\documentclass[12pt]{article}
\usepackage{amsmath,amssymb,latexsym, amsfonts, amscd, amsthm, tikz, tikz-cd}
\usepackage{graphicx,epsfig}

\hoffset=-50pt \voffset=-50pt
\textwidth=489pt \textheight=650pt
\oddsidemargin=33pt \evensidemargin=32pt \marginparwidth=0cm
\marginparsep=0cm \leftmargin=0cm \rightmargin=0cm

\theoremstyle{plain}
\newtheorem{theorem}{Theorem}[section]

\newtheorem{proposition}[theorem]{Proposition}
\newtheorem{corollary}[theorem]{Corollary}

\newtheorem{assumption}[theorem]{Assumption}
\newtheorem{lemma}[theorem]{Lemma}

\theoremstyle{definition}
\newtheorem{definition}[theorem]{Definition}
\newtheorem{remark}[theorem]{Remark}

\newcommand{\lra}{\longrightarrow}

\DeclareMathOperator{\Imm}{Im}
\DeclareMathOperator{\Sym}{Sym}

\newcommand{\tensor}{{\otimes}}

\newcommand{\Spec}{\mbox{Spec}}

\newcommand{\Ker}{\mbox{Ker}}

\newcommand{\GL}{{\rm \mathbf{GL}}}

\newcommand{\Z}{{\mathbb Z}}
\newcommand{\Q}{{\mathbb Q}}
\newcommand{\C}{{\mathbb C}}
\newcommand{\R}{{\mathbb R}}

\newcommand{\N}{{\mathbb N}}

\newcommand{\XSinf}{{\rm (X/S)_{\rm inf}^{\rm log}}}

\newcommand{\XSinfx}{\left(X/S\right)_{\rm inf |_X}^{\rm log}}
\newcommand{\WW}{{\mathbb W}}

\newcommand{\Xinf}{{\rm Sh\bigl((X/S)_{\rm inf}^{\rm log}}\bigr)}
\newcommand{\Xinfx}{X_{\rm inf,X}}
\newcommand{\Uinf}{U_{\rm inf}^{\rm log}}

\newcommand{\uno}{{\bf{1}}}
\newcommand{\Eq}{{\rm Eq}}
\newcommand{\CoEq}{{\rm CoEq}}
\newcommand{\colim}{{\rm colim}}
\newcommand{\Psh}{{\rm Psheaves}}

\newcommand{\uS}{{\underline{S}}}

\newcommand{\uX}{{\underline{X}}}

\newcommand{\uY}{{\underline{Y}}}

\newcommand{\cM}{{\cal M}}
\newcommand{\cH}{{\cal H}}
\newcommand{\cC}{{\cal C}}
\newcommand{\cI}{{\cal I}}

\newcommand{\cU}{{\cal U}}
\newcommand{\cP}{{\cal P}}

\newcommand{\cT}{{\cal T}}
\newcommand{\cF}{{\cal F}}
\newcommand{\cG}{{\cal G}}
\newcommand{\cE}{{\cal E}}
\newcommand{\cV}{{\cal V}}
\newcommand{\cA}{{\cal A}}
\newcommand{\cB}{{\cal B}}

\newcommand{\bV}{{\mathbb V}}
\newcommand{\bH}{{\mathbb H}}
\newcommand{\bA}{{\mathbb A}}

\newcommand{\bE}{{\mathbb E}}

\newcommand{\cD}{{\cal D}}
\newcommand{\cO}{{\cal O}}

\newcommand{\uU}{\underline{U}}

\newcommand{\Sh}{{\mbox{Sh}}}

\newcommand{\cW}{{\cal W}}
\newcommand{\cZ}{{\cal Z}}
\newcommand{\cIG}{{\cal IG}}

\newcommand{\de}{{\rm d}}
\newcommand{\dual}{^\vee}
\newcommand{\oA}{\overline{A}}
\newcommand{\oE}{\overline{\cE}}

\newcommand{\fW}{{\mathfrak{W}}}

\newcommand{\fIG}{{\mathfrak{IG}}}
\newcommand{\aW}{{\mathfrak{W}_k^{\rm alg}}}
\newcommand{\fT}{{\mathfrak{T}}}
\newcommand{\fw}{{\mathfrak{w}}}

\newcommand{\fX}{{\mathfrak{X}}}

\newcommand{\fK}{{\mathfrak{K}}}
\newcommand{\fS}{{\mathfrak{S}}}

\newcommand{\fp}{{\mathfrak{p}}}

\newcommand{\fm}{{\mathfrak{m}}}

\newcommand{\fg}{{\mathfrak{g}}}
\newcommand{\frn}{{\mathfrak{n}}}
\newcommand{\fh}{{\mathfrak{h}}}
\newcommand{\fb}{{\mathfrak{b}}}
\newcommand{\fn}{{\mathfrak{n}}}

\newcommand{\fu}{{\mathfrak{u}}}

\newcommand{\ho}{\hat{\otimes}}

\newcommand{\cX}{\mathcal{X}}

\begin{document}

\title{BGG-decomposition for de Rham Banach sheaves.}
\author{Fabrizio Andreatta, \\ Marco Baracchini, \\ Adrian Iovita.}
\maketitle

\tableofcontents \pagebreak

\section{Introduction}
\label{sec:intro}

The expression "BGG-decomposition" in the title of this article refers to the work of Bernstein-Gelfand-Gelfand, \cite{bernstein_gelfand_gelfand}, during the 70's, where these authors realized and explained how to compute Lie-algebra cohomology of a finite, irreducible Lie-algebra representation.

Given a semi-simple Lie-algebra $\fg$ over $\C$, a Borel sub-Lie-algebra $\fb$ of it and a finite, complex, irreducible representation $V$ of $\fg$, one has the natural Koszul complex: $$K_\bullet: U(\fg)\otimes_{U(\fb)}(V|_{U(\fb)}\otimes_{\C} \wedge^\bullet\fg/\fb),$$ where $U(\fg), U(\fb)$ are the universal enveloping algebras of $\fg$, respectively $\fb$. See section \ref{sec:outline} for a description of the differentials of $K_\bullet$. This complex gives a resolution of $V$ by projective $U(\fg)$-modules.

The authors of the above mentioned article, understood that there is a way of cutting down the complex $K_\bullet$ by using the center 
$Z(\fg)$ of $U(\fg)$ as follows: the irreducible representation $V$ has a highest weight vector for a weight $\lambda\in \fh^\vee$, i.e. the action of the Cartan sub-algebra $\fh$ of $\fg$ is given via the character $\lambda$ of $\fh$. Let $\chi_\lambda$ be the character of $Z(\fg)$, giving the action of $Z(\fg)$ on the Verma module
$U(\fg)\otimes_{U(\fb)}\C(\lambda)$, and if $W$ is a $U(\fg)$-module, we denote by $W_{\chi_\lambda}$ the subspace consisting of $w\in W$ such that for every $\sigma\in Z(\fg)$, there is $m\ge 0$ with $\bigl(\sigma-\chi_\lambda(\sigma)\bigr)^mw=0$, i.e. $W_{\chi_\lambda}$ is the generalized eigenspace of $\sigma\in Z(\fg)$ acting on $W$,  with eigenvalues $\chi_\lambda(\sigma)$, for all $\sigma\in Z(\fg)$. Then the sub-complex $(K_\bullet)_{\chi_\lambda}:=\bigl(U(\fg)\otimes_{U(\fb)}(V\otimes\wedge^\bullet \fg/\fb)\bigr)_{\chi_\lambda}$ of $K_\bullet$, is in fact a direct summand, and it is another resolution of $V$ by projective $U(\fg)$-modules, called now the BGG-complex of $V$. Therefore, the $BGG$-complex of $V$ is quasi-isomorphic to the Koszul complex.
The terms of the BGG complex have been completely determined in \cite{bernstein_gelfand_gelfand} in terms of the dot action of the Weyl group on the weight $\lambda$.  

It was remarked in \cite{bernstein_gelfand_gelfand}, (see Remark 2, section \S 9) that if there are complex Lie-groups $G,B$, with $B$ a Borel subgroup of $G$, such that $\fg={\rm Lie}(G), \fb={\rm Lie}(B)$, then for $V=\C$, the trivial $\fg$-representation, the dual of the complex $K_\bullet$ can be seen as the de Rham complex of the analytic flag variety $G/B$, at the point corresponding to the identity $e$ of $G/B$.

This observation was largely developed by Faltings in \cite{faltings} and Faltings-Chai in \cite{faltings_chai}, where they considered a 
Siegel variety $X$ for a symplectic algebraic group $G$ and a parabolic subgroup $P$ of it; $X$ is seen as a complex, analytic variety. Associated to an irreducible, complex representation $V$ of $P$, one has a vector bundle  $\cV$ on $X$ associated to it. If now $V$ is a finite, complex $G$-representation, the vector bundle $\cV$ has a natural connection and therefore we have a de Rham complex $\cV\otimes_{\cO_X}\Omega^\bullet_{X/\C}$ attached to $V$. In \cite{faltings_chai}, the authors use BGG-techniques to simplify the de Rham complex and compute its de Rham cohomology as follows. Let $V^\vee$ be the dual of the representation $V$, and suppose that it is an irreducible representation of the Lie-algebra  $\fg:={\rm Lie}(G^o)$, where $G^o$ is the kernel of the similitude character of $G$. We consider its Koszul complex $K_\bullet$. Using the center $Z(\fg)$ of $U(\fg)$ and the character $\lambda^\vee$ of
$V^\vee$, we obtain, following the ideas of Bernstein-Glefand-Gelfand presented above, the BGG-complex of $K_\bullet$. This complex has the form
$(K_\bullet)_{\chi_{\lambda^\vee}}=U(\fg)\otimes_{U(\fp)}M_\bullet$, with $M_\bullet$ finite $U(\fg)$-modules, which is determined in terms of the action of the Weyl group $W_M$, where $M$ denotes the Levi-subgroup of $P$, on the $M$-weights.

Now we use the following crucial observation (Proposition 5.1, \cite{faltings_chai}): let $E_1,E_2$  be complex representations of $P$ and denote by $\cE_1,\cE_2$ the associated vector bundles on $X$. Then there is a contravariant, functorial isomorphism 
$$
{\rm Diff}(\cE_1,\cE_2)\cong {\rm Hom}_{\fg}\bigl( U(\fg)\otimes_{U(\fp)}E_2^\vee, U(\fg)\otimes_{U(\fp)}E_1^\vee\bigr),
$$
where ${\rm Diff}(\cE_1,\cE_2)$ is the module of homogeneous differential operators of finite order from $\cE_1$ to $\cE_2$.

Therefore the $\fg$-morphism $K_{\bullet, \chi_{\lambda^\vee}}\to K_{\bullet}$ gives a morphism of complexes of vector bundles (they are not $\cO_X$-linear but differential operators) $\cV\otimes_{\cO_X}\Omega^\bullet_X\lra \cM^\bullet$, where 
$\cM^q$ is the vector bundle associated to the $P$-representation whose $\fp$-module is $M_q^\vee$. We'll denote ${\rm BGG}^\bullet$ the complex $\cM^\bullet$ and call it the BGG-complex associated to $V$.

Then \cite{faltings_chai} show that the above morphism of complexes of vector bundles on $X$ is a quasi-isomorphism and it stays a quasi-isomorphism for the natural extensions of these complexes to the toroidal compactification $\overline{X}$ of $X$.   
Therefore the hypercohomology of the complex ${\rm BGG}^\bullet$ computes the de log Rham cohomology of the complex $\cV\otimes_{\cO_X}\omega^\bullet_{\overline{X}}$.

Faltings-Chai make the same constructions for the algebraic Shimura varieties over number fields or local fields, or even rings of integers of local fields (under certain assumptions). No proof of why the BGG-complex exists and is quasi-isomorphic to the de Rham complex is given in these cases, 

There have been a number of other articles \cite{tilouine}, \cite{polo_tilouine}, \cite{kwlan_polo} in which some of these ideas have been 
developed and improvements made.  For example in \cite{kwlan_polo} a rigid analytic version of the Proposition 5.1 in \cite{faltings_chai} is given, namely Proposition 4.21, and therefore using it, even in the rigid analytic setting, the BGG-complex exists. When one works over a field of characteristic zero, one may base change to $\C$ and use \cite{faltings_chai} to deduce the quasi-isomorphism of the de Rham complex and the BGG complex.

\medskip
\noindent
In this article we are concerned with computing de Rham cohomology of de Rham complexes of modules with connections which are not coherent sheaves, but much larger sheaves of $p$-adic Banach modules, where $p>0$ is a prime integer. Moreover, these sheaves only live on certain open sub-spaces of the Shimura varieties, seen as $p$-adic rigid or adic analytic spaces over a $p$-adic field. 
Recently, there has been much activity around understanding the \'etale side of the story, that is to say the completed cohomology sheaves
and their Hodge-Tate structures by Lue Pan in \cite{lue_pan}, J. E. Rodriguez Camargo in \cite{camargo}, and 
G. Boxer, F. Calegari, T. Gee, V. Pilloni in \cite{boxer_calegari_gee_pilloni}.

\bigskip
The de Rham side of this story is very different and new ideas and concepts are needed in order to understand how to apply the BGG theory to it.

More precisely, we fix a prime $p>2$ and a Siegel variety of level prime to $p$, defined over a large enough but finite extension $K$ of $\Q_p$. Let the pair $(G,P)$ denote the symplectic algebraic group $G$ and a parabolic subgroup $P$ of it, defined over $K$, associated to the Siegel variety fixed above. We let $k$ be a $p$-adic weight for the pair $(G,P)$ with values in $K$ (only for this Introduction, to simplify notations), and denote by $S:={\rm Spa}(K, \cO_K)$. 
Having fixed the weight $k$, we let $\cX$ be a strict neighbourhood of the ordinary locus in the Shimura variety (depending on $k$).
Using vector bundles with marked sections we define a de Rham sheaf on $\cX$, which is a triple $(\WW_k, {\rm Fil}^\bullet_k, \nabla_k)$, where
$\WW_k$ is a Banach $\cO_{\cX}$-module on $\cX$, with an increasing filtration ${\rm Fil}^\bullet_k$ on it such that  the sub-$\cO_\cX$-module 
$\WW_k^{\rm alg}:=\colim_{n}{\rm Fil}_k^n$ is $p$-adically dense, and $\nabla_k:\WW_k\lra \WW_k\otimes_{\cO_\cX}\Omega^1_{\cX}({\rm log})$ is a logarithmic, integrable connection associated to the Gauss-Manin connection on the relative de Rham cohomology ${\rm H}_{\rm dR}$
of the universal generalized abelian scheme over $\cX$. Moreover $\nabla_k$ satisfies the Griffiths transversality property relative to ${\rm Fil}_k^\bullet$, therefore it preserves $\WW_k^{\rm alg}$. 
The goal of this article is the following: let $h\ge 0$ be a finite slope, we wish to compute, for every $i\ge 0$ the groups  ${\rm H}^i_{\rm dR}\bigl(\cX, (\WW_k, \nabla_k)\bigr)^{(h)}$, where the exponent $(h)$ means that we consider the slope less than or equal to $h$ subspace for the action of a certain Hecke operator $U_p$. 
We remark that if assumption \ref{ass:basicassumption} (see section \ref{sec:outline}) is satisfied, we have a canonical isomorphism:
$$
{\rm H}^i_{\rm dR}\bigl(\cX, (\WW_k, \nabla_k)\bigr)^{(h)}\cong {\rm H}^i_{\rm dR}\bigl(\cX, (\WW^{\rm alg}_k, \nabla_k)\bigr)^{(h)}.
$$

It follows that if we are interested in the finite slope de Rham cohomology, we may replace the pair $(\WW_k, \nabla_k)$ with the pair $(\WW^{\rm alg}_k, \nabla_k)$. In other words the first step in our work, same as on the \'etale side, is to {\bf decomplete} the $p$-adic Banach module $\WW_k$. But remark that decompletion in this case is very natural, determined by the Hodge filtration. From now on, working with $\WW_k^{\rm alg}$ and its filtration, the topologies that appear are only co-limit, respectively limit topologies, with discrete topology on every factor. We base-change $\WW_k^{\rm alg}\otimes_B\fK$ and keep denoting it $\WW_k^{\rm alg}$.

The second step in our analysis is to be able to go back and fourth between the sheaves with integrable connection $(\WW_k^{\rm alg}, \nabla_k)$ and some $\bigl(\fg:={\rm Lie}(G^o), \cP\bigr)$-representation $\aW$. Here $\cP$ is the analytic subgroup of ${\rm P}$ that preserves ${\rm H}_{\rm dR}^\#$ and its Hodge filtration, see below. We do not think that this could be done, as in the \'etale side, by using the perfectoid Shimura variety and the Hodge-Tate period map. Instead, on the de Rham side, this is done by using the analytic  torsor $\cT^{\rm an}$ of local basis of the dual of a modification ${\rm H}^\#_{\rm dR}$ of the relative de Rham cohomology sheaf  ${\rm H}_{\rm dR}$ of the universal abelian scheme over $\cX$, and its Hodge filtration. 


See also \cite{polo_tilouine} and \cite{kwlan_polo} and section \ref{sec:classical} of this article for the classical case.
The $(\fg, \cP)$-representation $\aW$, by using the above torsor allows us to recover $\WW_k^{\rm alg}$ with its filtration.
Now let $V:=(\aW)^\vee$, the $\fK$-dual of $\aW$, seen as a $(\fg, \cP)$-representation. 

We'd like to use the Lie-algebra action on $V$ and cut out a sub-complex of the Koszul-complex of $V$, the BGG-complex.
 We have the problem that $V$ is not finite dimensional and we don't know how to cut, using $Z(\fg)$, Koszul complexes of infinite dimensional Lie-representations. Fortunately, there is another complex, different from the Koszul complex of $V$ but with a morphism to it, which is a projective limit of Koszul complexes of finite $\fp$-representations. See section \ref{sec:outline}.

 Let us suppose for now that using the above observation (see section \ref{sec:outline} for details), and using the action of $Z(\fg)$ 
 \`a la BGG, we obtained a BGG-sub complex of the Koszul complex $K_\bullet$ of $V$, i.e. a sub-complex of the form 
 $$(1)\quad U(\fg)\otimes_{U(\fp)}M_\bullet\subset K_\bullet:=U(\fg)\otimes_{U(\fp)}(V\otimes \wedge^\bullet \fg/\fp).$$
 What everybody does (e.g. \cite{faltings_chai}) is to use the Proposition 5.1 \cite{faltings_chai} in the complex case, or the Proposition 4.21 \cite{kwlan_polo} in the rigid analytic case, and obtain from $(1)$ a sub-complex  of $\aW\otimes\Omega_\cX^\bullet$.

 In this article we choose a different strategy, namely we (continuously) dualize the entire formula $(1)$, keeping in mind that that formula can be written:
 $(1)\quad U(\fn^-)\otimes_KM_\bullet\subset U(\fn^-)\otimes_K V\otimes \wedge^\bullet \fn^-$. To see why this is interesting, let us recall that if we denote by $\cI$ the ideal of the diagonal embedding 
 $\Delta:\cX\lra (\cX\times_S \cX)^{\rm ex}$, we let $\cP_{\cX}:=\lim_m \cP^m_{\cX}$ where $\cP^m_{\cX}:=(\cO_{\cX}\otimes\cO_{\cX}))^{\rm ex}/\cI^{m+1}$. Then $\cP_{\cX}$ has a left and a right $\cO_{\cX}$-module structure, and we have a natural local identification  
 $U(\fn^-)^\vee\times\cO_{\cX}\cong \cP_{\cX}$, (see lemma \ref{lemma:dual}).  We recall that the exponent ${\rm ex}$ present in the above formula refers to the exactification function applied to a log closed immersion of log adic spaces.
 If $A$ is a quasi-coherent $\cO_{\cX}$-module, we denote by $L(A):= \cP_{\cX}\hat{\otimes}_{\cO_{\cX}}A=\lim_m (\cP^m_{\cX}\otimes_{\cO_{\cX}}A)$, seen as an $\cO_{\cX}$-module with the left structure and call it the linearization of $A$. 
 
 The functor $L(\ )$ is called the linearization functor on $\cX$, and has the property that if $(A, \nabla)$ is a pair consisting of an
 $\cO_\cX$-module and an integrable connection on it, the linearized de Rham complex of the pair is exact in degrees larger than $0$.
 
 We point out that the projection $(2)$ of complexes has a section, i.e. $L({\rm BGG}^\bullet)$ is a direct summand of $L(\WW_k^{\rm alg}\otimes \omega^\bullet_{\cX/S})$, but the two complexes are NOT quasi-isomorphic (see remark \ref{remark:elliptic}).
  
To eliminate the linearization, we consider the log infinitesimal site $(\cX/S)_{\rm inf}^{\rm log}$ and the natural morphism $\pi$ of topoi from the log infinitesimal topos
$\cX_{\rm inf}$ to the analytic topos $\cX_{\rm an}$ of $\cX$, with  $\pi_\ast:\cX_{\rm inf}\lra \cX_{\rm an}$. It has the property that 
if $(A,\nabla)$ is an $\cO_\cX$-module with integrable connection, then $(L(A), L(\nabla))$ can be seen as a sheaf on $(\cX/S)_{\rm inf}$, $\pi_\ast(L(A))\cong A$ and $R^i\pi_\ast(L(A))=0$ for $i>0$. See section \ref{sec:linedelin}. We show that applying $\pi_\ast$ to the diagram $(2)$ above, produces the desired quasi-isomorphism $\WW_k^{\rm alg}\otimes\omega^\bullet_{\cX/S}\lra {\rm BGG}^\bullet$.

\bigskip

The plan of the article is the following:

In section 2, we present the outline, much more precisely than in the Introduction, of our approach for Siegel varieties, supposing that certain assumptions are satisfied, and then prove that we have the BGG-decomposition of our de Rham sheaves. 

In section 3 we show how this theory applies to the classical case, for general PEL Shimura varieties.

In section 4, we consider the case of modular curves and show that the hypothesis discussed in section 2 are satisfied.
In this case, everything can be computed explicitly.

In section 5 we develop the theory of linearization and de-linearization of quasi-coherent crystals on the infinitesimal site of a smooth and separated log adic space. These results are heavily used in section 2 and 3 and 4.
We point out that recently there has been a lot of interest in the infinitesimal site and topos of a rigid space, see \cite{guo}, \cite{shiho}, \cite{chiarellotto_fornasiero}. We think that our treatment of the log infinitesimal neighbourhoods of the diagonal and linearization and de-linearization of quasi-cohernet crystals, introduces some new ideas in the theory, which allow us to prove the results we need.


\section{BGG decomposition of de Rham sheaves .}

\subsection{The set-up and the strategy.}
\label{sec:outline}

In this section we set-up a general framework in which de Rham sheaves can be constructed and the BGG decomposition performed.

\begin{assumption}
\label{ass:basicassumption}
 Let $p>0$ be a prime integer, let $K$ be a finite extension of $\Q_p$ and  $S:={\rm Spa}(B,B^+)$ an affinoid algebra over ${\rm Spa}(K, \cO_K)$. We consider ${\rm Spa}(K, \cO_K)$ and $S$ as log adic spaces with {\bf trivial} log structures.
 Let $f:\cX\to S$ be a log smooth and separated morphism of log adic spaces over ${\rm Spa}(K, \cO_K)$. We assume that $B$ is an integral domain and let $\fK$ denote its fraction field.
 
 i) We assume we are given a semi-simple, reductive, algebraic group ${\rm G}$ over $\Q_p$ and we fix a sequence ${\rm T}\subset {\rm Bo}\subset {\rm P}\subset {\rm G}$ consisting of a parabolic subgroup ${\rm P}$, the respective Borel subgroup  ${\rm Bo}$ and torus ${\rm T}$. We assume as well that
 we are given an analytic subgroup $\cG\subset {\rm G}^{\rm an}$, we denote by $\cT:=\cG\cap {\rm T}^{\rm an}\subset \cB:=\cG\cap {\rm Bo}^{\rm an}\subset \cP:=\cG\cap {\rm P}^{\rm an}\subset \cG$. Let $k\colon \cT({\rm Spa}(\Q_p, \Z_p))\to B^\times$ be an analytic group homomorphism, i.e. 
 an analytic weight. We assume that the inclusion $\cG\subset {\rm G}^{\rm an}$ defines an isomorphism of Lie-algebras $\fg:={\rm Lie}(\cG)\cong {\rm Lie}({\rm G}^{\rm an})={\rm Lie}({\rm G})$. We denote by $\fh\subset \fb\subset \fp$ the Lie-algebras of $\cT$ or ${\rm T}$, respectively
  of $\cB$ or ${\rm Bo}$, and respectively $\cP$ or ${\rm P}$. Then $\fg$ is a semi-simple Lie-algebra over $K$, $\fp$ is a parabolic, $\fb$ is a Borel and $\fh$ is a Cartan sub-algebra of $\fg$. We base-change all these Lie-algebras over $\fK$ and keep denoting them as $\fh\subset \fb\subset \fp\subset \fg$. By deriving we may consider $k$ 
 as a $B$-valued weight of $\fh$. 
 
 ii) We denote $\fW_k:=\bigl({\rm Ind}^{\cG}_{\cB}\bigr)^{\rm an}(k)$, where $\bigl({\rm Ind}^{\cG}_{\cB}\bigr)^{\rm an}(k)$ denotes the analytic induction from $\cP(\Q_p):=\cP({\rm Spa}(\Q_p, \Z_p))$ to $\cG(\Q_p):=\cG({\rm Spa}(\Q_p, \Z_p)$ of $k$. 
 It is naturally a $\cG(\Q_p)$-module and it has a natural, increasing filtration $(F^s)_{s\in \N}$, defined as the elements $\alpha\in \fW_k$ which are 
 polynomial functions of degree at most $s$ on $\cG(\Q_p)$, such that for all $p\in\cP(\Q_p)$ and $g\in \cG(\Q_p)$ we have $\alpha(p\cdot g)=k(p)\alpha(g)$. Clearly $\cP(\Q_p)$ preserves the filtration and therefore acts on
 $\aW:=\colim_{s\in \N}F^s$.  Each $F^s$ is a finite dimensional $\fK$-vector space, for $s\in \N$. 
 Moreover, there is a natural action of $\fg$ on $\fW_k$ such that we have $\fn^-\cdot F^s\subset F^{s+1}$, for all $s\in \N$. It follows that $\aW$ is naturally a $\fg$-representation.
 So $\aW$ is a $(U(\fg), \cP)$-module over $\fK$, in fact it is a Verma module of weight $k$, see Definition \ref{def:fgB} and the example after it. 
 
 The classical case arises when $\fW_k$ is replaced by an algebraic, finite dimensional, irreducible representation of $G$ or $P$.
 
 iii) We assume that there is a triple $\bigl(\WW_k^{\rm alg}, \cF^\bullet, \nabla_k\bigr)$, where $\WW_k^{\rm alg}$ is an $\cO_{\cX}$-module, with an increasing, exhausting and separated filtration $\cF^\bullet$, indexed by $\Z$, where $\cF^s$ is a coherent
 $\cO_{\cX}$-module, for  each $s\in \Z$ and $\cF^s=0$ for $s<0$.
 $\nabla:\WW_k^{\rm alg}\to \WW_k^{\rm alg}\otimes_{\cO_{\cX}}\omega^1_{\cX/S}$ is a logarithmic, integrable connection. We denoted $\omega^1_{\cX/S}$ the sheaf of $1$-log differential forms of $\cX$ over $S$. 
 
 The connection satisfies the Griffiths transversality property, i.e. $\nabla_k(\cF^s)\subset \cF^{s+1}\otimes_{\cO_{\cX}}\omega^1_{\cX/S}$.

 iv) We assume that there is an open, affinoid covering of $\cX$, $\bigl(U_i\bigr)_{i\in I}$ and for each $i\in I$, there are $\cO_{U_i}$-linear isomorphisms $\varphi_i:\bigl(\WW_k^{\rm alg}\otimes_B\fK\bigr)|_{U_i}\cong \aW\otimes_B\cO_{U_i}$,
 which induce isomorphisms $(\cF^s\otimes_B\fK)|_{U_i}\cong F^s\otimes_B\cO_{U_i}$, for every $s\in \Z$. We also assume that for every $i\in I$, there are $\cO_{U_i}$-linear isomorphisms $\psi_i:(\omega_{\cX/S}^1|_{U_i})\otimes_B\fK\cong (\fn^-)^\vee\otimes \cO_{U_i}$.             Moreover, we assume that for $i,j\in I$ the isomorphisms $(\varphi_j|_{U_{i,j}})\circ (\varphi_i^{-1}|_{U_{i,j}}):\aW\otimes_B\cO_{U_{i,j}}\cong   
 \aW\otimes_B \cO_{U_{i,j}}$ and $(\psi_j|_{U_{i,j}}\circ (\psi^{-1}_{U_{i,j}}):(\fn^-)^\vee\otimes_B \cO_{U_{i,j}}\cong (\fn^-)^\vee\otimes_B \cO_{U_{i,j}}$ are given by the same $p_{i,j}\otimes 1$, where $p_{i,j}\in \cP(U_{i,j})$. Here we denoted by $U_{i,j}$ the intersection $U_i\times_SU_j$.
 

 \end{assumption}
 
 Under these assumptions we will prove in the sequel that there is a canonical complex of coherent $\cO_{\cX}\otimes_B\fK$-modules, ${\rm BGG}^\bullet$, with a morphism of abelian complexes
 $ \WW_{k, \fK}^{\rm alg}\otimes_{\cO_{\cX}}\omega_{\cX/S}^\bullet\to {\rm BGG}^\bullet $, which is a {\bf quasi-isomorphism} of abelian complexes. Here the left-hand-side of the isomorphism is the log de Rham complex 
 of $(\WW_{k,\fK}^{\rm alg}, \nabla_k)$, and the morphisms between the two complexes are log differential operators, not $\cO_{\cX}$-linear maps.
 As a consequence, we obtain isomorphisms ${\rm H}^m_{\rm dR}\bigl(\cX, \WW_{k,\fK}^{\rm alg}\otimes_{\cO_{\cX}}\omega_{\cX/S}^\bullet\bigr)\cong \bH^m\bigl(\cX, {\rm BGG}^\bullet\bigr)$,
 for all $m\ge 0$. 
 
 In the next section we will show in detail how this set-up works first for the case where $\cX$ is the analytic space attached to a PEL-Shimura variety, we fix a parabolic subgroup $P$ of $G$, containing a Borel and torus, and $\fW_k$ is 
 a finite representation of $G$. This is the situation of \cite{faltings_chai} in the more general setting, and over a $p$-adic field instead of the complex numbers, as described in \cite{kwlan_polo}. In that article the authors basically show that Assumption \ref{ass:basicassumption} is satisfied and we only need to show that our machine produces in this case the desired quasi-isomorphism.
 
 In section 4 we give the example of elliptic modular forms where $\fW_k$ is infinite dimensional and $\fg:=\mathfrak{sl}_{2/\Q}$ and $\fp=\fb$ is a Borel sub-algebra and we will show that the Assumption \ref{ass:basicassumption} is satisfied in that case.
So far we think this set-up should work for Hilbert modular forms, i.e. $\fg=\mathfrak{sl}_{2/F}$, $F$ a totally real field, $\fp$ is a Borel sub-algebra, and in a future 
article we plan on explaining how this set-up can be adapted for Siegel modular forms
(i.e. $\fg=\mathfrak{sp}_{2g/\Q}$, where we consider a parabolic sub-algebra $\fp$, which could be a Siegel or Klingen parabolic).

 \bigskip
 
 \bigskip

 We start by supposing Assumption \ref{ass:basicassumption} granted (with $\fW_k=\bigl({\rm Ind}^{\cG}_{\cB}\bigr)^{\rm an}(k)$, the classical case will be dealt with separately) and we will use the notations of the assumption. 
 Here are the basic definition and example.
 
 \begin{definition}[see also \cite{kwlan_polo}, Definition 4.1] 
 \label{def:fgB} 
 By a $(U(\fg), \cP)$-module over a $B$-algebra $R$ we mean an $R$-module with actions of $U(\fg)$ and of $\cP$, which induce the same action of  $\fp$. By a morphism of $(U(\fg), \cP)$-modules we mean a morphism of $U(\fg)$-modules which induces by restriction a morphism of $U(\fp)$-modules coming from a morphism of $\cP$-modules. 
 \end{definition}
 
 In \cite{kwlan_polo}, Lemma 4.2 it is shown that if $W$ is a $\cP$-module over some $B$-algebra $R$, then 
 the Verma modules $V:=U(\fg)\otimes_{U(\fp)}W$, with action of $U(\fg)$ on the first factor of the tensor product and diagonal action of $\cP$ is a $(U(\fg), \cP)$-module.
 
 \bigskip  
 
\bigskip
 
 We now start by denoting $M:=(\aW)^\vee:={\rm Hom}_{\fK}(\aW, \fK)$ and $M^s:=\bigl(F_s\bigr)^\vee$, for $s\ge 0$. Then $(M^s)_s$ is a projective system of $\fK$-vector spaces with $(\fp, \cP)$-action and $\displaystyle M\cong \lim_{\leftarrow,s}M^s$. On $M$ we consider the projective limit topology with discrete topology on each $M^s$ and the $(\fg, \cP)$-dual action. For example the $\fg$-action is  given by: if $g\in \fg$, $f\in M$, $x\in \aW$, we define $(gf)(x):=-f(gx)$, see \cite{diximier}. As $\aW$ is a Verma module of weight $k$, it follows that $M$ has a lowest weight vector of character $-k$ for the action of the Cartan subalgebra $\fh$ of $\fg$.

For every $n,m,j\in \N$ we have natural maps: $d:U(\fn^-)^{n}\otimes M^{m}\otimes \wedge ^j \fn^-\lra U(\fn^-)^{n+1}\otimes M^{m-1}\wedge^{j-1}\fn^-$ defined by 

$$
a\otimes b\otimes (X_1\wedge X_2\wedge ...\wedge X_j)\to \sum_{i=1}^j (-1)^i(X_ia\otimes b-a\otimes X_ib)\otimes X_1\wedge...\hat{X}_i\wedge...\wedge X_i.
$$ 
For fixed $n,m$ we have complexes:
$$
U(\fn^-)^{n-\bullet}\otimes M^{m+\bullet}\otimes \wedge^\bullet\fn^-:\ U(\fn^-)^n\otimes M^m\leftarrow U(\fn^-)^{n+1}\otimes M^{m-1}\otimes \fn^-\leftarrow U(\fn^-)^{n+2}\otimes M^{m-2}\otimes \wedge^2\fn^-\leftarrow...
$$

We have, naturally, two ways to assemble the complexes  $U(\frn^-)^{n-\bullet}\otimes_K M^{m+\bullet}\otimes_K\wedge^\bullet \frn^-$.
\bigskip

\noindent
(I). Let 
$$
\cD^\bullet:=\colim_{ n}\lim_{ m}\Bigl(U(\frn^-)^{n-\bullet}\otimes M^{m+\bullet}\otimes \wedge^\bullet \frn^-\Bigr)\cong
U(\frn^-)\otimes M\otimes \wedge^\bullet \frn^-\cong U(\fg)\otimes_{U(\fp)}\bigl(M\otimes \wedge^\bullet (\fg/\fp)\bigr).
$$
Looking also at the differentials of $\cD^\bullet$, we see immediately that $\cD^\bullet$ is the Koszul-complex of $M$.
Because $M$ is a $U(\fg)$-module, using the Garland-Lepowsky theorem, see \cite{garland_lepowsky}, we have an isomorphism: $U(\fg)\otimes_{U(\fp)}\bigl(M\otimes_K \wedge^\bullet (\fg/\fp)\bigr)\cong U(\fg)\otimes_K M\otimes_{U(\fp)} \bigl(\wedge^\bullet (\fg/\fp)\bigr)$ and a natural 
$U(\fg)$-equivariant projection $\cD^0=U(\fg)\otimes_K M\lra M$ which gives a quasi-isomorphism of complexes: $\cD^\bullet\cong M^\bullet$, where $M^\bullet:=M\leftarrow 0\leftarrow 0\leftarrow\ldots$. In particular $\cD^\bullet$ is a resolution of $M$ and therefore the cohomology of $\cD^\bullet$ is given by: ${\rm H}^0(\cD^\bullet)=M$, ${\rm H}^i(\cD^\bullet)=0$ for $i>0$.

\bigskip
\noindent
(II) Let
$$
\cC^\bullet:=\lim_{m}\colim_{n}\Bigl(U(\frn^-)^{n-\bullet}\otimes M^{m+\bullet}\otimes_K \wedge^\bullet \frn^-\Bigr)\cong
\lim_{m}\Bigl(U(\frn^-)\otimes M^{m+\bullet}\otimes \wedge^\bullet \frn^-\Bigr).
$$
We have a natural morphism of complexes: $\xi:\cD^\bullet \lra \cC^\bullet$, which is not, in general, a quasi-isomorphism.

\bigskip
\noindent
As mentioned before, $M$ has a lowest weight vector for the character $-k$ of the Cartan sub-algebra $\fh$ of $\fg$, and we look at the action of $Z(\fg):=Z(U(\fg))$, the center of $U(\fg)$, on the $U(\fg)$-modules $A_{s,j}:=U(\frn^-)\otimes (M^s\otimes \wedge^j\frn^-)=
U(\fg)\otimes_{U(\fp)}\bigl(M^s\otimes \wedge^j(\fg/\fp)\bigr)$, for $s,j\ge 0$.

As pointed out in section \ref{sec:intro}, if we denote by $\chi_k$ the character of $Z(\fg)$ giving the action of $Z(\fg)$ on the Verma module $U(\fg)\otimes_{U(\fp)}\fK(k)$, the action of $Z(\fg)$ on $M$ is given by a $\fK$-valued character, which we denote $\chi^\vee$, of $Z(\fg)$ (see the section \ref{sec:details}).  This means that for every $\tau\in Z(\fg)$ and $f\in M$ we have $\tau f=\chi^\vee(\tau)f$. We denote by
$\cV^\bullet:=\bigl(\cC^\bullet\bigr)_{\chi^\vee}\subset \cC^\bullet$ the generalized eigenspace in $\cC^\bullet$ of character $\chi^\vee$ for the action of $Z(\fg)$, and let $\cU^\bullet:=\cC^\bullet/\cV^\bullet$. 
We notice (see section \ref{sec:details} for proofs) that for every $j\ge 0$ there is an integer $m_j$ such that we have $\cV^j_m:=(\cC^j_m)_{\chi^\vee}\cong (\cC^j_{m+1})_{\chi^\vee}=:\cV^j_{m+1}$, for all $m\ge m_j$. Here we denoted $\cC^j_m:=U(\fn^-)\otimes M^{m+j}\otimes \wedge^j\fn^-\cong U(\fg)\otimes_{U(\fp)}
(M^{m+j}\otimes \wedge^j \fg/\fp)$. This implies that $\cV^\bullet\subset \cC^\bullet$ is a direct summand, as complexes.

We have therefore the following commutative diagram $(\ast)$ of complexes, with exact, split rows (which defines $\gamma$):
\[
\begin{tikzcd}[row sep=small]
	0 \ar[r]&\cV^\bullet\ar[r]&\cC^\bullet\ar[r]&\cU^\bullet\ar[r]&0\\
				&						&\cD^\bullet\ar[u, "\xi"]\ar[r, equal]&\cD^\bullet\ar[u, "\gamma"]&
\end{tikzcd}
\]
1)  We prove the following, see the next section:

\begin{theorem}
\label{thm:zerocoh}
The morphism of complexes $\gamma$ induces $0$ on homology, i.e. for all $i\ge 0$, ${\rm H}_i(\gamma):{\rm H}_i(\cD^\bullet)\lra
{\rm H}_i(\cU^\bullet)$ and we have ${\rm H}_i(\gamma)=0$.
\end{theorem}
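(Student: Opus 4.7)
The plan is to reduce the statement to the case $i=0$ and then exploit the $Z(\fg)$-equivariance combined with the direct-summand structure of $\cV^\bullet \subset \cC^\bullet$.

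For every $i>0$, since $\cD^\bullet$ has been shown to be the Koszul resolution of $M$, one has ${\rm H}_i(\cD^\bullet)=0$, so ${\rm H}_i(\gamma)=0$ automatically. Only the degree $i=0$ needs an argument; here ${\rm H}_0(\cD^\bullet)\cong M$ as $U(\fg)$-modules, and by assumption $Z(\fg)$ acts on $M$ through the character $\chi^\vee$.

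The morphism $\xi$ is $U(\fg)$-linear, and the projection $\cC^\bullet\to\cU^\bullet$ coming from the decomposition $\cC^\bullet=\cV^\bullet\oplus\cU^\bullet$ is $Z(\fg)$-equivariant. Hence $\gamma$ is $Z(\fg)$-equivariant, and the image of ${\rm H}_0(\gamma)\colon M\to {\rm H}_0(\cU^\bullet)$ is contained in the $\chi^\vee$-eigenspace of $Z(\fg)$ acting on ${\rm H}_0(\cU^\bullet)$. So it suffices to show that this eigenspace vanishes. Since $\cV^\bullet$ is, by construction, the full $\chi^\vee$-generalized eigenspace of $\cC^\bullet$, the complement $\cU^\bullet$ contains no $\chi^\vee$-generalized eigenvector at any term. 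Using that $Z(\fg)$ commutes with the Koszul differentials, I would then argue that this level-wise vanishing propagates to homology, giving $\bigl({\rm H}_0(\cU^\bullet)\bigr)_{\chi^\vee}=0$, from which ${\rm H}_0(\gamma)=0$ follows.

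The main technical obstacle I anticipate lies precisely in this last step: deducing rigorously from the level-wise vanishing $(\cU^j)_{\chi^\vee}=0$ that $\bigl({\rm H}_0(\cU^\bullet)\bigr)_{\chi^\vee}=0$, since generalized-eigenspace functors need not be exact on arbitrary infinite-dimensional modules and $\cC^\bullet=\lim_m\cC^\bullet_m$ is an inverse limit. The key input is the stabilization $\cV^j_m\cong \cV^j_{m+1}$ for $m\ge m_j$: at finite level the relevant pieces of $\cC^j_m$ are built out of finite-dimensional blocks that split canonically into $Z(\fg)$-generalized eigenspaces, the differentials respect this splitting, and the stabilization allows the decomposition to pass cleanly to the inverse limit on the $\chi^\vee$-component while the complementary components may only become products over the remaining characters $\mu\ne\chi^\vee$, from which no $\chi^\vee$-eigenvector can be extracted. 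Once this compatibility is in place, the rest of the proof is formal.
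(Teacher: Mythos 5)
Your proposal is correct and reaches the theorem by a genuinely different route from the paper's. Both arguments feed on the same structural facts: ${\rm H}_i(\cD^\bullet)$ vanishes for $i>0$ and equals $M$ for $i=0$, where $Z(\fg)$ acts through $\chi^\vee$; and each $\cU^j$ is an inverse limit (in effect a product) of generalized $Z(\fg)$-eigenspaces attached to characters $\mu\in\Theta$, all distinct from $\chi^\vee$. The paper turns this into the existence, after passing to a transcendental extension $L$ of $\fK$, of a single $\tau\in I_{\chi^\vee}$ with $\varphi(\tau)\neq 0$ for every $\varphi\in\Theta$ (Lemmas \ref{lemma:polyn} and \ref{lemma:tau}); such a $\tau$ acts invertibly on every term of $\cU^\bullet$ and hence on its homology, while it annihilates ${\rm H}_i(\cD^\bullet)$ up to a power, giving ${\rm H}_i(\gamma)=0$ in all degrees at once. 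You instead reduce to degree $0$, note that the image of ${\rm H}_0(\gamma)$ lies in the $\chi^\vee$-eigenspace of ${\rm H}_0(\cU^\bullet)$, and kill that eigenspace. The step you flag as the main obstacle is a real one --- termwise absence of $\chi^\vee$-eigenvectors does not by itself pass to a quotient of complexes --- but the sketch you give is the correct repair and is completable: the differentials and transition maps are $U(\fg)$-equivariant, so by the stabilization claim $\cU^\bullet\cong\prod_{\mu\in\Theta}(\cU^\bullet)_\mu$ as complexes, with each term of $(\cU^\bullet)_\mu$ annihilated by a fixed power of $I_\mu$ (via Lemma \ref{lemma:diximier}); products of modules are exact, so ${\rm H}_0(\cU^\bullet)\cong\prod_\mu{\rm H}_0\bigl((\cU^\bullet)_\mu\bigr)$, and a $\chi^\vee$-eigenvector there has each component killed by powers of the two distinct, hence comaximal, maximal ideals $I_{\chi^\vee}$ and $I_\mu$ of $Z(\fg)$, so it vanishes. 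What your route buys is that no uniform choice over the possibly infinite set $\Theta$ is needed --- you compare $\chi^\vee$ with one $\mu$ at a time --- so the field extension of Lemma \ref{lemma:polyn} becomes unnecessary; what the paper's route buys is a one-line conclusion valid in every degree from a single invertibility statement.
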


\medskip

2) Now we continuously dualize the diagram $(\ast)$ (we use continuous duals for all modules, denoted $\ast$) and notice

\medskip

\begin{lemma}
\label{lemma:dual}

Suppose that $i\in I$ and $U_i:={\rm Spa}(A_i, A_i^+)$ is an affinoid open of $\cX$ appearing in Assumption \ref{ass:basicassumption}.

a) We have an isomorphism as $A_i$-modules $\bigl( \wedge^s \fn^-\bigr)^\vee\otimes_BA_i\cong \omega^s_{U_i/S}$, for $s\ge 0$.

and

b) If we denote $U(\fn^-)^s_{A_i}:=U(\fn^-)^s\otimes_BA_i$, then $\bigl( U(\fn^-)^s_{A_i}\bigr)^\ast\cong \cP_{U_i/S}^s$, see Lemma \ref{lemma:logsmooth} and Remark \ref{rmk:coordinates} for the notations. 
We recall that $\cP^s_{U_i/S}$ can be seen as the $s+1$-infinitesimal neighbourhood of the diagonal $\displaystyle U_i\stackrel{\Delta}{\lra}(U_i\times_S U_i)^{\rm ex}$.

 \end{lemma}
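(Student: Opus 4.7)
Both parts are essentially a combination of the trivializations from Assumption \ref{ass:basicassumption} iv) with standard identifications: (a) is immediate from the assumption, while (b) rests on the Poincar\'e--Birkhoff--Witt theorem together with the local structure of infinitesimal neighbourhoods in the log-smooth setting.

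\textbf{Part (a).} The plan is to take the $s$-th exterior power of the $\cO_{U_i}$-linear isomorphism $\psi_i \colon \omega^1_{U_i/S} \cong (\fn^-)^\vee \otimes_B \cO_{U_i}$ from Assumption \ref{ass:basicassumption} iv). Using $\omega^s_{U_i/S} = \wedge^s \omega^1_{U_i/S}$ and, since $\fn^-$ is a finite-dimensional $\fK$-vector space, the canonical identification $\wedge^s\bigl((\fn^-)^\vee\bigr) \cong \bigl(\wedge^s \fn^-\bigr)^\vee$, the composition yields the claim. Nothing beyond finite-dimensional linear algebra is required here.

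\textbf{Part (b).} The strategy is to describe both sides as direct sums of symmetric powers of $(\fn^-)^\vee$ (tensored with $A_i$) and then match them term by term. On the enveloping-algebra side, the PBW theorem in characteristic zero provides a canonical isomorphism of filtered vector spaces $U(\fn^-) \cong \Sym(\fn^-)$ via symmetrization, restricting in degree $\leq s$ to $U(\fn^-)^s \cong \bigoplus_{k=0}^s \Sym^k(\fn^-)$. Dualizing and base-changing to $A_i$ gives
\[
\bigl(U(\fn^-)^s_{A_i}\bigr)^\ast \;\cong\; \bigoplus_{k=0}^s \Sym^k\bigl((\fn^-)^\vee\bigr) \otimes_B A_i.
\]
On the geometric side, Lemma \ref{lemma:logsmooth} and Remark \ref{rmk:coordinates} furnish log-\'etale coordinates $x_1,\dots,x_n$ on $U_i/S$, in terms of which the ideal $\cI$ of the exactified diagonal $U_i \stackrel{\Delta}{\to} (U_i \times_S U_i)^{\rm ex}$ is regularly embedded and generated by the differences $x_j \otimes 1 - 1 \otimes x_j$. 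Consequently $\cI/\cI^2 \cong \omega^1_{U_i/S}$, and in characteristic zero the $\cI$-adic filtration on $\cP^s_{U_i/S} = (\cO_{U_i}\otimes\cO_{U_i})^{\rm ex}/\cI^{s+1}$ splits canonically, so that
\[
\cP^s_{U_i/S} \;\cong\; \bigoplus_{k=0}^s \Sym^k\bigl(\omega^1_{U_i/S}\bigr).
\]
Applying part (a) with $s=1$ to identify $\omega^1_{U_i/S}$ with $(\fn^-)^\vee\otimes_B A_i$ matches the two displays and proves (b).

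\textbf{Main obstacle.} The algebraic PBW step is routine; the real work lies on the geometric side, namely in verifying that the exactification of the diagonal for a log-smooth morphism admits local coordinates with the same formal behaviour as in the classical smooth case, so that the $\cI$-adic filtration on $\cP^s_{U_i/S}$ is the truncation of a symmetric-algebra filtration. This is exactly what is packaged in Lemma \ref{lemma:logsmooth} and Remark \ref{rmk:coordinates}, and once those local models are in hand the lemma reduces to dualizing PBW together with the isomorphism of part (a).
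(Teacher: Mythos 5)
Your proposal is correct and follows essentially the same route as the paper: part (a) is exactly the exterior power of the trivialization $\psi_i$ from Assumption \ref{ass:basicassumption} iv), and part (b) matches the paper's identification $\cP^s_{U_i/S}\cong \oplus_{k=0}^s {\rm Sym}^k\bigl((\fn^-)^\vee\bigr)\otimes_B\cO_{U_i}\cong \bigl(U(\fn^-)^s\bigr)^\vee\otimes_B\cO_{U_i}$ obtained from Lemma \ref{lemma:logsmooth} and Remark \ref{rmk:coordinates}; you merely make explicit the PBW/symmetrization step that the paper leaves implicit. The only cosmetic discrepancy is that in the log setting the coordinates on the exactified diagonal are the multiplicative elements $\xi_j=m_j-1$ of Corollary \ref{rmk:coordinates} rather than additive differences $x_j\otimes 1-1\otimes x_j$, but since you defer to that corollary for the local model this does not affect the argument.
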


The proof of the lemma will be given in the next section.

Therefore we have for every $i\in I$, with notations as above (see section \ref{sec:linedelin} for the definition of linearization):

$$
\bigl(\cD_{U_i}^\bullet\bigr)^\ast\cong \Bigl(\colim_{n}\lim_{m} U(\fn^-)_{A_i}^{n-\bullet}\otimes_{A_i} M_{A_i}^{m+\bullet}\otimes_{A_i} \wedge^\bullet (\fn^-\otimes_BA_i)\Bigr)^\ast\cong \lim_{n}\colim_{m}\bigl(\cP^{n-\bullet}_{U_i/S}\otimes F_{m+\bullet}\otimes \omega_{U_i/S}^\bullet\bigr)\cong
$$
$$
\cong \lim_{n}\bigl(\cP^{n+\bullet}_{U_i/S}\otimes\fW_k^{\rm alg}\otimes \omega_{U_i/S}^\bullet\bigr)=L\bigl(\fW_k^{\rm alg}\otimes \omega_{U_i/S}^\bullet\bigr).
$$

We also have:
$$
\bigl(\cC_{U_i}^\bullet\bigr)^\ast\cong \Bigl(\lim_{m}\colim_{n} U(\fn^-)_{A_i}^{n-\bullet}\otimes _{A_i}M_{A_i}^{m+\bullet}\otimes_{A_i} \wedge^\bullet (\fn^-\otimes_BA_i)\Bigr)^\ast\cong \colim_{m}\lim_{n}\bigl(\cP^{n-\bullet}_{U_i/S}\otimes F_{m+\bullet}\otimes \omega_{U_i/S}^\bullet\bigr)\cong
$$
$$
\cong \colim_{m}L(F_{m+\bullet}\otimes \omega_{U_i/S}^\bullet).
$$

\medskip

3) For every $i\in I$ we have the diagram $(\ast\ast)$ dual to $(\ast)$, with exact and split rows:
\[
\begin{tikzcd}[row sep=14]
	0\ar[r]& (\cU_{U_i}^\bullet)^\ast\ar[r]\ar[d,"\gamma^\ast" ]& (\cC_{U_i}^\bullet)^\ast\ar[r]\ar[d, "\xi^\ast"]&(\cV_{U_i}^\bullet)^\ast\ar[r]&0\\
	&L(\aW\otimes \omega_{U_i/S}^\bullet)\ar[r, equal]&L(\aW\otimes \omega_{U_i/S}^\bullet)&&
\end{tikzcd}
\]

Where $\cV_{U_i}^{\bullet}:=\cV^{\bullet}\otimes_B A_i$ and similarly for $\cU_{U_i}^\bullet$. We claim that $\gamma^\ast$ induces $0$ in cohomology, i.e. the map ${\rm H}^i(\gamma^\ast):{\rm H}^i\bigl((\cU_{U_i}^\bullet)^\ast\bigr)\lra {\rm H}^i(L(\aW\otimes \omega_{U_i/S}^\bullet)))$ is the zero map. See next sections for the proofs.

\medskip

Of course, the complexes $\bigl(\cC^\bullet_{U_i}\bigr)^\ast=\colim_mL\bigl(\cF_{m+\bullet}\otimes\omega^\bullet_{\cX/S}\bigr)|_{U_i}$ for $i\in I$, glue to give the complex of $\cP_{\cX/S}$-modules
$R^\bullet:=\colim_mL\bigl(\cF_{m+\bullet}\otimes\omega^\bullet_{\cX/S}\bigr)$, and we have

\begin{proposition}
\label{prop:gluing}
a) The complexes $\bigl(\cU_{U_i}^\bullet\bigr)^\ast_{i\in I}$ and the inclusions $u_i: (\cU^\bullet_{U_i})^\ast\hookrightarrow \colim_mL\bigl(\cF_{m+\bullet}\otimes\omega^\bullet_{\cX/S}\bigr)|_{U_i} $, 
for $i\in I$, glue to give a complex of $\cP_{\cX/S}$-modules on $\cX$, with an $\cO_{\cX}$-linear inclusion $u: L(Q^\bullet)\hookrightarrow  R^\bullet$.

b) The complexes $\bigl(\cV_{U_i}^\bullet\bigr)^\ast_{i\in I}$ and the projections $\colim_mL\bigl(\cF_{m+\bullet}\otimes\omega^\bullet_{\cX/S}\bigr)|_{U_i}\to \bigl(\cV_{U_i}^\bullet\bigr)^\ast$, for $i\in I$, glue to give a
complex of $\cP_{\cX/S}$-modules on $\cX$, with $\cO_{\cX}$-linear projection $ R^\bullet\to L(E^\bullet)$ such that the sequence of complexes
$$
0\lra L(Q^\bullet)\lra   R^\bullet\lra L(E^\bullet)\lra 0
$$
is exact.
\end{proposition}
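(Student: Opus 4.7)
The plan is to reduce the gluing claim to the $\cP$-equivariance of the direct sum splitting $\cC^\bullet = \cV^\bullet \oplus \cU^\bullet$. The decisive observation is that $\cV^\bullet$ is by construction the generalized $\chi^\vee$-eigenspace for the action of the center $Z(\fg)$ on $\cC^\bullet$, and the $\cP$-action on the $(U(\fg), \cP)$-modules at hand conjugates the $\fg$-action by the adjoint representation (as is immediate from the diagonal construction of the Verma modules and their duals that make up $\cC^\bullet$). Since every element of $Z(\fg)$ is $\mathrm{Ad}$-invariant, this forces $g\cdot z = z\cdot g$ for every $g\in\cP$ and $z\in Z(\fg)$, so $\cP$ preserves the $\chi^\vee$-generalized eigenspace and the splitting is $\cP$-equivariant. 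By the stabilization $\cV^j_m \cong \cV^j_{m+1}$ for $m\ge m_j$ recalled before the statement, this $\cP$-equivariance propagates through each finite stage of the $\lim$--$\colim$ defining $\cC^\bullet$. Applying Lemma \ref{lemma:dual} after continuous dualization and base-changing to $\cO_{U_i}$ then gives a $\cP$-equivariant decomposition
$$(\cC^\bullet_{U_i})^\ast \;=\; (\cU^\bullet_{U_i})^\ast \oplus (\cV^\bullet_{U_i})^\ast$$
identified with $R^\bullet|_{U_i} = \colim_m L\bigl(\cF_{m+\bullet}\otimes \omega^\bullet_{U_i/S}\bigr)$.

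Next, I would combine this with Assumption \ref{ass:basicassumption}(iv). On an overlap $U_{i,j}$, the two trivialisations of $R^\bullet|_{U_{i,j}}$ coming from $U_i$ and from $U_j$ differ by the simultaneous action of a single element $p_{i,j} \in \cP(U_{i,j})$ on both $\aW \otimes \cO_{U_{i,j}}$ and $(\fn^-)^\vee \otimes \cO_{U_{i,j}}$. By the previous paragraph, this $p_{i,j}$-action preserves both direct summands, and the cocycle condition on triple overlaps is inherited from the cocycle satisfied by the pairs $(\varphi_i,\psi_i)$. Effective descent then assembles the local families $\bigl((\cU^\bullet_{U_i})^\ast\bigr)_{i\in I}$ and $\bigl((\cV^\bullet_{U_i})^\ast\bigr)_{i\in I}$ into global complexes of $\cP_{\cX/S}$-modules on $\cX$, which we denote $L(Q^\bullet)$ and $L(E^\bullet)$ respectively. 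The same $\cP$-equivariance ensures that the local inclusions $u_i$ and the complementary projections appearing in diagram $(\ast\ast)$ glue, yielding the desired global morphisms $u\colon L(Q^\bullet)\hookrightarrow R^\bullet$ and $R^\bullet \to L(E^\bullet)$.

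Finally, exactness of
$$0 \longrightarrow L(Q^\bullet) \longrightarrow R^\bullet \longrightarrow L(E^\bullet) \longrightarrow 0$$
is a local statement on the cover $(U_i)_{i\in I}$ and reduces to the split short exact row of diagram $(\ast)$ after continuous dualization and base change to $\cO_{U_i}$; split exactness is preserved under both operations. The main obstacle I anticipate lies in the careful bookkeeping required by the inversion of limits under dualization: $\cC^\bullet$ is a $\lim_m\colim_n$ whereas $(\cC^\bullet)^\ast$ is naturally a $\colim_m\lim_n$, so one must verify both that the $\cP$-equivariant decomposition survives this swap and that the resulting global complexes genuinely arise as linearizations $L(Q^\bullet)$, $L(E^\bullet)$ of honest coherent complexes $Q^\bullet$, $E^\bullet$ on $\cX$ in the sense of Section \ref{sec:linedelin}. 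The stabilization $\cV^j_m\cong\cV^j_{m+1}$ is the essential input that reduces the whole question to finite-dimensional $\cP$-equivariant linear algebra at each truncation, after which the gluing and exactness become formal consequences of descent.
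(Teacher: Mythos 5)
Your proposal is correct and follows essentially the same route as the paper: the paper also reduces the gluing to the fact that the $\cP$-action commutes with $Z(\fg)$ and hence preserves the generalized eigenspace splitting, so that the transition elements $p_{i,j}$ of Assumption \ref{ass:basicassumption}(iv) respect both summands. Your assertion that ``every element of $Z(\fg)$ is $\mathrm{Ad}$-invariant'' is exactly the content of the paper's Lemma \ref{lemma:centrocommuta}, which justifies it for all of $\cP$ (not just elements in the image of $\exp_\fg$) via the analyticity of the map $p\mapsto p^{-1}(\sigma(pq))-\sigma q$.
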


The proof will be given in the next section.

4) Now we consider the diagram $(\ast\ast\ast)$ of complexes of sheaves of $\cO_{\cX}$-modules obtained by gluing the complexes discussed above:
\[
\begin{tikzcd}[row sep=14]
	0\ar[r]& L(Q^\bullet)\ar[r]\ar[d,"\eta=\gamma^\ast" ]& R^\bullet \ar[r]\ar[d, "\rho=\xi^\ast"]&L(E^\bullet)\ar[r]&0\\
	&L\bigl(\WW_k^{\rm alg}\otimes_{\cO_{\cX}} \omega_{\cX/S}^\bullet\bigr)\ar[r, equal]&L\bigl(\WW_k^{\rm alg}\otimes_{\cO_{\cX}} \omega_{\cX/S}^\bullet\bigr)&&
\end{tikzcd}
\]

We remark that the differentials in the complex $L\bigl(\WW_k^{\rm alg}\otimes_{\cO_\cX} \omega^\bullet_{\cX/S} \bigr)$ are induced by the connection $\nabla'$ on $\WW_k^{\rm alg}$, defined locally on $\aW\otimes_B\cO_{U_i}$, via the isomorphisms $\varphi_i$,
as follows. Let $\alpha_1,\alpha_2,\ldots, \alpha_t$ denote an $K$-basis of $\fn^-$ and define $\nabla':\aW\otimes_B\cO_{U_i}\lra \aW\otimes_B \omega^1_{U_i/S}$ by:
$$
\nabla'_i(w\otimes 1):=\sum_{i=1}^t \alpha_i w\otimes \alpha_i^\ast, \mbox{ for all } w\in \aW.
$$
Then $\nabla'_i$ is independent of the basis chosen, for every $i\in I$, they glue to give a connection $\nabla'$ on $\WW_k^{\rm alg}$, whose linearization satisfies $L(\nabla')=L(\nabla_k)$, as $\cO_{\cX}$-linear maps on $L(\WW_k^{\rm alg})$.

\medskip

5) De-linearization (see section \ref{sec:linedelin}).

We consider the log infinitesimal site attached to the log adic space $\cX$ over $S$ denoted $(\cX/S)_{\rm inf}^{\rm log}$. Then because all the sheaves in diagram 
5) are linearized or co-limits of linearized $\cO_{\cX}$-modules, we can see that diagram as a diagram of complexes of log crystals on
$(\cX/S)_{\rm inf}^{\rm log}$, denoted in the same way.

We recall from section  \ref{sec:linedelin} that if we denote by $\cX_{\rm inf}^{\rm log}$ respectively $\cX_{\rm an}$ the log infinitesimal topos, respectively the analytic topos of $\cX$, i.e. the category of sheaves of sets on the site $(\cX/S)_{\rm inf}^{\rm log}$, respectively the category of sheaves of sets on $\cX$, seen as a log adic space, then we have a morphism of sites $\pi_\ast: \cX_{\rm inf}^{\rm log}\lra \cX_{\rm an}$ with the properties listed there.

We claim the following:

a) the morphism 
$$
\pi_\ast(\rho): \pi_\ast (R^\bullet)\lra \pi_{\ast}\bigl(L(\WW_k^{\rm alg}\otimes \omega_{\cX/S}^\bullet)\bigr)\cong \WW_k^{\rm alg}\otimes \omega_{\cX/S}^\bullet
$$
is an isomorphism. This follows from the fact that $\displaystyle R^\bullet \cong \colim_{ m}L(F_{m+\bullet}\otimes_B\omega^\bullet_{\cX/S})$ therefore we have 
$$\pi_\ast(R^\bullet)=\colim_m\pi_\ast L\bigl(\cF_{m+\bullet}\otimes \omega^\bullet_{\cX/S}\bigr)=
\WW_k^{\rm alg}\otimes\omega_{\cX/S}^\bullet.$$ See lemma \ref{lemma:uastcolim}.

b) ${\rm H}^i(\pi_\ast(\eta))=0$ for all $i\ge 0$. To see this we recall that on the one hand every term of the diagram $(\ast\ast\ast)$  is a linearized module or a co-limit of linearized ones, $R^a\pi_\ast$ vanishes on any of them, if $a>0$. Therefore we have spectral sequences with morphisms between them (see next section for proofs):
\[
\begin{tikzcd}[row sep=small, column sep=1]
	R^a\pi_\ast\bigl({\rm H}^b(R^\bullet)\bigr)&\Rightarrow&{\rm H}^{a+b}\bigl(\pi_\ast (R^\bullet) \bigr)\\
	R^a\pi_\ast\bigl({\rm H}^b(L(\WW^{\rm alg}\otimes \Omega^\bullet_{\cX} )  \bigr)\ar[u, "\sigma_{a,b}"]&\Rightarrow& {\rm H}^{a+b}(\WW_k^{\rm alg}\otimes \Omega_{\cX}^\bullet)\ar[u, "\tau_{a+b}"]
\end{tikzcd}
\]
where $\sigma_{a,b}:=R^a\pi_\ast\bigl({\rm H}^b(\eta)  \bigr)=0$ for all $a,b\ge 0$, therefore $\tau_{a+b}:={\rm H}^{a+b}(\pi_\ast(\eta)\bigr)=0$ for all $a,b\ge 0$.

c) Finally, from a) and b) above we get an exact sequence of complexes (split):
$$
0\lra Q^\bullet\stackrel{\pi_\ast(\eta)}{\lra}\WW_k^{\rm alg}\otimes \omega_{\cX/S}^\bullet\stackrel{\mu}{\lra} E^\bullet\lra 0
$$
such that ${\rm H}^a(\pi_\ast(\eta))=0$ for all $a\ge 0$. Therefore $\mu:\WW_k^{\rm alg}\otimes \omega_{\cX/S}^\bullet \lra E^\bullet$ 
is a quasi-isomorphism. We remark that $E^\bullet$ is the BGG-complex associated to 
the de Rham complex $\WW_k^{\rm alg}\otimes \omega_{\cX/S}^\bullet$, denoted $E^\bullet:=BGG^\bullet$ in the comment after Assumption \ref{ass:basicassumption}, and the argument above shows that the two complexes are quasi-isomorphic.
This proves the main claim of this article: granting Assumption \ref{ass:basicassumption}, we produce a complex of coherent $\cO_{\cX}$-modules, $E^\bullet=BGG^\bullet$, with a quasi-isomorphism of complexes  $\mu:\WW_k^{\rm alg}\otimes \omega_{\cX/S}^\bullet \lra E^\bullet$.

\bigskip

\bigskip

\subsection{The details of the above outline.}
\label{sec:details}

\subsubsection{The proof of theorem \ref{thm:zerocoh} and lemma \ref{lemma:dual}.}

We first prove theorem \ref{thm:zerocoh}, for this we need to understand the structures of the complexes $\cV^\bullet, \cU^\bullet$, respectively.

Let us recall that we had a character $\chi_k$ of $Z(\fg)$ giving the action of $Z(\fg)$ on the highest weight module $\aW$.

\begin{lemma}
\label{lemma:character}
 $Z(\fg)$ acts on $M:=(\aW)^\ast$ by a character, which we called $\chi^\vee$, i.e. for every $\tau\in Z(\fg)$ and $f\in M$ we have
 $\tau f=\chi^\vee(\tau)f$. If $\fg=\frak{sl}_2$, then $\chi^\vee=\chi_k$.

\end{lemma}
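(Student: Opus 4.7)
The plan is to identify the character $\chi^\vee$ as the composition $\chi_k\circ S|_{Z(\fg)}$, where $S\colon U(\fg)\to U(\fg)$ is the principal antipode extending $X\mapsto -X$ on $\fg$. The claim $\chi^\vee=\chi_k$ when $\fg=\mathfrak{sl}_2$ will then reduce to a direct verification that $S$ fixes the Casimir.

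The main step is to extend the Lie-algebra formula $(g\cdot f)(x)=-f(gx)$ to the whole universal enveloping algebra. A straightforward induction on PBW-length yields
\[
(u\cdot f)(x) \;=\; f\bigl(S(u)\cdot x\bigr)
\qquad\text{for all } u\in U(\fg),\ f\in M,\ x\in\aW,
\]
which is the standard formula for the dual of a Hopf-algebra module. Next observe that the antipode preserves $Z(\fg)$: if $\tau g=g\tau$ for every $g\in U(\fg)$, then $S(g)S(\tau)=S(\tau g)=S(g\tau)=S(\tau)S(g)$, so bijectivity of $S$ forces $S(\tau)\in Z(\fg)$. Since $\aW$ is a Verma module of weight $k$, the element $S(\tau)\in Z(\fg)$ acts on every $x\in\aW$ by the scalar $\chi_k(S(\tau))$, and plugging into the formula above yields
\[
(\tau\cdot f)(x)=f\bigl(\chi_k(S(\tau))\,x\bigr)=\chi_k\bigl(S(\tau)\bigr)\,f(x)
\qquad\text{for every }f\in M,\ x\in\aW.
\]
Hence $\tau\cdot f=\chi^\vee(\tau)f$ with $\chi^\vee:=\chi_k\circ S|_{Z(\fg)}$; this is a $\fK$-algebra character because the restriction of the anti-automorphism $S$ to the commutative subalgebra $Z(\fg)$ is an algebra homomorphism, composed with the character $\chi_k$.

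For the second assertion, $Z(\mathfrak{sl}_2)=\fK[C]$ is generated by the Casimir $C=EF+FE+\tfrac12 H^2$, and a direct computation using $S(E)=-E$, $S(F)=-F$, $S(H)=-H$ together with $S(ab)=S(b)S(a)$ gives
\[
S(C)=S(F)S(E)+S(E)S(F)+\tfrac12\, S(H)^2 = FE+EF+\tfrac12\, H^2 = C.
\]
Therefore $\chi^\vee(C)=\chi_k(S(C))=\chi_k(C)$, which forces $\chi^\vee=\chi_k$. The only delicate point of the argument is the appearance of the antipode in the action of $U(\fg)$ on the dual $M$; once that is in place, everything reduces to the known central character of Verma modules and, for $\mathfrak{sl}_2$, to the $S$-invariance of the Casimir.
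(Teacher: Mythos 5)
Your proof is correct, but it takes a genuinely different route from the paper's. The paper works entirely on the dual side: it fixes the highest weight vector $v_+\in\aW$, shows that the $\fh$-weight space of $M$ of weight $-k$ is exactly the line $\fK\,(v_+)^\vee$, deduces that $Z(\fg)$ preserves that line and hence acts on it by a character $\chi^\vee$, and for $\mathfrak{sl}_2$ concludes by the identity $(Cf)(x)=f(Cx)$ --- which is precisely the $S$-invariance of the Casimir that you make explicit. Your argument instead routes everything through the antipode formula $(u\cdot f)(x)=f(S(u)x)$ together with the fact (asserted in the paper) that $Z(\fg)$ already acts on the Verma module $\aW$ by $\chi_k$; this yields the scalar action on \emph{every} $f\in M$ in one stroke and moreover identifies the character explicitly as $\chi^\vee=\chi_k\circ S|_{Z(\fg)}$. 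That is arguably tighter: the paper's proof as written only establishes $\tau(v_+)^\vee=\chi^\vee(\tau)(v_+)^\vee$ and leaves implicit the passage from the single vector $(v_+)^\vee$ to all of $M$ (which would need $M$ to be generated by $(v_+)^\vee$, or a density argument), whereas your computation applies verbatim to an arbitrary $f\in M$ and $x\in\aW$. The two approaches agree in the $\mathfrak{sl}_2$ case for the same underlying reason, namely $S(C)=C$ (your normalization of the Casimir differs from the paper's by a scalar, which is harmless since either generates $Z(\mathfrak{sl}_2)$ as a polynomial algebra).
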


\begin{proof}
Let $v_+$ denote a highest weight vector of $\aW$, i.e. $0\neq v_+\in \aW$ and for all $H\in \fh$ we have $H v_+=k(H)v_+$. Let 
$\{v_+, v_i\}_{i}$ be a basis of $\aW$ of eigenvectors for $\fh$ with $H v_i=(k(H)-a_i(H))v_i$, where $a_i(H)\in \N\backslash \{0\}$ for all $i$.

Let $\{(v_+)^\vee, v_i^\vee\}_i$ be the dual basis of $M$. Then for every $H\in \fh$ we have $H(v_+)^\vee=-k(H)(v_+)^\vee$ and
$H(v_i)^\vee=(a_i(H)-k(H))v_i^\vee$ for all $i$.
Therefore if $\tau\in Z(\fg)$, $H\in \fh$ we have $H\bigl(\tau(v_+)^\vee\bigr)=\tau\bigl(H(v_+)^\vee)\bigr)=-k(H)\tau(v_+)^\vee$.
If $0\neq f\in M$ is such that $Hf=-k(H)f$ then we claim: $f(v_i)=0$ for all $i$. If not, there is $j$ such that $f(v_j)\neq 0$ and  we have for $0\neq H\in \fh$: 
$$
-k(H)f(v_j)=(Hf)(v_j)=-f(Hv_j)=\bigl(-k(H)+a_j(H)\bigr)f(v_j)
$$ 
so $a_j(H)=0$, which contradicts the above assumption. Therefore $a:=f(v_+)\neq 0$ and so $f=a(v_+)^\vee$ with $0\neq a\in \fK$.
 All in all we deduce $\tau (v_+)^\vee=\chi^\vee(\tau)(v_+)^\vee$, with $\chi^\vee:Z(\fg)\lra \fK$ a character, i.e. a $K$-algebra homomorphism.
 
 If $\fg=\frak{sl}_2$, $Z(\fg)$ is the polynomial algebra over $K$ in the variable $C$, where $C:=H^2+u^-u^++u^+u^-$ is the Casimir operator. Therefore if $0\neq f\in M$ and $x\in \aW$ we have $\chi^\vee(C)f(x)=(Cf)(x)=f(Cx)=\chi_k(C)f(x).$ Therefore $\chi^\vee(C)=\chi_k(C)$ and so $\chi^\vee=\chi_k$.
\end{proof}

The first question is, given a character $\chi$ of $Z(\fg)$, how do we describe the $U(\fg)$-modules $(A_{s,j})_{\chi}$, for varying $s,j$? These modules were defined at II above as follows: $A_{s,j}=U(\fn^-)\otimes_K M^s\otimes_K \wedge^j(\fn^-)$. 

{\bf We claim:} {\it the natural projections $\bigl(A_{t,m})\bigr)_{\chi}\lra \bigl(A_{s,m}\bigr)_{\chi}$ are isomorphisms for $t>s>m$,  if $t-m$ and $s-m$ are large enough.  We denote 
$(A_m)_{\chi}:=\bigl(A_{t,m}\bigr)_{\chi}$ for $t$ large enough. Moreover, with notations as above, we have

a) $\cF_m=(A_m)_{\chi^\vee}$, for all $0\le m$.

b) $\displaystyle \cG_m=\lim_{\Delta}\Bigl(\oplus_{\chi\in \Delta}(A_m)_{\chi}\Bigr)$, where $\Theta$ is the set of 
characters of $Z(\fg)$, distinct from $\chi^\vee$, such that $(A_m)_{\chi}\neq 0$, and $\Delta\subset \Theta$ runs over the finite subsets. }

We will first prove the theorem granted the claim, and will prove the claim at the end.

\bigskip

We recall that we denoted  $A_{s,m}:=U(\fn^-)\otimes_K M^s\otimes_K \wedge^j \fn^-\cong U(\fg)\otimes_{U(\fp)}\bigl(M_s\otimes_K \wedge^m(\fg/\fp)\bigr)$. If $\chi$ is any character of $Z(\fg)$, we denote by $(A_{s,m})_{\chi}:=\{x\in A_{s,m}\ |\ \mbox{ for all}\ \tau\in Z(\fg), (\tau-\chi(\tau))^nx=0, \mbox{for some}\ n\in \N\}$.
Then $(A_{s,m})_{\chi}$ is a $U(\fg)$direct summand of $A_{s,m}$.

Let now $\varphi$ be any character of $Z(\fg)$, then if denote $I_\varphi:={\rm Ker}(\varphi: Z(\fg)\lra \fK)$, we see that $I_\varphi$ is a maximal ideal of $Z(\fg)$. One the other hand, as $Z(\fg)\cong \bigl({\rm Sym}(\fh)\bigr)^{W_G}$, if we denote by $\alpha:\Spec({\rm Sym}(\fh))=\bA^m_{\fK}\lra \bA^m_{\fK}/W_G=\Spec(Z(\fg))$, as $\alpha$ is a finite morphism, we have $\alpha^{-1}(I_\varphi)=\{I_1, I_2,...,I_s\}$, where
$I_j\subset {\rm Sym}(\fh)\cong \fK[X_1,...,X_m]$ is a maximal ideal, for all $1\le j\le s$.

Let us denote $\displaystyle \Theta':=\cup_{\varphi\in \Theta}\pi^{-1}(I_\varphi)\subset {\rm Max}({\rm Sym}(\fh))$. Let $\alpha^{-1}(I_{\chi^\vee}):=\{ \fm_1, \fm_2,...,\fm_u\}\subset {\rm Max}({\rm Sym}(\fh))$.

\begin{lemma}
\label{lemma:polyn}
To the expense of base changing $\alpha$ to a field extension $L$ of $\fK$, there exists an $f\in \fm_1$ such that $f\notin \fm$, for all
$\fm\in \Theta'$.
\end{lemma}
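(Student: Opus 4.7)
The plan is to translate the statement into an interpolation problem on the affine space $\bA^m_L=\Spec(\Sym(\fh)_L)$ after a suitable finite base change $L/\fK$, and then to construct $f$ explicitly as a product of linear forms.

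The first step is to analyse $\Theta'$. By Harish-Chandra, $Z(\fg)\cong \Sym(\fh)^{W_G}$, so each $\fK$-valued character of $Z(\fg)$ corresponds to a closed point of $\bA^m_\fK/W_G$; its preimages under $\alpha$ form a single $W_G$-orbit of closed points of $\bA^m_\fK$. Any $\chi\in\Theta$ actually appearing in $(A_m)_\chi=(A_{t,m})_\chi$ for $t$ large is, by decomposing $U(\fg)\otimes_{U(\fp)}(M^t\otimes\wedge^m(\fg/\fp))$ into Verma-type summands, of the form $\chi_\mu$ for some $\fp$-highest weight $\mu$ of $M^t\otimes\wedge^m(\fg/\fp)$. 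Since $M=\aW^\vee$ has countably many weights, $\Theta$ and therefore $\Theta'$ are at most countable. Crucially, no element of $\Theta'$ coincides with any $\fm_i\in\alpha^{-1}(I_{\chi^\vee})$: under $\alpha$, elements of $\Theta'$ map to some $I_\varphi$ with $\varphi\neq\chi^\vee$, while the $\fm_i$ all map to $I_{\chi^\vee}$.

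The second step is to enlarge $\fK$ to a finite extension $L$ over which $\fm_1$ becomes split. Since $\alpha$ is finite, the residue field of $\fm_1$ is a finite extension of $\fK$, and we choose $L$ containing it. After base change, $\fm_1\cdot L[X_1,\ldots,X_m]$ defines the finite set of Galois conjugates $\{p^{(1)},\ldots,p^{(d)}\}\subset L^m$ of the original point. Every $\fm\in\Theta'$ likewise contributes a finite set of closed points of $\bA^m_L$ with residue fields in some finite extension of $\fK$, so the countable collection $\Theta'$ yields a countable set of closed points $\{q_1,q_2,\ldots\}\subset\bA^m_L$, each of them distinct from every $p^{(i)}$ by the previous paragraph (we may enlarge $L$ as needed to view all these points as closed points of a single $\bA^m_L$).

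In the third step I construct $f$ as a product of linear forms. For each $i\in\{1,\ldots,d\}$, I seek a linear form $\ell_i \in L[X_1,\ldots,X_m]$ vanishing at $p^{(i)}$ but nowhere on the countable set $\{q_j\}_{j\geq 1}$. Writing $\ell_i=\sum_{k=1}^m c_k(X_k-p^{(i)}_k)$ and setting $v_j^{(i)}:=q_j-p^{(i)}\in L^m\setminus\{0\}$, the condition $\ell_i(q_j)\neq 0$ reads $\langle c,v_j^{(i)}\rangle\neq 0$, i.e.\ $c\notin (v_j^{(i)})^{\perp}$. Since $L$ is a $p$-adic field and therefore uncountable, the countable union $\bigcup_j (v_j^{(i)})^{\perp}$ of proper $L$-hyperplanes cannot fill $L^m$; any $c$ in the complement yields a valid $\ell_i$. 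Then $f:=\prod_{i=1}^d \ell_i\in L[X_1,\ldots,X_m]$ vanishes on all of $p^{(1)},\ldots,p^{(d)}$, hence lies in the radical ideal $\fm_1\cdot L[X_1,\ldots,X_m]$, while $f(q_j)=\prod_i\ell_i(q_j)\neq 0$ for every $j$; this is the required element.

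The main obstacle is the representation-theoretic input of the first step: showing that only countably many central characters $\chi$ contribute to $(A_m)_\chi$, and that $\Theta'$ is disjoint from $\alpha^{-1}(I_{\chi^\vee})$. Once these are in hand, the rest is a Zariski-density argument that works precisely because the ground field is uncountable.
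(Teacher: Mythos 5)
Your proof is correct, but it takes a genuinely different route from the paper's. The paper reduces to the case $\fK=\overline{\fK}$, normalizes $\fm_1$ to be the origin $(X_1,\dots,X_m)$, passes to the purely transcendental extension $L=\fK(Y_1,\dots,Y_m)$, and takes the single \emph{generic} linear form $f=Y_1X_1+\cdots+Y_mX_m$: for any $\fm\in\Theta'$ corresponding to a nonzero point $a_\fm\in\fK^m$, the value $f(a_\fm)=\sum_i a_{\fm,i}Y_i$ is nonzero simply because the $Y_i$ are $\fK$-linearly independent. That argument is cardinality-free — it needs no information about the size of $\Theta'$ — at the cost of a transcendental base change. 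You instead keep $L/\fK$ algebraic and buy the conclusion from two extra inputs: (i) $\Theta'$ is countable, which you justify (correctly, and this is genuinely extra content not needed by the paper) from the Verma filtration of the $A_{s,m}$ and the countably many weights of $M$, together with the automatic disjointness $\fm_1\notin\Theta'$; and (ii) $L\supset\fK\supset K\supset\Q_p$ is uncountable, so a countable union of proper hyperplanes cannot cover $L^m$, which lets you choose each $\ell_i$ and set $f=\prod_i\ell_i\in\fm_1L[X_1,\dots,X_m]$ (radical in characteristic zero, so membership follows from vanishing at the conjugate points). Two small imprecisions, neither fatal: $L$ is a finite extension of ${\rm Frac}(B)$, not literally a ``$p$-adic field'' (uncountability still holds since it contains $\Q_p$); and ``enlarging $L$'' to make all the countably many $q_j$ rational could force an infinite algebraic extension — this is harmless since the lemma allows any field extension, and in any case you can avoid it by viewing the $q_j$ in $\overline{L}^m$ and noting that each condition $\langle c,v_j^{(i)}\rangle=0$ still cuts out a proper $L$-subspace of $L^m$. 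Both proofs feed equally well into Lemma \ref{lemma:tau} and Corollary \ref{cor:tau}.
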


\begin{proof}
We have, for some field extension $L/\fK$, ${\rm Sym}_L(\fh)\cong L[X_1,...,X_m]$. We may suppose WLOG that $\fK$ is algebraically closed, i.e.
$\fK=\overline{\fK}\subset L$ and so $\fm_1=(X_1,...,X_m)$ and for $\fm\in \Theta'$ we have $\fm=(X_1-a_{\fm,1},..., X_m-a_{\fm,m})$ with
$(a_{\fm,1},...,a_{\fm,m})\in (\fK)^m\backslash \{(0,...,0)\}$. 

We choose now $L:=\fK(Y_1,...,Y_m)$ with $Y_1,...,Y_m$ independent transcendental elements over $\fK$. Then if $f\in \fm_1$, $f=Y_1X_1+Y_2X_2+...+Y_mX_m$, for all $\fm\in \Theta'$ as above, $f(a_{\fm,1},...,a_{\fm,m})=a_{\fm,1}Y_1+...+a_{\fm,m}Y_m\neq 0$
for all $(a_{\fm, 1},...,a_{\fm,m})\in \fK^n\backslash \{(0,...,0)\}$.
\end{proof}

\begin{lemma}
\label{lemma:tau}
To the expense of base changing $\alpha$ to a field extension $L/\fK$, there is $\tau\in I_{\chi_k}$ such that $\tau\notin I_\varphi$ for all
$\varphi\in \Theta$.
\end{lemma}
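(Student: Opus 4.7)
The plan is to promote the polynomial $f\in\fm_1$ produced in Lemma~\ref{lemma:polyn} to a $W_G$-invariant element by a norm construction, then descend it to $Z(\fg)$ via the Harish-Chandra identification $Z(\fg)\cong {\rm Sym}(\fh)^{W_G}$ recorded above (and I will read $I_{\chi_k}$ as the image of $\fm_1$ under $\alpha$, i.e.\ $I_{\chi^\vee}$ in the notation of the setup; the argument is identical for any fixed target character once one picks a preimage under $\alpha$). The point is that the norm automatically sits in $\fm_1$, hence in the corresponding maximal ideal of $Z(\fg)$, but by primality of each $\fm\in\Theta'$ it cannot enter any such $\fm$ without contradicting Lemma~\ref{lemma:polyn}.

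Concretely, I would first base-change exactly as in Lemma~\ref{lemma:polyn} and retain the element $f\in\fm_1\cdot L\subset {\rm Sym}_L(\fh)$ with $f\notin\fm$ for every $\fm\in\Theta':=\bigcup_{\varphi\in\Theta}\alpha^{-1}(I_\varphi)$. Then I would set
$$\tau:=\prod_{w\in W_G}w(f)\in {\rm Sym}_L(\fh)^{W_G}=L\otimes_\fK Z(\fg),$$
where $W_G$ acts through its standard linear action on $\fh$. Since the identity element of $W_G$ contributes $f$ to the product, $\tau$ has $f$ as a factor; thus $\tau\in\fm_1$. Combined with $W_G$-invariance, $\tau\in \fm_1\cap(L\otimes_\fK Z(\fg))=L\otimes_\fK I_{\chi^\vee}$, which is the base change of the desired maximal ideal of $Z(\fg)$.

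For the non-membership I would argue by contradiction. If $\varphi(\tau)=0$ for some $\varphi\in\Theta$, pick any $\fm\in\alpha^{-1}(I_\varphi)\subset\Theta'$; then $\tau\in\fm$. Primality of $\fm$ forces at least one factor $w(f)$ to lie in $\fm$, equivalently $f\in w^{-1}(\fm)$. But $W_G$ permutes the fibers of $\alpha$, so $w^{-1}(\fm)\in\alpha^{-1}(I_\varphi)\subset\Theta'$, contradicting Lemma~\ref{lemma:polyn}.

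The main obstacle I anticipate is the bookkeeping around the Harish-Chandra picture: the finite map $\alpha\colon \Spec({\rm Sym}(\fh))\to\Spec(Z(\fg))$ is the quotient by $W_G$ (up to the usual $\rho$-shift), and one needs that its geometric fibers are exactly $W_G$-orbits so that the Weyl group indeed permutes the preimages of $I_\varphi$. This is standard but load-bearing; once accepted, the remainder is the elementary norm trick. A minor secondary point is that $\tau$ only lies in $L\otimes_\fK Z(\fg)$ rather than in $Z(\fg)$ itself, but this is already anticipated by the phrase \emph{at the expense of base changing} in the statement.
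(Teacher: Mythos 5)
Your proposal is correct and follows essentially the same route as the paper: both take the element $f\in\fm_1$ from Lemma~\ref{lemma:polyn} and form its Weyl-group norm $\tau=\prod_{\sigma\in W_G}\sigma(f)\in {\rm Sym}_L(\fh)^{W_G}$, then use that $W_G$ permutes the fibers of $\alpha$ to conclude $\tau\notin\fm$ for all $\fm\in\Theta'$. Your write-up is in fact slightly more careful than the paper's (you make explicit the primality step and the verification that $\tau\in I_{\chi_k}$ via the identity factor), but there is no substantive difference in approach.
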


\begin{proof}
Let $L$ be a field extension of $\fK$ and $f\in \fm_1$ such that $f\notin\fm$ for all $\fm\in \Theta'$, as in lemma \ref{lemma:polyn}. Let then $\tau:=\prod_{\sigma\in W_G}\sigma(f)\in {\rm Sym}(\fh)^{W_G}=Z(\fg)$. Then, for all $\varphi\in \Theta$, and $\sigma\in W_G$, we have
$\sigma(f)\notin \fm$ for all $\fm\in \pi^{-1}(I_\varphi)$, by lemma \ref{lemma:polyn}. Therefore, $\tau=\prod_{\sigma\in W_G}\sigma(f)\notin \fm$, for all $\fm\in \Theta'$.
\end{proof}

\begin{corollary}
\label{cor:tau}
Let $\tau\in I_{\chi_k}$ be as in lemma \ref{lemma:tau}, then $\tau$ acts invertibly on $\cU_i$, for all $i\ge 0$.
\end{corollary}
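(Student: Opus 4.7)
\textbf{Proof proposal for Corollary \ref{cor:tau}.}

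The strategy is to exhibit each $\cU^i$ as a projective limit of products of generalized $Z(\fg)$-eigenspaces on which the central element $\tau$ furnished by Lemma \ref{lemma:tau} acts invertibly via a truncating Neumann-type expansion.

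First, I would unpack the structure. By construction $\cC^i=\lim_m \cC^i_m$ with $\cC^i_m\cong U(\fg)\otimes_{U(\fp)}\bigl(M^{m+i}\otimes\wedge^i(\fg/\fp)\bigr)$. The stabilization claim stated just before Lemma \ref{lemma:character}, together with its parts (a) and (b), identifies for $m$ large the generalized $\chi^\vee$-eigenspace $(\cC^i_m)_{\chi^\vee}$ with $\cF_{m+i}$, while the sum of the remaining generalized eigenspaces assembles into $\cG^i_m:=\lim_{\Delta}\bigoplus_{\varphi\in\Delta}(A_{m+i})_\varphi$ indexed by the finite subsets $\Delta\subset\Theta$. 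Hence $\cC^i_m$ splits $Z(\fg)$-equivariantly as $(\cC^i_m)_{\chi^\vee}\oplus \cG^i_m$, and taking the projective limit in $m$ gives $\cU^i\cong\lim_m\cG^i_m$.

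Second, I would show that $\tau$ acts invertibly on every $(A_m)_\varphi$ with $\varphi\in\Theta$. Since $M^s=(F_s)^\vee$ is finite-dimensional over $\fK$, the module $A_{s,m}$ is of the form $U(\fg)\otimes_{U(\fp)}W$ with $W=M^s\otimes\wedge^m(\fg/\fp)$ a finite-dimensional $\fp$-module. Filtering $W$ by $\fp$-submodules with irreducible subquotients $W_j/W_{j-1}$ of highest weights $\lambda_j$ produces a finite filtration of $A_{s,m}$ whose graded pieces $U(\fg)\otimes_{U(\fp)}(W_j/W_{j-1})$ are cyclic, generated by a highest weight vector, so $Z(\fg)$ acts on each of them through the single Harish--Chandra character $\chi_{\lambda_j}$. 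Consequently, on the generalized $\varphi$-eigenspace $(A_m)_\varphi$ the operator $\tau-\varphi(\tau)$ is locally nilpotent with exponent bounded by $\dim_\fK W$. Since $\tau\notin I_\varphi$ by Lemma \ref{lemma:tau}, $\varphi(\tau)\in \fK^\times$, and the finite sum
$$
\sigma_\varphi(x):=\varphi(\tau)^{-1}\sum_{k\ge 0}\bigl(-\varphi(\tau)^{-1}(\tau-\varphi(\tau))\bigr)^k x
$$
defines a two-sided $\fK$-linear inverse of $\tau$ on $(A_m)_\varphi$.

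Third, I would glue these local inverses into a global inverse on $\cU^i$. Each $\sigma_\varphi$ is a polynomial in $\tau$ with coefficients depending only on $\varphi(\tau)$, so it commutes with every $Z(\fg)$-equivariant map; in particular it is compatible with the product structure defining $\cG^i_m$ and with the transition maps $\cG^i_{m+1}\to\cG^i_m$. Assembling them componentwise gives an inverse of $\tau$ on $\cG^i_m$ and, in the limit, on $\cU^i=\lim_m\cG^i_m$.

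The main obstacle will be establishing the local nilpotence of $\tau-\varphi(\tau)$ on the infinite-dimensional module $(A_m)_\varphi$, which is why I reduce to finite-dimensional $\fp$-modules and invoke the composition-series / Harish--Chandra argument above. Once this is in place, the Neumann formula truncates vector by vector, which is precisely what is needed to pass to the product over $\Theta$ and then to the projective limit without requiring any uniform bound on nilpotence exponents across $\varphi\in\Theta$.
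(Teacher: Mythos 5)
Your proposal is correct and follows essentially the same route as the paper: decompose $\cU^i$ into the (limits of sums of) generalized eigenspaces $(A_i)_\varphi$ for $\varphi\in\Theta$, observe that $\tau-\varphi(\tau)$ acts nilpotently there while $\varphi(\tau)\neq 0$ by Lemma \ref{lemma:tau}, and invert $\tau$ componentwise before passing to the limit. Your additional justification of the nilpotence bound via a composition series of the finite-dimensional $\fp$-module, and the explicit Neumann-series inverse, merely flesh out steps the paper leaves implicit.
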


\begin{proof}
We recall that $\displaystyle \cU_i=\lim_{\Delta}\Bigl(\oplus_{\chi\in \Delta}(A_i)_{\chi}\Bigr)$, where $\Delta\subset \Theta$ are finite subsets. For every $\varphi\in \Delta\subset \Theta$, we have $I_\varphi^s$ annihilates $(A_i)_{\varphi}$, for some $s$.

We have: $\tau=(\tau-\varphi(\tau))+\varphi(\tau)\in I_{\varphi}+L^\ast$. As $\tau-\varphi(\tau)$ acts nilpotently on 
$\Bigl((A_i)_L\bigr)_\varphi$ and $\varphi(\tau)\neq 0$, by lemma \ref{lemma:tau}, $\tau$ acts invertibly on $\bigl((A_i)_L\bigr)_{\varphi}$, for all $\varphi\in \Theta$.  
\end{proof}

\bigskip
\noindent
{\it Proof of theorem \ref{thm:zerocoh}.}

Let $L$ be a field extension of $\fK$ and $\tau\in Z(\fg)_L$ as in corollary  \ref{cor:tau}. We recall that $\gamma:\cD^\bullet\lra \cU^\bullet$ is a morphism of complexes, and that the cohomology of $\cD^\bullet$ is known. Moreover for every $i\ge 0$, and every $x\in {\rm H}^i(\cD^\bullet)$, there is $n_x\ge 0$ such that $\bigl(I_{\chi_k}\bigr)^{n_x}x=0$. Let us make the base change from $\fK$ to $L$ as in corollary \ref{cor:tau} and let $\tau\in I_{\chi_k}$ of that corollary. For every $i\ge 0$ and every $x\in {\rm H}^i(\cD^\bullet)$ we have: $0={\rm H}^i(\gamma)(\tau^{n_x}x)=
\tau^{n_x}{\rm H}^i(\gamma)(x)$. But by corollary \ref{cor:tau}, $\tau^{n_x}$ acts invertibly on $\cU^\bullet_L$ and so on ${\rm H}^i(\cU^\bullet)_L$, which implies ${\rm H}^i(\gamma_L)(x)=0$ for all $x\in {\rm H}^i(\cD^\bullet)$. Therefore ${\rm H}^i(\gamma)=0$ for all $i\ge 0$.

\bigskip
Finally, now we prove the claim.

\bigskip
{\bf Proof of the claim.}

We recall that  $M^s$ is a finite dimensional $\fK$-vector space, for every $s\in \N$. 


\begin{lemma}[see also Prop. 7.6.14, \cite{diximier}]
\label{lemma:diximier}
Let $F$ be a finite $U(\fp)$-module (i.e. a $U(\fp)$-module which is a finite dimensional $\fK$-vector space) and $E:=U(\fg)\otimes_{U(\fp)}F$. Let $f_1,f_2,...,f_s$ be an $\fK$-basis of $F$ made of $\fh$-eigenforms of weights $\mu_1, \mu_2,...,\mu_s$. Suppose the basis is ordered such that: if $\mu_i>\mu_j$ then $i<j$, for $1\le i,j\le s$.
We define $E_i$ to be the $U(\fg)$ submodule of $E$ generated by $1\otimes f_1, 1\otimes f_2,...,1\otimes f_i$.

Then $E_0=(0)\subset E_1\subset E_2\subset ...\subset E_s=E$ is a filtration of $E$ by $U(\fg)$-submodules such that $E_i/E_{i-1}\cong M(\mu_i)$, the Verma module of weight $\mu_i$.
\end{lemma}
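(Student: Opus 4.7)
The plan is to exhibit $F_i := \bigoplus_{j\le i} \fK\cdot f_j \subset F$ as an ascending chain of $U(\fp)$-submodules of $F$; once this is verified, tensoring the filtration with $U(\fg)$ over $U(\fp)$ will produce the claimed filtration of $E$ with the correct subquotients. My approach has three steps.

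First I would verify stability of $F_i$ under $\fp$. Since $\fp\supset\fb$ contains every positive root space, any root vector $X\in\fg_\alpha\subset\fp$ has weight $\alpha\ge 0$ in the partial order determined by $\fb$. Consequently $X\cdot f_j$ is a sum of $\fh$-eigenvectors of weight $\mu_j+\alpha$: if $\alpha=0$ then $X\in\fh$ and $X\cdot f_j\in\fK\cdot f_j$, while if $\alpha>0$ then $X\cdot f_j$ expands in the basis only in terms of $f_\ell$ with $\mu_\ell=\mu_j+\alpha>\mu_j$, forcing $\ell<j$ by the ordering hypothesis. In either case $X\cdot f_j\in F_j\subset F_i$ for every $j\le i$, so each $F_i$ is $\fp$-stable and hence $U(\fp)$-stable.

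Next I would identify $E_i$ and its subquotients. By the Poincar\'e--Birkhoff--Witt theorem, $U(\fg)$ is free, hence flat, as a right $U(\fp)$-module, so the functor $U(\fg)\otimes_{U(\fp)}(-)$ is exact. The natural map $U(\fg)\otimes_{U(\fp)}F_i\hookrightarrow E$ is therefore injective, and its image is exactly the $U(\fg)$-submodule generated by $1\otimes f_1,\ldots,1\otimes f_i$; that is, $E_i\cong U(\fg)\otimes_{U(\fp)}F_i$. Applying $U(\fg)\otimes_{U(\fp)}(-)$ to the short exact sequence $0\to F_{i-1}\to F_i\to F_i/F_{i-1}\to 0$ yields $E_i/E_{i-1}\cong U(\fg)\otimes_{U(\fp)}(F_i/F_{i-1})$. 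The quotient $F_i/F_{i-1}$ is one-dimensional, generated by the class of $f_i$; the argument of the previous step shows that any $X\in\fg_\alpha\subset\fp$ with $\alpha>0$ sends $f_i$ into $F_{i-1}$, so the nilpotent radical of $\fp$ acts trivially on $F_i/F_{i-1}$, while $\fh$ acts by $\mu_i$. Thus $F_i/F_{i-1}$ is the one-dimensional $\fp$-module $\fK(\mu_i)$ of weight $\mu_i$, and $E_i/E_{i-1}\cong U(\fg)\otimes_{U(\fp)}\fK(\mu_i)=M(\mu_i)$.

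The main obstacle is concealed in the first step when $\fp$ strictly contains $\fb$: then $\fp$ also contains the negative root spaces $\fg_{-\beta}$ for roots $\beta$ of the Levi, and such $X\in\fg_{-\beta}$ could \emph{lower} the weight of $f_j$, so that raw ordering by total weight no longer suffices to keep $F_i$ stable. To handle this I would first decompose $F$ into Levi-isotypic components, order them globally by highest weight, and then refine the ordering within each component compatibly with the weight filtration for the Levi, so that the part of $F_i$ inside each Levi summand is itself a Levi-submodule; the hypothesis ``$\mu_i>\mu_j\Rightarrow i<j$'' leaves exactly this freedom for incomparable weights inside a single Levi-simple piece, reducing the general parabolic statement to the Borel case treated by Dixmier.
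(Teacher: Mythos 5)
For the Borel case $\fp=\fb$ (and, more generally, whenever every weight space of $F$ is a Levi-stable line) your argument is correct and is essentially the paper's: the paper likewise shows $U(\fp)f_\ell\subset F_i$ by the weight-raising argument for $x\in\fn^+$, and then identifies $E_i/E_{i-1}$ through the PBW isomorphism $E_i\cong U(\fn^-)\otimes_{\fK}F_i$ as the free rank-one $U(\fn^-)$-module on the highest weight vector $b_i$ of weight $\mu_i$. Your packaging via exactness of $U(\fg)\otimes_{U(\fp)}(-)$ and the identification $F_i/F_{i-1}\cong\fK(\mu_i)$ is an equivalent, slightly cleaner, route to the same conclusion.

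The obstacle you isolate in your last paragraph is genuine, and it is present in the paper's own proof: the paper verifies stability of $F_i$ only under $x\in\fn^+$ and says nothing about the lowering operators $\fg_{-\beta}\subset\fm$ of the Levi, which send $f_j$ to vectors of strictly smaller weight, hence to basis vectors of index possibly larger than $i$. Be aware, however, that your proposed repair by reordering cannot rescue the statement as written for a proper parabolic: if $F$ has a Levi-composition factor of dimension greater than one, no ordering of a weight basis makes every $F_i$ a $\fp$-submodule with one-dimensional quotients, and the conclusion itself fails. For instance, with $\fg=\mathfrak{sl}_3$, $\fp$ the $(2,1)$-block parabolic and $F$ the standard two-dimensional representation of the Levi, $E=U(\fg)\otimes_{U(\fp)}F$ is already generated by $1\otimes f_1$ alone, and its formal character (that of $U(\fn^-)\otimes F$ with $\fn^-$ two-dimensional) is not a sum of two Verma characters; moreover $\fK(\mu_1)$ is not even a $\fp$-module, so $U(\fg)\otimes_{U(\fp)}\fK(\mu_1)$ is undefined. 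What does survive for a general $\fp$, and is all that the paper uses downstream (finiteness of the set of central characters occurring in $A_{s,m}$ and the resulting direct sum decomposition), is that each $E_i/E_{i-1}$ is a highest weight module of highest weight $\mu_i$ or zero: your observation that $b_i$ has weight $\mu_i$, is annihilated by every positive root vector, and generates $E_i/E_{i-1}$ over $U(\fg)$ already yields this, with no stability of $F_i$ under the Levi required.
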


\begin{proof} We first remark that $U(\fp)f_1+U(\fp)f_2+...+U(\fp)f_i\subset F_i:=\fK f_1+\fK f_2+...+\fK f_i$, as if $x\in \frn^+$,
then $xf_\ell$ is an eigenvector for $\fh$ with weight $\mu'>\mu_l$, i.e. $xf_\ell\in F_{\mu'}$. Therefore if $1\le \ell\le i$ we have
$xf_\ell=\sum_{i=1}^t a_if_{k_i}$, with $a_i\in \fK$ and $f_{k_i}\in F_{\mu'}$, therefore $k_i\le \ell\le i$ and $xf_\ell\in \sum_{j=1}^i \fK f_j$.

Let us now look at the image $b_i$ of $1\otimes f_i$ in $E_i/E_{i-1}$, for $i\ge 1$, then $b_i\in \Bigl(E_i/E_{i-1}\Bigr)_{\mu_i}$, moreover
if $\alpha$ is a positive root of $\fg$ and $x_\alpha\in \fg^\alpha$, we have: $x_\alpha (1\otimes f_i)=1\otimes x_\alpha f_i=\alpha(1\otimes f_i)\in E_{i-1}$. Therefore $x_\alpha b_i=0$.

Finally, let us notice that: $E_i=U(\fg)\otimes_{U(\fp)}F_i\cong U(\frn^-)\otimes_{\fK}F_i$ is a free $U(\frn^-)$-module of finite rank.
Therefore $E_i/E_{i-1}\cong U(\frn^-)b_i$ is a free $U(\frn^-)$-module and therefore $E_i/E_{i-1}=M(\mu_i)$.
\end{proof}

Therefore, if $F$ is as above and $\nu$ is a weight with character of $Z(\fg)$, $\chi_{\nu}$ then $\bigl(U(\fg)\otimes_{U(\fp)}F\bigr)_{\chi_{\nu}}$ has a filtration by $U(\fg)$-sub-modules with graded quotients Verma modules $M(\mu_i)$, where $1\le i\le s$, with
$\nu=w\bullet \mu_i$ for some $w\in W_G$, the Weyl group of $G$.  Let $\Theta:=\Theta_{U(\fg)\otimes_{U(\fp)}F}$ be the finite set of characters $\chi$ of $Z(\fg)$ such that $\bigl(U(\fg)\otimes_{U(\fp)}F\bigr)_{\chi}\neq 0.$ Then $U(\fg)\otimes_{U(\fp)}F\cong \oplus_{\chi\in \Theta}\bigl(U(\fg)\otimes_{U(\fp)}F\bigr)_{\chi}$.

In particular, coming back to our modules $A_{s,m}$, for $m$ fixed, let us remark that for a character $\chi$ of $Z(\fg)$, the natural projections $\bigl(A_{t,m})\bigr)_{\chi}\lra \bigl(A_{s,m}\bigr)_{\chi}$ are isomorphisms for $t>s>m$ and $t-m$ and $s-m$ large enough.  We denote 
$(A_m)_{\chi}:=\bigl(A_{t,m}\bigr)_{\chi}$ for $t$ large enough. Coming back to the problem of describing the structures of our complexes $\cF^\bullet, \cG^\bullet$, we remark that:

a) $\cF_m=(A_m)_{\chi_k}$, for all $0\le m\le d$.

b) $\displaystyle \cG_m=\lim_{\leftarrow, \Delta}\Bigl(\oplus_{\chi\in \Delta}(A_m)_{\chi}\Bigr)$, where $\Theta$ is the set of 
characters of $Z(\fg)$, distinct from $\chi_k$, such that $(A_m)_{\chi}\neq 0$, and $\Delta\subset \Theta$ runs over the finite subsets. 

\bigskip

\bigskip

\begin{proof}{\it of  lemma \ref{lemma:dual}.}

We recall that we have $U_i\to S$ a log smooth morphism of log affinoid spaces, with underlying morphism $\uU_i:={\rm Spa}(A_i,A_i^+)\to \uS:={\rm Spa}(B,B^+)$. Therefore $(n^-)^\vee\otimes_BA_i\cong \omega^1_{A_i/B}=\omega^1_{U_i/S}$ from Assumption \ref{ass:basicassumption}
and therefore it follows that $(\wedge^j\fn^-)^\vee\otimes_BA_i\cong \omega^j_{U_i/S}$ for all $j\ge 0$.

We recall from Lemma \ref{lemma:logsmooth} that we have an exact closed immersion
$$
(1)\quad  U_i\stackrel{\Delta}{\lra} (U_i\times_S U_i)^{\rm ex},
$$
 with coherent ideal $J_i$, such that $J_i/J_i^2\cong \omega^1_{U_i/S}$. 
 We denoted $\cP^s_{U_i/S}:=\cO_{(U_i\times_SU_i)^{\rm ex}}/J_i^{s+1}.$ Using Remark \ref{rmk:coordinates}, it follows that for every $s\ge 1$ we have natural isomorphisms as $\cO_{U_i}$-modules 
 $$
(2)\quad  \cP^s_{U_i/S}\cong \oplus_{i=0}^s {\rm Sym}^i(\fn^-)^\vee\otimes_B\cO_{U_i}\cong (U(\fn^-)^s)^\vee\otimes_B\cO_{U_i}.
 $$
 
This proves the lemma. \end{proof}

\begin{proof} of Proposition \ref{prop:gluing}.

 The proof of the Proposition follows from the following Lemma.

\begin{lemma}
\label{lemma:centrocommuta}
Suppose $Q$ is a $\fK$-vector space which is a $(U(\fg), \cP)$-module, with an increasing, exhausting and separated filtration $(Q^n)_{n\in \Z}$ by finite dimensional $\fK$-vector spaces
and such that $Q^n=0$ of $n<0$. We recall that this implies $Q=\colim_n Q^n$.
Suppose that the action of $\cP$, $\cP\times Q\lra Q$ is analytic and preserves the filtration, i.e. there is a locally free $B$-module $Q^o$, with a filtration $(Q^{o,n})_n$, which is a $(U(\fg),\cP)$-module  
with the $\cP$-action analytic and preserving the filtration and such that we have an isomorphism $Q\cong Q^o\otimes_B\fK$ as $\bigl(U(\fg), \cP\bigr)$-modules with filtrations.

 For every $q\in Q$, $\sigma\in Z(\fg)$ and $p\in \cP$ we have $p^{-1}(\sigma(pq))=\sigma q$.
\end{lemma}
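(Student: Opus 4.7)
The plan is to prove the equivalent identity $p \circ \sigma = \sigma \circ p$ as endomorphisms of $Q$, which expresses the fact that $Z(\fg)$ commutes with the $\cP$-action. The argument is a standard ``differentiate and extend'' using the compatibility between the $U(\fg)$- and $\cP$-actions built into the definition of a $(U(\fg), \cP)$-module.

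First I would reduce to a finite-dimensional target. Any $q \in Q = \colim_n Q^n$ lies in some $Q^{o,n}$, and $\sigma$ lies in some piece $U(\fg)^{\leq d}$ of the PBW filtration, so by the shifting behaviour of the $\fn^-$-action on the filtration (as in the Verma module example after Assumption \ref{ass:basicassumption}), $\sigma$ acts as a $B$-linear map $Q^{o,n} \to Q^{o,n+d}$. Since $\cP$ preserves the filtration, the assignment
\[
\phi: \cP \longrightarrow \Hom_B\bigl(Q^{o,n}, Q^{o,n+d}\bigr), \qquad p \longmapsto p^{-1} \circ \sigma \circ p - \sigma,
\]
is a morphism of adic spaces with target a finite locally free $B$-module, and the goal becomes $\phi \equiv 0$.

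Next I would check that $\phi$ vanishes to all orders at the identity. The compatibility clause of Definition \ref{def:fgB} forces the derivative of the $\cP$-action at $e$ in a direction $X \in \fp = {\rm Lie}(\cP)$ to equal the $X$-action coming from $U(\fg)$. Hence on each finite piece the action of $\exp(tX) \in \cP$ has the formal expansion $\sum_{m \geq 0} t^m X^m/m!$ with $X$ acting through $U(\fg)$, and therefore
\[
\exp(-tX) \circ \sigma \circ \exp(tX) \;=\; \sum_{m \geq 0} \frac{t^m}{m!}\bigl(\ad(-X)\bigr)^m(\sigma) \;=\; \sigma,
\]
because $\sigma \in Z(\fg)$ gives $[X,\sigma] = 0$ in $U(\fg)$ and so $(\ad X)^m(\sigma) = 0$ for every $m \geq 1$. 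Thus $\phi$ vanishes on the exponential image of a neighbourhood of $0 \in \fp$, hence on an admissible open $U \subset \cP$ containing the identity.

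Finally, the extension to all of $\cP$ is purely group-theoretic: the relation $p\sigma = \sigma p$ is closed under products, since if $p_1, p_2$ satisfy it then $(p_1 p_2) \sigma = p_1 \sigma p_2 = \sigma p_1 p_2$. Therefore it holds on every $U^n$, hence on the subgroup $\bigcup_n U^n = \cP$, the last equality being standard for a connected analytic group whose Lie algebra is $\fp$; alternatively one invokes the identity theorem for analytic maps between connected adic spaces. The main obstacle is really bookkeeping: one must verify carefully that on each finite step $Q^{o,n}$ the exponentiation of the $\fp$-action genuinely realizes the $\cP$-action, so that the Taylor-series computation above is literal rather than merely formal. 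Once this is set up, the centrality of $\sigma$ closes the argument.
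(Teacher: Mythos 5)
Your proposal is correct and follows essentially the same route as the paper: both arguments reduce to showing the map $p\mapsto p^{-1}(\sigma(pq))-\sigma q$ vanishes near the identity via $\exp(-X)\,\sigma\,\exp(X)=\exp(\mathrm{ad}(-X))(\sigma)=\sigma$ (using $[\fp,\sigma]=0$ since $\sigma\in Z(\fg)$), and then extend to all of $\cP$ by analyticity of the action. Of your two closing alternatives, the identity-theorem/analyticity one is the one the paper uses and is the safer choice, since the claim that a neighbourhood of the identity generates $\cP$ is not automatic for $p$-adic analytic groups (though it does hold for the congruence-type group $\cP$ appearing here).
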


\begin{proof}
We fix $q\in Q$ and $\sigma\in Z(\fg)$, and consider the map: $\alpha_{q, \sigma}:\cP\lra Q$ defined by $\alpha_{q,\sigma}(p):=p^{-1}(\sigma(pq))-\sigma q$.  Then $\alpha_{q, \sigma}$ has image in some 
$Q^n$, $n\in \N$ and it is an analytic map, in the sense above.
Let $\fp^o\subset \fp$ be an open neighbourhood of $0$ such that $\exp_\fg$ is defined on $\fp^o$ and suppose $p=\exp_{\fg}(p^o)$, with $p^o\in \fp^o$.
then we have

$$
p^{-1}(\sigma(pq))=\exp_{\fg}\bigl({\rm ad}(p^o)\bigr)(\sigma) q.
$$ 
But, as $\sigma\in Z(\fg)=Z\bigl(U(\fg)\bigr)$, we have ${\rm ad}(p^o)(\sigma)=p^o\sigma-\sigma p^o=0$. Therefore we have
$$
\exp_{\fg}\bigl({\rm ad}(p^o)\bigr)(\sigma) q=\Bigl(\sum_{n=0}^\infty \frac{\bigl({\rm ad}(p^o)\bigr)^n}{n!}\Bigr)(\sigma) q=\sigma q,
$$
In other words $\alpha_{q,\sigma}(p)=0$ for all $p\in \exp_{\fg}(\fb^0)$ and $\exp_{\fg}(\fb^o)$ is a non-trivial open subset of $\cP$. By analyticity, $\alpha_{q,\sigma}=0$, which proves the lemma. 
\end{proof}

Now to apply the Lemma, let us consider the Verma $(U(\fg),\cP)$-module $Q:=U(\fg)\otimes_{U(\fp)}\bigl(M^s\otimes \wedge^t \fg/\fb\bigr)$, with its filtration induced by the filtration of $U(\fg)$, and "cut it with respect to a character $\chi$" using the action of $Z(\fg)$ on it, i.e. let $Q_\chi$ denote the generalized eigen-sub-space of $Q$ for the elements $\sigma\in Z(\fg)$, of eigenvalues $\chi(\sigma)$.

Let $i,j\in I$ be as in Assumption \ref{ass:basicassumption} iv) and let $p_{i,j}\in \cP(U_{i,j})$ be an element such that
the isomorphism $Q\otimes \cO_{U_{i,j}}\cong Q\otimes \cO_{U_{i,j}}$ is given by multiplication by $p_{i,j}$. Now we apply the Lemma 
\ref{lemma:centrocommuta} and deduce that multiplication by the same $p_{i,j}$ gives an isomorphism:
$Q_\chi\otimes \cO_{U_{i,j}}\cong Q_{\chi}\otimes \cO_{U_{i,j}}$.
\end{proof}

\bigskip 

\subsubsection{The dual picture}\label{SubsectionDualTheorem}


In this section we consider the algebraic duals of the diagram of complexes above, i.e. we apply the functor $T\to T\dual:={\rm Hom}_{\fK}(T, \fK)$, for $T$ a $\fK$-vector space, and will prove

\begin{proposition}\label{PropositionVanishingDualCohomology}
	With the notation as in the previous section ${\rm H}^i(\gamma^{\vee,\bullet})=0$ for each $i\in \N$.
\end{proposition}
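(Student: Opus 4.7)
The plan is to reduce the statement to Theorem \ref{thm:zerocoh} by exploiting the exactness of $\fK$-linear duality. Since $\fK$ is a field, every short exact sequence of $\fK$-vector spaces splits, so the contravariant functor $T \mapsto T^\vee := \Hom_\fK(T, \fK)$ is exact on the full category of $\fK$-vector spaces, with no finiteness or topological hypothesis required on $T$. This is important in our setting, since the terms of the complexes $\cD^\bullet$, $\cC^\bullet$ and $\cU^\bullet$ are in general infinite-dimensional over $\fK$.

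The next step is to invoke the standard fact that a contravariant exact functor commutes with taking (co)homology of chain complexes. Concretely, for a chain complex $C_\bullet$ of $\fK$-vector spaces with cycles $Z_i := \ker(d_i)$, boundaries $B_i := d_{i+1}(C_{i+1})$, and homology $H_i(C_\bullet) = Z_i / B_i$, the short exact sequences $0 \to B_i \to Z_i \to H_i(C_\bullet) \to 0$ and $0 \to Z_i \to C_i \to d_i(C_i) \to 0$ remain exact after applying $(-)^\vee$. A short diagram chase then provides a canonical identification $H^i(C_\bullet^\vee) \cong (H_i(C_\bullet))^\vee$, and this identification is natural in chain maps: for any $\phi \colon C_\bullet \to D_\bullet$, the induced map $H^i(\phi^\vee)$ corresponds to $(H_i(\phi))^\vee$ under these identifications.

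Applying this to $\gamma \colon \cD^\bullet \to \cU^\bullet$ and invoking Theorem \ref{thm:zerocoh}, which gives $H_i(\gamma) = 0$ for every $i \geq 0$, yields $H^i(\gamma^{\vee,\bullet}) = (H_i(\gamma))^\vee = 0$ for every $i \in \N$, as required. I do not foresee any real obstacle: the genuine homological content has already been packaged in Theorem \ref{thm:zerocoh}, and the present statement is a purely formal consequence of exactness of vector-space duality. The only subtlety worth emphasizing is that the duality in play is the algebraic one $\Hom_\fK(-, \fK)$, as made explicit in the opening sentence of the subsection, rather than any continuous or topological dual; this is precisely what permits us to dualize the direct-summand decomposition $0 \to \cV^\bullet \to \cC^\bullet \to \cU^\bullet \to 0$ and the map $\gamma$ without any topological obstruction.
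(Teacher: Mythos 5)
Your proposal is correct and follows essentially the same route as the paper: both arguments reduce Proposition \ref{PropositionVanishingDualCohomology} to Theorem \ref{thm:zerocoh} by exploiting that every exact sequence of $\fK$-vector spaces splits, so that $\Hom_\fK(-,\fK)$ is exact and interacts well with homology. The paper packages this as a pairing $\rH(A)\times \rH(A^\vee)\to\fK$ together with adjointness and non-degeneracy in the second variable (i.e.\ injectivity of the canonical map $\rH^i(C^{\vee})\to (\rH_i(C))^\vee$), whereas you prove the slightly stronger natural isomorphism $\rH^i(C^{\vee})\cong (\rH_i(C))^\vee$; either version suffices.
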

In order to prove the vanishing of the dual map in cohomology, we consider the following natural pairing.
\begin{lemma}\label{LemmaPairings}
	Let $A_0\overset{\alpha}{\rightarrow} A_1\overset{\beta}{\rightarrow} A_2$ be a complex of $\fK$-vector spaces. We remark that we can choose a splitting, as $\fK$-vector spaces: $A_2\cong A_3\oplus \Imm(\beta)\cong A_3\oplus A_1/\Ker(\beta)$. Let $A_2\dual\overset{\beta\dual}{\rightarrow} A_1\dual\overset{\alpha\dual}{\rightarrow} A_0\dual$ be the dual complex, then there is a $\fK-$linear pairing
	\[
	\langle\ ,\ \rangle\, :{\rm H}(A)\times {\rm H}(A\dual)\longrightarrow S
	\]
	where $\displaystyle {\rm H}(A):=\frac{\Ker(\beta)}{{\rm Im}(\alpha)}$ and $\displaystyle {\rm H}(A\dual):=\frac{\Ker(\alpha\dual)}{\Imm(\beta\dual)}$. 
	If 
	\[
	\begin{tikzcd}
		A_0\ar[r,"\alpha"]\ar[d,"\varphi_0"]& A_1\ar[r,"\beta"]\ar[d,"\varphi_1"]& A_2\ar[d,"\varphi_2"]\\
		C_0\ar[r,"\epsilon"]& C_1\ar[r,"\delta"]& C_2
	\end{tikzcd}
	\]
	is a commutative diagram of $\fK-$vector spaces, where $\delta\circ\epsilon=0$; then the maps
	\[
	{\rm H}(\varphi): {\rm H}(A)\rightarrow {\rm H}(C),\quad {\rm H}(\varphi\dual): {\rm H}(C\dual)\rightarrow {\rm H}(A\dual)
	\]
	are adjoint w.r.t. the two pairings, i.e.
	\[
	\langle {\rm H}(\varphi)(a),b\rangle_C=\langle a, {\rm H}(\varphi\dual(b))\rangle_A,
	\]
	for each $a\in {\rm H}(A)$ and $b\in {\rm H}(C\dual)$.
	
	\[
	\text{fix }b\in {\rm H}(A\dual),\text{ if }\langle a,b\rangle=0\ \text{for each}\ a\in {\rm H}(A)\text{, then }b=0.
	\]
\end{lemma}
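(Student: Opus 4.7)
The plan is to establish the three assertions of Lemma \ref{LemmaPairings} in order: construct the pairing, verify adjointness, and then prove the non-degeneracy statement on the right. On representatives $x \in \Ker(\beta)$ and $f \in \Ker(\alpha^\vee)$ I would define $\langle [x],[f]\rangle := f(x)$ and check independence of representatives via the two identities $f(\alpha(y)) = \alpha^\vee(f)(y) = 0$ (for shifts $x \mapsto x+\alpha(y)$ with $y \in A_0$) and $g(\beta(x)) = 0$ (for shifts $f \mapsto f + \beta^\vee(g)$ with $g \in A_2^\vee$, using $\beta(x)=0$). Bilinearity over $\fK$ is immediate.

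For the adjointness statement, the computation writes itself: if $a = [x]$ and $b = [g]$, then
\[
\langle {\rm H}(\varphi)(a),\, b\rangle_C \;=\; g(\varphi_1(x)) \;=\; (\varphi_1^\vee g)(x) \;=\; \langle a,\, {\rm H}(\varphi^\vee)(b)\rangle_A,
\]
and commutativity of the two squares $\varphi_1 \circ \alpha = \epsilon \circ \varphi_0$ and $\delta \circ \varphi_1 = \varphi_2 \circ \beta$ guarantees that $\varphi_1^\vee g \in \Ker(\alpha^\vee)$ whenever $g \in \Ker(\delta^\vee)$, so both sides make sense as pairings on cohomology.

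The substantive part is the non-degeneracy. Fix $b = [f] \in {\rm H}(A^\vee)$ with $f \in \Ker(\alpha^\vee)$, and assume $f(x) = 0$ for every $x \in \Ker(\beta)$. The latter condition says that $f$ factors through $A_1/\Ker(\beta)$, so via the canonical isomorphism $A_1/\Ker(\beta) \cong \Imm(\beta)$ it defines a linear functional $\bar f : \Imm(\beta) \to \fK$ with $f = \bar f \circ \beta$. I would then invoke the $\fK$-linear splitting $A_2 \cong A_3 \oplus \Imm(\beta)$ provided in the hypothesis to extend $\bar f$ by zero along the $A_3$-summand, obtaining $g \in A_2^\vee$ such that $g \circ \beta = f$, i.e.\ $f = \beta^\vee(g) \in \Imm(\beta^\vee)$; hence $b = [f] = 0$ in ${\rm H}(A^\vee)$.

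I do not anticipate a serious obstacle: the argument is a careful unwinding of definitions, and the only non-trivial ingredient is the splitting $A_2 \cong A_3 \oplus \Imm(\beta)$, which the statement already builds into the setup. This is exactly the step where working in the category of $\fK$-vector spaces (so that arbitrary subspaces admit complements and functionals extend freely) pays off, since the complexes $\cC^\bullet$ and $\cU^\bullet$ appearing in the surrounding applications are not finite-dimensional. Once the lemma is in hand, Proposition \ref{PropositionVanishingDualCohomology} will follow by combining it with Theorem \ref{thm:zerocoh}: the vanishing of ${\rm H}^i(\gamma)$ on one side forces the vanishing of ${\rm H}^i(\gamma^\vee)$ on the other through the adjointness and non-degeneracy just established.
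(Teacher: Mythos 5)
Your proposal is correct and follows essentially the same route as the paper's proof: the pairing is defined by evaluating a representative functional on a representative cycle, adjointness is the one-line computation $g(\varphi_1(x))=(\varphi_1^\vee g)(x)$, and non-degeneracy is obtained by factoring $f$ through $A_1/\Ker(\beta)\cong\Imm(\beta)$ and extending by zero via the given splitting $A_2\cong A_3\oplus\Imm(\beta)$. Your additional remark that $\varphi_1^\vee g\in\Ker(\alpha^\vee)$ for $g\in\Ker(\delta^\vee)$ is a small completeness point the paper leaves implicit, but the argument is otherwise identical.
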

\begin{proof}
	For any $a\in {\rm H}(A),\ b\in {\rm H}(A\dual)$ we define $\langle a,b\rangle =:\phi(x)$ where $x\in A_1$ and $\phi\in A_1\dual$ are representative elements of $a$ and $b$. Observe that the pairing does not depend on the representative elements: if $\alpha(z)=y\in \Imm(\alpha)$ and $\beta\dual(\eta)=\psi\in \Imm(\beta\dual)$, then 
	\[
	(\phi+\psi)(x+y)=\phi(x)+ \phi(\alpha(z))+ \eta(\beta(x+y))=\phi(x)
	\]
	since $x,y\in \Ker(\beta)$ and $\phi\in \Ker(\alpha\dual)$.
	
	Let $a\in {\rm H}(A), \ b\in {\rm H}(C\dual)$ and choose some representatives $x\in A_1$ and $\phi\in C_1\dual$, then
	\[
	\langle {\rm H}(\varphi)(a),b\rangle_C=\phi(\varphi_1(x))= \left(\varphi_1\dual(\phi)\right)(x)=\langle a, {\rm H}(\varphi\dual(b))\rangle_A.
	\]
	
	Let $a\dual\in {\rm H}(A\dual)$ and $\phi\in A_1\dual$ be an element representing $a\dual$. If $\langle a,a\dual\rangle=0$ for any $a\in {\rm H}(A)$, then $\phi(x)=0$ for any $x\in \Ker(\beta)$, then $\Ker(\beta)\subset \Ker(\phi)$ and we can define a morphism $\delta': A_1/\Ker(\beta)\rightarrow S$ s.t. the following diagram commutes
	\[
	\begin{tikzcd}
		A_1\ar[r, "\phi"]\ar[d, "\beta"] & B\\
		A_1/\Ker(\beta)\ar[ur, dashrightarrow, "\delta'"]&
	\end{tikzcd}.
	\]
	
	   Due to the splitting chosen in the beginning, we can extend $\delta'$ to an $\fK-$linear function $\delta: A_2\rightarrow B$ and we get that $\phi=\beta\dual (\delta)\in \Imm(\beta\dual)$ and $a\dual=0$.
\end{proof}
We get as corollary the result we claimed at the beginning of this section.
\begin{corollary}\label{CorollaryDualVanishingCohomology}
	Let $f^\bullet:A^\bullet\rightarrow C^\bullet$ be a morphism between bounded complexes of $B-$modules with ${\rm H}^i(f^\bullet)=0$ for each $i\in \Z$. Suppose that for $A^\bullet$ we have splittings as in the lemma. Then ${\rm H}^i((f^\bullet)\dual)=0$ for each $i\in \Z$.
\end{corollary}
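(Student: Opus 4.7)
The plan is to deduce the corollary purely formally from Lemma \ref{LemmaPairings} applied degree by degree. Since $A^\bullet$ is bounded and admits the required splittings at each position, we fix an index $i\in \Z$ and apply the lemma to the three-term subcomplexes $A^{i-1}\to A^i\to A^{i+1}$ and $C^{i-1}\to C^i\to C^{i+1}$, linked by the commutative diagram coming from $f^\bullet$. This produces pairings
\[
\langle\ ,\ \rangle_A\colon {\rm H}^i(A^\bullet)\times {\rm H}^i((A^\bullet)^\vee)\to \fK,\qquad \langle\ ,\ \rangle_C\colon {\rm H}^i(C^\bullet)\times {\rm H}^i((C^\bullet)^\vee)\to \fK,
\]
and the adjointness identity $\langle {\rm H}^i(f^\bullet)(a),b\rangle_C=\langle a,{\rm H}^i((f^\bullet)^\vee)(b)\rangle_A$ for all $a\in {\rm H}^i(A^\bullet)$ and $b\in {\rm H}^i((C^\bullet)^\vee)$.

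Next, fix $b\in {\rm H}^i((C^\bullet)^\vee)$ and consider the class $\beta:={\rm H}^i((f^\bullet)^\vee)(b)\in {\rm H}^i((A^\bullet)^\vee)$. For every $a\in {\rm H}^i(A^\bullet)$ the adjointness identity gives
\[
\langle a,\beta\rangle_A=\langle {\rm H}^i(f^\bullet)(a),b\rangle_C,
\]
and the right-hand side vanishes because, by hypothesis, ${\rm H}^i(f^\bullet)=0$. Hence $\langle a,\beta\rangle_A=0$ for every $a\in {\rm H}^i(A^\bullet)$. The non-degeneracy statement in Lemma \ref{LemmaPairings} (the last displayed implication, which is the place where the splitting hypothesis on $A^\bullet$ is used, since it is needed to extend a functional defined on $A^i/\Ker$ to all of $A^{i+1}$) then forces $\beta=0$. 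Since $b$ was arbitrary, ${\rm H}^i((f^\bullet)^\vee)=0$, and since $i$ was arbitrary the corollary follows.

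The only subtle point — and hence the one I would check carefully — is the applicability of the non-degeneracy clause to a morphism of unbounded-looking complexes: it requires that the splitting $A^{i+1}\cong A^{i+2}\oplus A^i/\Ker$ in the hypothesis really does allow one to extend a $\fK$-linear functional on $A^i/\Ker$ to a functional on all of $A^{i+1}$, which is why the splitting assumption on $A^\bullet$ (rather than on $C^\bullet$) is exactly what is needed. Once this is granted, the proof is a one-line diagram chase. Note that the $B$-module setting of the corollary versus the $\fK$-vector space setting of the lemma causes no difficulty: either one re-runs the proof of Lemma \ref{LemmaPairings} verbatim over $B$ (the argument used only linear algebra and the splittings), or one argues that the conclusion $\beta=0$ can be tested after tensoring with $\fK$, reducing to the case already proved.
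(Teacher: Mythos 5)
Your proof is correct and follows essentially the same route as the paper's: apply Lemma \ref{LemmaPairings} degree by degree, use the adjointness of ${\rm H}(\varphi)$ and ${\rm H}(\varphi\dual)$ together with ${\rm H}^i(f^\bullet)=0$, and invoke the non-degeneracy clause (where the splitting hypothesis on $A^\bullet$ enters) to conclude. The only difference is cosmetic: the paper indexes the dual complex so that the pairing relates ${\rm H}^{i}(A^\bullet)$ with ${\rm H}^{n-i}((A^\bullet)\dual)$, whereas you keep the dual of $A^i$ in degree $i$; your remark on the $B$-module versus $\fK$-vector-space issue is a point the paper glosses over and is handled adequately.
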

\begin{proof}
	We can suppose that the two complexes are concentrated on $0\leqslant i\leqslant n$, i.e. that $A^i=C^i=0$ for each $i<0$ and for each $n<i$. By the Lemma \ref{LemmaPairings} we get two pairings for each $i\in \N$ with $0\leqslant i\leqslant n$:
	\[
	\langle \ ,\ \rangle_A\, :{\rm H}^{i}(A^\bullet)\times {\rm H}^{n-i}((A^\bullet)\dual) \longrightarrow B,\quad \langle \ ,\ \rangle_C\, :{\rm H}^{i}(C^\bullet)\times H^{n-i}((C^\bullet)\dual)\longrightarrow \fK.
	\]
	One can conclude using properties of this pairing and the adjunction property. Indeed for any $b\in {\rm H}^{n-i}((C\dual)^\bullet)$ and $a\in {\rm H}^{i}(A^\bullet)$, we have
	\[
	\langle a, {\rm H}^{n-i}((f^\bullet)\dual)(b)\rangle_A=\langle {\rm H}^{i}(f)(a),b\rangle _C=\langle 0,b\rangle_C=0
	\]
	hence ${\rm H}^{n-i}((f^\bullet)\dual)(b)=0$.
\end{proof}
The proof of the Proposition \ref{PropositionVanishingDualCohomology} is an immediate consequence of the Corollary \ref{CorollaryDualVanishingCohomology}.


\bigskip

\subsubsection{The continuous dual picture}\label{SubsectionContinuousDual}

In this section we continue to use the notations of the previous section. Given a topological $\fK$-vector space $T$, we denote by $T^\ast$ its continuous $\fK$-dual of $T$, i.e. $T^\ast:={\rm Hom}_{\fK, cont}(T,\fK)\subset T\dual={\rm Hom}_{\fK}(T,\fK)$, where on $\fK$ we set the discrete topology. Of course, if $T$ is a finite dimansional $\fK$-vector space $T^\ast=T^\vee$.

 The main reason we consider these continuous duals is the following: suppose
 $T$ is a $\fK$-vector space with a separated and exhausting filtration $\{T_i\}_{i\in \N}$ by finite dimensional $\fK$-vector spaces $T_i\subset T$, then we have:
$T=\colim_n T_n$, $T^\ast \cong T^\vee\cong \lim_n T_n^\vee$ and finally $\bigl(T^\ast\bigr)^\ast\cong \Bigl(\lim_n T_n^\vee\Bigr)^\ast\cong \colim_n \bigl(T_n^\vee\bigr)^\ast\cong \colim_n T_n=T$. 

Coming back to our problem, for any $s, n\in \N$ we have defined the complexes
\[
B_{n,s}^\bullet:=U(\fg)^{ n-\bullet}\tensor_{U(\fp)}\bigl(M^{s+\bullet}\tensor_S \wedge_{\fK}^\bullet (\fg/\fp)\bigr).
\]
By definition and by the previous results we have that 
\[\cD^\bullet=\colim_{n\in \N}\lim_{s\in \N} B_{n,s}^\bullet\quad\text{and}\quad \cG^\bullet=\lim_{s\in \N} \oplus_{\chi\in \Theta} \left(\colim_{n\in \N} {\fK}_{n,s}^\bullet\right)_{\chi}.
\]
In the previous section we proved that the map between the algebraic duals of the two complexes
$
\cG^{\bullet,\vee}\rightarrow \cD^{\bullet,\vee}
$ induces the zero map between the cohomology groups. We will be interested in understanding the natural maps between the continuous duals of the two complexes. More precisely the topologies we use are the limit and co-limit topologies, with discrete topology on each term and the discrete topology on K, respectively $B$. From the comment at the beginning of this section it follows that
$$
(\cG^\bullet)^\ast=\Bigl(\lim_{s\in \N} \left(\oplus_{\chi\in \Theta} \left(\colim_{n\in \N} B_{n,s}^\bullet\right)_{\chi}\right)\Bigr)^\ast\cong \colim_{s\in \N} \left(\oplus_{\chi\in \Theta} \left(\colim_{n\in \N} B_{n,s}^\bullet\right)_{\chi}\right)\dual
$$
and 
$$
(\cD^\bullet)^\ast=\Bigl(\colim_{n\in \N}\lim_{s\in \N} B_{n,s}^\bullet\Bigr)^\ast\cong \lim_{n\in \N}\colim_{s\in \N} \left((B_{n,s}^\bullet)\dual\right).
$$
All in all, we look at the maps
\[
(\gamma^{\bullet})^\ast:\colim_{s\in \N} \left(\oplus_{\chi\in \Theta} \left(\colim_{n\in \N} B_{n,s}^\bullet\right)_{\chi}\right)\dual\longrightarrow  \lim_{n\in \N}\colim_{s\in \N} \left((B_{n,s}^\bullet)\dual\right).
\]
Observe that there is a diagram of complexes
\small
\begin{equation}\label{DiagramContinuousDualTheorem}
\begin{tikzcd}
	\ar[d]\colim_{s\in \N} \left(\oplus_{\chi\in \Theta} \left(\colim_{n\in \N} B_{n,s}^\bullet\right)_{\chi}\right)\dual\ar[r, "	(\gamma^{\bullet})^\ast"]&\lim_{n\in \N}\colim_{s\in \N} \left((B_{n,s}^{\bullet})^\vee\right)\ar[d, "c^\bullet"]\\
	\cG^{\bullet,\vee}=\left(\lim_{s\in \N} \oplus_{\chi\in \Theta} \left(\colim_{n\in \N} B_{n,s}^\bullet\right)_{\chi}\right)\dual\ar[r, "\gamma^{\bullet,\vee}"]& \cD^{\bullet,\vee}=\lim_{n\in \N}\left(\colim_{s\in \N} B_{n,s}^\bullet\right)\dual
\end{tikzcd}
\end{equation}
\normalsize
In this section we prove that the maps induced on the cohomology groups by $(\gamma^{\bullet})^\ast$ vanish. We need only to prove that the canonical map $c^\bullet$ induces injective morphisms between the cohomology groups; indeed the vanishing of ${\rm H}^k\left((\gamma^{\bullet})^\ast\right)$ will follow by the diagram above and the vanishing proved in the Proposition \ref{PropositionVanishingDualCohomology} of the cohomology of $\gamma^{\bullet,\vee}$.

\begin{lemma}\label{LemmaInjectivityOnsCohomology}
	Let $\{A_s^\bullet,\ d_s^\bullet\}_{s\in \N}$ be a projective system of complexes of $\fK$-vector spaces, i.e. $A_s^\bullet$ is a complex of $\fK$-vector spaces with differentials $d_s^\bullet$ and for any $s\in \N$ there is a morphism of complexes $f_{s+1}^\bullet: A_{s+1}^\bullet\rightarrow A_s^\bullet$.
	
	Suppose that for any $k\in \Z$, $s\in \N$ the map $f_{s+1}^k: A_{s+1}^k\twoheadrightarrow A_s^k$ is surjective and that there is a section $g_s^k: A_s^k\hookrightarrow \lim_{s\in \N} A_s^k$ of the projection map $\lim_{s\in \N} A_s^k\rightarrow A_s^k$. Suppose that the sections $g_s^k$ are compatible with the morphisms $d_s^k$, i.e. that the diagrams
	\[
		\begin{tikzcd}[column sep= large]
			A^k_{s}\ar[r,"d^k_s"]\ar[d, "g_s^k", hookrightarrow]& A^{k+1}_{s}\ar[d, "g_s^{k+1}", hookrightarrow]\\
			\lim_{s\in \N} A_s^k\ar[r, "\lim_{s\in \N} d^k_s"]& 	\lim_{s\in \N} A_s^{k+1}
		\end{tikzcd}
	\]
	are commutative.
	Then the map of complexes
	$
		\colim_{s\in \N} (A_s^\bullet)\dual\rightarrow \left( \lim_{s\in \N} A_s\right)\dual
	$
	induces a morphism in cohomology 
	\[
		{\rm H}^k\left( 	\colim_{s\in \N} (A_s^\bullet)\dual\right)\hookrightarrow {\rm H}^k\left(\left( \lim_{s\in \N} A_s^\bullet\right)\dual\right)
	\]that is injective for any $k\in \Z$.
\end{lemma}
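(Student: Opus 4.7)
The plan is to exploit the sections $g_s^k$: together with the hypothesized compatibility with the differentials $d_s^k$, they assemble into morphisms of complexes $g_s^\bullet : A_s^\bullet \to \lim_{t \in \N} A_t^\bullet$ which section the projections $\pi_s^\bullet : \lim_{t \in \N} A_t^\bullet \twoheadrightarrow A_s^\bullet$. Dualizing, one obtains morphisms of complexes $(g_s^\bullet)\dual : (\lim_t A_t^\bullet)\dual \to (A_s^\bullet)\dual$ which retract $(\pi_s^\bullet)\dual$, so applying ${\rm H}^k$ gives a splitting of ${\rm H}^k\bigl((\pi_s^\bullet)\dual\bigr)$ and in particular exhibits it as an injection for every $s$ and every $k$.

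To finish, I would invoke exactness of filtered colimits of $\fK$-vector spaces to identify ${\rm H}^k\bigl(\colim_s (A_s^\bullet)\dual\bigr)$ with $\colim_s {\rm H}^k\bigl((A_s^\bullet)\dual\bigr)$. A typical element of the latter is represented by a class $\alpha_s \in {\rm H}^k\bigl((A_s^\bullet)\dual\bigr)$ at some finite level $s$, and the canonical map to ${\rm H}^k\bigl((\lim_t A_t^\bullet)\dual\bigr)$ sends $[\alpha_s]$ to ${\rm H}^k\bigl((\pi_s^\bullet)\dual\bigr)(\alpha_s)$. This factorization is forced by the defining relation $f_{t,s} \circ \pi_t = \pi_s$ for $t \geq s$, which makes $\{(\pi_s^\bullet)\dual\}_s$ a cocone over the system whose colimit is $\colim_s (A_s^\bullet)\dual$. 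If this image vanishes, then by the injectivity established in the previous paragraph one has $\alpha_s = 0$, hence $[\alpha_s] = 0$ in the colimit.

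The proof is entirely formal and I do not anticipate a genuine obstacle: the whole statement reduces to the standard principle that a split injection of complexes induces a split injection on cohomology, combined with exactness of filtered colimits. The one subtle point worth flagging is that the surjectivity hypothesis on the transition maps $f_{s+1}^k$ plays no active role in this argument; it is in fact automatic from the existence of the section $g_s^k$, since $f_{s+1} \circ \pi_{s+1} = \pi_s$ and $\pi_s$ is surjective via $g_s$. Thus the entire content of the lemma is concentrated in the chain-level splitting $\pi_s^\bullet \circ g_s^\bullet = \mathrm{id}$, and the remaining work is merely to track this splitting through dualization, cohomology, and the filtered colimit.
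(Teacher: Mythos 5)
Your proof is correct and is essentially the paper's own argument: the paper's key computation $\varphi = \delta\circ\bigl(\lim_s d_s^k\bigr)\circ g_{s_*}^k = d_{s_*}^{k,\vee}\bigl(\delta\circ g_{s_*}^{k+1}\bigr)$ is exactly your retraction $(g_{s_*}^\bullet)\dual$ applied to the coboundary witness, just written element by element rather than packaged as a chain-level splitting of $(\pi_{s}^\bullet)\dual$. Your observations that cohomology commutes with the filtered colimit and that the surjectivity of the $f_{s+1}^k$ is redundant given the sections are both correct and only make explicit what the paper leaves implicit.
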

\begin{proof}
	Let $k\in \Z$, $s\in \N$ and $\varphi\in \Ker\left(\colim_{s\in \N} d_s^{k-1,\vee} \right)$. Then there is an $s_*\in \N$ such that $\varphi\in \Ker\left(d_{s_*}^{k-1,\vee} \right)$. Suppose that the image of $\varphi$ is zero in cohomology, i.e. that there is a $\delta\in \left(\lim_{s\in \N} A_s^{k+1}\right)^\vee$ such that the diagram
	\[
		\begin{tikzcd}
			\lim_{s\in \N} A_s^k \ar[d, "p_{s_*}^k"]\ar[r, "\lim_s d_s^k"]& \lim_{s\in \N} A_s^{k+1}\ar[rd, "\delta"]\ar[d, "p_{s_*}^{k+1}"]&\\
			A_{s_*}^k\ar[r, "d_{s_*}^k"]\ar[rr,"\varphi",bend right=20,swap]\ar[u,"g_{s_*}^k",bend left=50]
			&A_{s_*}^{k+1} \ar[u,"g_{s_*}^{k+1}",bend left=50]&S\\
		\end{tikzcd}
	\]
	 commutes. Observe that
	 \[
	 	\varphi= \delta\circ \left(\lim_{s\in \N} d_s^k\right) \circ g_{s_*}^k= \left( \delta\circ g_{s_*}^{k+1}\right)\circ d_{s_*}^k= d_{s_*}^{k,\vee}\left(\delta\circ g_{s_*}^{k+1}\right).
	 \]
	 Hence $\varphi$ is zero on cohomology and the Lemma is proved.
\end{proof}
\begin{corollary}\label{CorollaryHcnInjective}
	For any $n\in \N$ the map
	\[
	c_n^\bullet :\ \colim_{s\in \N} (B_{n,s}^{\bullet})^\vee\rightarrow\left( \lim_{s\in \N} B_{n,s}^{\bullet}\right)\dual
	\]
	induces an injective morphism on the cohomology groups.
\end{corollary}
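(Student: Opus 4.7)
The plan is to deduce the corollary by applying Lemma \ref{LemmaInjectivityOnsCohomology} to the projective system of complexes $\{B_{n,s}^\bullet\}_{s\in \N}$ with $n$ fixed. Two hypotheses need verification: surjectivity of the transition maps in $s$, and the existence of sections of the projections $\lim_s B_{n,s}^\bullet \twoheadrightarrow B_{n,s}^\bullet$ compatible with the differentials.

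Surjectivity is immediate: writing
\[
B_{n,s}^k = U(\fg)^{n-k} \tensor_{U(\fp)} \bigl(M^{s+k} \tensor_{\fK} \wedge^k(\fg/\fp)\bigr),
\]
the transition $B_{n,s+1}^k \twoheadrightarrow B_{n,s}^k$ is induced by the canonical projection $M^{s+k+1} \twoheadrightarrow M^{s+k}$, which is the $\fK$-dual of the inclusion of finite-dimensional spaces $F^{s+k} \hookrightarrow F^{s+k+1}$. Tensoring with the finite-dimensional factors $U(\fg)^{n-k}$ and $\wedge^k(\fg/\fp)$ preserves the surjection.

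To produce sections compatible with the differentials, I would first construct $\tau^s \colon M^s \hookrightarrow M = \lim_t M^t$ splitting the canonical projection and satisfying $X \cdot \tau^s(m) = \tau^{s-1}(X \cdot m)$ for every $X \in \fn^-$ and every $m \in M^s$. By duality such $\tau^s$ correspond to a family of complements $\{C^s\}$ of $F^s$ inside $\aW$ satisfying $\fn^- \cdot C^s \subseteq C^{s+1}$: if $\tau^s$ is taken to be extension by zero on $C^s$, then $M^s$ is identified with the subspace of functionals in $M$ vanishing on $C^s$, and the condition $\fn^- C^{s-1} \subseteq C^s$ unwinds exactly into the required compatibility. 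Tensoring $\tau^{s+k}$ with the identity on $U(\fg)^{n-k} \otimes \wedge^k(\fg/\fp)$ then yields the sections $g_s^k$, and their compatibility with $d_s^k$ follows from the explicit Koszul formula for $d$, because the only summand of the differential that touches the $M$-factor is $-a \otimes (X_i b)$.

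The central obstacle is producing the complements $C^s$ with $\fn^- \cdot C^s \subseteq C^{s+1}$. When $\fn^-$ is abelian --- for instance for the Siegel parabolic inside $\mathrm{Sp}_{2g}$ --- the Poincar\'e--Birkhoff--Witt identification $U(\fn^-) \cong \Sym(\fn^-)$ is $\fn^-$-equivariant, so one may simply take $C^s = \bigoplus_{i > s} \Sym^i(\fn^-) \cdot v_0$, where $v_0$ is a highest-weight generator of the Verma module $\aW$. In general one builds $C^s$ inductively on $s$, extending $C^{s-1}$ across a chosen splitting of the finite-dimensional graded piece $F^s/F^{s-1}$ that is compatible with the action of $\fn^-$ on $C^{s-1}$; this inductive step is where the delicate work lies. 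Once the sections $g_s^k$ are in place, Lemma \ref{LemmaInjectivityOnsCohomology} applies verbatim and produces the injectivity of ${\rm H}^k(c_n^\bullet)$ for every $k$, completing the proof.
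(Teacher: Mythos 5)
Your proposal follows the same route as the paper's proof: check surjectivity of the transition maps in $s$, produce sections of the projections $\lim_s B^k_{n,s}\to B^k_{n,s}$ compatible with the Koszul differentials, and invoke Lemma \ref{LemmaInjectivityOnsCohomology}. The paper disposes of the second point in one line (the filtration of $\aW$ ``clearly'' admits projections $g_i\colon \aW\to F^i$ with the desired properties, and the compatibility square ``commutes by hypothesis''), whereas you correctly isolate it as the only real content, translate it into the existence of complements $C^s$ of $F^s$ in $\aW$ with $\fn^-\cdot C^s\subseteq C^{s+1}$ (your dualization of that condition is right), and settle it by the $\Sym(\fn^-)$-grading whenever $\fn^-$ is abelian --- which covers every case the paper actually runs (the elliptic case, and the PEL case with the parabolic $P$, where the paper notes that $\fn^{\pm}$ are abelian).

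One caution: do not present the non-abelian case as a routine induction; the required complements need not exist. Take $\fg=\mathfrak{sl}_3$, $\fp=\fb$, and $\aW$ a Verma module, identified with $U(\fn^-)$ carrying the PBW filtration, with $Y_1=E_{21}$, $Y_2=E_{32}$, $[Y_1,Y_2]\neq 0$ central in $\fn^-$. Any complement $C^0$ of $\fK\cdot 1$ is of the form $\{v-\lambda(v)\cdot 1\}$ for a functional $\lambda$ on the span of the positive-degree PBW monomials, and then $Y_1\cdot\bigl(Y_2-\lambda(Y_2)\bigr)-Y_2\cdot\bigl(Y_1-\lambda(Y_1)\bigr)=[Y_1,Y_2]+\lambda(Y_1)Y_2-\lambda(Y_2)Y_1$ is a nonzero element of $F^1$ lying in the span of $\fn^-\cdot C^0$; hence no complement $C^1$ of $F^1$ can contain $\fn^-\cdot C^0$, for any choice of $C^0$. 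So the existence of action-compatible splittings is a genuine condition on the pair $(\aW,F^\bullet)$, not a formal consequence of working with vector spaces. Either this condition should be added to Assumption \ref{ass:basicassumption} (which is in effect what the paper silently does) or the argument should be stated under the hypothesis that $\fn^-$ is abelian; with that proviso your proof is complete.
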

\begin{proof}
	Clearly, as we work with $\fK$-vector spaces, the filtration of $\aW$ admits a system of $\fK$-linear projections $g_i:\,\fW_k^{\rm alg}\rightarrow F_i$, for all $i\ge 0$, with the desired properties.

	We recall that for any $Y\in\fn^-$ the diagram
		\[
	\begin{tikzcd}
		Y:\,\fW^{\rm alg}_k\ar[r]\ar[d, "g_i"]& \fW^{\rm alg}_k\ar[d, "g_{i+1}"]\\
		Y:\,F_i\ar[r]&F_{i+1}
	\end{tikzcd}
	\]
	commutes by hypothesis. The projections induce sections $M^s\rightarrow M=\lim_{s\in \N}M^s$, hence sections
	\small
	\[
		g_s^k:\  U(\fn^-)^{\leqslant n-k}\tensor_S M^{s+k}\tensor_S \wedge_S^k \fn^-\rightarrow \lim_{s\in \N}\  U(\fn^-)^{\leqslant n-k}\tensor_S M^{s+k}\tensor_S \wedge_S^k \fn^-.
	\]
	\normalsize
	Since the Koszul complex $d_{n,s}^{B,k}: B^k_{n,s}\rightarrow B^{k+1}_{n,s}$ is defined in terms of the $\fn^-$action and the projections commute with this action we get that the sections are compatible with the morphisms $d_{n,s}^{B,k}$ and we apply the Lemma \ref{LemmaInjectivityOnsCohomology}.
	
\end{proof}
\begin{lemma}\label{LemmaInjectivityOnnCohomology}
	Let $\{A_n^\bullet,\ d_n^\bullet\}_{n\in \N}$ be a projective system of complexes of $\fK$-vector spaces. Suppose that the complex $\lim_{n\in\N}d_n^\bullet$ splits, i.e. for each $k\in\Z$ the short exact sequence
	\[
		0\rightarrow\Ker\left(\lim_{n\in \N}d_n^{k}\right)\rightarrow \lim_{n\in \N}A_n^k\rightarrow \frac{\lim_{n\in \N}A^k_n}{\Ker\left( \lim_{n\in \N}d_n^{k}\right)}\rightarrow 0
	\] 
	splits, i.e. there is a projection $h^k: \lim_{n\in \N}A_n^k\rightarrow \Ker\left(\lim_{n\in \N}d_n^{k}\right) $. Suppose that for each $n\in \N$ and $k\in \Z$ there is a section $g_n^k: A_n^k\rightarrow \lim_{n\in \N} A_n^k$ of the canonical projection.
	Then 
	\[
		{\rm H}^k\left(\lim_n A^\bullet_n\right)\hookrightarrow \lim_{n\in \N} {\rm H}^k(A_n^\bullet)
	\]
	is an injective morphism.
\end{lemma}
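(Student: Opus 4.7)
I plan to prove the injectivity by reducing to the vanishing of a $\lim^1$-obstruction and then eliminating that obstruction using the combined force of the sections and the splittings.

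The first step is to use the sections $g_n^k$ to make the relevant inverse systems Mittag--Leffler. Writing $p_n\colon \lim_n A^k\to A_n^k$ for the canonical projection, the identity $p_n\circ g_n^k=\mathrm{id}$ together with $p_n=p_{n,n+1}\circ p_{n+1}$ shows that the transition map $A_{n+1}^k\to A_n^k$ is surjective, and hence $\lim{}^1 A^k=0$ for every $k$. An analogous argument, applied to $(\lim d^{k-1})(g_n^{k-1}(\beta))$ and projection one level up, shows that the transitions on $B^k_n:=\Imm(d_n^{k-1})\subset A_n^k$ are surjective, so $\lim{}^1 B^k=0$. Setting $Z^{k-1}_n:=\Ker(d_n^{k-1})$ and applying $\lim$ to the short exact sequence $0\to Z^{k-1}\to A^{k-1}\xrightarrow{d^{k-1}}B^k\to 0$ yields
\[
0\to\lim Z^{k-1}\to \lim A^{k-1}\xrightarrow{\lim d^{k-1}}\lim B^k\to\lim{}^1 Z^{k-1}\to 0,
\]
so $\lim B^k/\Imm(\lim d^{k-1})\cong \lim{}^1 Z^{k-1}$. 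Combined with the Milnor exact sequence
\[
0\to\lim{}^1 {\rm H}^{k-1}(A_n^\bullet)\to {\rm H}^k(\lim_n A_n^\bullet)\to\lim_n {\rm H}^k(A_n^\bullet)\to 0
\]
(valid because $\lim{}^1 A^k=0$) and the identification $\lim{}^1{\rm H}^{k-1}(A_n^\bullet)\cong \lim{}^1 Z^{k-1}$ (from $\lim{}^1 B^{k-1}=0$ and the sequence $0\to B^{k-1}\to Z^{k-1}\to {\rm H}^{k-1}\to 0$), the kernel of the map in the lemma is exactly $\lim{}^1 Z^{k-1}$. So the task reduces to showing that every $\alpha\in\lim B^k$ lies in $\Imm(\lim d^{k-1})$.

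For this, bring in the splitting. The retraction $h^{k-1}$ yields a direct-sum decomposition $\lim A^{k-1}=\lim Z^{k-1}\oplus C^{k-1}$ with $C^{k-1}:=\Ker(h^{k-1})$, and $\lim d^{k-1}$ restricts to an injection $C^{k-1}\hookrightarrow\Imm(\lim d^{k-1})$ which is an isomorphism. Given local lifts $\beta_n\in A_n^{k-1}$ with $d_n^{k-1}(\beta_n)=p_n(\alpha)=:\alpha_n$, I form the canonical candidates
\[
\tilde\beta_n\ :=\ g_n^{k-1}(\beta_n)-h^{k-1}\bigl(g_n^{k-1}(\beta_n)\bigr)\ \in\ C^{k-1}.
\]
Since $h^{k-1}$ lands in $\Ker(\lim d^{k-1})$, we have $(\lim d^{k-1})(\tilde\beta_n)=(\lim d^{k-1})(g_n^{k-1}(\beta_n))\in\Imm(\lim d^{k-1})$, and its $n$-th component is $\alpha_n$.

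The main obstacle will be the final coherence step: assembling the family $\{\tilde\beta_n\}\subset C^{k-1}$ into a single $\tilde\beta\in C^{k-1}$ whose image under $\lim d^{k-1}$ is exactly $\alpha$. For any $n\le m$ the difference $\tilde\beta_m-\tilde\beta_n\in C^{k-1}$ maps under $\lim d^{k-1}$ into $\Imm(\lim d^{k-1})\subset\lim Z^k$, on which the retraction $h^k$ acts as the identity; the deficit $\alpha-(\lim d^{k-1})(\tilde\beta_n)$ therefore lies in $\Ker(\lim d^k)\cap\Ker(p_n)$, so it vanishes at all levels up to $n$. The plan is to combine the two retractions $h^{k-1}, h^k$ with the surjectivity of the transitions established above to show that the obstruction class these deficits define in $\lim{}^1 Z^{k-1}$ is trivial, producing the coherent lift $\tilde\beta$ and completing the proof. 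The subtlety lies precisely in this patching step, where the interplay between the pointwise lifts (provided by the sections) and the canonical complement $C^{k-1}$ (provided by the splitting) must be tracked carefully.
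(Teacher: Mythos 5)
Your reduction is correct and, up to the point where it stops, it is essentially the paper's own argument: surjectivity of the projections $p_n$ forces surjectivity of the transition maps on the $A_n^k$ and on the boundaries $B_n^k=\Imm(d_n^{k-1})$, the Milnor sequence identifies $\Ker\bigl({\rm H}^k(\lim_n A_n^\bullet)\to\lim_n{\rm H}^k(A_n^\bullet)\bigr)$ with $\lim_n B^k/\Imm(\lim_n d^{k-1})$, and your elements $\tilde\beta_n=(1-h^{k-1})\bigl(g_n^{k-1}(\beta_n)\bigr)$ are exactly the elements whose projections the paper calls $w_n$. But the proposal ends with ``the plan is to\dots show that the obstruction class these deficits define is trivial'': the coherence of the family $\{\tilde\beta_n\}$ is precisely where all the content of the lemma sits, and you do not prove it. The natural attempt fails: $\tilde\beta_n-\tilde\beta_m$ lies in $C^{k-1}$ and its image under $\lim d^{k-1}$ is only known to vanish in components $\le m$, and injectivity of $\lim d^{k-1}$ on $C^{k-1}$ extracts nothing from finitely many vanishing components. (The paper's proof asserts at this same point, without argument, that the system $w_n=p_n(\tilde\beta_n)$ is compatible.)

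This is not a presentational gap: from the hypotheses as stated the patching step cannot be carried out, because the statement itself fails without further assumptions. Take $A_n^\bullet$ to be the two-term complex $\fK[t]\stackrel{d_n}{\longrightarrow}\fK[t]/(t^n)$ with $d_n$ the quotient map, identity transitions in degree $0$ and the natural projections in degree $1$. All projections $\lim_n A^j\to A_m^j$ are surjective, so the sections $g_m^j$ exist, and over a field every short exact sequence splits, so the retractions $h^k$ exist; yet ${\rm H}^1(\lim_n A_n^\bullet)=\fK[[t]]/\fK[t]\neq 0$ while ${\rm H}^1(A_n^\bullet)=0$ for every $n$. So your deferred ``patching step'' needs more than the splittings and the pointwise sections: one must require the $g_n^\bullet$ to be compatible with the differentials (as is explicitly assumed in Lemma \ref{LemmaInjectivityOnsCohomology}) and to be induced by a single splitting of the pro-system, which is what actually holds in the application (Corollary \ref{CorollaryVanishingContinuousDualCohomology}), where the sections come from the grading $U(\fn^-)\cong\Sym(\fn^-)$ and the direct-sum decomposition of $\fW_k^{\rm alg}$. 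With those extra compatibilities one gets $(\lim d^{k-1})(\tilde\beta_n)=g_n^k(\alpha_n)$ with the right-hand sides forming a coherent family, and the injectivity of $\lim d^{k-1}$ on $C^{k-1}$ then does close the argument; without them, it cannot.
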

\begin{proof}
	Let $x=(x_n)_{n\in \N}\in \Ker\left(\lim_{n} (d_n^k)\right)$ with $x_n\in \Ker\left(d_n^k\right)\subset A_n^k$ the $n$-th projection of $x$. Suppose that $[x_n]_{n\in \N}\in \lim_{n\in \N} {\rm H}^k\left( A^\bullet_n\right)$ vanishes, i.e. that for each $n\in \N$ there is a $z_n\in A^{k-1}_n$ such that $d_n^{k-1}(z_n)=x_n$.  We want to show that also $[x]\in {\rm H}^k\left(\lim_{n\in \N} A_n^\bullet\right)$ is zero. A priori the system $z_n$ is not compatible, but we know that the map $\lim_{n}d_n^{k-1}$ split, then we can define 
	\[
		w_n:= p_n^k \left(g_n^k(z_n)- h^k g_n^k(z_n)\right),
	\]
	where $p_n^k: \lim_{n\in \N} A_n^k\rightarrow A_n^k$ is the canonical projection. Observe that for each $n\in \N$ we get that $d_n^k(w_n)=x_n$ and the system $w_n$ is a compatible system. Then the element $(w_n)_{n\in \N}\in \lim_{n\in \N}A_n^{k-1}$ is well defined and his image is $x$.
\end{proof}

\begin{corollary}\label{CorollaryVanishingContinuousDualCohomology}
	The map
\[
(\gamma^{\bullet})^\ast:\colim_{s\in \N} \left(\oplus_{\chi\in \Theta} \left(\colim_{n\in \N} B_{n,s}^\bullet\right)_{\chi}\right)\dual\longrightarrow  \lim_{n\in \N}\colim_{s\in \N} \left(B_{n,s}\dual\right).
\]
vanishes on the cohomology groups.
\end{corollary}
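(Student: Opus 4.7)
The plan is to deduce the vanishing of ${\rm H}^k\bigl((\gamma^{\bullet})^\ast\bigr)$ for every $k$ from the commutative diagram \ref{DiagramContinuousDualTheorem}, by showing that the right vertical map $c^\bullet$ is injective on cohomology. Granting this, commutativity of the diagram yields ${\rm H}^k(c^\bullet)\circ {\rm H}^k\bigl((\gamma^\bullet)^\ast\bigr)={\rm H}^k(\gamma^{\bullet,\vee})\circ {\rm H}^k(v)$, where $v$ denotes the left vertical map. The right-hand side vanishes by Proposition \ref{PropositionVanishingDualCohomology}, and injectivity of ${\rm H}^k(c^\bullet)$ then forces ${\rm H}^k\bigl((\gamma^\bullet)^\ast\bigr)=0$, as claimed.

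To establish injectivity of $c^\bullet$ on cohomology, I would observe that $c^\bullet=\lim_n c_n^\bullet$, where each $c_n^\bullet$ is the map from Corollary \ref{CorollaryHcnInjective} and is already known to induce injective maps on cohomology. The task thus reduces to controlling how cohomology interacts with the projective limit in $n$. Here I would apply Lemma \ref{LemmaInjectivityOnnCohomology} to the projective system of source complexes $A_n^\bullet := \colim_s (B_{n,s}^\bullet)^\vee$, which produces a natural injection ${\rm H}^k\bigl(\lim_n A_n^\bullet\bigr) \hookrightarrow \lim_n {\rm H}^k(A_n^\bullet)$. Since limits of $\fK$-vector spaces preserve kernels, $\lim_n {\rm H}^k(c_n^\bullet)$ is also injective; composing with the above injection gives injectivity of ${\rm H}^k(c^\bullet)$, completing the argument.

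The main obstacle will be verifying the splitting hypotheses of Lemma \ref{LemmaInjectivityOnnCohomology} for the system $A_n^\bullet$: one needs sections of the canonical projections $\lim_n A_n^\bullet \to A_n^\bullet$ together with a splitting of the limit differential in each degree, chosen compatibly with the Koszul differentials induced by the $\fn^-$-action. The strategy mirrors the proof of Corollary \ref{CorollaryHcnInjective}: the $\fK$-linear projections afforded by the filtration $F^\bullet \subset \fW_k^{\rm alg}$ commute with the $\fn^-$-action used to define the differentials $d_{n,s}^{B,k}$, so after dualizing one obtains the required compatible splittings in the $n$-direction by the same formal mechanism that was used for the $s$-direction. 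Once these splittings are in place, the Lemma applies and the corollary follows from the diagram chase sketched in the first paragraph.
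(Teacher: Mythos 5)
Your proposal is correct and follows essentially the same route as the paper: reduce to injectivity of ${\rm H}^k(c^\bullet)$ via the commutative diagram (\ref{DiagramContinuousDualTheorem}) and Proposition \ref{PropositionVanishingDualCohomology}, then obtain that injectivity by applying Lemma \ref{LemmaInjectivityOnnCohomology} to the system $\colim_s (B_{n,s}^\bullet)^\vee$ together with Corollary \ref{CorollaryHcnInjective} and the fact that projective limits of $\fK$-vector spaces preserve kernels. The splitting hypotheses you flag are indeed the point the paper also addresses, using the grading $U(\fn^-)\cong \Sym(\fn^-)$ to produce the sections in the $n$-direction.
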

\begin{proof}
	
	We apply the Lemma \ref{LemmaInjectivityOnnCohomology} to the projective system 
	\[
	\colim_{s\in \N} \left(B^\bullet_{n,s}\right)\dual= \left(U(\fn^-)^{\leqslant n-\bullet}\right)\dual \tensor_{K} \left(\fW_\kappa^{alg}\tensor_K\left(\wedge_S^\bullet \fn^-\right)\dual\right).
	\] The discussion  about splittings in the proof of Lemma \ref{LemmaInjectivityOnsCohomology} applies here as well, and by the isomorphism $U(\fn^-)\cong \Sym_B\left( \fn^-\right)$ we get the sections $\left(U(\fn^-)^{\leqslant n-k}\right)\dual\rightarrow \left(U(\fn^-)\right)\dual$. Then 
\[
{\rm H}^k\left(\lim_{n\in \N}\colim_{s\in \N}\left(B_{n,s}^{\bullet,\vee}\right) \right) \hookrightarrow \lim_{n\in \N} {\rm H}^k\left(\colim_{s\in \N} \left(B_{n,s}^{\bullet,\vee}\right)\right).
\]Using that the limit commutes with kernels and the fact that ${\rm H}^k(c_n^\bullet)$ is injective all $n,k\in \N$ by the Corollary \ref{CorollaryHcnInjective}, one gets that
	\small
	\[
	 \lim_{n\in \N} {\rm H}^k\left(\colim_{s\in \N} (B_{n,s}^{\bullet})^\vee\right)\hookrightarrow \lim_{n\in \N} \left(\colim_{s\in \N } B_{n,s}^\bullet\right)\dual =\left(\cD^{\bullet}\right)^\ast
	\]
	\normalsize
	is injective. We conclude as was explained at the beginning of this section: we use the diagram (\ref{DiagramContinuousDualTheorem}), the vanishing of ${\rm H}((\gamma^{\bullet})^\vee)$ and the injectivity of ${\rm H}(c^\bullet)$.
\end{proof}

\medskip

\subsubsection{The spectral sequence argument of 5) b) of section \S \ref{sec:outline}.}

Suppose we have two abelian categories with a left exact functor $\pi_\ast: \cA\lra \cB$, and a morphism of bounded below by $0$ complexes
$\phi^\bullet: T^\bullet \lra Q^\bullet$, such that for every $n\in \Z$, ${\rm H}^n(\phi^\bullet)=0$, where ${\rm H}^n(\phi^\bullet):{\rm H}^n(T^\bullet)\lra {\rm H}^n(Q^\bullet)$.

\begin{lemma}
\label{lemma:ssequence}
Suppose that $\cA$ has enough injective objects, for all $i>0, j\ge 0$ we have $R^i\pi_\ast(T^j)=R^i\pi_\ast(Q^j)=0$.
Then ${\rm H}^n(\pi_\ast(\phi^\bullet))=0$, where ${\rm H}^n\bigl(\pi_\ast(\phi^\bullet)\bigr): {\rm H}^n\bigl(\pi_\ast(T^\bullet)\bigr)\lra {\rm H}^n\bigl(\pi_\ast (Q^\bullet)\bigr)$, for all $n\ge 0$.
\end{lemma}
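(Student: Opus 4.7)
The plan is to use the two standard hypercohomology spectral sequences converging to $R^{p+q}\pi_\ast(T^\bullet)$ (and the same for $Q^\bullet$), and compare them via the functoriality of $\phi^\bullet$. First I would fix injective Cartan--Eilenberg resolutions of $T^\bullet$ and $Q^\bullet$ (using that $\cA$ has enough injectives), so that the hyperderived functors $R\pi_\ast$ are well defined and admit the usual pair of spectral sequences:
$$
{}^{I}E_1^{p,q}=R^q\pi_\ast(T^p)\Longrightarrow R^{p+q}\pi_\ast(T^\bullet),\qquad {}^{II}E_2^{p,q}=R^p\pi_\ast\bigl(H^q(T^\bullet)\bigr)\Longrightarrow R^{p+q}\pi_\ast(T^\bullet),
$$
and similarly for $Q^\bullet$. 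The morphism $\phi^\bullet$ induces a morphism of spectral sequences compatible with both filtrations on the abutment.

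Next, I would use the acyclicity hypothesis $R^i\pi_\ast(T^j)=R^i\pi_\ast(Q^j)=0$ for $i>0$ to observe that ${}^{I}E_1^{p,q}=0$ for $q>0$, so ${}^{I}E_1$ is concentrated in the row $q=0$; hence it degenerates at $E_1$ and gives a canonical isomorphism
$$
R^n\pi_\ast(T^\bullet)\cong H^n\bigl(\pi_\ast T^\bullet\bigr),
$$
and the same for $Q^\bullet$. Under these identifications, the morphism on abutments induced by $\phi^\bullet$ is exactly $H^n(\pi_\ast\phi^\bullet)$, so it suffices to prove that this morphism vanishes for every $n\ge 0$.

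Now I would turn to the second spectral sequence ${}^{II}E_2$. On the $E_2$ page, the morphism induced by $\phi^\bullet$ is
$$
R^p\pi_\ast\bigl(H^q(\phi^\bullet)\bigr)\colon R^p\pi_\ast\bigl(H^q(T^\bullet)\bigr)\lra R^p\pi_\ast\bigl(H^q(Q^\bullet)\bigr),
$$
which is the zero map by the hypothesis $H^q(\phi^\bullet)=0$ and the functoriality of $R^p\pi_\ast$. By induction on the page, the morphism between the ${}^{II}E_r$ pages is then zero for every $r\ge 2$, and therefore the morphism between the ${}^{II}E_\infty$ pages vanishes for all $(p,q)$.

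Finally I would translate this conclusion about ${}^{II}E_\infty$ into the desired statement about the abutment. The ${}^{II}E_2$ spectral sequence has $R^p\pi_\ast(H^q)=0$ whenever $p<0$ or $q<0$ (the complexes are bounded below by $0$ and $\pi_\ast$ is left exact), so the decreasing filtration on $R^n\pi_\ast(T^\bullet)$ has only finitely many steps ($0\le p\le n$). The morphism $H^n(\pi_\ast\phi^\bullet)$ respects this filtration on source and target, and since it is zero on each graded piece ${}^{II}E_\infty^{p,n-p}$, a trivial induction on the filtration length forces $H^n(\pi_\ast\phi^\bullet)=0$, as required. The only delicate point is this last step: the vanishing at $E_\infty$ only gives the vanishing on graded pieces, and one needs the finiteness of the filtration (guaranteed by the bounded-below assumption) to conclude the vanishing of the map on the abutment itself.
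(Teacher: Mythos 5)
Your strategy is the same as the paper's: Cartan--Eilenberg resolutions, the first hypercohomology spectral sequence plus the acyclicity hypothesis to identify the abutment with ${\rm H}^n(\pi_\ast T^\bullet)$ (resp.\ ${\rm H}^n(\pi_\ast Q^\bullet)$), and the second spectral sequence, on whose $E_2$-page the induced map is $R^p\pi_\ast({\rm H}^q(\phi^\bullet))=0$. Everything up to and including the vanishing of the morphism on the ${}^{II}E_\infty$-pages is correct and is essentially what the paper writes.

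The problem is the step you yourself flag as delicate and then wave away. A morphism of filtered objects that vanishes on every graded piece need \emph{not} vanish, even when the filtrations are finite, and there is no ``induction on the filtration length'': the map is applied only once, so vanishing on the graded pieces only yields that ${\rm H}^n(\pi_\ast\phi^\bullet)$ sends $F^p$ into $F^{p+1}$, i.e., strictly decreases the filtration. Concretely, take source and target $k^2$ with $F^0=k^2\supset F^1=\langle e_1\rangle\supset F^2=0$ and $f(e_2)=e_1$, $f(e_1)=0$: this is filtered, zero on both graded pieces, and nonzero. (The paper's own proof makes exactly the same jump from $\alpha'=0$ to $\beta'=0$, so you have reproduced its argument faithfully, but the step is not justified as written.) Moreover the gap cannot be filled without extra input, because the lemma as stated is false: let $0\to\cG\to\cE\to\cF\to 0$ be a non-split extension of sheaves on a space $X$ whose connecting map $\delta\colon{\rm H}^0(X,\cF)\to{\rm H}^1(X,\cG)$ is nonzero, let $Q^\bullet$ be an injective resolution of $\cG$, let $T^\bullet$ be an injective resolution of $\cF$ placed so that $\cF$ sits in cohomological degree $1$, and let $\phi^\bullet$ be a chain map representing the extension class (which exists since $Q^\bullet$ is a bounded-below complex of injectives). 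Then ${\rm H}^n(\phi^\bullet)=0$ for all $n$ for degree reasons and all terms are $\Gamma$-acyclic, yet ${\rm H}^1(\Gamma(\phi^\bullet))=\delta\neq 0$. To rescue the conclusion one needs something stronger than ${\rm H}^q(\phi^\bullet)=0$ --- for instance that $\phi^\bullet$ is null-homotopic or vanishes in the derived category, or that for each $n$ at most one of the graded pieces ${}^{II}E_\infty^{p,n-p}$ of the target is nonzero --- and you should identify which of these actually holds in the intended application.
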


\begin{proof}
Let $I^{\bullet, \bullet}, J^{\bullet, \bullet}$ be Cartan-Eilenberg resolutions of $T^\bullet$ and $Q^\bullet$ respectively, with
$\varphi^{\bullet, \bullet}:T^{\bullet,\bullet}\lra J^{\bullet, \bullet}$ a morphism of double complexes compatible with $\phi^\bullet$ (see \cite{stack_proj}).

For each of the double complexes $\pi_\ast(I^{\bullet,\bullet})$ respectively $\pi_\ast(J^{\bullet, \bullet})$ consider the spectral sequences, with the natural morphisms induced by $\varphi^{\bullet, \bullet}$:

\[
\begin{tikzcd}[column sep=1, row sep=10]
	E_2^{p,q}\ar[d, "\alpha"]&:={\rm H}^p_{\rm vert}\bigl({\rm H}^q_{\rm hor}(\pi_\ast(I^{\bullet,\bullet})\bigr)&\Rightarrow&{\rm H}^{p+q}\bigl({\rm Total}(\pi_\ast(I^{\bullet,\bullet}))\bigr)\ar[d, "\beta"]\\
	E_2^{p,q}&:={\rm H}^p_{\rm vert}\bigl({\rm H}^q_{\rm hor}(\pi_\ast(J^{\bullet,\bullet})\bigr)&\Rightarrow &{\rm H}^{p+q}\bigl({\rm Total}(\pi_\ast(J^{\bullet,\bullet}))\bigr)
\end{tikzcd}
\]
where 
$\alpha:={\rm H}^p_{\rm vert}\bigl({\rm H}^q_{\rm hor}(\pi_\ast(\varphi^{\bullet,\bullet}))\bigr)$ and $\beta:={\rm H}^{p+q}\bigl({\rm Total}(\pi_\ast(\varphi^{\bullet,\bullet}))\bigr)$.

Let $M^\bullet$ be any one of the complexes $T^\bullet, Q^\bullet$ and $K^{\bullet,\bullet}$ its Cartan-Eilenberg resolution as above.
Then we have a natural morphism of complexes $M^\bullet\lra K^{0,\bullet}$ and we consider the morphism of complexes
obtained by applying $\pi_\ast$, i.e. $\pi_\ast(M^{\bullet})\lra \pi_\ast(K^{0,\bullet})$ and the double complex $\pi_\ast(K^{\bullet,\bullet})$.

The spectral sequence associated to this double complex, considered above is:
$$
(\ast)\quad E_2^{p,q}:={\rm H}^p_{\rm vert}\bigl({\rm H}^q_{\rm hor}(\pi_\ast(K^{\bullet,\bullet}))\bigr)\Rightarrow {\rm H}^{p+q}\bigl({\rm Total}(\pi_\ast(K^{\bullet,\bullet}))\bigr).
$$
Let us analyze this spectral sequence. We have that for every $q\ge 0$, the map ${\rm H}^q(M^{\bullet})\lra {\rm H}_{\rm hor}^q(K^{\bullet,\bullet})$ realizes the complex ${\rm H}_{\rm hor}^q(K^{\bullet,\bullet})$ as an injective resolution of ${\rm H}^q(M^{\bullet})$, moreover by the main properties of Cartan-Eilenberg resolutions, we have isomorphisms of complexes 
$$
\pi_{\ast}\bigl({\rm H}_{\rm hor}^q(K^{\bullet,\bullet})\bigr)\cong {\rm H}_{\rm hor}^q\bigl(\pi_\ast(K^{\bullet,\bullet})\bigr).
$$ 
Therefore 
$$
E_2^{p,q}={\rm H}_{\rm vert}^p\bigl({\rm H}_{\rm hor}^q(\pi_\ast(K^{\bullet,\bullet}))\bigr)\cong {\rm H}_{\rm vert}^p\bigl(\pi_\ast({\rm H}_{\rm hor}^q(K^{\bullet,\bullet}))\bigr)\cong R^p\pi_\ast\bigl({\rm H}^q(M^{\bullet})\bigr).
$$
On the other hand, for every $s\ge 0$, the map of complexes: $M^s\lra K^{\bullet,s}$ realizes $K^{\bullet,s}$ as an injective resolution of 
$M^s$. Therefore for every $i>0$, ${\rm H}^i\bigl(\pi_\ast(K^{\bullet,s})\bigr)\cong R^i\pi_\ast(M^s)=0$ by the assumptions of the lemma.
Therefore, thinking of $\pi_\ast(M^s)$ as a complex concentrated in degree $0$, the morphism  $\pi_\ast(M^s)\lra \pi_{\ast}(K^{\bullet,s})$ is a quasi-isomorphism of complexes for every $s\ge 0$. This implies that the natural morphism of complexes
$M^\bullet \lra {\rm Total}(K^{\bullet, \bullet})$ is a quasi-isomorphism, therefore for every $p,q\ge 0$ we have isomorphisms:
${\rm H}^{p+q}\bigl({\rm Total}(\pi_\ast(K^{\bullet,\bullet}))\bigr)\cong {\rm H}^{p+q}(\pi_\ast(M^{\bullet}))$.

We showed that the spectral sequence $(\ast)$ reduces to the spectral sequence:
$$
E_2^{p,q}:=R^p\pi_\ast\bigl({\rm H}^q(M^{\bullet})\bigr)\Rightarrow {\rm H}^{p+q}\bigl(\pi_\ast(M^\bullet))\bigr).
$$
Therefore, returning to our morphism of complexes $\phi^\bullet: T^\bullet\lra Q^\bullet$, the spectral sequences become
\[
\begin{tikzcd}[column sep=1, row sep=12]
	E_2^{p,q}\ar[d, "\alpha'"]&:=R^p\pi_\ast\bigl({\rm H}^q(T^{\bullet})\bigr)&\Rightarrow &{\rm H}^{p+q}\bigl((\pi_\ast(T^{\bullet}))\bigr)\ar[d, "\beta'"]\\
	E_2^{p,q}&:=R^p\pi_\ast\bigl({\rm H}^q(Q^{\bullet})\bigr)&\Rightarrow &{\rm H}^{p+q}\bigl((\pi_\ast(Q^{\bullet}))\bigr)
\end{tikzcd}
\]
where $\alpha'=R^p\pi_\ast({\rm H}^q(\phi^\bullet))=0$, therefore $\beta'={\rm H}^{p+q}(\pi_\ast(\phi^\bullet))=0$, which proves the lemma.

\end{proof}

This concludes the details section and therefore our result is proved, granting Assumption \ref{ass:basicassumption}.

\section{The classical case for general PEL Shimura varieties.}
\label{sec:classical}

Our first example is the classical situation of \cite{faltings_chai}, presented more generally in \cite{kwlan_polo}, i.e. the situation where 
we start with a finite dimensional representation of our reductive, algebraic group $G$.

We will briefly recall the geometric set-up, which is presented with all details in chapter 2 of \cite{kwlan_polo}.
Let $\cO$ be an order in a finite dimensional semi-simple $\Q$-algebra with a positive involution $\ast$.
We denote by $\Z(1)=2\pi i\Z={\rm Ker}\bigl(\exp:\C\to \C^\ast\bigr)$, where we denoted $i$ a complex number with $i^2=-1$.
If $M$ is a $\Z$-module, we denote $M(1):=M\otimes_{\Z}\Z(1)$.

\begin{definition}
A PEL-type $\cO$-lattice is a triple $(L, \langle\ ,\ \rangle, h_0)$, where 

i) $L$ is an $\cO$-lattice

ii) An alternating pairing $\langle\ ,\ \rangle:L\times L\lra \Z(1)$ satisfying $\langle bx,y\rangle=\langle x, b^\ast y\rangle$, for all
$b\in \cO, x,y\in L$.

iii) $h_0$ is an $\R$-algebra homomorphism $h_0:\C\to {\rm End}_{\cO\otimes_{\Z}\R}(L\otimes_{\Z}\R)$ satisfying:

a) $\langle h_0(z)x, y\rangle=\langle x, h_0(z^c)y\rangle$, for all $z\in \C, x,y\in L$. where $z^c$ is the complex conjugate of $z$.

b) the $\R$-bilinear pairing $(2\pi i)^{-1}\langle \bullet, h_0(i)\bullet\rangle$ is symmetric and positive definite. 
\end{definition}

We recall that a tuple $(\cO, \ast, L, \langle\ ,\ \rangle, h_0)$ is called an integral PEL-datum. It allows us to define the group functor $G$ on $\Z$-algebras: 
$$
G(R):=\{(g,r)\in {\rm Aut}_{\cO\otimes R}(L\otimes R)\times \mathbb{G}_m(R) \ | \ \langle(gx,gy\rangle=r\langle x, y \rangle, \mbox{ for all } x,y\in L\otimes R\},
$$
for $R$ a $\Z$-algebra.

We also recall that $h_0$ defines a Hodge structure of weight  $-1$ on $L$, i.e. $L\otimes_{\Z}\C\cong V_0\oplus V_0^c$ such that
$h_0(z)$ acts on $V_0$ by $1\otimes z$ and on $V_0^c$ by $1\otimes z^c$. As $V_0$ is maximal totally isotropic with respect to $\langle \ ,\ \rangle$, we have a canonical isomorphism $V_0^c\cong V_0^\vee(1)$.

We fix a prime integer $p>0$ called {\it good} if it satisfies:

1) $p$ is unramified in $\cO$

2) $p\neq 2$ if $\cO\otimes \Q$ involves simple factors of type D.

3) If denote $L^\#:=\{x\in L\otimes \Q\ | \ \langle x, y\rangle\in \Z(1), \mbox{ for all } y\in L\}$, i.e. the dual lattice to $L$ with respect to $\langle\ ,\ \rangle$, then $p\nmid [L^\#:L]$.

We fix a good choice of $p$. There is a finite field extension $F_0'$ of the reflex field $F_0$ of the $\cO\otimes \C$-module 
$V_0$, unramified at $p$ and an $\cO_{F_0'}$-torsion free $\cO\otimes_{\Z}\cO_{F_0'}$-module $L_0$, such that $L_0\otimes_{\cO_{F_0'}}\C\cong V_0$. We fix a choice of $F_0'$ and $L_0$ and denote by 
$$
\langle \ ,\ \rangle_{\rm can}:\bigl(L_0\oplus L_0^\vee(1)\bigr)\times \bigl(L_0\oplus L_0^\vee(1)\bigr)\lra \cO_{F_0'}(1)
$$
the pairing defined by: $\langle (x_1,f_1), (x_2,f_2)\rangle=f_2(x_1)-f_1(x_2)$, for
$(x_i,f_i)\in L_0\oplus L_0^\vee(1)$, $i=1,2.$

There exists a DVR $R_1$ over $\cO_{F_0'}$ satisfying the following:

a) There exists an isomorphism 
$$
\bigl(L\otimes_{\Z}R_1, \langle\ ,\ \rangle\bigr)\cong \bigl(L_0\oplus L_0^\vee(1), \langle\ ,\ \rangle_{\rm can}\bigr)\otimes_{\cO_{F_0'}}R_1,
$$
which allows us to write for all $R_1$-algebra $R$
$$
(G\otimes R_1)(R):=\{(g,r)\in {\rm Aut}_{\cO\otimes R}((L_0\oplus L_0^\vee(1))\otimes_{\cO_{F_0'}} R)\times \mathbb{G}_m(R) \ | \ \langle(gx,gy\rangle_{\rm can}=r\langle x, y \rangle_{\rm can}, 
$$
for all  $x,y\in (L_0\oplus L_0^\vee(1))\otimes R$.

If we denote
$$
P(R):\{(g,r)\in (G\otimes R_1)(R)\ | \ g\bigl(L_0^\vee(1)\otimes_{\cO_{F_0'}}R\bigr)=L_0^\vee(1)\otimes_{\cO_{F_0'}}R\}
$$
and 
$$
M(R):={\rm Aut}_{\cO\otimes R}\bigl(L_0^\vee(1)\otimes_{\cO_{F_0'}}R\bigr)\times \mathbb{G}_m(R),
$$
we may think of $M(R)$ as a quotient of $P(R)$, functorially for all $R_1$-algebra $R$.

The group functors $G\otimes R_1, P, M$ are representable by respectively a split, reductive algebraic group scheme over ${\rm Spec}(R_1)$, a parabolic subgroup scheme of it and respectively $M$ is represented by the Levi quotient of $P$. 

\bigskip

\subsection{PEL-type Shimura varieties and automorphic bundles.}

We fix $\cH$ a {\bf neat}  open compact subgroup of $G(\widehat{\Z}^p)$. The data $\bigl(L, \langle\ ,\ \rangle, h_0, \cH \bigr)$ defines a moduli problem $M_{\cH}$ over $S_0:={\rm Spec}\bigl(\cO_{F_0', (p)}\bigr)$, parameterizing tuples $(A,\lambda, \iota, \alpha_{\cH})$ over schemes $S$ over $S_0$, see \cite{kwlan_polo}, \S 2.2, where

1) $A\to S$ is an abelian scheme

2) $\lambda:A\to A^\vee$ is a polarization of degree prime to $p$.

3) $\iota:\cO\hookrightarrow {\rm End}_S(A)$ is an $\cO$-endomorphism structure on $A$.

4) ${\rm Lie}(A/S)$ with its $\cO\otimes_{\Z}\Z_{(p)}$-action satisfies the determinant condition given by $(L\otimes_{\Z}\cO_S, \langle\ ,\ \rangle, h_0)$.

5) $\alpha_{\cH}$ is an integral level $\cH$-structure on $(A, \lambda, \iota)$ of type $(L\otimes\widehat{\Z}^p, \langle\ ,\ \rangle)$.

Then $M_{\cH}$ is represented by a smooth, quasi-projective scheme over $S_0$, we denote by $\cM_{\cH}$ the adic analytic space over $S_0':={\rm Spa}(K, \cO_K)$, where $K$ is the $p$-adic completion of $F_0'$, associated to the formal completion of $M_{\cH}$ along its special fiber. 
We use the same notation $(A,\lambda, \iota, \alpha_{\cH})$ to denote the analytification of the universal algebraic tuple, over $\cM_{\cH}$. 
We consider the relative de Rham cohomology sheaf ${\rm H}^1_{\rm dR}(A/\cM_{\cH})$ and let ${\rm H}_1^{\rm dR}(A/\cM_{\cH})$ denote its dual.
We have a natural exact sequence defining the Hodge filtration of ${\rm H}_1^{\rm dR}(A/\cM_{\cH})$
$$
0\lra \bigl({\rm Lie}(A^\vee/\cM_{\cH})\bigr)^\vee(1) \lra {\rm H}_1^{\rm dR}(A/\cM_{\cH})\lra {\rm Lie}(A/\cM_{\cH})\lra 0,
$$
and a natural alternating pairing (the Poincar\'e pairing) $$\langle\ ,\ \rangle_{\lambda}:{\rm H}_1^{\rm dR}(A/\cM_{\cH})\times {\rm H}_1^{\rm dR}(A/\cM_{\cH})\lra \cO_{\cM_{\cH}}(1).$$

\medskip

We consider the algebraic groups $G,P,M$ over $R_1$-algebras and we base-change $\cM_{\cH}$ over $S:={\rm Spa}(R_1\otimes_{\Z_{(p)}} \Q_p, R_1\otimes_{\Z_{(p)}}\Z_p)$. We now fix $K$ a finite extension of $R_1\otimes_{\Z_{(p)}}\Q_p$) and we  define functors from the categories of algebraic representations of $G\otimes_{R_1} K$ (respectively $P\otimes_{R_1} K$, respectively $M\otimes_{R_1} K$) on finite $K$-vector spaces to  coherent sheaves on $\cM_{\cH, S}$, called sometimes automorphic vector bundles (see \cite{kwlan_polo}, \S 2.2.) We first define torsors $\cE_G, \cE_P, \cE_M$ over $\cM_{\cH,S}$ as follows:
$$
\cE_G:=\underline{\rm Isom}_{\cO\otimes\cO_{\cM_{\cH,S}}}\Bigl({\bigl(\rm H}_1^{\rm dR}(A/\cM_{\cH,S}, \langle\ ,\ \rangle_{\lambda}\bigr), \bigl((L_0\oplus L_0^\vee(1))\otimes_{R_1}\cO_{\cM_{\cH,S}}, \langle\ ,\ \rangle_{\rm can}\bigr)\Bigr).
$$

$$
\cE_P:=\underline{\rm Isom}_{\cO\otimes\cO_{\cM_{\cH,S}}}\Bigl({\bigl(\rm H}_1^{\rm dR}(A/\cM_{\cH,S}, \langle\ ,\ \rangle_{\lambda},
({\rm Lie}(A^\vee/\cM_{\cH,S})^\vee(1)\bigr), 
$$
$$
\bigl((L_0\oplus L_0^\vee(1))\otimes_{R_1}\cO_{\cM_{\cH,S}}, \langle\ ,\ \rangle_{\rm can}, L_0^\vee(1)\otimes_{R_1}\cO_{\cM_{\cH,S}}\bigr)\Bigr).
$$
and
$$
\cE_M:=\underline{\rm Isom}_{\cO\otimes\cO_{\cM_{\cH,S}}}\Bigl(\bigl({\rm Lie}(A^\vee/\cM_{\cH,S})^\vee(1)\bigr), \bigl(L_0^\vee(1)\otimes_{R_1}\cO_{\cM_{\cH,S}}\bigr)\Bigr).
$$

\begin{definition}
Let $Q\in \{G,P,M\}$ and $W$ an algebraic representation of $Q\otimes K$ on a finite dimensional $K$-vector space.
Then let $T:={\rm Spa}(R, R^+)$ be an affinoid adic space over $S$
$$
\cE_{Q,T}(W):=(\cE_Q\times_ST)\times^{Q_T}(W\otimes_{K}R),
$$
the adic push-out of the torsor by $W$. It is a coherent sheave on $\cM_{\cH,S}$.
\end{definition}

These functors have the following properties (see Lemma 2.24 in \cite{kwlan_polo}).

1) The assignement and $\cE_G(\ )$ (respectively $\cE_P(\ )$, respectively $\cE_M(\ )$) define exact functors from the category 
of algebraic representations of $G\otimes K$ on finite dimensional $K$-vector spaces (respectively $P\otimes K$, respectively $M\otimes K$) to the category of coherent sheaves on $\cM_{\cH,S}$.

2) If $W$ is a $G\otimes K$-representation and we denote $W|_P$ its restriction to representations of $P\otimes K$, then we have canonical isomorphisms $\cE_G(W)\cong \cE_P(W|_P)$.

3) If now $W$ is a representation of $M\otimes K$ and we consider it as a representation of $P\otimes K$ via the projection
$P\to M$, we have canonical isomorphisms $\cE_P(W)\cong \cE_M(W)$. 

4) If $W$ is a $P\otimes K$ representation and it has a finite separable and exhausting filtration $(F^a(W))_{a\in \Z}$ by sub-$P\otimes K$-representations such that the graded quotients are $M\otimes K$-representations, then $\cE_P(W)$ has a finite separable and exhausting filtration by coherent sub-sheaves $(\cE_P(F^a(W)))_{a\in \Z}$, with graded quotients $\cE_M(Gr^a(W))$.

\bigskip

The next step is to consider Lie algebras and Verma modules. Let $\fg:={\rm Lie}(G\otimes K), \frak{p}:={\rm Lie}(P), \fm:={\rm Lie}(M)$ the Lie algebras of our groups, and let $\fn^+, \fn^-$ be the unipotent Lie algebras. We recall that they are both abelian.

If $W$ is an algebraic representation of $G\otimes K$, as above, the $W^\vee$ has a natural structure of a $(\fg, P)$-module and moreover
$U(\fg)\otimes_{U(\fp)}W^\vee$, with its natural $(\fg, P)$-module structure is a Verma module (see   Lemma 4.2 in \cite{kwlan_polo}).

We have the following: let us recall that $\fn^-$ and $(\fn^-)^\vee$ are naturally $P$-representations, same as $(U(\fn^-)^m)^\vee$, for all $m\ge 0$.

Corollary 4.13 and Lemma 4.18 in \cite{kwlan_polo} imply that $\cE_P((\fn^-)^\vee)\cong \Omega^1_{\cM_{\cH}/K}$ and
$\cE_P\bigl((U(\fn^-)^m)^\vee\bigr)\cong \cP^m(\cM_{\cH}/K)$, where we recall that $\cP^m(\cM_{\cH}/K)$ is the adic $m$-th infinitessimal neighbourhood of the diagonal $\Delta:\cM_{\cH}\lra \cM_{\cH}\times_{S}\cM_{\cH}$.

Therefore, if $W$ is a finite, algebraic representation of $G\otimes K$ and we denote by $\WW:=\cE_G(W)$, then $\WW$ is endowed with a "Gauss-Manin" integrable connection $\nabla$, and Assumption \ref{ass:basicassumption} is satisfied, if we replace the
analytic Borel subgroup $\cB$ by the (algebraic) parabolic $P$.

Finally, there is another step in this construction in \cite{kwlan_polo}, that is to compactify $\cM_{\cH}$, i.e. let $\cX$ be the adic space associated to a toroidal compactification $M_{\cH,S}^{\rm tor,\Sigma}$, of $M_{\cH,S}$, depending on a compatible collection $\Sigma$ of cone decompositions. Then $\cX\to S$ is a smooth, projective morphism of adic spaces. Moreover if denote
$D:=\cX\backslash \cM_{\cH,S}$, this is a relative Cartier divisor  with simple normal crossings. We denote from now on $\cX:=(\cX, M)$ the log adic space associated to the pair $(\cX, D)$. Similarly, the universal abelian scheme $A\to \cM_{\cH,S}$ extends to a semi-abelian scheme, denoted still $A\to \cX$ and this can be compactified to obtain a proper morphism $f:\overline{A}\to \cX$ such that $f^{-1}(D)\subset \overline{A}$ is a relative Cartied divisor with normal crossings, which defines a fine log structure on $\overline{A}$. We denote the log adic space thus obtained by $\overline{A}$. Then the morphism of log adic spaces $f:\overline{A}\to X$ is log smooth and proper and we denote ${\rm H}^1_{\rm dR}(\overline{A}/\cX)$ the first log de Rham cohomology space of $\overline{A}/\cX$. It is locally free $\cO_{\cX}$-module of finite rank and it canonically extends ${\rm H}^i(A/\cM_{\cH,S})$, with its Hodge filtration and Gauss-Manin connection to $X$.
Moreover if denote by $\omega^1_{\oA/\cX}$, as usual, the sheaf of logarithmic differentials of $\oA/\cX$, then  the Hodge filtration of 
${\rm H}^1_{\rm dR}(\oA/\cX)$ is $\omega^1_{\oA/\cX}$.

Then one can re-define the torsors $\cE_G, \cE_P, \cE_M$ over $\cX$, instead of over $\cM_{\cH,S}$, denoted $\oE_G, \oE_P, \oE_M$, using 
${\rm H}_1^{\rm dR}(\oA/\cX):=\bigl({\rm H}^1_{\rm dR}(\oA/\cX)\bigr)^\vee$ and $\omega_{\oA^\vee/\cX}^1$ instead of 
${\rm H}_1^{\rm dR}(A/\cM_{\cH,S})$ and $\bigl({\rm Lie}(A^\vee/\cM_{\cH,S})\bigr)^\vee$. Moreover, the Assumption \ref{ass:basicassumption} is satisfied for a finite representation of $G\otimes K$. 

\bigskip

\subsection{BGG-decomposition in the classical setting.}

\bigskip

Let $W$ be a finite, irreducible representation of $G\otimes K$ and we denote $\WW:=\oE_G(W)$. This coherent sheaf on $\cX$ has an integrable logarithmic connection $\nabla:\WW \lra \WW\otimes_{\cO_{\cX}}\omega^1_{\oA/\cX}$, therefore we have a de Rham complex on $\cX$, $\WW^\bullet: \WW\otimes_{\cO_{\cX}}\omega^\bullet_{\oA/\cX}$.

On the other hand, we have a natural structure of $(\fg, P)$-module on the Koszul complex:
$$
\cC^\bullet=\cD^\bullet: U(\fg)\otimes_{U(\mathfrak{p})}\bigl(W^\vee\otimes_K\wedge^\bullet (\fg/\mathfrak{p})\bigr),
$$
which is a resolution of $W^\vee$. 

Let $Z(\fg)$ denote the center of $U(\fg)$ and consider the sub-complex of $\cC^\bullet$, $\cV^\bullet$ defined as follows.
Let $k^\vee$ be the character of the Cartan sub-algebra $\fh$ of $\fg$ which gives the action of $\fh$ on $W^\vee$, and let
$\chi_{k^\vee}$ be the associated character of $Z(\fg)$. Then $\cV^\bullet$ is the generalized eigenspace of $\cC^\bullet$ for 
the elements $\sigma\in Z(\fg)$, with eigenvalues $\chi_{k^\vee}(\sigma)$. Moreover, $\cV^\bullet$ is also a projective resolution of $W^\vee$, i.e. if we denote by $\cU^\bullet:=\cC^\bullet/\cV^\bullet$, we have an exact sequence of complexes (which is canonically split)
$$
0\lra \cV^\bullet \stackrel{\alpha}{\lra} \cC^\bullet \stackrel{\gamma}{\lra} \cU^\bullet\lra 0
$$ 
such that $\alpha$ is a quasi-isomorphism, therefore $\gamma$ induces $0$ in homology.

Now we dualize this sequence of complexes, i.e. consider the sequence
$$
(\ast)\quad 0\lra (\cU^\bullet)^\vee \stackrel{\gamma^\vee}{\lra}(\cC^\bullet)^\vee\stackrel{\alpha^\vee}{\lra}(\cV^\bullet)^\vee\lra 0,
$$ 
where we know $\gamma^\vee$ still induces $0$ in cohomology, i.e ${\rm H}^i(\gamma^\vee)=0$ for all $i\ge 0$.

We now push-out the torsor $\overline{\cE}_P$ by the exact sequence of $P$-representations $(\ast)$ above and obtain an exact sequence of sheaves on $\cX$ of the form:
$$
(\ast\ast)\quad 0\lra L(Q^\bullet)\stackrel{\gamma^\vee}{\lra}L(\WW\otimes_{\cO_{\cX}}\omega^\bullet_{\cX/S})\stackrel{\alpha^\vee}{\lra} L(E^\bullet)\lra 0
$$
where $L$ is the linearization functor (see chapter 5) and $L(E^\bullet)$ is the complex of sheaves associated to $(\cV^\bullet)^\vee$, i.e. $E^\bullet:={\rm BGG}^\bullet$.

We wish to remove the linearizations and show that the morphism of differential operators attached to $\alpha^\vee$ is a quasi-isomorphism of complexes. 

For this we consider the exact sequence of complexes $(\ast\ast)$ as an exact sequence of complexes of crystals on the log infinitesimal site of $\cX/S$, denoted $\XSinf$ (see section 5) and apply the functor $\pi_\ast$, defined in section  \ref{sec:delin}, to it. We obtain the exact sequence of complexes of sheaves on $\cX$
$$
0\lra Q^\bullet\stackrel{\pi_\ast(\gamma^\vee)}{\lra} \WW\otimes_{\cO_{\cX}}\omega^1_{\cX/S}\stackrel{\pi_\ast(\alpha^\vee)}{\lra} E^\bullet\lra 0
$$
The sequence is either exact because $\pi_\ast$ is left exact and ${\rm R}^i\pi_\ast(L(\ -\ ))=0$ for $i\ge 1$, or because the sequence is split.
By Corollary \ref{CorollaryVanishingContinuousDualCohomology}, ${\rm H}^i(\pi_\ast(\gamma^\vee))=0$ for all $i\ge 0$, therefore
$\pi_\ast(\alpha^\vee)$ is a quasi-isomorphism.

\bigskip

\section{Application to the elliptic case, i.e. for $G:=\GL_{2/\Q}$.}

In this section we follow the outline of section \ref{sec:outline} and give all the details of Assumption \ref{ass:basicassumption}, the construction of sheaves, the BGG-decomposition and the quasi-isomorphism of the BGG-complex to the de Rham complex.
In this simple case, all computations can be done explicitly and the conclusions follow easily, confirming our proofs in section \ref{sec:details}.

\subsection{de Rham sheaves for $\GL_{2/\Q}$.}

\subsubsection{Review of the VBMS construction and some variants.}
\label{sec:VBMS}

Let $p>2$ be a prime integer, and $G:=\GL_{2/\Z_p}$. We fix a Borel subgroup $B$, and a torus $T$, such that $T\subset B\subset G$. Let $\cX\lra {\rm Spa}(\Q_p, \Z_p)$ be an adic analytic space and let $(\cE, \cE^+)$ denote a pair consisting of a locally free $\cO_{\cX}$-module of rank $2$ and a sub-sheaf $\cE^+$ of $\cE$ which is a locally free $\cO_{\cX}^+$-module of rank $2$ such that $\cE^+\otimes_{\cO_{\cX}^+}\cO_{\cX}=\cE$.
 
 Let $\cI\subset \cO_{\cX}^+$ be an invertible ideal, which gives the topology of $\cO_{\cX}^+$ and let $r\ge 0$ be such that $\cI\cap \Z_p=p^r\Z_p$. We suppose that there is a section $s\in {\rm H}^0(\cX, \cE^+/\cI\cE^+)$ such that, locally on $\cX$, $s$ is an element of an $\cO_{\cX}^+/\cI$-basis $\{s, s'\}$ of $\cE^+/\cI\cE^+$. We have the following (see \cite{andreatta_iovita}) 
 
 \begin{theorem}
 The functor associating to every adic space $\gamma:\cZ\lra \cX$, such that $\gamma^\ast(\cI)$ is an invertible $\cO_{\cZ}^+$-ideal, the set $\bV_0(\cE^+, s)(\gamma:\cZ\lra \cX)$ of elements $h\in {\rm Hom}_{\cO_{\cZ}^+}\bigl(\gamma^\ast(\cE^+), \cO_{\cZ}^+ \bigr)=
 {\rm H}^0\bigl(\cZ, (\cE^+)^\vee\bigr)$ such that:
 
 a) the set $\{h\}$ can be completed, locally on $\cZ$ to a basis $\{h, h'\}$ of $\gamma^\ast\bigl(\cE^+\bigr)^\vee$;
 
 b) $h^\vee\ {\rm mod}\bigl(\gamma^\ast(\cI\cE^+)\bigr)=\gamma^\ast(s)$;\\
 is representable by the adic space $\bV_0(\cE^+,s)$.
 
 \end{theorem}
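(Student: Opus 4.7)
The plan is to prove representability by reducing to an explicit local model and then gluing. Since both conditions defining the functor are local on $\cX$ and on the test space $\cZ$, I would first restrict to an affinoid $\cX = \Spa(A, A^+)$ over which $\cE^+$ is free of rank two. By the standing hypothesis that $s$ extends locally to a basis of $\cE^+/\cI\cE^+$, I can furthermore pick the basis $\{e_1, e_2\}$ of $\cE^+$ so that the image of $e_1$ in $\cE^+/\cI\cE^+$ equals $s$. Writing $I \subset A^+$ for the ideal corresponding to $\cI$, an element $h \in \Hom(\gamma^*\cE^+, \cO_\cZ^+)$ is then a pair $(a, b)$ in the trivialized frame, and conditions (a) and (b) translate into explicit algebraic constraints on $(a, b)$.

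Next I would unpack condition (b). Given a completion $\{h, h'\}$ of $h$ to a basis of $(\cE^+)^\vee$, the dual basis $\{h^\vee, (h')^\vee\}$ of $\cE^+$ is computed by inverting the frame matrix, and the requirement $h^\vee \equiv s \bmod \cI\cE^+$ forces a specific congruence on $(a,b)$ modulo $I$; crucially, this condition depends only on $h$ and not on the auxiliary choice of $h'$, because the freedom to modify $h'$ by multiples of $h$ does not alter $h^\vee \bmod \cI\cE^+$. Together with the primitivity condition (a) (which amounts to $h$ being a direct summand, hence automatic once the reduction of $h$ is identified with the dual of a basis element mod $I$), this cuts out, inside the total space $\mathbb{V}\bigl((\cE^+)^\vee\bigr) \to \cX$, a \emph{formal tubular neighborhood} of the section determined by $s$. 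Concretely the representing affinoid is of the form $\Spa\bigl(A\langle X_1, X_2\rangle, A^+\langle X_1, X_2\rangle\bigr)$ with the $X_i$'s parametrizing the $\cI$-adic deviations of $h$ from the marking.

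Once the local model is in hand, I would check representability by the universal property on the affinoid: a map $\gamma\colon \cZ \to \bV_0(\cE^+, s)$ from an adic space with $\gamma^*\cI$ invertible is equivalent to the data $(\gamma^*X_1, \gamma^*X_2) \in \Gamma(\cZ, \cO_\cZ^+)^2$ subject to the relations defining the formal neighborhood, and this is visibly the same as specifying the $h$ meeting (a) and (b). Finally, to globalize, I would verify that the construction is intrinsically defined: a change of local trivialization of $\cE^+$ preserving the marking $s$ induces a compatible isomorphism of the corresponding local models, because both sides are functorial in the pair $(\cE^+, s)$. This gives gluing data that assembles the affinoid pieces into a global adic space over $\cX$ representing $\bV_0(\cE^+, s)$.

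The main obstacle will be the bookkeeping in condition (b): one must confirm that the dual-basis element $h^\vee$ gives a \emph{well-defined} constraint on $h$ alone (independent of $h'$), and that the resulting subfunctor of $\mathbb{V}\bigl((\cE^+)^\vee\bigr)$ is representable by an adic space rather than merely a subsheaf of sets. This reduces to the clean fact that, because $\cI$ defines the topology of $\cO_\cX^+$, the $\cI$-adic congruence conditions carve out a genuine open formal disk bundle over $\cX$, which is naturally an adic space by the standard construction of relative formal polydiscs in the Huber framework.
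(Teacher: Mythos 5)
The paper itself gives no proof of this theorem: it is quoted from \cite{andreatta_iovita} (see also \cite{halo_spectral}, \S 2--3), so there is nothing internal to compare against. Your route — trivialize $\cE^+$ locally with $e_1$ lifting $s$, translate (a) and (b) into congruences on the coordinates $(a,b)=(h(e_1),h(e_2))$, identify the resulting subfunctor with a relative formal polydisc, and glue by the intrinsic (trivialization-independent) description of the functor — is exactly the standard VBMS argument of those references, and the outline is sound.

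There is, however, one genuinely wrong assertion in your treatment of condition (b). You claim that $h^\vee \bmod \gamma^\ast(\cI\cE^+)$ does not depend on the completion $h'$ because modifying $h'$ by multiples of $h$ leaves it unchanged. It does not: replacing $h'$ by $h'+ch$ changes the dual basis vector $h^\vee$ to $h^\vee - c\,(h')^\vee$, which alters its class mod $\cI\cE^+$ whenever $c\notin\gamma^\ast\cI$. The condition in (b) must therefore be read existentially in $h'$, and the correct way to see that it constrains $h$ alone is the local computation: if some completion satisfies $h^\vee\equiv \tilde s$ with $\tilde s=e_1$ a lift of $s$, then $h(\tilde s)=h(h^\vee)-h(h^\vee-\tilde s)\equiv 1 \bmod \gamma^\ast\cI$; conversely, if $a=h(e_1)\equiv 1$, then $a$ is a unit, $h'=e_2^\vee$ completes $h$ to a basis, and $h^\vee=a^{-1}e_1\equiv e_1$. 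So (a) and (b) together are equivalent to the single condition $h(\tilde s)\equiv 1 \bmod \gamma^\ast\cI$ (independent of the lift $\tilde s$, since $h$ takes $\cI\cE^+$ into $\gamma^\ast\cI$), and this is what cuts out the polydisc $a=1+\alpha X_1$, $b=X_2$ over an affinoid where $\cI=(\alpha)$. With this correction the rest of your argument — representability of the local model as $\Spa\bigl(A\langle X_1,X_2\rangle, A^+\langle X_1,X_2\rangle\bigr)$ and gluing via functoriality in $(\cE^+,s)$ — goes through.
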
 
 
 \subsubsection{The sheaf $\WW_k$.}

 We work in the notations and hypothesis of section \S \ref{sec:VBMS}. Let $\cW$ be the weight space, i.e. the adic space over 
 ${\rm Spa}(\Q_p, \Z_p)$ associated to the complete noetherian $\Z_p$-algebra $\Lambda:=\Z_p[[\Z_p^\ast]]\cong \Z_p[\Z/(p-1)\Z][[T]]$, i.e. $\cW$ is the space of locally analytic characters on $\Z_p^\ast$. We recall the universal weight $k^{\rm univ}:\Z_p^\ast \lra \Lambda^\ast$ sending $a\in \Z_p^\ast$ to the group-like element $[a]\in\Lambda^\ast$. Suppose we have a morphism of adic spaces $ \cX\lra \cW$, such that the composition $\displaystyle \Z_p^\ast\stackrel{k^{\rm univ}}{\lra}\Lambda^\ast\subset \cO_{\cW}(\cW)^\ast\lra \bigl(\cO_{\cX}^+(\cX)\big)^\ast$ is an analytic function when restricted to $1+p^r\Z_p=(1+\cI)\cap\Z_p^\ast$. By analyticity, the inverse image of $k^{\rm univ}$ is defined on $1+\cI$.
 
 We denote by $\cT$ the adic torus, which represents the functor associating to every adic space $\gamma:\cZ\lra \cX$, such that
 $\gamma^\ast (\cI)$ is invertible, the set: $\cT(\gamma:\cZ\lra \cX):=1+\gamma^\ast(\cI)$.
 We have a natural action of $\cT$ on $\bV_0(\cE^+, s)$, defined on $\gamma:\cZ\lra \cX$-points by: $u\ast h:=uh$ with $u\in \cT(\gamma:\cZ\lra \cX)$ and $h\in \bV_0(\cE^+, s)(\gamma:\cZ\lra \cX)$.
 
 \begin{definition}
 Let $f:\bV_0(\cE^+, s)\lra \cX$  the natural morphism of adic spaces. We define: $\WW_{k^{\rm univ}}(\cE^+,s):=f_\ast\bigl(\cO_{\bV_0(\cE^+,s)}\bigr)[k^{\rm univ}]$, where if $\cG$ is an $\cO_{\cX}^+$-module with an action of $\cT$, we denote $\cG[k^{\rm univ}]$ the sub-sheaf  of $\cG$ of sections $x$ such that for all corresponding section $u$ of $\cT$, we have 
 $u\ast x=k^{\rm univ}(u)x$.
 \end{definition}  
 
 \begin{remark}
 If $\cZ:={\rm Spa}(S,S^+)\lra \cW$ is a morphism of adic spaces and $k:\Z_p^\ast\lra S^{+,\ast}$ is a $\cZ$-valued weight, as such a weight factors through the universal one, the above definition gives also the definition of $\WW_k$.  
 \end{remark}
 
 \subsubsection{An example: analytic induction.}
 \label{sec:exampleind}

  We continue working in the notations of section \ref{sec:VBMS}. Let us recall our sequence of group-schemes over $\Z_p$: $T\subset Q\subset G=\GL_2$. Let us fix a weight
  $k:\Z_p^\ast\lra (B^+)^\ast$, where $B^+$ is a $p$-adically complete, topologically of finite type $\Z_p$-algebra. We denote
  $B:=B^+\hat{\otimes}_{\Z_p}\Q_p$ and suppose $S:={\rm Spa}(B,B^+)\lra {\rm Spa}(\Q_p, \Z_p)$ is smooth (in fact we may suppose 
  $S$ is a closed polydisk in $\cW$) and moreover, we suppose that $k$ is $n$-analytic, i.e. analytic when restricted to 
  $1+p^n\Z_p$. In particular this implies that there is $u_k\in B^+$ such that $k(t)=\exp(u_k\log(t))$ for all $t\in 1+p^n\Z_p$.

  Let  $\cG$ denote the adic analytic group which represents the functor associating to ${\rm Spa}(R,R^+)\lra \cX$ the group
 $\cG(R, R^+):=\bigl\{\alpha\in G(R^+)\quad |\quad \bigl(\alpha\ {\rm mod}p^n\bigr)\in N(R^+/p^nR^+)\subset G(R^+/p^nR^+)\bigr\}$, where we denoted $N\subset Q$ the unipotent radical, and $T_n(R^+):={\rm Ker}\bigl(T(R^+)\lra T(R^+/p^nR^+)\bigr)$. We also define the adic analytic group $\cB$, which represents the functor ${\rm Spa}(R, R^+)\to \cB(R,R^+):=T_n(R^+)\cdot N(R^+)$.
 
 \begin{definition}
 We denote
 $$
 \fW_k:=\bigl({\rm Ind}_{\cB}^{\cG}\bigr)^{\rm an}(k):=\bigl\{f:\cG\lra \bA^{\rm 1, an} \quad | \mbox{ such that } f  \mbox{ is a morphism of adic analytic groups and }
 $$
 $$
 \mbox{ for every } {\rm Spa}(R, R^+)\to \cX, f(bg)=k(b)f(g), \ \mbox{for all } b\in \cB(R,R^+), g\in \cG(R,R^+)\bigr\}.
 $$
 Moreover, if $\displaystyle b= tn\in \cB(R,R^+)=T_n(R^+)\cdot N(R^+))$, with $t:=\left( \begin{array}{cc} {a} & {0} \\ {0} & {d}\end{array} \right)\in T_n(R^+)$ then $k(b):=k(ad^{-1})$.
 \end{definition}
 
We notice that we have a right, continuous action of $\cG$ on $\fW_k$.

\bigskip
\noindent
On the other hand, let us now choose $\cX=S:={\rm Spa}(B, B^+)$ seen as an adic space, and let $\cF^+:=V:=(B^+)^2:=e_1B^+\oplus e_2B^+$, seen as a free $B^+$-module of rank $2$. Let $s':=e_1\bigl({\rm mod}\ p^n\bigr)\in \cF^+/\cI$, where $\cI=p^n\cO_{\cX}^+$. Let also $\cE^+:=V^\vee$, the $B^+$-dual of $V$, and let $s:=e_1^\vee({\rm mod} p^n)$, then $(\cF^+, s')$ is a free sheaf with a marked section and let $\WW_k$ be the $B^+$-module associated to the pair $(\cF^+, s')$, constructed in the previous section

\begin{proposition}
\label{prop:fWk=WWk}
There is a natural $\cG(B, B^+)$ action on $\WW_k$, and a canonical isomorphism as $\cG(B, B^+)$-modules $\fW_k\cong \WW_k$.
\end{proposition}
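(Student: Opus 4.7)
The plan is to realise both sides as sections of the same $\cG$-equivariant line bundle on the analytic flag variety $\cB\backslash\cG$, cut out by the character $k$, and to construct an explicit intertwiner between them.

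First I would describe $\bV_0(\cF^+,s')$ and $\WW_k$ in coordinates. The $(R,R^+)$-points of $\bV_0(\cF^+,s')$ are pairs $(\alpha,\beta)\in R^+\times R^+$ with $\alpha\in 1+p^n R^+$ and $\beta\in p^n R^+$; setting $\alpha=1+p^n X$ and $\beta=p^n Y$ realises it as the closed polydisc $\Spa(B\langle X,Y\rangle,B^+\langle X,Y\rangle)$, and $\cT=1+p^n\cO^+$ scales both coordinates diagonally. Since $\alpha$ is a unit, the $k$-eigenspace decomposition shows that every such $F$ is uniquely of the form $F(\alpha,\beta)=k(\alpha)\,g(\beta/\alpha)$ for some analytic function $g$ on $|Z|\leq|p|^n$, so $\WW_k$ is canonically the $B$-module of analytic functions on this disc. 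In parallel, the Iwasawa-style factorisation $g=\beta_g\cdot\bar u(c/d)$ with $\bar u(z)=\mat{1}{0}{z}{1}$ and $\beta_g=\mat{\det g/d}{b}{0}{d}\in\cB(R,R^+)$ identifies $\cB\backslash\cG$ with the same disc, and presents any $\phi\in\fW_k$ uniquely as $\psi(z)=\phi(\bar u(z))$.

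I would then define the comparison map $\Phi\colon\fW_k\to\WW_k$ by
\[
\Phi(\phi)(\alpha,\beta):=k(\alpha)\,\phi\bigl(\bar u(\beta/\alpha)\bigr),
\]
and note that $\Phi(\phi)(u\alpha,u\beta)=k(u)\Phi(\phi)(\alpha,\beta)$ holds by a direct substitution, so $\Phi(\phi)\in\WW_k$. Under the identifications of the previous paragraph, $\Phi$ becomes the identity map on analytic functions on the disc, hence a $B$-linear isomorphism.

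Finally I would transport the right-translation action of $\cG(B,B^+)$ on $\fW_k$ along $\Phi$ to define the $\cG$-action on $\WW_k$. Using the Iwasawa decomposition of $\bar u(z)g_0$ one obtains the classical principal-series formula
\[
(\phi\cdot g_0)(\bar u(z))=k\!\left(\tfrac{\det g_0}{(zb+d)^2}\right)\phi\!\left(\bar u\!\left(\tfrac{za+c}{zb+d}\right)\right)
\]
for $g_0=\mat{a}{b}{c}{d}\in\cG$, whose transport through $\Phi$ gives an explicit formula on pairs $(\alpha,\beta)$. The main point, and the step requiring most care, is to verify that this transported action agrees with the intrinsic one on $\WW_k$ coming from the structure of $\bV_0(\cF^+,s')$ as a $\cG/H$-homogeneous space, where $H=\{g\in\cG:g\cdot e_1=e_1\}$: because central scalars $\mathrm{diag}(u,u)$ act trivially on $\fW_k$ (as $k(u/u)=1$) but scale $(\alpha,\beta)$ by $u$ under the naive left action induced from $\cF^+$, the geometric $\cG$-action must be the twisted one descending from right translation on $\cG/H$ rather than the naive linear one on $\cF^+$. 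Checking this identification, which amounts to rewriting the principal-series cocycle in the coordinates $(\alpha,\beta)$, completes the proof.
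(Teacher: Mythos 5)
Your proposal is correct in substance and proves the right statement, but it runs the argument in explicit coordinates, whereas the paper argues torsor-theoretically: it introduces the adic space $\cP_{(\cE^+,s)}$ of frames $(e_1',e_2')$ of $V^\vee\otimes R^+$ satisfying the congruence conditions mod $p^n$, observes that forgetting $e_2'$ realizes $\cP_{(\cE^+,s)}$ as an $N$-torsor over $\bV_0(\cF^+,s')$, and then identifies $\WW_k=\pi_\ast(\cO_{\bV_0})[k]$ with $\bigl(\pi_{1,\ast}(\cO_{\cP})^N\bigr)[k]=({\rm Ind}_{\cB}^{\cG})^{\rm an}(k)$. That packaging makes the left $\cG$-action on $\WW_k$ automatic (it is inherited from the action on frames) and absorbs into the torsor formalism exactly the two points you single out as delicate: the Iwasawa factorization is the choice of a section of $\cP\to\bV_0$, and the cocycle $k(\det g_0/(zb+d)^2)$ together with the ``central scalars act trivially'' issue is the statement that the action on $\WW_k$ descends from $\cP$ rather than from the naive linear action on $\cE^+$. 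Your route buys explicitness (the intertwiner $\Phi(\phi)(\alpha,\beta)=k(\alpha)\phi(\bar u(\beta/\alpha))$ and the principal-series formula are correct and useful for later computations), at the cost of having to verify the cocycle identities by hand. One normalization should be fixed: by the paper's own description $\bV_0(\cF^+,s')(R,R^+)=\{h=\alpha e_1^\vee+\beta e_2^\vee \;:\; h(e_1)\equiv 1 \bmod p^n\}$, so $\alpha\in 1+p^nR^+$ but $\beta$ ranges over all of $R^+$, not over $p^nR^+$; accordingly $\WW_k$ is the space of analytic functions of $Z=\beta/\alpha$ on the closed unit disc, consistent with $\fW_k=X^kB\langle Y\rangle$ in the following subsection, and the radius $|p|^n$ in your second paragraph should be adjusted (together with the matching normalization of the congruence subgroup $\cG$ and of $\cB\backslash\cG$) so that the two discs you identify really are the same one.
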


\begin{proof} We consider the adic space $\cP:=\cP_{(\cE+,s)}$ which represents the functor associating to every morphism of adic spaces ${\rm Spa}(R, R^+)\lra \cX$,
the set 
$$
\cP_{(\cE^+,s)}(R,R^+):=
\bigl\{(e_1', e_2'), \ R^+-\mbox{ basis of } V^\vee\otimes_{\cO_K}R^+ \mbox{ such that } 
$$
$$
 e'_2=e_2^\vee\otimes 1 \bigl({\rm mod}\ p^n(V\otimes R^+)\bigr), \ e'_1\in \bigl((e^\vee_1\otimes 1)+R^+(e_2^\vee\otimes 1)\Bigr)\ {\rm mod}\ p^n         \bigr\}.
$$
We remark that there is a natural left $\cG$-action on $\cP$. 
There is also a natural morphism of adic spaces $\varphi: \cP=\cP_{(\cE^+, s)}\lra \bV_0(\cF^+, s')$, defined on points by: let $\cZ:={\rm Spa}(R,R^+)$ be as above, then $\varphi_\cZ(e_1', e_2')=e_1'\in \bV_0(\cF^+,s')(\cZ)=\{h\in {\rm Hom}_{R^+}(V\otimes R^+, R^+)=V^\vee\otimes R^+, \quad | \quad h(e_1\otimes 1)\ {\rm mod}\  p^n=1\}$. It is easy to see that the morphism $\varphi$ realizes $\cP_{(\cE^+, s)}$ as an $N$-torsor over $\bV_0(\cF^+, s')$. Therefore we have, denoting $\pi:\bV_0(\cF^+, s')\lra X$ and $\pi_1:\cP_{(\cE^+,s)}\lra X$,
$$
\pi_\ast\bigl(\cO_{\bV_0(\cF^+,s')}\bigr)[k]\cong \Bigl(\pi_{1, \ast}\bigl(\cO_{\cP_{(\cE^+, s)}}\big)^{N}\Bigr)[k]=\bigl({\rm Ind}_{\cB}^{\cG}\bigr)^{\rm an}(k),
$$
considering the right $\cB\subset \cG$ action on $\pi_{1,\ast}\bigl(\cO_{\cP_{(\cE^+,s)}}\bigr)$. Therefore we have 


$$\fW_k\cong \Gamma\bigl(\cP_{(\cE^+,s)}, \cO_{\cP_{(\cE^+,s)}}\bigr)[k]\cong\Gamma\bigl(\bV_0(\cF^+,s'), \cO_{\bV_0(\cF^+,s')}\bigr)[k]\cong\WW_k.$$

\end{proof}

\bigskip

\subsubsection{de Rham sheaves on modular curves.}
\label{sec:derhammodular}

Let $N\ge 5$ be an integer and $p\geq 3$ a prime integer such that $(N,p)=1$ and consider the 
 modular curve $X_1(N)$ over $\Z_p$, which classifies generalized elliptic curves with $\Gamma_1(N)$-level
structures. We denote $\hat{X}_1(N)$ the formal completion of this curve along its special fiber and denote by $\cX_1(N)$ the adic analytic generic fiber associated to the above formal scheme. We denote $\bE$ the universal generalized elliptic curve over this formal scheme or adic space.

Consider the ideal ${\rm Hdg}$, called the Hodge ideal, defined as the ideal of $\cO_{\hat{X}_1(N)}$, locally (on open affines ${\rm Spf}(R)\subset \hat{X}_1(N)$
which trivialize the sheaf $\omega_\bE$) generated by $p$ and  a local lift, ${\rm Ha}^o(E/R,  \omega)$, of the Hasse invariant ${\rm Ha}$, where $\omega$ is a basis of $\omega_E$. For every integer $r\geq 2$ we denote by
$\fX_r$ the formal open sub-scheme of the formal admissible blow-up of  $\hat{X}_1(N)$ with respect to the sheaf of ideals $(p, {\rm Hdg}^r)$, where this ideal is generated by ${\rm Hdg}^r$. Let $\mathcal{X}_r$ denote the adic generic fiber of $\fX_r$. By construction ${\rm Hdg}$ is an invertible ideal in $\fX_r$.  Recall from \cite[App. A]{halo_spectral}  that the universal generalized elliptic curve $\bE\to \fX_r$ has a canonical subgroup  ${\rm H_m}\subset \bE[p^m]$ of order $p^m$, where $m$ depends on $r$. For example, if $r\ge 2$ then $m\ge 1$ and if $r\ge p+2$ then $m\ge 2$.

\medskip
\noindent
Let us denote by $\pi\colon \mathcal{IG}_{m,r}:={\rm Isom}\bigl(\underline{\Z/p^m\Z}, {\rm H}_m^D\bigr)\to \mathcal{X}_r$ the $m$-th layer of the adic analytic Igusa tower over $\mathcal{X}_r$, where ${\rm H}_m^D$ denotes the Cartier dual of ${\rm H_m}$. Then $\mathcal{IG}_{m,r}$ is a finite, \`etale, Galois cover
of $\mathcal{X}_r$, with Galois group $(\Z/p^m\Z)^\ast$ and  we denote by $\fIG_{m,r}$ the formal scheme which is the normalization of
$\fX_r$ in $\mathcal{IG}_{m,r}$.  Let $r\ge 2$.

\begin{proposition}
\label{prop:cansgr}

We have

\begin{itemize}

\item[i.] the canonical subgroup ${\rm H}_1$ of the universal elliptic curve $\bE$ over $\fIG_{1,r}$ is a lifting of the kernel of Frobenius modulo $p/\mathrm{Hdg}$;

\item[ii.] we have an isomorphism between $C:=\bE[p]/{\rm H}_1$ and ${\rm H}_1^D$ defined by the Weil pairing on $\bE[p]$, which gives a canonical section $P^{\rm univ}$ of $C$;

\item[iii. ] the map of invariant differentials associated to the inclusion ${\rm H}_1\subset \bE$ induces a map $\omega_{{\rm H}_1} \to \omega_\bE/(p\mathrm{Hdg}^{-1}) \omega_\bE$ so
that via ${\rm dlog}_{{\rm H}_1}\colon L={\rm H}_1^D \to \omega_{{\rm H}_1}$ we get a section $\displaystyle s':={\rm dlog}_{{\rm H}_1}(P^{\rm univ})\in \mathrm{H}^0\bigl(\fIG_{1,r}, \omega_{\bE}/(p\mathrm{Hdg}^{-1})\omega_{\bE}\bigr)$;

\end{itemize}
\end{proposition}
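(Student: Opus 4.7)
The plan is that all three parts are standard consequences of the theory of canonical subgroups of overconvergent elliptic curves. I would derive each from (i) the construction of $\fX_r$ together with the cited \cite[App. A]{halo_spectral}, (ii) the symplectic structure on $\bE[p]$ induced by the principal polarization, and (iii) the compatibility of the $\mathrm{dlog}$ map with the inclusion ${\rm H}_1\subset \bE$.

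For part (i), I would appeal directly to the cited reference. On $\fX_r$ with $r\ge 2$, the formal admissible blow-up along $(p,\mathrm{Hdg}^r)$ forces $\mathrm{Hdg}$ to become an invertible ideal, placing us in the overconvergent locus where the canonical subgroup is defined. By construction recalled in \cite[App. A]{halo_spectral}, ${\rm H}_1\subset \bE[p]$ is the unique finite flat subgroup of order $p$ whose reduction modulo $p/\mathrm{Hdg}$ agrees with the kernel of the relative Frobenius on $\bE \bmod (p/\mathrm{Hdg})$, which is precisely assertion (i).

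For part (ii), the Weil pairing gives a perfect alternating $\cO_{\fIG_{1,r}}$-bilinear form $e_p\colon \bE[p]\times \bE[p] \to \mu_p$ on a finite flat group scheme of order $p^2$. Any finite flat subgroup of order $p$ is automatically maximal isotropic, so the restriction of $e_p$ induces a perfect pairing $C\times {\rm H}_1\to \mu_p$, and hence a canonical isomorphism $C\xrightarrow{\sim} {\rm H}_1^D$. By the very definition of the Igusa cover $\fIG_{1,r}$ as $\underline{\mathrm{Isom}}(\underline{\Z/p\Z},{\rm H}_1^D)$, the tautological isomorphism evaluated at $1$ produces a canonical generator of ${\rm H}_1^D$; transporting it through the Weil identification yields $P^{\rm univ}\in C$.

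For part (iii), the closed immersion ${\rm H}_1\hookrightarrow \bE$ gives by pullback a map of invariant differentials $\omega_\bE \to \omega_{{\rm H}_1}$. The key input from canonical-subgroup theory (again \cite[App. A]{halo_spectral}) is that this map becomes an isomorphism after reducing modulo $p\mathrm{Hdg}^{-1}$, since modulo $p/\mathrm{Hdg}$ the subgroup ${\rm H}_1$ is the kernel of Frobenius and the induced map on cotangent spaces is controlled precisely by the Hasse invariant, which trivializes $\mathrm{Hdg}$ in the relevant quotient. Inverting this reduced isomorphism produces the claimed map $\omega_{{\rm H}_1}\to \omega_\bE/(p\mathrm{Hdg}^{-1})\omega_\bE$, and composing it with $\mathrm{dlog}_{{\rm H}_1}(P^{\rm univ})\in \omega_{{\rm H}_1}$ (via the identification $L={\rm H}_1^D$ from (ii)) gives the global section $s'$. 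The main technical obstacle here is verifying the exact level of overconvergence: that $r\ge 2$ is sufficient for the map $\omega_\bE \to \omega_{{\rm H}_1}$ to become an isomorphism after reduction modulo precisely $p\mathrm{Hdg}^{-1}$ and not a coarser ideal; this is a bookkeeping check against the normalization of the Hasse invariant used in \cite{halo_spectral} and I would do it by tracing through the explicit local formulas for the canonical subgroup.
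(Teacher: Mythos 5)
Your proposal is correct and matches the paper's treatment: the paper's entire proof of this proposition is a citation to \cite[Appendix A]{halo_spectral}, and your argument simply spells out the standard facts (uniqueness of the canonical subgroup lifting $\ker F$, maximal isotropy of ${\rm H}_1$ under the Weil pairing giving $C\cong {\rm H}_1^D$, and the Oort--Tate computation identifying $\omega_{{\rm H}_1}$ with $\omega_\bE/(p\mathrm{Hdg}^{-1})\omega_\bE$) that the reference contains. The remaining "bookkeeping check" you flag in part (iii) is exactly what the paper also defers to the cited appendix (and to \cite[Prop.~6.2]{andreatta_iovita_stevens} in the lemma that follows), so nothing is missing relative to the paper's own level of detail.
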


\begin{proof}:  These statements are proved, for example, in \cite[Appendix A]{halo_spectral}.
 \end{proof}

\noindent
In the notations of Proposition \ref{prop:cansgr} iii), let $\displaystyle s':={\rm dlog}_{{\rm H}_1}(P^{\rm univ})\in \mathrm{H}^0\bigl(\fIG_{1,r}, \omega_{\bE}/\bigl(p\mathrm{Hdg}^{-1}\bigr)\omega_{\bE}\bigr)$ be the section defined there.

\begin{lemma}

Any local lift of the section $s':={\rm dlog}_{{\rm H}_1}(P^{\rm univ})$, as above, to a local section  $\tilde{s}$ of $\omega_{\bE}$, spans the $\cO_{\fIG_{1,r}}$-submodule
$\mathrm{Hdg}^{\frac{1}{p-1}}\omega_{\bE}\subset \omega_{\bE}.$

\end{lemma}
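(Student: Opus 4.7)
The statement is local on $\fIG_{1,r}$, so we fix an affine open $\Spf(R) \subset \fIG_{1,r}$ on which $\omega_\bE$ is free, generated by a section $\omega$. We must show that any lift $\tilde{s}$ of $s' = \mathrm{dlog}_{H_1}(P^{\mathrm{univ}})$ to $\omega_\bE$ satisfies $\tilde s = u \cdot \delta \cdot \omega$, where $\delta$ is a local generator of $\mathrm{Hdg}^{1/(p-1)}$ and $u \in R^\times$. The key inputs will be (a) the identification $\omega_{H_1} \cong \omega_\bE/(p\mathrm{Hdg}^{-1})\omega_\bE$ from Proposition \ref{prop:cansgr}(iii), (b) the computation of the image of the Hodge--Tate/$\mathrm{dlog}$ map on the canonical subgroup, and (c) Nakayama's lemma to remove the ambiguity in the lift.

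The main input is the classical fact, going back to Fargues' thesis (and used systematically in the theory of overconvergent modular forms \`a la Pilloni, Andreatta--Iovita--Stevens), that the Hodge--Tate map
\[
\mathrm{dlog}: H_1^D \otimes_{\Z_p} \cO_{\fIG_{1,r}} \longrightarrow \omega_{H_1}
\]
has image equal to the fractional ideal $\mathrm{Hdg}^{1/(p-1)} \cdot \omega_{H_1}$ (the fractional power makes sense on $\fIG_{1,r}$, where $\mathrm{Hdg}$ is invertible and suitable roots can be extracted using the presence of the point $P^{\mathrm{univ}}$). Since $P^{\mathrm{univ}}$ is a \emph{universal generator} of $H_1^D$ over $\fIG_{1,r}$ (by the definition of the Igusa tower), $\mathrm{dlog}(P^{\mathrm{univ}})$ is a generator of this image, i.e., spans $\mathrm{Hdg}^{1/(p-1)}\omega_{H_1}$ as a cyclic $R$-submodule of $\omega_{H_1}$.

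Combining with Proposition \ref{prop:cansgr}(iii), the section $s' \in \omega_\bE/(p\mathrm{Hdg}^{-1})\omega_\bE$ generates the image of $\mathrm{Hdg}^{1/(p-1)}\omega_\bE$ in this quotient. Here one uses the inclusion of ideals $(p\mathrm{Hdg}^{-1}) \subset (\mathrm{Hdg}^{1/(p-1)})$, which holds because on $\fX_r$ we have $p \in (\mathrm{Hdg}^r)$, so $p \mathrm{Hdg}^{-1} \in (\mathrm{Hdg}^{r-1})$, and $r-1 \geq 1 \geq 1/(p-1)$ as soon as $r \geq 2$ and $p \geq 3$. Consequently any lift $\tilde{s} \in \omega_\bE$ of $s'$ already lies in $\mathrm{Hdg}^{1/(p-1)}\omega_\bE$.

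To promote ``generates modulo $(p\mathrm{Hdg}^{-1})\omega_\bE$'' to ``generates $\mathrm{Hdg}^{1/(p-1)}\omega_\bE$'' we write $\tilde{s} = u' \cdot \delta \cdot \omega$ with $u' \in R$, and use that the ambiguity in $u'$ coming from changing the lift lies in the ideal generated by $(p\mathrm{Hdg}^{-1})/\mathrm{Hdg}^{1/(p-1)} \in (\mathrm{Hdg}^{r-1-1/(p-1)})$, which is contained in the Jacobson radical of the $p$-adically complete local ring $\cO_{\fIG_{1,r},x}$ as soon as $r-1-1/(p-1) > 0$, i.e., $r \geq 2$. Hence $u'$ is a unit, and $\tilde{s}$ generates $\mathrm{Hdg}^{1/(p-1)}\omega_\bE$.

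\textbf{Main obstacle.} The central input is the image computation for the $\mathrm{dlog}$ map on the canonical subgroup; the remaining arguments (matching of ideals via the bound on $r$, and Nakayama on the local ring) are essentially bookkeeping. Articulating this computation carefully in the adic/formal setting of the Igusa tower $\fIG_{1,r}$---and in particular making precise sense of the fractional ideal $\mathrm{Hdg}^{1/(p-1)}$ and its compatibility with the universal trivialization $P^{\mathrm{univ}}$---is the only delicate point, and we expect to import it from \cite{halo_spectral} or equivalent references rather than reprove it.
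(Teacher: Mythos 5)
Your proof is correct and follows essentially the same route as the paper: the only real content is the explicit Oort--Tate computation of the image of ${\rm dlog}_{{\rm H}_1}$, which the paper likewise imports wholesale from \cite[Prop. 6.2]{andreatta_iovita_stevens} (this is exactly what improves the bound $r\ge p^2$ of \cite{halo_spectral} to $r\ge 2$), the remaining lifting argument being left implicit there. One small caution in your Nakayama step: what is topologically nilpotent is the specific element $p\,{\rm Hdg}^{-p/(p-1)}$ generating the ambiguity (its valuation is bounded below by $1-\tfrac{p}{r(p-1)}>0$ on $\fX_r$), not the ideal ${\rm Hdg}^{\,r-1-1/(p-1)}$ itself, which is the unit ideal on the ordinary locus and hence not contained in the Jacobson radical.
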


\begin{proof}:   
The statement of the Lemma strengthens \cite{halo_spectral}, where it was stated for $r\geq p^2$. It follows in this stronger form by the explicit computation of ${\rm dlog}_{\rm H}\colon L={\rm H}^D \to \omega_{\rm H}$ using Oort-Tate group schemes in \cite[Prop. 6.2]{andreatta_iovita_stevens}.  
\end{proof}

\medskip
\noindent
We'll use the trivialized canonical subgroups on $\fIG_{1,r}$ in order to define locally free sheaves with marked sections on this formal scheme, to which we will apply the VBMS-machine presented in section \S \ref{sec:VBMS}. More precisely, in the notations above, we define the invertible $\cO_{\fIG_{1,r}}$-submodule
$\Omega_\bE$ of $\omega_{\bE}$ as the span of any lift
$\tilde{s}$ of $s$ such that, if we set $\underline{\delta}:=\Omega_\bE\omega_\bE^{-1}$, then $\underline{\delta}$ is an invertible $\cO_{\fIG_{1,r}}$-ideal with
$\underline{\delta}^{p-1}= \pi^\ast({\rm Hdg})$ (recall that $\pi\colon \fIG_{1,r} \to \fX_r$ is the natural projection). From $\tilde{s}$ we also get a canonical section $s$
of $\mathrm{H}^0\bigl(\fIG_{1,r},
\Omega_{\bE}/p\mathrm{Hdg}^{-\frac{p}{p-1}}\Omega_{\bE}\bigr)$ so
that $s$ defines a basis of
$\Omega_{\bE}/p\mathrm{Hdg}^{-\frac{p}{p-1}}\Omega_{\bE}$ as
$\cO_{\fIG_{1,r}}/ p\mathrm{Hdg}^{-\frac{p}{p-1}}\cO_{\fIG_{1,r}}$-module. Therefore, our first locally free sheaf with marked section 
on $\bigl(\fIG_{1,r}, \cI:=p\mathrm{Hdg}^{-\frac{p}{p-1}}\bigr)$ is the pair $\bigl(\Omega_{\bE}, s\bigr)$.

 \medskip
 \noindent
 We now define a second locally free sheaf with marked section on $\bigl(\fIG_{1,r}, \cI:=p\mathrm{Hdg}^{-\frac{p}{p-1}}\bigr)$.
We denote by ${\rm H}_\bE^\#$ the push-out in the category of coherent sheaves on $\fIG_{1,r}$
of the diagram
\[
\begin{tikzcd}[row sep=10]
	\underline{\delta}^p\omega_E\ar[r]\arrow[d, phantom, sloped, "\subset"] &\underline{\delta}^p{\rm H}_{\bE}\\
	\Omega_{\bE}&
\end{tikzcd}.
\]
We then have the following commutative diagram of sheaves with exact rows:
\[
\begin{tikzcd}[row sep=10]
	0\ar[r]&\underline{\delta}^p\omega_\bE\ar[r]\arrow[d, phantom, sloped, "\subset"]&\underline{\delta}^p{\rm H}_{\bE}\ar[r]\ar[d]&\underline{\delta}^p\omega_{\bE}^{-1}\ar[r]\ar[d, equal]&0\\
	0\ar[r]&\Omega_{\bE}\ar[r]\arrow[d, phantom, sloped, "\subset"]&{\rm H}_{\bE}^\#\ar[r]\arrow[d, phantom, sloped, "\subset"]&\underline{\delta}^p\omega_{\bE}^{-1}\ar[r]\arrow[d, phantom, sloped, "\subset"]&0\\
	0\ar[r]&\omega_{\bE}\ar[r]&{\rm H}_{\bE}\ar[r]&\omega_\bE^{-1}\ar[r]&0
\end{tikzcd}
\]
It follows that ${\rm H}_\bE^\#$ is a locally free $\cO_{\fIG_{1,r}}$-module of rank two and 
 $(\Omega_\bE, s)=(\underline{\delta}\omega_\bE, s)\subset ({\rm H}_\bE^\#, s)$ is a compatible inclusion of locally free sheaves with marked sections.   We also have the following commutative diagram with exact rows:
 \[
 \begin{tikzcd}[row sep=10]
 	0\ar[r]&\Omega_{\bE}\ar[r]\ar[d, equal]&{\rm H}_{\bE}^\#\ar[r]\arrow[d, phantom, sloped, "\subset"]&\underline{\delta}^p\omega_{\bE}^{-1}\ar[r]\arrow[d, phantom, sloped, "\subset"]&0\\
 	0\ar[r]&\Omega_{\bE}\ar[r]&\underline{\delta}{\rm H}_{\bE}\ar[r]&\underline{\delta}\omega_\bE^{-1}\ar[r]&0
 \end{tikzcd}
 \]
in which the right square is cartesian, defining ${\rm H}_\bE^\#$ as pull-back.

We have natural actions of $\fT^{\rm ext}:=  \Z_p^\ast \bigl(1+\pi_\ast\bigl(p\mathrm{Hdg}^{-\frac{p}{p-1}} \cO_{\fIG_{1,r}}\bigr)\bigr)$
 on the morphisms of formal schemes:
 $$
  u\colon \bV_0({\rm H}_\bE^\#,s)\lra  \fX_r,  \mbox{ and on } v\colon \bV_0(\Omega_\bE,s)\lra \fX_r,
 $$ 
 with trivial action on $\fX_r$.

\begin{definition}\label{def:analweight} Given a ring $R$ which is $p$-adically complete and separated, 
we say that a homomorphism $\nu \colon\Z_p^\ast\lra R^\ast$ is an analytic weight if there exists $u\in R$ with the property that $\nu(t)= \exp (u \log t)$, for every $t\in 1+ p\Z_p$. 
\end{definition}

Assume now that $r\ge 2$ for $p\geq 5$ and $r\geq 4$ for $p=3$. Let $\nu$ be an $R$-valued analytic weight.

\begin{definition}\label{def:bW} We define $\fw^\nu:=v_\ast(\cO_{\bV_0(\Omega_\bE,s)}\widehat{\otimes}_{\Z_p} R)[\nu]$
and $\WW_\nu:=u_\ast(\cO_{\bV_0({\rm H}_\bE^\#,s)}\widehat{\otimes}_{\Z_p} R)[\nu]$ as the sub-sheaves of sections of the respective sheaves on which $\fT^{\rm ext}$ acts via $\nu$ (see \cite[\S 3.1\& \S 3.3]{andreatta_iovita}).
\end{definition}

The definition makes sense  if $r\geq p^2$ for any prime $p$ as explained in \cite[\S 3.2 \& \S 3.3]{andreatta_iovita}.

\begin{remark}\label{rmk:functVBMS} {\bf Specialization.}  In the notations of definition \ref{def:bW}, assume that $R'$ is the ring of integers of a finite extension of $\Q_p$ and that we have an algebra homomorphism $R\to R'$. Let $x$ be an $R'$-valued point of $\fX_r$ defined by an elliptic curve $E$ over $R'$. Set $\Omega_E:=x^\ast(\Omega_\bE)$,  ${\rm H}_E^\#:=x^{\ast}\bigl({\rm H}_\bE^\#\bigr)$ with induced section $s_x$ obtained from $s$ by pulling-back via $x$. Then, applying the construction above to $(\Omega_E,s_x)$ and $({\rm H}_E^\#,s_x)) $ and the $R'$-valued weight $\nu':\Z_p^\ast\lra (R')^\ast$ which is the composition 
$\displaystyle \Z_p^\ast\stackrel{\nu}{\lra}R^\ast \lra (R')^\ast$, we get $R'$-modules $\fw^{\nu'}_{R'}\subset  \WW_{\nu',R'}$ that coincide with $x^\ast(\fw^\nu)$ and $x^\ast(\WW_\nu)$ respectively.  We sometimes write  $\fw^{\nu'}_{R'}=\fw^{\nu'}_{R'}(\Omega_E,s_x)$ and $ \WW_{\nu',R'}=\WW_{\nu',R'}({\rm H}_E^\#,s_x)$ if needed.

\end{remark}

As in \cite[\S 3.2 \& \S 3.3]{andreatta_iovita} one has the following:

\begin{proposition} The sheaf $\fw^{\nu}$ is an invertible
$\cO_{\fX_r}\widehat{\otimes} R$-module and $\WW_\nu$ has a natural, increasing filtration $\bigl({\rm Fil}\bigr)_{n\ge 0}$ by $\cO_{\fX_r}\widehat{\otimes} R$-submodules such that $\fw^\nu$ is identified with ${\rm Fil}_0$. 

The Gauss-Manin connection $\nabla\colon {\rm H}_\bE\to {\rm H}_\bE\otimes \Omega^1_{\fX_r/R}\bigl({\rm log}({\rm cusps})\bigr)$ induces a connection $\nabla_{\nu}$ (with poles) on $\WW_\nu$.
\end{proposition}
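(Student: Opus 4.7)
The plan is to reduce, étale-locally on $\fIG_{1,r}$, to the model situation of \S\ref{sec:exampleind}, using the fact that a lift $\tilde s$ of the marked section simultaneously trivialises $\Omega_\bE$ and extends to a basis of ${\rm H}_\bE^\#$ whose second vector trivialises the quotient $\underline{\delta}^p\omega_\bE^{-1}$. Once the setup is trivialised, all three assertions descend via the $\fT^{\rm ext}$-torsor structure built into the definition of $\bV_0$.

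For the invertibility of $\fw^\nu$: since $\Omega_\bE$ is invertible, $\bV_0(\Omega_\bE,s)$ is étale-locally a trivial $\fT^{\rm ext}$-torsor over $\fX_r$ (an open unit disc centred at $1$). Pushing forward $\cO$ and decomposing under $\fT^{\rm ext}$, the weight-$\nu$ eigensheaf is locally generated by the single character $\nu$, so $\fw^\nu$ is invertible over $\cO_{\fX_r}\widehat{\otimes} R$. The inclusion $\Omega_\bE\hookrightarrow {\rm H}_\bE^\#$ of compatible pairs with marked sections induces a natural morphism $\rho\colon \bV_0({\rm H}_\bE^\#,s)\to \bV_0(\Omega_\bE,s)$ by restriction of functionals, which étale-locally is an affine-line fibration whose fibre coordinate is a lift of a generator of the quotient $\underline{\delta}^p\omega_\bE^{-1}$. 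Defining ${\rm Fil}_n\WW_\nu$ to be the subsheaf of sections that are polynomial of degree $\le n$ along the fibres of $\rho$ produces an intrinsic, $\fT^{\rm ext}$-equivariant, exhaustive increasing filtration of $\WW_\nu$, and ${\rm Fil}_0$ is canonically identified with the pullback of $v_\ast\cO[\nu]=\fw^\nu$.

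For the connection $\nabla_\nu$: first I would extend Gauss--Manin on ${\rm H}_\bE$ to a connection $\nabla^\#$ on ${\rm H}_\bE^\#$, noting that in a local basis adapted to the Hodge filtration $\nabla$ is upper-triangular with off-diagonal entry the Kodaira--Spencer map, and that the inclusion ${\rm H}_\bE^\#\subset \underline{\delta}{\rm H}_\bE$ forces that entry to acquire a pole controlled by a fixed power of $\mathrm{Hdg}$. Second, interpret $\nabla^\#$ dually as horizontal transport on the total space of $({\rm H}_\bE^\#)^\vee$, restrict to the open subspace $\bV_0({\rm H}_\bE^\#,s)$ (which is preserved because the congruence conditions modulo $\cI$ are parallel for $\nabla^\#$), and obtain a derivation on $u_\ast\cO_{\bV_0({\rm H}_\bE^\#,s)}$ valued in $\Omega^1_{\fX_r/R}(\log(\mathrm{cusps}))$ with the said Hodge pole. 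Because the $\fT^{\rm ext}$-action is fibrewise scaling and is preserved by horizontal transport, the derivation commutes with $\fT^{\rm ext}$ and descends to the $\nu$-eigensheaf as $\nabla_\nu$.

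The principal technical obstacle will be the last step: precisely controlling the $\mathrm{Hdg}$-denominators when passing from $\nabla$ on ${\rm H}_\bE$ to $\nabla^\#$ on ${\rm H}_\bE^\#$ and then dualising, and checking that the resulting derivation really does send ${\rm Fil}_n\WW_\nu$ into ${\rm Fil}_{n+1}\WW_\nu\otimes \Omega^1_{\fX_r/R}(\log)$ (the Griffiths transversality expected of $\nabla_\nu$, matching Assumption \ref{ass:basicassumption} iii)). The invertibility of $\fw^\nu$ and the existence of ${\rm Fil}_\bullet$ are, by contrast, essentially formal, following from the $\fT^{\rm ext}$-torsor structure and the structure of $\rho$ as an affine fibration.
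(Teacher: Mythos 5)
Your proposal is correct and follows essentially the same route as the paper, which in fact gives no proof of this proposition beyond citing \cite{andreatta_iovita}, \S 3.2--3.3: invertibility of $\fw^\nu$ and the filtration ${\rm Fil}_\bullet$ come exactly as you say from the $\fT^{\rm ext}$-torsor structure and from the affine fibration $\bV_0({\rm H}_\bE^\#,s)\to \bV_0(\Omega_\bE,s)$ obtained by restricting functionals along $\Omega_\bE\hookrightarrow {\rm H}_\bE^\#$, and $\nabla_\nu$ is induced functorially from a pole-twisted extension of Gauss--Manin to ${\rm H}_\bE^\#$. The one point you pass over too lightly is the parenthetical assertion that the congruence condition modulo $\cI$ cutting out $\bV_0$ is horizontal for $\nabla^\#$: this is the genuinely nontrivial computation in \cite{andreatta_iovita} (horizontality of ${\rm dlog}(P^{\rm univ})$ modulo $\cI$, via the crystalline interpretation of ${\rm dlog}$), and it is there --- rather than in the Griffiths transversality you flag --- that the construction could in principle fail.
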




\subsubsection{Assumption \ref{ass:basicassumption} in the elliptic case. The torsor of trivializations of the pair $(\Omega_{\bE}, {\rm H}_\bE^\#)$.}
\label{sec:GMperiod}

Let $r\ge 2$ be an integer and $k:\Z_p^\ast\lra (B^+)^\ast$ be an $n$-analytic weight, for $n\ge 1$, see section \ref{sec:exampleind}. Let $\fIG_{n,r}\lra \fX_r$ be the formal Igusa curve of level $n$ over the formal neighbourhood of the ordinary locus $\fX_r$ in $\hat{X}_1(N)$, base-changed to $\fS:={\rm Spf}(B^+)$. Over $\fIG_{n,r}$ we have the locally free $\cO_{\fIG_{n,r}}$-module of rank $2$, ${\rm H}_{\bE}^\#$, with its filtration $\Omega_{\bE}\subset {\rm H}_{\bE}^\#$. We denote by ${\rm Std}$ the standard representation of the analytic group $\cG$ over $B$,  by ${\rm Fil}_1$ its natural filtration, i.e. if $e_1,e_2$ is an ordered basis of ${\rm Std}$, ${\rm Fil}_1$ is generated by $e_1$. In the elliptic case, the parabolic subgroup ${\rm P}$ in Assumption \ref{ass:basicassumption} is a Borel subgroup, denoted there ${\rm Bo}$, therefore the analytic sub-group $\cP$ is $\cB:={\rm Bo}\cap \cG$. The analytic group $\cG$ acts naturally on ${\rm H}^\#_{\bE}$ and $\cB$ preserves its Hodge filtration.

We consider the formal scheme over $\fS$, 
$$
\fT_{({\rm H}_{\bE}^\#)^\vee}:=\{\varphi\in {\rm Isom}\bigl(({\rm H}_{\bE}^\#)^\vee, {\rm Std}\otimes \cO_{\fIG_{n,r}}\bigr)\quad \mbox{such that } \varphi\Bigl({\rm Ker}\bigl({\rm H}_{\bE}^\#)^\vee\lra (\Omega_{\bE})^\vee\Bigr)={\rm Fil}_1\otimes \cO_{\fIG_{n,r}}\}.
$$
Then $\fT_{({\rm H}_{\bE}^\#)^\vee}\lra \fIG_{n,r}$ is a formal $\cB$-torsor. 

 We denote $\cT_{({\rm H}_{\bE}^\#)^\vee}\lra \cIG_{n,r}\lra \cX_r$ the log adic spaces associated to the formal schemes above, i.e. $\cX_r$ and $\cIG_{n,r}$ have the log structures induced by the divisors at the cusps, while $\cT_{({\rm H}_{\bE}^\#)^\vee}$ has the inverse image log structure. We denote $\pi:\cT_{({\rm H}_{\bE}^\#)^\vee}\lra \cX_r$ the composition.
  
\begin{lemma}
\label{lemma:torsor}
Let $U\subset \cX_r$ be an open affinoid such that there is a section $s:U\lra \cT_{({\rm H}_{\bE}^\#)^\vee}$ of $\pi$. Then we have

a) $s^\ast\bigl(\aW\otimes \cO_{\cT_{({\rm H}_{\bE}^\#)^\vee}}\bigr)\cong \WW_k^{\rm alg}|_U$, where the isomorphism preserves the filtrations.

b) $(\fn^-)^\vee\otimes \cO_U \cong \omega^1_{\cX_r/S}|_U$. 

\end{lemma}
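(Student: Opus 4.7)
The plan is to use the section $s$ to simultaneously trivialize $({\rm H}_\bE^\#|_U,\Omega_\bE|_U)$ as a filtered locally free sheaf on $U$, and then to compare with the explicit descriptions of $\aW$ and $\omega^1_{\cX_r/S}|_U$ given respectively by Proposition \ref{prop:fWk=WWk} and the Kodaira-Spencer isomorphism.

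For part (a), the section $s$ is by definition of the torsor $\cT_{({\rm H}_\bE^\#)^\vee}$ the datum of a $\cB$-equivariant, Hodge-filtration-preserving isomorphism
\begin{equation*}
\varphi\colon ({\rm H}_\bE^\#)^\vee|_U \stackrel{\sim}{\lra} {\rm Std}\otimes_{\Z_p}\cO_U
\end{equation*}
sending ${\rm Ker}\bigl(({\rm H}_\bE^\#)^\vee\to (\Omega_\bE)^\vee\bigr)$ to ${\rm Fil}_1\otimes\cO_U$. Dualizing identifies the filtered pair $({\rm H}_\bE^\#|_U,\Omega_\bE|_U)$ with the trivial filtered pair $({\rm Std}^\vee\otimes\cO_U,\, {\rm Fil}_1^\perp\otimes\cO_U)$ and carries the marked section of Proposition \ref{prop:cansgr} to $e_1\bmod \cI$. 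Functoriality of the VBMS construction (cf. Remark \ref{rmk:functVBMS}) then shows that $\WW_k|_U$ is the VBMS sheaf associated to this trivial pair on $U$, and Proposition \ref{prop:fWk=WWk} identifies this with $\fW_k\,\widehat{\otimes}_B\,\cO_U$. Passing to the co-limit of the polynomial-degree filtration $F^\bullet$ and matching it with the filtration $\cF^\bullet$ on the VBMS side produces an isomorphism $\WW_k^{\rm alg}|_U\cong\aW\otimes_B\cO_U$, compatible with filtrations; since $\aW\otimes\cO_{\cT_{({\rm H}_\bE^\#)^\vee}}$ is constant along the fibres of $\pi$, this is equivalent to the statement of (a).

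For part (b), $\varphi$ induces in particular trivializations of the line bundles $\Omega_\bE|_U$ and $({\rm H}_\bE^\#/\Omega_\bE)|_U$, coming from $e_1^\vee$ and $e_2^\vee$ respectively. The log Kodaira-Spencer isomorphism ${\rm KS}\colon \omega_\bE^{\otimes 2}\stackrel{\sim}{\lra}\omega^1_{\cX_r/S}({\rm log})$, combined with the identity $\Omega_\bE=\underline{\delta}\,\omega_\bE$ and the analogous trivialization of the quotient, yields a trivialization of $\omega^1_{\cX_r/S}|_U$. On the Lie-algebra side, $\fn^-\subset\mathfrak{gl}_2$ is the one-dimensional lower-nilpotent subalgebra spanned by $E_{21}$, and $(\fn^-)^\vee\otimes\cO_U$ is canonically trivialized by $E_{21}^\vee$. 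The standard identification of $(\fn^-)^\vee$ with the cotangent space of $\cG/\cB$ at the identity coset matches these two trivializations, giving the desired isomorphism.

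The main obstacle is the bookkeeping of the $\underline{\delta}$-twist between $\Omega_\bE$ and $\omega_\bE$: one has to check that the twist appearing on the filtration side is consistent with the negative-root weight of $\fn^-$ under the Borel action, so that both sides produce the same line bundle up to a canonical $\cB$-equivariant isomorphism. Once this is verified, both isomorphisms (a) and (b) are canonical up to the action of $\cB$, which is exactly what is needed for the gluing statement in Assumption \ref{ass:basicassumption}(iv).
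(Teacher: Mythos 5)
Your argument is essentially the paper's: part (a) is exactly the pullback of Proposition \ref{prop:fWk=WWk} along the section of the torsor, and part (b) is the Kodaira--Spencer map combined with the identification $\fg/\fb\cong\fn^-\cong{\rm Hom}({\rm Fil}_1,{\rm Gr}_1({\rm Std}))$. The only difference is that the paper defines Kodaira--Spencer directly on the modified pair $\Omega_\bE\subset{\rm H}^\#_\bE$ via its Gauss--Manin connection, so the $\underline{\delta}$-twist bookkeeping you flag as the main obstacle never arises: the section trivializes ${\rm Fil}_1$ and ${\rm Gr}_1$ of ${\rm H}^\#_\bE$ simultaneously, and no comparison with the untwisted $\omega_\bE^{\otimes 2}\cong\omega^1_{\cX_r/S}(\log)$ is needed.
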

 
 \begin{proof}
 
 a) We have $\pi^\ast\bigl(\WW_{k,\fK}^{\rm alg}\bigr)\cong \aW\otimes \cO_{\cT_{({\rm H}_{\bE}^\#)^\vee}}$, see Proposition \ref{prop:fWk=WWk}. This implies a).
 
 b) Let ${\rm KS}$ denote the Kodaira-Spencer isomorphism defined as the composition:
 $$
 \Omega_{\bE}\subset {\rm H}_{\bE}^\#\stackrel{\nabla}{\lra}{\rm H}_{\bE}^\#\otimes\omega^1_{\cX_r/S}\lra (\Omega_{\bE})^\vee\otimes\omega^1_{\cX_r/S}.
 $$
 We have 
 $$
 \begin{array}{cccccc}
 \pi^\ast\bigl(\Omega_{\bE})&\stackrel{\pi^\ast({\rm KS})}{\lra}&\pi^\ast\bigl((\Omega_{\bE})^\vee\bigr)\otimes\pi^\ast\bigl(\omega^1_{\cX_r/S}\bigr)\\
 \downarrow\cong&&\downarrow\cong\\
 {\rm Fil}_1\otimes\cO_{\cT_{({\rm H}_{\bE}^\#)^\vee}}&\cong&{\rm Gr}_1({\rm Std})\otimes\pi^\ast\bigl(\omega^1_{\cX_r/S}\bigr))
 \end{array}
 $$
 We have $\pi^\ast({\rm KS}):\pi^\ast\bigl(\omega^1_{\cX_r/S}\bigr)^\vee\cong {\rm Hom}\bigl({\rm Fil}_1, {\rm Gr}_1\bigr)\otimes \cO_{\cT_{({\rm H}_{\bE}^\#)^\vee}}$. On the other hand
 we have a commutative diagram
 $$
 \begin{array}{cccccccc}
 {\rm End}({\rm Std})&\stackrel{\rho}{\lra}&{\rm Hom}\bigl({\rm Fil}_1, {\rm Gr}_1\bigr)\\
 \cup&&\uparrow\cong\\
 \fg&\lra&\fg/\fb\cong \fn^-
 \end{array}
 $$
 Therefore if $s:U\to \cT_{({\rm H}_{\bE}^\#)^\vee}$ is a section of $\pi$, with $U\subset \cX_r$ an open affinoid, we have: $${\rm KS}: \bigl(\omega_{\cX_r/S}^1\bigr)^\vee|_U\cong \fn^-\otimes \cO_U,$$ 
 which is the dual of b).
 
 \end{proof}

\bigskip

\begin{remark}
Let us observe that Lemma \ref{lemma:torsor} implies Assumption \ref{ass:basicassumption} in the elliptic case.

We also observe that this torsor was defined in \cite{kwlan_polo}, 2.19 in a much more general setting, see also section \ref{sec:classical}. There they used push-out by $P$-representations of finite presentation to construct vector bundles on Shimura varieties. 

\end{remark}

\subsection{{\rm BGG}-decomposition in the elliptic case.}

Let $k$ be a universal family weight with values in the $\Q_p$-Banach algebra $B^+$ and $\fW_k$ our $\cG$-representation, $\fW_k:=\left({\rm Ind}_\cB^{\cG}\right)^{\rm an}$, see section \ref{sec:GMperiod}. Suppose that $k$ is $m$-analytic, i.e. there is $\alpha\in B:=B^+\otimes_{\Z_p}\Q_p$ such that $k(t)=\exp(\alpha\log(t))$ for all $t\in 1+p^{m}\Z_p\subset \Z_p^\ast$. We suppose that $B$ is an integral domain and denote, as in section \ref{sec:outline}, $\fK:={\rm Frac}(B)$.

Let us recall that if we denote by $V:=(B^+)^2=e_1B^+\oplus e_2B^+$ the standard representation of $\GL_2(B^+)$, and we let
$V_0:=e_1(1+p^{m}B^+)\times e_2B^+\subset V$, then we can write 
$$
\fW_k:=\{f:V_0\lra B\quad |\quad f(a(x,y))=k(a)f(x,y)\mbox{ for all } a\in 1+p^{m}\Z_p, (x,y)\in V_0, \\
$$
$$
\mbox{ such that } f|_{e_1\times e_2B^+}
\mbox{ is analytic }\}.
$$
By choosing appropriately variables $X,Z$, the elements of $\fW_k$ are written: $\sum_{n=0}^\infty a_nX^{k-n}Z^n$, with $a_n\in B$, $a_n\to 0$. As we have an analytic action of $\cG$ on $\fW_k$, its Lie-algebra acts naturally on the module, but ${\rm Lie}(\cG)\otimes_{\Z_p}\Q_p\cong \mathfrak{gl}_2$. In what follows we will be interested in the action of the semi-simple Lie-sub-algebra $\fg:=\mathfrak{sl}_2\subset \mathfrak{gl}_2$ on $\fW_k$. 

Let $$u^-:={\left( \begin{array}{cc} {0} & {0} \\ {1} & {0}
\end{array} \right)},u^+:={\left( \begin{array}{cc} {0} & {1} \\ {0} & {0}
\end{array} \right)}, H:={\left( \begin{array}{cc} {1} & {0} \\ {0} & {-1}  
\end{array} \right)}
$$ 
be the standard generators of $\mathfrak{g}:=\mathfrak{sl}_2$, then they act on $\fW_k$ by:
$$
u^-=Z\frac{\partial}{\partial X},\quad u^+=X\frac{\partial}{\partial  Z},\quad H=X\frac{\partial}{\partial X}- Z\frac{\partial}{\partial Z}.
$$
 
In particular, if set $Y:=Z/X$ can write $\fW_k=X^kB\langle Y\rangle$ and so an element can be uniquely written $X^k\sum_{n=0}^\infty a_nY^n$, with $a_n\to 0$ in $B$.
In this shape, the action of $\mathfrak{sl}_2$ is given by:
$$
u^-(X^kY^n)=(k-n)X^kY^{n+1}, \  H(X^kY^n)=(k-2n)X^kY^n, \ u^+(X^kY^n)=nX^kY^{n-1}.
$$

We denote $F_n:=X^kB[Y]^{\rm deg\le n}\subset \fW_k$ and let $\displaystyle \aW:=\colim_{n}(F_n\otimes_B\fK)=X^k\fK [Y]$.
We denote from now on ${\rm Fil}_n:=F_n\otimes_B\fK\cong X^k\fK [Y]\subset \aW$, which has a natural action of $\fb=\Q_pu^++\Q_p H$ and $u^-:{\rm Fil}_n\to {\rm Fil}_{n+1}$.

For every $N\ge 0$ let $V_N:=\bigl({\rm Fil}_N\bigr)^\vee={\rm Hom}_{\fK}({\rm Fil}_N, \fK)$ with the dual $\mathfrak{b}$-action.
Moreover if denote $(e_{N,i})_{i=0,N}$ the dual basis to the $\fK$-basis of ${\rm Fil}_N$, $(X^k, X^kY,...,X^kY^N)$, the action of 
$H$ is given by: $H(e_{N,i})=(-k+2i)e_{N,i}$ and $u^-:V_{N+1} \lra V_N$ is defined on basis by:
for every $1\le i\le N+1$, we have $u^-(e_{N+1, i})=(-k+i)e_{N, i-1}$ and $u^-(e_{N+1,0})=0$. Finally, we denote $V:=(\fW_k^{\rm alg})^\vee=\lim_N V_N$, with its projective limit topology with discrete topology on every $V_N$.

We recall   the dual Koszul complex associated to this situation: for every $N\ge 0$ we have
$$
(\ast)_N:\quad U(\fg)\otimes_{U(\fb)}\bigl(V_{N+1}\otimes_{\Q_p} \fg/\fb\bigr)\stackrel{\Xi_N}{\lra} U(\fg)\otimes_{U(\fb)}V_N 
$$
we write $\fu^-\cong \fg/\fb$ and by using the Poincar\'e-Birkhof-Witt theorem, we write  for any $U(\fb)$-module $M$: $U(\fg)\otimes_{U(\fb)}M\cong U(\fu^-)\otimes_{\Q_p}M$, where $U(\fu^-)={\rm Sym}(\fu^-)=\Q_p[u^-]$, as $\Q_p$-algebras.
Therefore the complex $(\ast)$ can be re-written:
$$
(\ast)_N:\quad U(\fu^-)\otimes_{\Q_p}\bigl(V_{N+1}\otimes_{\Q_p} \fu^-\bigr)\stackrel{\Xi_N}{\lra} U(\fu^-)\otimes_{\Q_p} V_N,
$$
where the differential is defined by: for any $P(u^-)\in \Q_p[u^-], 0\le i\le N+1$ we have 
$$
\Xi(P(u^-)\otimes e_{N+1,i}\otimes u^-)=u^-P(u^-)\otimes e_{N, i}+(k-i)P(u^-)\otimes e_{N, i-1},$$ 
where we set $e_{N, N+1}=e_{N, -1}=0$.

We recall from section \ref{sec:outline} the two ways to assemble the complexes: $(\ast)_N$: 

 The complex $\cD^\bullet$ defined by
 $$
 \cD^\bullet: \quad [U(\fu^-)\otimes_{\Q_p}\bigl(\lim_N V_{N+1}\otimes_{\Q_p} \fu^-\bigr)\stackrel{\Xi_N}{\lra} U(\fu^-)\otimes_{\Q_p} \lim_N V_N]\cong  
 $$
 $$
 \cong [U(\fu^-)\otimes_{\Q_p}\bigl(V\otimes_{\Q_p} \fu^-\bigr)\lra U(\fu^-)\otimes_{\Q_p} V].
 $$ 
 This is the Koszul complex of $V$. 
 
Another way to assemble the complexes $(\ast)_N$ is the complex $\cC^\bullet$ defined by:
 $$
 \cC^\bullet:\quad[\lim_N\Bigl(U(\fu^-)\otimes_{\Q_p}\bigl(V_{N+1}\otimes_{\Q_p} \fu^-\bigr)\Bigr)\lra \lim_N\Bigl( U(\fu^-)\otimes_{\Q_p} V_N\Bigr)].
 $$
 
 There is a morphism of complexes $\xi:\cD^\bullet\lra \cC^\bullet$, which, is proved in section \ref{sec:details}, induces $0$ as a map on the cohomology of the two complexes. It will be seen at the end of this section that in our simple situation, this follows easily from the following explicit computations.
 
 \medskip
 
We suppose that for any $n\in \Z$, $k-n\in \fK^\times$, i.e.  that $k$ is not an integer weight,
and we explicitly compute the cohomology of the complex $(\ast)_N$.

\bigskip
{\bf The Cokernel of $\Xi_N$}.

Let $P(u^-)\in \Q_p[u^-]$ and $0\le i\le N$. For all $0\le j\le i$, we set $Q_j(u^-)=0$, moreover we define
$$
Q_{i+1}(u^-):=\frac{1}{(k-i-1)}P(u^-),
\mbox{ and  for } i+1\le j\le N, \ Q_{j+1}(u^-):=-\frac{1}{(k-j-1)}u^- Q_j(u^-).
$$ 
A simple calculation shows
$$
\Xi_N\bigl(\sum_{j=0}^{N+1}Q_j(u^-)\otimes e_{N+1, j}\otimes u^-\bigr)=P(u^-)\otimes e_{N,i}.
$$
In other words $\Xi_N$ is surjective and so ${\rm Coker}(\Xi_N)=0$.

\bigskip

{\bf The Kernel of $\Xi_N$.}

\bigskip

Let $\sum_{i=0}^{N+1} P_i(u^-)\otimes e_{N+1,i}\otimes u^-\in U(\fu^-)\otimes V_{N+1}\otimes \fu^-$ be an element in ${\rm Ker}(\Xi_N)$.
So $0=\Xi_N\bigl(\sum_{i=0}^{N+1}P_i(u^-)\otimes e_{N+1,i}\otimes u^-)$ which implies that for every $0\le i\le N$ we have
$P_{i+1}(u^-)=-(k-i-1)^{-1}u^-P_i(u^-)$, in other words if we denote by:
$$
e:=\sum_{i=0}^{N+1} \frac{(-1)^i}{(k-1)(k-2)...(k-i)}(u^-)^i\otimes e_{N+1, i}\otimes u^-
$$
then $M:={\rm Ker}(\Xi_N)=U(\fu^-)e\subset U(\fu^-)\otimes V_{N+1}\otimes \fu^-$.

The question is what kind of $\fh:=H\Q_p$-module is $M$?
We compute the action of $H$ on $e$. We recall: for $0\le i\le N+1$ we have
$$
H((u^-)^i\otimes e_{N+1,i}\otimes u^-)=H(u^-)^i\otimes e_{N+1,i}\otimes u^-=([H, u^-]-u^-H)(u^-)^{i-1}\otimes e_{N+1,i}\otimes u^-=
$$

$$
=(-2i(u^-)^i+u^iH)\otimes e_{N+1,i}\otimes u^-=
$$
$$
=-2i(u^-)^i\otimes e_{N+1,i}\otimes u^-+(u^-)^i\otimes He_{N+1,i}\otimes u^-+(u^-)^i\otimes e_{N+1,i}\otimes [H, u^-]=
$$
$$
=-2i(u^-)^i\otimes e_{N+1,i}\otimes u^-+(2i-k)(u^-)^i\otimes e_{N+1,i}\otimes u^-+(u^-)^i\otimes e_{N+1,i}\otimes (-2u^-)=
$$
$$
=(-k-2)\Bigl((u^-)^i\otimes e_{N+1,i}\otimes u^-\Bigr).
$$
We notice that every term of the sum is an eigenvector for $H$ of eigenvalue $-k-2$.
In other words $He=(-k-2)e$ and moreover, $u^+(e)=0$.  
 It follows that ${\rm Ker}(\Xi_N)$ is the generalized  Verma module of weight $-k-2$, i.e. ${\rm Ker}(\Xi_N)=U(\fg)\otimes_{U(\fb)}\fK (-k-2)$. 

\bigskip

{\bf The BGG complex for $(\ast)_N$.}

\bigskip

We look again at our complex
$$
(\ast)_N:\quad U(\fu^-)\otimes_{\Q_p}\bigl(V_{N+1}\otimes_{\fK}{\Q_p}\fu^-\bigr)\stackrel{\Xi_N}{\lra} U(\fu^-)\otimes_{\Q_p} V_N,
$$
and look for the generalized eigenspace subcomplex for the action of $Z(\fg)$, the center of $U(\fg)$, with eigenvalues given by the values of the character $\chi_k$ (see lemma \ref{lemma:character}).
According to lemma \ref{lemma:diximier} we use the basis $e_{N+1, i}$ $i=0,1,2..., N+1$ for $V_{N+1}$ and the basis $e_{N,j}$ with $j=0, 1, ..., N$ 
for $V_N$. 
$H$ acts on the first basis by the weights: $-k-2, -k, -k+2, ..., -k+2N$ and on the second by $-k, -k+2,\ldots, -k+2N$.
Now we write filtrations of the two terms of the complex with graded quotients Verma modules with those weights.
Let $\chi_k:Z(\fg)\lra C$ be the central character associated to the Verma module $V_{k}=\aW$. We know that if $V_\lambda$ is a generalized Verma module with character $\lambda$, then $(V_\lambda)_{\chi_k}$ is not zero if and only if $\lambda=k$ or $\lambda=-k-2$.

Therefore $\Bigl(U(\fu^-)\otimes_{\Q_p}\bigl(V_{N+1}\otimes_{\Q_p} \fu^-\Bigr)_{\chi_k}=U(\fg)\otimes_{U(\fb)}\Q_p(-k-2)$ and
$\Bigl(U(\fu^-)\otimes_{\Q_p} V_N\Bigr)_{\chi_k}=0$.

Thus the BGG complex associated to $(\ast)_N$ is: $({\rm BGG})_N^\bullet: U(\fg)\otimes_{U(\fb)}\fK(-k-2)\lra 0$, and indeed it is independent of $N$. By the previous subsection we have that the two complexes 
\[
	\begin{tikzcd}[row sep=10]
		(\ast)_N^\bullet:&U(\fg)\otimes_{U(\fb)}\bigl(V_{N+1}\otimes_{\Q_p} \fg/\fb\bigr)\ar[r, "\Xi_N"] &U(\fg)\otimes_{U(\fb)}V_N\\
		({\rm BGG})_N^\bullet:&U(\fg)\otimes_{U(\fb)}\fK (-k-2)\ar[r]\arrow[u, phantom, sloped, "\subset"]& 0 \arrow[u, phantom, sloped, "\subset"]
	\end{tikzcd}
\]
are quasi-isomorphic. The first inclusion in the above diagram identifies $U(\fg)\otimes_{U(\fb)}\fK (-k-2)$ with ${\rm Ker}(\Xi_N)\subset U(\fg)\otimes_{U(\fb)}\bigl(V_{N+1}\otimes_{\Q_p} \fg/\fb\bigr)$.

\subsection{The complexes $\cC^\bullet$ and $\cD^\bullet$.}

For any $N\ge 0$ we have natural commutative diagrams with exact rows
\[
\begin{tikzcd}[row sep=10]
	(\ast)_{N+1}:& 0\ar[r]& {\rm Ker}(\Xi_{N+1})\ar[r] \ar[d, "\alpha_N"]&U(\fu^-)\otimes V_{N+2}\otimes \fu^-\ar[r, "\Xi_{N+1}"] \ar[d, "\beta_N"]&U(\fu^-)\otimes V_{N+1}\ar[r]\ar[d, "\gamma_N"]&0\\
	(\ast)_{N}:&0\ar[r]& {\rm Ker}(\Xi_{N})\ar[r] & U(\fu^-)\otimes V_{N+1}\otimes \fu^-\ar[r, "\Xi_{N}"] &U(\fu^-)\otimes V_{N}\ar[r]&0
\end{tikzcd}
\]
where $\alpha_N$ is induced by $\beta_N$ and $\beta_N, \gamma_N$ are induced by the natural projections $V_{i+1}\lra V_i$, $i\ge 0$.

We observe that the maps $\alpha_N$ are isomorphisms of $U(\fg)$-modules. By taking projective limits over the $N$'s and using the Mittag-Lefler property, we obtain the exact sequence:
$$
0\to U(\fg)\otimes_{U(\fb)}\fK (-k-2)\to \cC^1:=\lim_{\leftarrow,N}\bigl(U(\fg)\otimes_{U(\fb)}(V_N\otimes \fg/\fb)\bigr)\to \cC^0:=\lim_{\leftarrow,N}\bigl(U(\fg)\otimes_{U(\fb)}V_N\bigr)\to 0.
$$
 
 In other words we have a quasi-isomorphism of complexes, in the notations of Theorem \ref{thm:zerocoh}: 
 $$
\cF^\bullet: [U(\fg)\otimes_{U(\fb)}\fK (-k-2)\to 0]\lra \cC^\bullet: [\cC^1\to \cC^0].
$$
The diagram of that theorem, denote it $(1)$:
\[
\begin{tikzcd}[row sep=12]
	0\ar[r]&\cF^\bullet\ar[r]&\cC^\bullet\ar[r]&\cG^\bullet\ar[r]&0\\
	&&\cD^\bullet\ar[r, equal]\ar[u, "\xi"]&\cD^\bullet\ar[u, "\gamma"]&
\end{tikzcd}
\]
and the fact that $\cF^\bullet\lra \cC^\bullet$ is a quasi-isomorphism, implies that $\cG^\bullet$ is an exact complex, therefore $\gamma$ induces the map $0$ in cohomology.
Therefore in the elliptic case, the conclusion of Theorem \ref{thm:zerocoh} follows very easily.

\bigskip

To follow our program, now we continuously dualize  diagram $(1)$ and obtain the diagram, which we'll call $(2)$, of modules with connection:
\[
\begin{tikzcd}[row sep=12]
	0\ar[r]&(\cG^\bullet)^\ast\ar[r]\ar[d, "\gamma^\ast"]&(\cC^\bullet)^\ast\ar[r]\ar[d, "\xi^\ast"]&(\cF^\bullet)^\ast\ar[r]&0\\
	&(\cD^\bullet)^\ast\ar[r, equal]&(\cD^\bullet)^\ast&&
\end{tikzcd}
\]

Now we use Assumption \ref{ass:basicassumption}, which is true in this case, or use the torsor $\cT_{(\bH^\#)^\vee}$ of section \ref{sec:GMperiod} to obtain from the diagram (2) of $(U(\fg), \cB)$-modules a diagram, called (3) of  linearized $\cO_{\cX}$-modules 

\[
\begin{tikzcd}[row sep=12]
	0\ar[r]&\psi^\ast\bigl((\cG^\bullet)^\ast\bigr)\ar[r]\ar[d, "\gamma^\ast "]&\colim_N L({\rm Fil}_{N+\bullet}\otimes \omega^\bullet_{\cX/S})\ar[r]\ar[d, "\xi^\ast"]&L({\rm BGG}^\bullet)\ar[r]&0\\
	&L\bigl(\WW^{\rm alg}_k\otimes\omega^\bullet_{\cX/S}\bigr)\ar[r, equal]&L\bigl(\WW^{\rm alg}_k\otimes\omega^\bullet_{\cX/S}\bigr)&&
\end{tikzcd}
\]
where now the functor $L(\ )$ denotes linearization.
We remark the following:

$\bullet$ the complex which we denoted ${\rm BGG}^\bullet$ in the above diagram is ${\rm BGG}^\bullet:=\psi^\ast\bigl((\cF^\bullet)^\ast\bigr)=[0\lra \omega^{k+2}].$ 

$\bullet$ the connection defining the differentials of the complexes $\colim_N L({\rm Fil}_{N+\bullet}\otimes \omega^\bullet_{\cX/S})$ and respectively 
$L\bigl(\WW^{\rm alg}_k\otimes\omega^\bullet_{\cX/S}\bigr)$ is $\nabla'$ associated to the action of the Lie-algebra on $\fW_k^{\rm alg}$.
But the differentials of the complexes are defined by $L(\nabla')=L(\nabla_k)$, where $\nabla_k$ is the connection of $\WW_k^{\rm alg}$ induced by the Gauss-Manin connection. So from now on we will work with the complexes with differential $L(\nabla_k)$.

$\bullet$ We recall that $\psi^\ast(\gamma^\ast)$ induces $0$ in cohomology.
 
 \medskip

The final step in this comparison is to consider the log infinitesimal site $(\cX/S)_{\rm inf}^{\rm log}$ and the infinitesimal topos $(\cX)_{\rm inf}^{\rm log}$ on it and the morphism of topoi: 
$\pi_\ast: (\cX)_{\rm inf}^{\rm log}\lra \cX^{\rm an}$, see section \ref{sec:outline} and section \ref{sec:inf}. We consider the linearized sheaves with integrable connection on $\cX$, think of them as crystals in $(\cX)_{\rm inf}^{\rm log}$ and apply $\pi_\ast$ to the diagram $(3)$. Several things happen, namely:

By applying lemma \ref{lemma:uastcolim}, we obtain
$$
\bullet\ \pi_\ast\bigl(\colim_{N+\bullet} L\bigl({\rm Fil}_{N+\bullet}\otimes \omega^\bullet_{\cX/S})\bigr)\cong \colim_N {\rm Fil}_{N+\bullet}\otimes \omega^\bullet_{\cX/S}\cong  \WW_k\otimes\omega^\bullet_{\cX/S} \cong\pi_\ast\bigl(L(\WW_k^{\rm alg}\otimes \omega^\bullet_{\cX/S})\bigr)
$$

$\bullet$ $\pi_\ast\bigl(\gamma^\ast\bigr)$ induces $0$ in cohomology, as follows from lemma \ref{lemma:ssequence}.

Therefore, we obtain an exact sequence of complexes on $\cX$ (we recall that the top row in diagram $(3)$ is a split exact sequence; alternatively, one can use that $R^i\pi_\ast\bigl(L(\cM)\bigr)=0$ for all $i>0$ and $\cM$ a sheaf of $\cO_{\cX}$-modules in $(\cX)_{\rm inf}^{\rm log}$):
$$
0\lra \cM^\bullet\stackrel{\rho}{\lra} \WW_k^{\rm alg}\otimes\omega^{\bullet}_{\cX/S}\stackrel{\theta}{\lra} {\rm BGG}^\bullet\lra 0
$$ 
where $\cM^\bullet$ is the complex $\pi_\ast\bigl(\pi_{GM}^\ast((\cG^\bullet)^\ast)\bigr)$ and $\rho:=\pi_\ast\bigl(\gamma^\ast\bigr)$, and we know it induces $0$ on cohomology. It follows that $\theta$ is a quasi-isomorphism of complexes and we are done.

\begin{remark} 
\label{remark:elliptic}
We have a few comments to the above result:

1) The map $\theta: \WW_k^{\rm alg}\otimes\omega^\bullet_{\cX/S}\lra {\rm BGG}^\bullet$ is a differential operator, so only 
$B$-linear, not $\cO_{\cX}$-linear. But this projection is split naturally as follows:
\[
\begin{tikzcd}[row sep=10]
	\WW_k^{\rm alg}\ar[r, "\nabla_k"]&\WW_k^{\rm alg}\otimes\omega^1_{\cX/S}\\
	0\arrow[u, phantom, sloped, "\subset"]\ar[r]&\omega^{k+2}\arrow[u, phantom, sloped, "\subset"]
\end{tikzcd}
\]
and the inclusion $\omega^{k+2}\hookrightarrow \WW^{\rm alg}_k\otimes\omega^1_{\cX/S}$ is of course $\cO_{\cX}$-linear.

2) In order for the reader to appreciate the subtlety of the set-up, let us remark that before applying $\pi_\ast$, the situation is complicated: the natural inclusion of complexes:
\[
\begin{tikzcd}[row sep=10]
L(\WW_k^{\rm alg})\ar[r, "L(\nabla_k)"]&L(\WW_k^{\rm alg}\otimes\omega^1_{\cX/S})\\
	0\arrow[u, phantom, sloped, "\subset"]\ar[r]&L(\omega^{k+2})\arrow[u, phantom, sloped, "\subset"]
\end{tikzcd}
\]
cannot be a quasi-isomorphism as ${\rm Ker}(L(\nabla_k))=\WW_k^{\rm alg}\neq 0$.
Therefore, only after applying $\pi_\ast$ the complexes $(\cC^\bullet)^\ast$ and $(\cD^\bullet)^\ast$ become isomorphic and everything works out. In fact, for this simple situation, there is another, direct proof of the quasi-isomorphism of the complexes in the diagram of remark 2) above, see \cite{andreatta_iovita}.
\end{remark}

\bigskip
 
\section{The log infinitesimal site; linearization and de-linearization of complexes.}
\label{sec:inf}

In this section we work with locally noetherian fine and saturated (fs for short) log adic spaces, as introduced in Definition 2.2.2 of \cite{diao_lan_liu_zhu}. We fix a log smooth and separated morphism $f: X\lra S$, as in Definition 3.1.1 of \cite{diao_lan_liu_zhu} and we present the basics of the theory of the log infinitesimal site associated to it. We use the following convention: if $X$ is a log adic space, $\uX$ denotes the underlying adic space. Moreover we fix for the rest of the article $\uS={\rm Spa}(B, B^+)$ an affinoid over 
${\rm Spa}(K, \cO_K)$ and $S$ has trivial log structure. For simplicity of notation, we will denote by $\cO_X$ the structure sheaf $\cO_{\uX}$.
 This site is denoted $(X/S)_{\rm inf}^{\rm log}$. Moreover we need a theory of crystals on this site and of linearization and de-lineraization of such crystals.

The site (over a different base) and with trivial  log structure is defined in \cite{guo} but in that article the author works with coherent crystals and avoids linearization of such crystals. We need a theory of crystals which are more general than coherent ones, but they are {\bf not complete}. We recall that in \cite{conrad} such a category of $\cO_X$-modules is defined and called the category of quasi-coherent $\cO_X$-modules. It is the category of $\cO_X$-modules which are locally filtered colimits of coherent sheaves. We recall that now there are other, more sophisticated definitions of ``quasi-coherent" $\cO_X$-modules on an adic space $\uX$, (see \cite{clausen_scholze}, \cite{andreychev}, \cite{snoor}), but for us the above definition suffices. We do not need sheaves which are acyclic on affinoids, only a category of sheaves which are preserved by pullback of morphisms between adic spaces, pull-back defined by the use of the {\bf algebraic tensor product}, versus the completed one. 

This definition of quasi-coherent modules on adic spaces allows us to define the category of 
quasi-coherent crystals on the log infinitesimal site of a log adic space $X$ over $S$, their linearization and de-linearization.

 \subsection{The log infinitesimal site.}
 \label{sec:infinitesimal}
 
  We start by recalling the main definitions.
   
 
 We define the (small) log infinitesimal site of $f:X\lra S$ following \cite{chiarellotto_fornasiero} and \cite{guo}, Definition 2.1.2.: .
 \begin{definition}
 	The \textbf{log infinitesimal site of $f:X\lra S$} is the category, denoted by $\XSinf$, having the following objects and arrows:
 	\begin{itemize}
 		\item The objects are pairs $(U,T)$ where $U\subset X$ is an open immersion of log adic spaces over $S$, (as $X\to S$ is log smooth, this is equivalent to saying that $\uU$ is an open of $\uX$ with log structure induced from the one on $X$), equipped with a nilpotent, exact closed immersion over $S$, $g:U\rightarrow T$;
 		\item The arrows from $(U_1,T_1)$ to $(U_2,T_2)$ are pairs of $S$-morphisms $(\alpha,\beta):(U_1,T_1)\rightarrow (U_2,T_2)$ s.t. $\alpha$ is an open immersion and the following diagram commutes
 		\begin{tikzcd}
 			& X\\
 			U_1\ar[d, "g_1"]\ar[ru, "i_1"]\ar[r, "\alpha"] & U_2 \ar[u, "i_2"]\ar[d, "g_2"]\\
 			T_1\ar[r, "\beta"]& T_2
 		\end{tikzcd}.
 	\end{itemize}
 \end{definition}
 
 We have 
 
 \begin{lemma}[\cite{guo}, Lemma 2.1.4, \cite{diao_lan_liu_zhu}, Proposition 2.3.27] \label{InfCathegoriesHaveFiberProductsAndEqualizers}
 	The site $\XSinf$ has equalizers and fiber products.
 \end{lemma}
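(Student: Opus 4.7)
The plan is to construct fiber products and equalizers in $\XSinf$ by borrowing the corresponding constructions from the ambient category of fs log adic spaces over $S$ and then applying the exactification functor to restore the exactness of the closed immersion into $X$. Throughout, the key observation to keep in mind is that in a morphism $(\alpha,\beta):(U_1,T_1)\to (U_2,T_2)$, the map $\alpha$ is forced by the compatibility condition with $i_1,i_2$: it must be the inclusion $U_1\hookrightarrow U_2$ whenever $U_1\subset U_2$. Thus both constructions reduce, after fixing the first coordinates, to constructions on the $T$-side.

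For fiber products, given $(U_1,T_1)\to (U,T)\leftarrow (U_2,T_2)$, I would take first coordinate $V:=U_1\cap U_2\subset U$, then form the fiber product $T_1\times_T T_2$ in fs log adic spaces over $S$ (this exists by general results for fs log adic spaces, e.g.\ those recalled in \cite{diao_lan_liu_zhu}). The universal property produces a canonical $S$-morphism $V\to T_1\times_T T_2$; this is a closed immersion whose defining ideal is the sum of the ideals defining $U_i\subset T_i$ pulled back, hence nilpotent. It need not be exact, so I would replace it by the exactification and set the fiber product to be $\bigl(V,(T_1\times_T T_2)^{\rm ex}\bigr)$. The universal property of $\XSinf$-fiber products then follows by combining the universal property of the underlying fiber product of log adic spaces with the universal property of exactification (any morphism from an exact closed immersion into a non-exact one factors uniquely through the exactification).

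For equalizers, given a parallel pair $(\alpha,\beta),(\alpha,\beta'):(U_1,T_1)\to (U_2,T_2)$, the first coordinates necessarily agree by the remark above, so the question reduces to equalizing $\beta,\beta'$. I would construct the equalizer $E\hookrightarrow T_1$ as the pullback $T_1\times_{T_2\times_S T_2}T_2$ of the diagonal along $(\beta,\beta')$. Since the two compositions $U_1\to T_1\rightrightarrows T_2$ coincide (both factor through the map $U_1\to U_2\to T_2$ forced by the morphism data), $U_1$ factors through $E$ via a closed immersion whose ideal is again nilpotent. Passing to the exactification $E^{\rm ex}$, the pair $(U_1,E^{\rm ex})$ is the desired equalizer, whose universal property is verified by the same two-step argument as for fiber products.

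The main technical obstacle, and where I expect the real work to lie, is the careful handling of exactification: one must verify (i) that the exactification of a nilpotent closed immersion is again a nilpotent closed immersion, so that the output genuinely lies in $\XSinf$; (ii) that exactification is functorial and satisfies the universal property that morphisms from an already-exact closed immersion into an arbitrary closed immersion factor uniquely through the exactification; and (iii) that these constructions remain within fs log adic spaces. Granting these properties of the exactification functor --- which in our setting can be read off from the local description given in Lemma~\ref{lemma:logsmooth} and Remark~\ref{rmk:coordinates} --- both universal properties follow formally, and no further global input is needed.
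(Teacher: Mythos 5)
The paper itself gives no argument for this lemma---it is quoted from \cite{guo}, Lemma 2.1.4 and \cite{diao_lan_liu_zhu}, Proposition 2.3.27---so you are supplying a proof rather than reproducing one. Your construction is the standard one for (log) infinitesimal and crystalline sites: fix the first coordinate ($V=U_1\cap U_2$, resp.\ $U_1$, using that $\alpha$ is forced because the $i_j$ are monomorphisms), form the limit on the $T$-side in fs log adic spaces, observe that the first coordinate sits inside it as a nilpotent closed immersion, exactify, and deduce the universal property in the site from the right-adjoint property of $(-)^{\rm ex}$. This is correct in outline and is essentially what the cited references do.

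Two points need repair. First, your in-paper reference for the properties of exactification is off: Lemma \ref{lemma:logsmooth} and Corollary \ref{rmk:coordinates} only describe the exactified infinitesimal neighbourhoods of the diagonal of the log smooth morphism $X\to S$; the universal property you need (your point (ii)) is exactly the adjunction of Proposition \ref{prop:exactification}, and that is what makes the factorization argument work for test objects $(U',T')$ with $U'\to T'$ exact. Second, and more substantively, your point (i) --- nilpotence of the ideal of $V$ in $(T_1\times_T T_2)^{\rm ex}$ --- is not a formal property of exactification: for an arbitrary nilpotent closed immersion with chart $h\colon P\to Q$, passing to $P^{\rm ex}=(h^{\rm gp})^{-1}(Q)$ adjoins new elements of $P^{\rm gp}$ whose images need not differ from liftable units by nilpotents, so the exactified thickening of a nilpotent thickening can fail to be nilpotent in general. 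What saves the construction here is that $U\to T$, $U_i\to T_i$ are \emph{exact} nilpotent thickenings of opens of $X$: locally a chart $M$ of the log structure of $X$ near $V$ extends to compatible charts of $T$, $T_1$ and $T_2$ (units lift along nilpotent thickenings), so the chart of $V\to T_1\times_T T_2$ can be taken with $P^{\rm gp}\to Q^{\rm gp}$ an isomorphism; then $P^{\rm ex}$ differs from $P$ only by units, exactification does not change the underlying adic space, and the ideal remains the nilpotent sum of the pulled-back ideals. You should make this step explicit rather than deferring it. (For the equalizer you can in fact avoid exactification entirely: since $\alpha=\alpha'$ and $U_1\to T_1$ is exact, the closed subspace of $T_1$ where $\beta^{\#}=\beta'^{\#}$ and $\beta^{\flat}=\beta'^{\flat}$, with the pullback log structure, already receives $U_1$ by an exact nilpotent closed immersion.)
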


 Now we define coverings on the category $\XSinf$ and the associated topos.
 
 \begin{definition}
 	A \textbf{covering in the log infinitesimal site} is a collection of morphisms $\{(\alpha_i,\beta_i): (U_i,T_i)\rightarrow (U,T)\}_{i\in I}$ in the category $\XSinf$ s.t. the collections $\{\alpha_i\}_{i\in I}$ and $\{\beta_i\}_{i\in I}$ are open coverings.
 \end{definition}

The coverings on $\XSinf$ describe a topology. We denote $\Xinf$ the (small) log infinitesimal topos, i.e. the category of sheaves of sets on $\XSinf$.

\subsubsection{Description of Pre-Sheaves and Sheaves in the log infinitesimal topos}

In this section we give a description of the (small) infinitesimal topos. 

We denote an object $g=(U,T)\in  \XSinf$ as the morphism $g:U\rightarrow T$ attached to it, moreover we denote $U=:s(g)$ and $T=:t(g)$ where $s$ stands for source and $t$ stands for target. For a morphism $\gamma: g_1\rightarrow g_2$ we denote
\[
\gamma_s: s(g_1)\rightarrow s(g_2)\quad \text{and}\quad \gamma_t: t(g_1)\rightarrow t(g_2)
\]
the morphisms between sources and between targets.

\begin{lemma}\label{lemmaPShinf}
	The data of a presheaf $F$ over $\XSinf$ is equivalent to:
	\begin{itemize}
		\item a collection of presheaves $\{F_{g}\}_{g\in \XSinf}$, where $F_{g}$ is a presheaf over $t(g)$ for each $g\in \XSinf$;
		\item a collection of morphisms $\{\phi_{\gamma}: \gamma_t^{-1} F_{g_2}\rightarrow F_{g_1}\}_{\gamma: g_1\rightarrow g_2}$, where $\gamma$ varies between all the morphisms in $\XSinf$.
	\end{itemize}
	with the following properties:
	\begin{enumerate}
		\item if $\gamma=\gamma_2\circ\gamma_1$, then $\phi_{\gamma_1}\circ \left(\gamma_{1,t}^{-1} \phi_{\gamma_2} \right)= \phi_{\gamma}$, i.e.
		\[
		\begin{tikzcd}
			\gamma_{1,t}^{-1} \left(\gamma_{2,t}^{-1} F_{g_3}\right) \ar[d, "\cong"]\ar[r, "\gamma_{1,t}^{-1} \phi_{\gamma_2}"]& \gamma_{1,t}^{-1} F_{g_2} \ar[r, "\phi_{\gamma_1}"] & F_{g_1}\\
			(\gamma_t)^{-1} F_{g_3}\ar[rru, "\phi_{\gamma}"]& &
		\end{tikzcd}
		\]
		is a commutative diagram, where $\gamma_1: g_1\rightarrow g_2$ and $\gamma_2: g_2\rightarrow g_3$;
		\item $\phi_{\text{id}_g}=\text{id}_{F_g}$.
	\end{enumerate}
	
	Moreover $F$ is a sheaf on the log infinitesimal site iff each $F_g$ is a sheaf on $t(g)$ and if $\gamma_t$ is an open immersion, then $\phi_\gamma$ is an isomorphism.
\end{lemma}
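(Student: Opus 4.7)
The plan is to establish the equivalence by exploiting the fact that for each object $g \in \XSinf$ with $t(g) = T$, the restrictions $g|_V := (g^{-1}(V), V)$, as $V$ ranges over open subsets of $T$, form a subcategory of $\XSinf$ canonically equivalent to the site of opens of $T$; this is well-defined because $g$ is an exact closed immersion with $s(g) \subseteq X$ open, so $g^{-1}(V)$ is again an open of $X$. Everything then reduces to untangling how a general presheaf on $\XSinf$ restricts to these slice subcategories and how distinct slices are related by arbitrary morphisms in $\XSinf$.

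Given a presheaf $F$ on $\XSinf$, I would define $F_g$ by $F_g(V) := F(g|_V)$, with restrictions induced by the obvious morphisms $g|_{V'} \to g|_V$ in $\XSinf$ associated to inclusions $V' \subseteq V$. For a morphism $\gamma : g_1 \to g_2$ and opens $V' \subseteq T_1$, $V \subseteq T_2$ with $\gamma_t(V') \subseteq V$, the restriction of $\gamma$ produces a morphism $g_1|_{V'} \to g_2|_V$ in $\XSinf$, hence a map $F_{g_2}(V) \to F_{g_1}(V')$; passing to the colimit over such $V$ defines $\phi_\gamma(V') : (\gamma_t^{-1} F_{g_2})(V') \to F_{g_1}(V')$. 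Conversely, from the data $(\{F_g\}, \{\phi_\gamma\})$ I would recover $F$ by $F(g) := F_g(t(g))$ and $F(\gamma) := \phi_\gamma(T_1) \circ \mathrm{adj}$, where $\mathrm{adj}: F_{g_2}(T_2) \to (\gamma_t^{-1} F_{g_2})(T_1)$ is the canonical map from the global sections into the colimit. Condition (1) translates precisely into functoriality $F(\gamma_2 \circ \gamma_1) = F(\gamma_1) \circ F(\gamma_2)$, and (2) into preservation of identities, and these two constructions are easily checked to be mutually inverse up to natural isomorphism.

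For the sheaf statement, the key observation is that a covering $\{g_i \to g\}$ in $\XSinf$ is determined entirely by an open covering $\{V_i\}$ of $T$, with $g_i = g|_{V_i}$; indeed both $\alpha_i$ and $\beta_i$ are open immersions, and since $g$ is an exact closed immersion the source component is forced to be $s(g_i) = g^{-1}(V_i)$. Hence the sheaf condition for $F$ on coverings of $g$ is exactly the sheaf condition for $F_g$ on open covers of $T$. The additional requirement that $\phi_\gamma$ be an isomorphism when $\gamma_t$ is an open immersion corresponds to gluing compatibility across distinct slices: in that case $\gamma_t^{-1} F_{g_2}$ is simply the restriction of the sheaf $F_{g_2}$ to the open $\gamma_t(T_1) \subseteq T_2$, and the condition asserts that this agrees with the presheaf $F_{g_1}$ intrinsically attached to the source object. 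Without this identification, the local sheaf data $\{F_g\}$ would not assemble into a global sheaf on $\XSinf$.

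I expect the main obstacle to be the book-keeping in verifying condition (1): the left-hand composition $\phi_{\gamma_1} \circ \gamma_{1,t}^{-1} \phi_{\gamma_2}$ is defined via a double colimit over opens of $T_2$ and $T_3$, while the right-hand side $\phi_{\gamma_2 \circ \gamma_1}$ is a single colimit over opens of $T_3$. Matching them requires the standard cofinality argument comparing $(\gamma_{1,t})^{-1} \circ (\gamma_{2,t})^{-1}$ with $(\gamma_{2,t} \circ \gamma_{1,t})^{-1}$, after which each pair of opens reduces to one instance of functoriality of $F$ applied to the composable morphisms $g_1|_{V_1} \to g_2|_{V_2} \to g_3|_{V_3}$ in $\XSinf$. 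This is purely formal, but the notation must be handled with care.
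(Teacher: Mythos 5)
The paper offers no proof of this lemma, and your argument is the standard one (the analogue of the Berthelot--Ogus description of sheaves on the crystalline site): slice each object $g$ over the opens of its target, reconstruct $F$ via $F(g)=F_g(t(g))$ with functoriality encoded by conditions (1)--(2), and observe that coverings in $\XSinf$ are exactly open coverings of the targets because the source of a nilpotent exact closed immersion is determined topologically by the target. This is correct; the only subtlety, which you correctly isolate, is that the data $(\{F_g\},\{\phi_\gamma\})$ is a priori redundant ($F_g(V)$ and $F_{g|_V}(V)$ both record $F(g|_V)$), and it is the requirement that $\phi_\gamma$ be an isomorphism whenever $\gamma_t$ is an open immersion that identifies $F_g|_V$ with $F_{g|_V}$ and lets the local sheaf data glue.
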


\subsubsection{Envelopes}\label{SectionEnvelop}
 
 We observe that, in general, the category $\XSinf$  does not have a final object, in such cases it is usual that one defines global sections  of a sheaf $F$ by
\[
\Gamma\left(\XSinf, F\right):=\text{Hom}_{\Xinf}\left(\uno, F\right)
\]
where $\uno$ is the final object in $\Xinf$, i.e. it is the sheafification of the presheaf $\uno^{pre}$, where
\[
\uno^{pre}(U,T)= \{\ast\}\quad \text{for each }(U,T)\in \XSinf.
\]
By $\{\ast\}$ we denoted a set with one element, $\ast$.
In this section we define the envelope of a closed immersion and we describe the basic properties of the envelope. The envelope construction plays a central role in our theory since it provides a hyper-covering of the sheaf $\uno$, hence a way to compute global sections of any given sheaf.

Let us denote by $\cC$ the category with objects closed immersions of log adic spaces over $S$ (\cite{diao_lan_liu_zhu}, Definition 2.2.23.) $X\hookrightarrow Y$, with morphisms pairs $(\alpha, \beta):(X\hookrightarrow Y)\to (X'\hookrightarrow Y')$, with
$\alpha: X\lra X'$ and $\beta: Y\lra Y'$ such that the obvious square is commutative. We also have the full subcategory $\cC^{\rm ex}$ of $\cC$ whose objects consist of exact closed immersions of log adic spaces over $S$ (\cite{diao_lan_liu_zhu}, Definition 2.2.23 and Definition 2.2.2.) We have

\begin{proposition}
\label{prop:exactification}
The inclusion functor $\iota:\cC^{\rm ex}\subset \cC$ has a right adjoint $(X\hookrightarrow Y)\to (X\hookrightarrow Y)^{\rm ex}\cong (X\hookrightarrow Y^{\rm ex})$.
\end{proposition}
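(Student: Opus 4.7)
The strategy is to construct $Y^{\rm ex}$ locally via charts, adapting the classical exactification construction from log schemes (as in Kato and Ogus) to the log adic setting, and then to glue using the universal property. Given a closed immersion $i:X\hookrightarrow Y$ in $\cC$, I would first work on an affinoid open $\uY=\Spa(A,A^+)$ on which the log structure admits an fs chart $P\to A$, with $X$ cut out by a coherent ideal $I\subset A$ and equipped with an induced fs chart $Q\to A/I$ compatible via a morphism of monoids $\theta:P\to Q$. The key monoid-level construction is the submonoid $P^{\rm ex}\subset P^{\rm gp}$ defined as the preimage of $Q\subset Q^{\rm gp}$ under $\theta^{\rm gp}$. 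One checks that $P^{\rm ex}$ is fine and saturated, that $P\subset P^{\rm ex}$ is of finite type with $(P^{\rm ex})^{\rm gp}=P^{\rm gp}$, and that the induced morphism $\theta^{\rm ex}:P^{\rm ex}\to Q$ is exact in the sense that $P^{\rm ex}=Q\times_{Q^{\rm gp}}P^{\rm gp}$.

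Geometrically I would then define $\uY^{\rm ex}$ locally as an affinoid adic space $\Spa(A^{\rm ex},A^{{\rm ex},+})$ with $A^{\rm ex}:=A\otimes_{\Z[P]}\Z[P^{\rm ex}]$, equipped with the topology making $A\to A^{\rm ex}$ adic and with $A^{{\rm ex},+}$ the integral closure of the image of $A^+\otimes_{\Z[P]}\Z[P^{\rm ex}]$; the associated log structure comes from the chart $P^{\rm ex}$. The closed immersion $X\hookrightarrow Y^{\rm ex}$ is defined at the level of rings by sending $e_m\mapsto$ its canonical lift determined by $\theta^{\rm ex}(m)\in Q\to A/I$, and is exact precisely because $P^{\rm ex}\to Q$ is. The projection $Y^{\rm ex}\to Y$ in $\cC$ corresponds to the inclusion $P\hookrightarrow P^{\rm ex}$.

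For the universal property, let $(X'\hookrightarrow Y')\in\cC^{\rm ex}$ together with a morphism $(X'\hookrightarrow Y')\to (X\hookrightarrow Y)$ in $\cC$. I would show chart-by-chart that the induced $P\to M_{Y'}$ extends uniquely to $P^{\rm ex}\to M_{Y'}$: any $m\in P^{\rm ex}$ satisfies $\theta^{\rm gp}(m)\in Q$, and the image of $m$ in $M_{Y'}^{\rm gp}$ restricts on $X'$ to an element of $M_{X'}$; the exactness of $X'\hookrightarrow Y'$ then forces this element to already lie in $M_{Y'}$. This gives the required factorization $Y'\to Y^{\rm ex}$, unique because $P\subset P^{\rm ex}$ is an injection of integral monoids. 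The local exactifications are independent of chart choices precisely because they satisfy this universal property, so they automatically glue to a global object $Y^{\rm ex}$ and the adjunction isomorphism follows.

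The main obstacle, as I see it, is the adic-geometric bookkeeping, namely specifying the correct ring $A^{{\rm ex},+}$ of integral elements and the correct topology on $A^{\rm ex}$ so that $Y^{\rm ex}$ is a genuine locally noetherian fs log adic space in the sense of \cite{diao_lan_liu_zhu}, and verifying that the closed immersion $X\hookrightarrow Y^{\rm ex}$ remains nilpotent (so that $(X\hookrightarrow Y^{\rm ex})$ really defines an object of $\cC^{\rm ex}$ rather than just of the ambient category). Once the chart-level construction is shown to satisfy the universal property, the passage to the global setting and the verification of functoriality are formal.
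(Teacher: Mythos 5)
Your proposal follows essentially the same route as the paper: the key construction is identical (locally choose a chart $h:P\to Q$ of the closed immersion, set $P^{\rm ex}:=(h^{\rm gp})^{-1}(Q)$, and define $\uY^{\rm ex}$ by base change along $S\langle P\rangle\to S\langle P^{\rm ex}\rangle$, which is your $A\otimes_{\Z[P]}\Z[P^{\rm ex}]$), and the paper likewise asserts that these local constructions glue. You actually supply more detail than the paper does on the universal property of $P^{\rm ex}\to Q$ and the resulting adjunction; note only that the objects of $\cC$ as defined here are not required to be nilpotent, so your worry on that point does not arise for this proposition.
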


\begin{proof}
Let $\iota:X\to Y$ be a closed immersion of log adic spaces over $S$. Thanks to \cite{diao_lan_liu_zhu}, Proposition 2.3.19 and Proposition 2.3.22, locally on $X$ and $Y$ we have a chart $h:P\to Q$
of $\iota$ with $P,Q$ finite saturated monoids. Let $h^{\rm gp}: P^{\rm gp}\to Q^{\rm gp}$ be the natural group homomorphism and set $P^{\rm ex}:=(h^{\rm gp})^{-1}(Q)\to Q$.
We define $\uY^{\rm ex}:=\uY\times_{S\langle P\rangle}S\langle P^{\rm ex}\rangle$, with the log-structure defined by the  pre-log structure $P^{\rm ex}\to \cO_{\uY^{\rm ex}}$. See \cite{diao_lan_liu_zhu}, Example 2.2.19, for the notation $S\langle P\rangle$ and $S\langle P^{\rm ex}\rangle$ and Proposition 2.3.27 shows that the fiber product above exists in the category of log adic spaces over $S$.  By construction the morphism $\iota$ defines an exact closed immersion 
$$
\iota^{\rm ex}: X\cong X\times_{S\langle Q\rangle}S\langle Q\rangle\to Y^{\rm ex}:=Y\times_{S\langle P\rangle}S\langle P^{\rm ex}\rangle.
$$
 with chart $P^{\rm ex}\to Q$. In general, these local constructions glue and define $\iota^{\rm ex}:X\to Y^{\rm ex}$. 
 
\end{proof}

Let $j:X\rightarrow Y$ be a closed immersion of log adic spaces over $S$, not necessarily exact or nilpotent; we apply Proposition \ref{prop:exactification} and consider the exact closed immersion $j^{\rm ex}:X \rightarrow Y^{\rm ex}$, then  there is a coherent ideal $J\subset \cO_{\uY^{\rm ex}}$ s.t. $(j^{\rm ex})^{-1}\cO_{\uY^{\rm ex}}/ J \cong \cO_X$ via the morphism $j^{\rm ex}$. Let $P^n_{X/Y}$ be the adic space having as topological space the topological space of $X$ and with ring of definition $\cO_{\uY^{\rm ex}}/ J^{n+1}$ and log structure induced from the log structure of $Y^{\rm ex}$. Observe that $X$ is isomorphic (via $j$) to $P^0_{X/Y}$ as log adic spaces, moreover the morphism $X\rightarrow P^n_{X/Y}$ is an exact closed immersion with nilpotent ideal of order $n+1$, hence $(X, P^n_{X/Y})$ is an object in the site $\XSinf$.
\begin{definition}
	The object 
	\[
	D_X^n(Y):=(X, P^n_{X/Y})\in \XSinf
	\] is called the \textbf{$n$-th log envelope} of $X$ in $Y$.
\end{definition}
We have already seen that $P^0_{X/Y}=X$, moreover if $X=Y$ and $j=\text{id}_X$, then $P^n_{X/Y}=X$ for any $n\in \N$ and $D^n_X(X)=(X,X)\in \XSinf$. An interesting example is given by taking the log $n$-th envelope of the diagonal morphism
\[
\Delta_k: X\longrightarrow X^{\times^{k+1}_S}.
\]
We recall that $X$ is log smooth and separated over $S$, hence $\Delta_k$ is a closed immersion for all $k\in \N$. We denote $P^n_X(k):=P^n_{X/X^{\times^{k+1}_S}}$ and $D^n_X(k):=\left(X, P^n_X(k)\right)$. Moreover $I(k)\subset \cO_{\underline{P}^n_X(k)}$ will denote the ideal defined by the closed immersion of $X\subset P^n_X(k)$ and
$\mathcal{P}^n_X(k):= \Delta_k^{-1} \left(\cO_{\underline{P}^n_X(k)}\right)$ will
denote the structural sheaf of rings of $P^n_{X}$ (we see it over $X$ since the two topological spaces are equal). For each $i\in \N_{\leqslant k}$ let us denote with $p^n_i(k)$ the morphism
\[
\begin{tikzcd}
	P^n_{X}(k) \ar[r, "	p^n_i(k)"]\ar[d, ]& P^n_{X}(0)\ar[d, equal]\\
	X^{\times_S^{k+1}}\ar[r, "p_i"]& X
\end{tikzcd}
\]
induced by the $i$-th projection $p_i$.
If $k=1$ we simplify the notation:
\[
I:= I(1),\quad\quad P^n_{X}:=P^n_{X}(1)\quad\text{and}\quad \mathcal{P}^n_{X}:=\mathcal{P}^n_{X}(1).
\]
For $i,j\in \N_{\leqslant 2}$ we denote $p_{i,j}^n(2)$ the morphism
\[
\begin{tikzcd}
	P^n_{X}(2) \ar[r, "	p_{i,j}^n(2)"]\ar[d, ]& P^n_X\ar[d, hook]\\
	X\times_S X\times_S X\ar[r, "p_{i,j}"]& X\times_S X
\end{tikzcd}
\]
induced via the projection on the $i-th$ and $j-th$ coordinates (where the order matters). 

\medskip

At this point it is worth trying to understand $\cP^n_X$ and especially $\cP_X:=\lim_n \cP^n_X$, for $f:X:\to S$ a log smooth and separated morphism of log adic spaces, with $S={\rm Spa}(B,B^+)$  (in this Lemma we do not assume that the log structure of $S$ is trivial, only that both $X$ and $S$ have finite separated (for short fs) log structures.) We first have 

\begin{lemma} 
\label{lemma:logsmooth}
Given  $f:X\rightarrow S$ as above, there exist

\item[(i)] an affinoid open covering $(U_i)_{i\in I}$ of $\uX$,

\item[(ii)] fine and saturated monoids $M_i$ with the property that the order of the torsion subgroup of $M_i^{\rm gp}$ is invertible in $U_i$, for all $i\in I$, 

\item[(iii)] morphisms of log adic spaces $U_i\to S\langle M_i\rangle $ (see Example 2.2.19 of \cite{diao_lan_liu_zhu} for the notation) whose underlying morphism of adic spaces is \'etale, for all $i\in I$.

\bigskip
Then, for every $n\ge 0$ we have $\cP_{U_i}^n\cong  \cO_{U_i}[M_i^{\rm gp}]/J_i^{n+1}$ with $J_i$ the augmentation ideal, i.e., the ideal generated by $m_i-1$ for varying $m_i\in M_i^{\rm gp}$. In particular,   
$\omega^1_{U_i/S} \cong \cO_{U_i}\otimes_{\Z} M_i^{\rm gp}\cong J_i/J_i^2$.  
\end{lemma}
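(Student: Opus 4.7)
\medskip

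\noindent
\textbf{Proof plan.} The plan is to work locally on $\uX$, reduce the assertion to a standard computation on a log-affine model $S\langle M\rangle$, and then transport it to $U_i$ via the étale chart.

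First, I would produce the covering in (i)--(iii). By the definition of log smoothness (see \cite{diao_lan_liu_zhu}, Corollary 3.1.10 or the Kato-style criterion adapted there), every point of $\uX$ admits an affinoid open neighbourhood $U_i$ together with an fs chart $M_i\to \cO_{U_i}$ such that the induced map $U_i\to S\langle M_i\rangle$ is étale (after possibly shrinking $U_i$ and replacing $M_i$ by an fs monoid whose torsion part of $M_i^{\rm gp}$ has order invertible in $\cO_{U_i}$). This is exactly the content of (i)--(iii) and reduces the problem, by étale base change and the compatibility of the exactification with étale morphisms, to the universal case $X = S\langle M\rangle \to S$.

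Second, I would compute the exactification of the diagonal $\Delta\colon X\hookrightarrow X\times_S X$ in this universal case. The product $S\langle M\rangle\times_S S\langle M\rangle$ carries the chart $M\oplus M\to \cO$, and $\Delta$ is induced by the codiagonal $\sigma\colon M\oplus M\to M$, $(a,b)\mapsto a+b$. Applying Proposition \ref{prop:exactification}, the exactification replaces $M\oplus M$ by $(\sigma^{\rm gp})^{-1}(M)$ inside $M^{\rm gp}\oplus M^{\rm gp}$. Using the isomorphism $M^{\rm gp}\oplus M^{\rm gp}\cong M^{\rm gp}\oplus M^{\rm gp}$ given by $(a,b)\mapsto (a+b,b-a)$, this preimage becomes $M\oplus M^{\rm gp}$, so
\[
(X\times_S X)^{\rm ex}\;\cong\; X\times_S S\langle M^{\rm gp}\rangle,
\]
and on the level of structure sheaves we obtain $\cO_{(X\times_S X)^{\rm ex}}\cong \cO_X[M^{\rm gp}]$. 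Under this identification the diagonal $X\hookrightarrow (X\times_S X)^{\rm ex}$ corresponds to the augmentation $\cO_X[M^{\rm gp}]\twoheadrightarrow \cO_X$, $m\mapsto 1$, whose kernel is the ideal $J$ generated by $\{m-1: m\in M^{\rm gp}\}$. This gives $\cP_X^n\cong \cO_X[M^{\rm gp}]/J^{n+1}$ at once, and transporting via the étale chart yields the formula for $\cP_{U_i}^n$.

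Third, the identification $J_i/J_i^2\cong \cO_{U_i}\otimes_{\Z} M_i^{\rm gp}$ follows from the explicit shape of $J_i$: the map $M_i^{\rm gp}\to J_i/J_i^2$, $m\mapsto [m-1]$, is $\Z$-linear because $(mn-1)=(m-1)+(n-1)+(m-1)(n-1)$ and the last term lies in $J_i^2$, and it is a bijection on a set of generators, hence extends to an isomorphism $\cO_{U_i}\otimes_\Z M_i^{\rm gp}\xrightarrow{\sim} J_i/J_i^2$. The isomorphism $\omega^1_{U_i/S}\cong J_i/J_i^2$ is the standard conormal identification in the log setting: for an exact closed immersion, the sheaf of log differentials is computed by the conormal of the diagonal, and on $S\langle M\rangle/S$ one has $\omega^1\cong \cO\otimes_\Z M^{\rm gp}$ with the generator $d\log m$ corresponding to $m$; this identification is compatible with the map $m\mapsto m-1$.

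The main technical obstacle is making sure the exactification computation is carried out correctly and canonically: one must verify that the change of variables $(a,b)\mapsto (a+b,b-a)$ (or an equivalent decomposition) really identifies the preimage $(\sigma^{\rm gp})^{-1}(M)$ with $M\oplus M^{\rm gp}$ compatibly with the target monoid $M$, and that the étale descent of the exactification construction along $U_i\to S\langle M_i\rangle$ preserves the computation level-by-level in $n$. Everything else is routine unwinding of definitions.
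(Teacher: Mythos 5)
Your proposal is correct and follows essentially the same route as the paper: extract an \'etale chart from log smoothness, compute the exactification of the codiagonal of monoids as $(M\oplus_N M)^{\rm ex}=M\oplus_N M^{\rm gp}$ so that the exactified product has structure sheaf $\cO[M^{\rm gp}]$ with the diagonal cut out by the augmentation ideal, and then transport along the \'etale map. Two small points. First, your change of variables $(a,b)\mapsto(a+b,b-a)$ has determinant $2$ and is not an automorphism of $M^{\rm gp}\oplus M^{\rm gp}$ in general; use $(a,b)\mapsto(a+b,b)$ instead, which does carry $(\sigma^{\rm gp})^{-1}(M)$ onto $M\oplus M^{\rm gp}$ (and over a base with nontrivial log structure one must work with $M\oplus_N M$ rather than $M\oplus M$). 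Second, the step you defer as the ``main technical obstacle'' is precisely where the paper's proof concentrates its effort: since $U_i\to S\langle M_i\rangle$ is only \'etale, the diagonal $U_i\hookrightarrow (U_i\times_S U_i)^{\rm ex}$ is \emph{not} the base change of the diagonal of the model; rather, the pullback of the augmentation section along the \'etale map $(U_i\times_S U_i)^{\rm ex}\to U_i\times_{S\langle N\rangle} S\langle M_i^{\rm gp}\rangle$ decomposes as $U_i\amalg U''$ with the diagonal copy of $U_i$ open and closed, and one then invokes the fact that an \'etale map restricting to the identity on a closed subspace induces an isomorphism on each $n$-th infinitesimal neighbourhood of that subspace. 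With that verification supplied (it is standard, but it is the actual content of the paper's argument), your proof is complete.
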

\begin{proof}
	The first claim follows from Definition 3.1.1 of  \cite{diao_lan_liu_zhu}. By definition $\cP_{U_i}^n$ is the structure sheaf of the $n+1$-infinitesimal neighbourhood of $U_i\subset (U_i\times_S U_i)^{\rm ex}$. It follows from the proof of Proposition \ref{prop:exactification}, that working locally on $X/S$ we have: $U_i\to S$ is a log smooth morphism of affinoid log adic spaces, where
$U_i$ has a fine chart $M_i\to A_i$, and $U_i={\rm Spa}(A_i,A_i^+)$. 

Let $i\in I$ and from now on, during this proof, we denote $U:=U_i\to S$ and $A:=A_i$, $S:={\rm Spa}(B,B^+)$$M:=M_i\to A$ and $N\to B$ be the log structures and a morphism of monoids $N\to M$ providing a chart for $U\to S$. Let $\delta\colon U\to S\langle M\rangle$ be the induced \'etale map.  Without loss of generality we may assume that $\omega^1_{U/S}$ if a free $A$-module.

We have, $(M\oplus_N M)^{\rm ex}=M\oplus_N (M)^{\rm gp}$. Using $\delta$ we get  \'etale maps
$\gamma= 1\times \delta\colon U\times_S U\to U\times_{S\langle N\rangle} S\langle M\rangle$ and 
$
\gamma^{\rm ex}\colon (U\times_S U)^{\rm ex}\longrightarrow U \times_{S\langle N\rangle} S\langle M^{\rm gp} \rangle$, given by identifying $(U\times_S U)^{\rm ex}\cong (U\times_SU)\times_{S\langle M\oplus_N M\rangle}S\langle M\oplus_N M^{\rm gp}\rangle$, identifying the latter with $U \times_S \bigl(U \times_{S\langle N\rangle} S\langle M^{\rm gp} \rangle \bigr)$  and projecting onto $ U \times_{S\langle N\rangle} S\langle M^{\rm gp} \rangle$. 
We have a natural, commutative diagram

$$
\begin{array}{cccccccc}
U & \stackrel{\Delta}{\longrightarrow}   & U\times_S U\cr
\Vert & & \big\downarrow            1\times \gamma\cr
U &\stackrel{\Delta'}{\longrightarrow} &  U\times_{S\langle N\rangle} S\langle M\rangle
\end{array}
$$
where $\Delta$ is the diagonal and $\Delta'$ the augmentation map.
This induces a commutative diagram
$$
\begin{array}{cccccccc}
U & \stackrel{\Delta^{\rm ex}}{\longrightarrow} &   (U\times_S U)^{\rm ex}\cr 
\Vert & & \big\downarrow \gamma^{\rm ex}\cr
U & \stackrel{\Delta^{',\rm gp}}{\longrightarrow} &   U \times_{S\langle N\rangle} S\langle M^{\rm gp} \rangle,\cr
\end{array}
$$
where $\Delta^{',\rm gp}$ is the augmentation map. Consider the following diagram, with squares that are cartesian by the above discussion, possibly except for the most left one:
$$\begin{matrix}
U & \longrightarrow & U' & \longrightarrow &(U\times_SU)^{\rm ex} & \longrightarrow & U\times_S U & \stackrel{\pi_2}{\longrightarrow}  & U \cr \Vert & &
\alpha\big\downarrow &&\gamma^{\rm ex}\big\downarrow  & & \gamma \big\downarrow & & \big\downarrow\delta\cr U& =& 
U &\stackrel{\Delta^{',\rm gp}}{\longrightarrow}  &U \times_{S\langle N\rangle} S\langle M^{\rm gp} \rangle & \longrightarrow &  U \times_{S\langle N\rangle} S\langle M \rangle & \stackrel{\pi_2}{\longrightarrow}  & S\langle M \rangle ,\cr 
\end{matrix}$$where $\pi_2$ is the second projection. The map $\alpha$ is \'etale and $\Delta^{\rm ex}$ factors through $U'$ so that $U'$ decomposes as $U'=U\amalg U''$ with $U''$ and $U$ open and closed in $U'$.

For every positive integer $n$ consider the $n$-th infinitesimal neighborhoods 

\begin{itemize}
\item[i.] $U'\subset {\cal P}^n_{U' \subset (U\times_SU)^{\rm ex} }$ of $U'$ in $(U\times_SU)^{\rm ex} $, 

\item[ii.] $U\subset {\cal P}^n_{U\subset (U\times_SU)^{\rm ex} }$  of $U$ in $(U\times_SU)^{\rm ex} $ 

\item[iii.] $U\subset {\cal P}^n_{U \subset U \times_{S\langle N\rangle} S\langle M^{\rm gp} \rangle}$ of $U$ in $U \times_{S\langle N\rangle} S\langle M^{\rm gp} \rangle$.

\item[iv.] $U''\subset{\cal P}^n_{U'' \subset (U\times_SU)^{\rm ex} }$ of $U''$ in $(U\times_SU)^{\rm ex} $,

\end{itemize}

{\bf Lemma:} The map ${\cal P}^n_{U \subset (U\times_SU)^{\rm ex} }\to {\cal P}^n_{U \subset U \times_{S\langle N\rangle} S\langle M^{\rm gp} \rangle}$ is an isomorphism.

\smallskip

\begin{proof} As $U'\cong U \amalg U''$ we have a decomposition  $${\cal P}^n_{U' \subset (U\times_SU)^{\rm ex} }\cong {\cal P}^n_{U \subset (U\times_SU)^{\rm ex} }\amalg {\cal P}^n_{U'' \subset (U\times_SU)^{\rm ex} }.$$The morphism ${\cal P}^n_{U' \subset (U\times_SU)^{\rm ex} }\to {\cal P}^n_{U \subset U \times_{S\langle N\rangle} S\langle M^{\rm gp} \rangle}$ is \'etale by the cartesian diagram above. Hence the map  ${\cal P}^n_{U \subset (U\times_SU)^{\rm ex} }\to {\cal P}^n_{U \subset U \times_{S\langle N\rangle} S\langle M^{\rm gp} \rangle}$ is \'etale as it is the composite of an open immersion and an \'etale map. Since it induces the identity on $U$, it is an isomorphism.

\end{proof}

 \

The conclusion of Lemma \ref{lemma:logsmooth} follows.




	
\end{proof}

We have the following important consequence of Lemma \ref{lemma:logsmooth}.

\begin{corollary}\label{rmk:coordinates}
We use the notations of Lemma \ref{lemma:logsmooth}, i.e. let $f:X\to S$ be a log smooth and separated morphism and let $(U_i)_{i\in I}$ be the affinoid open cover there.
Let $M_i$ be the fine, saturated monoids such that the torsion subgroup of $M_i^{\rm gp}$ is finite, invertible in $U_i$, for every $i\in I$. Let $m_{i,1}, m_{i,2},...,m_{i,d_i}$ be a basis of the 
torsion free quotient of $M_i^{\rm gp}$. Then $\xi_{i,j}:=m_{i,j}-1\in J_i$ for $1\le j\le d_i$ have the properties:

a) $\xi_{i,j}({\rm mod}\ J_i^2)={\rm dlog}(m_{i,j})$ for $1\le j\le d_i$ is an $O_{U_i}$-basis of $\omega_{U_i/S}^1$, for all $i\in I$.

b) $\cO_{U_i}[M_i^{\rm gp}]/J_i^{n+1}\cong \cO_{U_i}[(M_i^{\rm gp})^{\rm tor}][\xi_{i,1},...,\xi_{i,d_i}]^{{\rm deg}\le n}$, and $\cO_{U_i}\subset \cO_{U_i}[(M_i^{\rm gp})^{\rm tors}]$ is a finite \'etale extension.

Moreover $\cP_{U_i}:=\lim_n\cP_{U_i/S}^n\cong \cO_{U_i}[[\xi_{i,1},...,\xi_{i, d_i}]]$.

In other words, $\xi_{i,1},...,\xi_{i,d_i}$ are local coordinates for the morphism $f$.
\end{corollary}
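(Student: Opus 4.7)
The plan is to deduce everything as a direct consequence of Lemma \ref{lemma:logsmooth}, which already identifies $\cP^n_{U_i/S}$ with $\cO_{U_i}[M_i^{\rm gp}]/J_i^{n+1}$ and $J_i/J_i^2$ with $\omega^1_{U_i/S}\cong \cO_{U_i}\otimes_\Z M_i^{\rm gp}$. Fix $i\in I$ and write $M:=M_i^{\rm gp}$, $J:=J_i$, $d:=d_i$. Since $(M)^{\rm tor}$ is finite of order $N$ invertible in $\cO_{U_i}$, we may decompose $M=T\oplus F$ where $T:=(M_i^{\rm gp})^{\rm tor}$ and $F\cong \Z^d$ is the free quotient, with lifts $m_{i,1},\ldots,m_{i,d}\in M$ of the chosen basis, so that $\xi_{i,j}:=m_{i,j}-1\in J$.

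For part a), the natural isomorphism $\omega^1_{U_i/S}\cong \cO_{U_i}\otimes_\Z M$ of Lemma \ref{lemma:logsmooth} sends $\mathrm{dlog}(m)$ to $1\otimes m$ and, under the identification with $J/J^2$, corresponds to $m-1\pmod{J^2}$; hence $\xi_{i,j}\pmod{J^2}$ maps to $\mathrm{dlog}(m_{i,j})$. Since $N$ is invertible, the natural map $\cO_{U_i}\otimes_\Z T\to 0$ vanishes, so $\cO_{U_i}\otimes_\Z M$ is $\cO_{U_i}$-free on $\{1\otimes \bar m_{i,j}\}_j$; the $\xi_{i,j}\pmod{J^2}$ therefore form an $\cO_{U_i}$-basis of $\omega^1_{U_i/S}$.

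For part b), split the group algebra as $\cO_{U_i}[M]\cong \cO_{U_i}[T]\otimes_{\cO_{U_i}}\cO_{U_i}[x_{i,1}^{\pm 1},\ldots,x_{i,d}^{\pm 1}]$ with $x_{i,j}=m_{i,j}$. Maschke's theorem, applicable because $N$ is invertible in $\cO_{U_i}$, shows that $\cO_{U_i}\subset \cO_{U_i}[T]$ is finite \'etale. After substituting $x_{i,j}=1+\xi_{i,j}$, each $(1+\xi_{i,j})^{-1}$ expands as the geometric series $\sum_{k\geq 0}(-\xi_{i,j})^k$ in the $J$-adic completion, giving an embedding $\cO_{U_i}[M]\hookrightarrow \cO_{U_i}[T][[\xi_{i,1},\ldots,\xi_{i,d}]]$. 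For $t\in T$ of order $N$, expanding $0=t^N-1=\sum_{k\geq 1}\binom{N}{k}(t-1)^k$ and isolating $N(t-1)$ gives $t-1\in J^2$; iterating shows $t-1\in J^k$ for every $k\geq 1$. Consequently the torsion contribution collapses in the inverse limit, yielding
$$
\cP_{U_i}=\lim_n \cO_{U_i}[M]/J^{n+1}\cong \cO_{U_i}[[\xi_{i,1},\ldots,\xi_{i,d}]].
$$
The finite-level statement in b) then follows by transporting the idempotent decomposition of the finite \'etale algebra $\cO_{U_i}[T]$ through the augmentation filtration at level $n$.

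The main obstacle is the bookkeeping for the torsion part at each finite level: although in the inverse limit $t-1$ vanishes (belonging to every power $J^k$), the compatibility at level $n$ of the idempotent decomposition of $\cO_{U_i}[T]$ with the $J/J^{n+1}$-filtration requires explicit computation, performed through the projectors onto the characters of $T$. Everything else is formal, transported to the geometric situation via Lemma \ref{lemma:logsmooth}.
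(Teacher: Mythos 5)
Your overall route is the same as the paper's: reduce everything to the local group-algebra description $\cP^n_{U_i}\cong\cO_{U_i}[M_i^{\rm gp}]/J_i^{n+1}$ of Lemma \ref{lemma:logsmooth}, split $M_i^{\rm gp}$ into torsion plus free part, substitute $m_{i,j}=1+\xi_{i,j}$ and use nilpotence of $\xi_{i,j}$ modulo $J_i^{n+1}$ to pass from Laurent monomials to polynomials, and invoke invertibility of $|T|$ for the torsion part. Part a) is fine. Your further observation that $t-1\in J_i^k$ for every $k$ (from $t^N=1$ and $N\in\cO_{U_i}^\times$) is correct and is in fact \emph{needed} for the final assertion $\cP_{U_i}\cong\cO_{U_i}[[\xi_{i,1},\ldots,\xi_{i,d_i}]]$, a point the paper's own (very terse) proof does not address.

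However, you do not follow that observation to its conclusion, and this leaves a genuine gap. Since $t-1\in J_i^{n+1}$ for every torsion $t$, the image of $\cO_{U_i}[(M_i^{\rm gp})^{\rm tor}]$ in $\cO_{U_i}[M_i^{\rm gp}]/J_i^{n+1}$ is just $\cO_{U_i}$ (all idempotents of $\cO_{U_i}[T]$ collapse onto $1$); the quotient therefore factors through $\cO_{U_i}[M_i^{\rm gp}/T]\cong\cO_{U_i}[x_{i,1}^{\pm1},\ldots,x_{i,d_i}^{\pm1}]$, and one obtains
$$
\cO_{U_i}[M_i^{\rm gp}]/J_i^{n+1}\;\cong\;\cO_{U_i}[\xi_{i,1},\ldots,\xi_{i,d_i}]^{\deg\le n}
$$
with \emph{no} factor $\cO_{U_i}[(M_i^{\rm gp})^{\rm tor}]$ (check this already for $M_i^{\rm gp}=\Z/2\Z$, where $J_i=J_i^2$ and the quotient is $\cO_{U_i}$). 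Consequently the finite-level isomorphism in b), as you state it with the torsion group algebra present, cannot hold when $T\neq 0$, and no "transport of the idempotent decomposition through the augmentation filtration" will produce it: your proof currently asserts two mutually incompatible statements and defers the contradiction to unspecified bookkeeping. The fix is to record the corrected finite-level statement above (which is the version actually used later, e.g. in Lemma \ref{lemma:dual} and in the proof of Lemma \ref{LemmaDiffGenInDegreeOne}), from which the limit statement $\cP_{U_i}\cong\cO_{U_i}[[\xi_{i,1},\ldots,\xi_{i,d_i}]]$ follows immediately. A second, smaller omission (shared with the paper) is the verification that the surjection $\cO_{U_i}[\xi_{i,1},\ldots,\xi_{i,d_i}]^{\deg\le n}\to\cO_{U_i}[M_i^{\rm gp}]/J_i^{n+1}$ has trivial kernel; this follows from the identification $\cO_{U_i}[x^{\pm1}]/(\xi)^{n+1}\cong\cO_{U_i}[\xi]/(\xi)^{n+1}$ (the $1+\xi_{i,j}$ are already units in the truncated polynomial ring) and deserves a line.
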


\begin{proof}
By Lemma \ref{lemma:logsmooth} it is enough to show b). Let, as in the proof of Lemma \ref{lemma:logsmooth}, $i\in I$ and denote
$U:=U_i:={\rm Spa}(A,A^+)$, $M:=M_i^{\rm gp}$, $J:=J_i$, $m_1,...,m_d$ a basis of $M/M^{\rm tor}$ and $\xi_1:=m_1-1,...,\xi_d:=m_d-1$.

Then $A[M]/J^{n+1}\cong A[M^{\rm tor}][m_1,...,m_d, m_1^{-1},...,m_d^{-1}]/J^{n+1}=A[M^{\rm tor}][\xi_1,...,\xi_d]/J^{n+1}$ as
 for all $1\le j\le d$, $m_j=1+\xi_j$ and $\xi_j\in J$ which is nilpotent in our ring, therefore $m_j$ is a unit in $A[M]/J^{n+1}$. 

As $M^{\rm tor}$ is finite of order invertible in $A$ (this in fact is automatic as $A$ has characteristic $0$), $A[M^{\rm tor}]/A$ is etale, 
therefore $\xi_j({\rm mod }J^2)=:{\rm dlog}(m_j)$, $j=1,...,d$ is a basis of $\omega^1_{U/S}$. This proves the corollary.

\end{proof}

Due to Corollary \ref{rmk:coordinates}, the theory now follows as in \cite{berthelot_ogus}.

Coming back to a general closed immersion $j:X\rightarrow Y$, defined by the ideal $J\subset \cO_{\uY^{\rm ex}}$, then for any $n\leqslant m$ there is a map (identity between topological spaces and projection by $J^{n+1}$ as morphism between the sheaves)
\[
P^n_{X/Y}\longrightarrow P^m_{X/Y}.
\]
We obtain an inductive system $\{D^n_X(Y)\}_{n\in\N}$; we would like to define the log envelope of $X$ in $Y$ as the colimit of the $D^n_X(Y)$, but in general this colimit does not exist in our site. Then we consider the Yoneda embedding and we compute the colimit in the log infinitesimal topos.
\begin{definition}
	The \textbf{log envelope} of $X$ in $Y$ is
	\[
	D_X(Y):=\colim_{n\in \N}^{\Xinf}  h_{D^n_X(Y)}\  \in \Xinf
	\]
	where $h_{D^n_X(Y)}:= \text{Hom}_{\XSinf} (- , D^n_X(Y))$.
\end{definition}
The overscript in the colimit recalls the category in which the colimit is computed. 
We provide  another description of the envelop $D_X(Y)$.

Let $g:U\rightarrow T$ be an object in the log infinitesimal site, we define
\[
\text{Hom}_{\XSinf}((U,T), (X,Y)):=\{\beta\in\text{Hom}_{S}(T,Y)\mid  
\begin{tikzcd}
	U\ar[r, hook]\ar[d, "g"]& X\ar[d, "j"]\\
	T\ar[r, "\beta"]& Y
\end{tikzcd} \text{ is commutative }
\},
\]

One can verify that this functor (with obvious restriction maps) is a sheaf in the log infinitesimal topos, we denote this sheaf by
\[
\text{Hom}_{\XSinf}(\ -\ , (X,Y)).
\]
We remark that the pair $(X,Y)$ is not in general in the site, as we do not require that the morphism $j$ be nilpotent or exact. The following Lemma is proved as Lemma 2.2.5 in \cite{guo}.

\begin{lemma} \label{lemmaD_X(Y)=Hom (-,Y)}
	If $j: X\rightarrow Y$ is a closed immersion, then 
	\[
	D_X(Y)\cong {\rm Hom}_{\XSinf}(\ -\ , (X,Y))
	\]
\end{lemma}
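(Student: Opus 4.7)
The plan is to exhibit, for every object $(U,T)\in \XSinf$, a natural bijection
\[
\colim_{n\in\N} \mathrm{Hom}_{\XSinf}\bigl((U,T),\, D^n_X(Y)\bigr)\;\cong\;\mathrm{Hom}_{\XSinf}\bigl((U,T),\,(X,Y)\bigr),
\]
which is functorial in $(U,T)$, and then to observe that the right-hand side is already a sheaf on $\XSinf$ (being a Hom-functor in the target), so the filtered colimit presheaf on the left is a sheaf and its sheafification $D_X(Y)$ coincides with it. The work is therefore entirely in establishing the bijection at a fixed $(U,T)$.

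First I would construct the forward map. A morphism $(U,T)\to D^n_X(Y)=(X,P^n_{X/Y})$ consists of the canonical open immersion $U\hookrightarrow X$ together with an $S$-morphism $\beta_n\colon T\to P^n_{X/Y}$ compatible with the closed immersions $g\colon U\hookrightarrow T$ and $X\hookrightarrow P^n_{X/Y}$. Post-composing $\beta_n$ with the canonical morphism $P^n_{X/Y}\hookrightarrow Y^{\rm ex}\to Y$ yields a morphism $\beta\colon T\to Y$ over $S$ with the required compatibility, giving an element of $\mathrm{Hom}_{\XSinf}\bigl((U,T),(X,Y)\bigr)$; this is manifestly compatible with the transition maps $D^n_X(Y)\to D^{n+1}_X(Y)$.

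For the inverse, start with $\beta\colon T\to Y$ over $S$ such that $\beta\circ g$ equals the composite $U\hookrightarrow X\xrightarrow{j} Y$. Since $g$ is an exact closed immersion, the adjunction of Proposition \ref{prop:exactification} lifts $\beta$ uniquely to a morphism $\tilde\beta\colon T\to Y^{\rm ex}$ compatible with $j^{\rm ex}\colon X\to Y^{\rm ex}$. Because $g$ is a nilpotent closed immersion, $|T|=|U|$, and $\tilde\beta\circ g$ factors through $|X|$, so $|\tilde\beta|$ factors through $|X|\subset |Y^{\rm ex}|$. Let $J\subset \cO_{\underline Y^{\rm ex}}$ be the coherent ideal defining $X\hookrightarrow Y^{\rm ex}$ and let $I_U\subset \cO_{\underline T}$ be the ideal of $U$, which is nilpotent by definition of $\XSinf$, say $I_U^{N+1}=0$. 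The compatibility $\tilde\beta\circ g= j^{\rm ex}$ implies $\tilde\beta^\#(J)\subset I_U$, whence
\[
\tilde\beta^\#\bigl(J^{N+1}\bigr)\subset I_U^{N+1}=0,
\]
so $\tilde\beta^\#$ factors through $\cO_{\underline Y^{\rm ex}}/J^{N+1}$. Combined with the topological factorization through $|X|$, this produces a morphism of ringed spaces $T\to P^N_{X/Y}$; since the log structure on $P^N_{X/Y}$ is the one induced from $Y^{\rm ex}$ via the closed immersion, the morphism is automatically a morphism of log adic spaces, and yields a morphism $(U,T)\to D^N_X(Y)$ in $\XSinf$, whose class in the colimit is independent of the chosen $N\ge $ (order of nilpotency).

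By construction the two maps are mutually inverse: starting from $\beta_n$, the recipe above recovers the same factorization (the minimal $N$ may differ from $n$, but the two define the same element in the colimit); conversely, starting from $\beta$ and producing $\tilde\beta$ then truncating modulo $J^{N+1}$ and re-composing with $P^N_{X/Y}\hookrightarrow Y^{\rm ex}\to Y$ returns $\beta$ by the unicity in the adjunction. Naturality in $(U,T)$ is immediate. The only real obstacle is the verification that the truncation indeed lands in $P^N_{X/Y}$ as a morphism of log adic spaces; this ultimately rests on the fact that the log structure on $P^N_{X/Y}$ is pulled back from $Y^{\rm ex}$ along a closed immersion that is a homeomorphism onto $|X|$, so log-compatibility for $\tilde\beta$ descends to the truncation without further work. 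Once this point is settled, the nilpotency of $I_U$ provides exactly the numerical control needed to factor through a finite-order infinitesimal neighbourhood, which is the heart of the lemma.
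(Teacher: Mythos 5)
Your argument is correct and is essentially the same as the standard one the paper delegates to the cited reference (Lemma 2.2.5 of \cite{guo}): use the adjunction of Proposition \ref{prop:exactification} to lift $\beta\colon T\to Y$ uniquely to $\tilde\beta\colon T\to Y^{\rm ex}$, observe that compatibility forces $\tilde\beta^\#(J)\subset I_U$, and let nilpotence of $I_U$ factor $\tilde\beta$ through a finite-order neighbourhood $P^N_{X/Y}$; monomorphy of $P^n_{X/Y}\hookrightarrow Y^{\rm ex}$ then gives the mutual-inverse and well-definedness checks. The only point worth flagging is that your pointwise bijection needs the order of nilpotence of $I_U$ to be bounded on all of $T$; if "nilpotent" is read only locally (as the paper's own proof of Lemma \ref{lemma1isCoEq} seems to allow, passing to a cover where each $U_i\to T_i$ has finite order $n_i$), you should either restrict the bijection to such $(U,T)$ and conclude by comparing sheaves on a cover, or note that quasi-compactness of the relevant $T$'s bounds the order globally.
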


\subsubsection{The global sections of a log infinitesimal sheaf.}

Let $\Delta:X\to X\times_SX$ be the diagonal closed immersion of log adic spaces over $S$, and we use the notations of the previous subsection.
We observe that the two projection maps $p^{n}_0(1), p^{n}_1(1): P^n_{X}(1)\rightrightarrows X$ make the following diagram commute, for any $n_1\leqslant n_2$ and $i=0,1$:
\[
\begin{tikzcd}
	P_{X}^{n_1} \ar[r, "p^{n_1}_i(1)"] \ar[d]& X\ar[d] \\
	P_{X}^{n_2} \ar[r, "p^{n_2}_i(1)"] & X \\
\end{tikzcd},
\]
hence we get two induced projection maps
\[
D_X(1):=D_X(X\times_S X) \rightrightarrows (X,X).
\]
\begin{lemma}\label{lemma1isCoEq}
	The canonical morphism
	\[
	D_X(0)={\rm Hom}_{\XSinf}\left(-, (X,X)\right)\rightarrow \uno
	\]
	is an epimorphism of sheaves, in particular
	\[
	\uno=\CoEq^{\Xinf}\left(D_X(1)\rightrightarrows D_X(0)\right),
	\]
	where the two morphisms are the two projections.
\end{lemma}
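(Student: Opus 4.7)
The plan is to reduce the lemma to two pieces: first, the claim that $D_X(0) \to \uno$ is locally surjective as a map of presheaves on a cover (hence epi as a map of sheaves), and second, the identification $D_X(1) \cong D_X(0) \times D_X(0)$ in $\Xinf$. Once both are in hand, the coequalizer statement is a purely formal topos-theoretic consequence of the first.

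For the epimorphism, it suffices to show that for every object $(U,T) \in \XSinf$ there exists a covering $\{(U_i, T_i) \to (U,T)\}$ such that $D_X(0)(U_i,T_i) \neq \emptyset$, i.e.\ such that the open immersion $U_i \hookrightarrow X$ can be extended to an $S$-morphism $T_i \to X$. By Lemma \ref{lemmaD_X(Y)=Hom (-,Y)}, this is precisely a lift of the closed immersion $U \hookrightarrow X$ across the nilpotent exact closed immersion $U \hookrightarrow T$. Here I would invoke the infinitesimal lifting property of log smooth morphisms for $f: X \to S$ (the log adic analogue, cf.\ \cite{diao_lan_liu_zhu}, using exactness of $U \hookrightarrow T$): working on a sufficiently small open affinoid cover $\{T_i\}$ of $T$, with $U_i := U \cap T_i$, log smoothness produces $S$-morphisms $T_i \to X$ extending $U_i \hookrightarrow X$. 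Since $\{(U_i, T_i) \to (U,T)\}_i$ is a covering in $\XSinf$ by definition, this gives the local surjectivity we need.

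For the identification, I would note that by Lemma \ref{lemmaD_X(Y)=Hom (-,Y)} a section of $D_X(1) = D_X(X \times_S X)$ over $(U,T)$ is an $S$-morphism $\beta: T \to X \times_S X$ whose restriction along $U \hookrightarrow T$ is the composite $U \hookrightarrow X \xrightarrow{\Delta} X \times_S X$ (the factorization through some $P^n_X(1)$ is automatic since the ideal of $U$ in $T$ is nilpotent). By the universal property of $X \times_S X$ over $S$, such a $\beta$ is the same as a pair $(\beta_0, \beta_1)$ of $S$-morphisms $T \to X$ each restricting to $U \hookrightarrow X$ on $U$, i.e.\ a pair of sections of $D_X(0)(U,T)$. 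This functorial bijection is compatible with the two projections $D_X(1) \rightrightarrows D_X(0)$, which are induced by $p_0, p_1: X \times_S X \to X$ and thus correspond to the two projections of $D_X(0) \times D_X(0) \to D_X(0)$. Hence $D_X(1) \cong D_X(0) \times D_X(0)$ in $\Xinf$.

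Granting these two points, the coequalizer statement is standard: in any topos, if $e: F \twoheadrightarrow \uno$ is an epimorphism, then $\uno = \CoEq(F \times F \rightrightarrows F)$, because the two projections become equal after composition with $e$, so $e$ factors through the coequalizer $C \to \uno$; the induced map is epi (as $e$ is) and mono (since both projections from $F \times F$ land in every fiber), hence an isomorphism. Applying this with $F = D_X(0)$ and using $D_X(1) \cong D_X(0) \times D_X(0)$ yields $\uno \cong \CoEq^{\Xinf}(D_X(1) \rightrightarrows D_X(0))$. The main obstacle is the first step: one must be careful that the local lifts produced by log smoothness can be arranged on a cover compatible with the covering conditions of $\XSinf$ (in particular, that the cover of $T$ restricts to a cover of $U$), but this is handled by simply intersecting with $U$, since the log structures on $U$ and $T_i$ are induced from those on $X$ and $T$ respectively.
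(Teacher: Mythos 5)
Your proposal is correct and follows essentially the same route as the paper: local lifting of $U_i\hookrightarrow X$ to $T_i\to X$ via log smoothness (the paper cites Proposition 3.3.16 of \cite{diao_lan_liu_zhu}) to get the epimorphism, the identification $D_X(1)\cong D_X(0)\times_{\uno} D_X(0)$ via Lemma \ref{lemmaD_X(Y)=Hom (-,Y)} and the universal property of $X\times_S X$, and then the standard fact that an epimorphism onto the terminal object in a topos is the coequalizer of its kernel pair. The only difference is that you spell out the last formal step, which the paper simply asserts.
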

\begin{proof}
	Showing that a morphism of sheaves is an epimorphism requires to show (by definition) that for any object $(U,T)\in \XSinf$ there is a covering $\{(U_i,T_i)\}_{i\in I}$ s.t. the morphism between the $(U_i,T_i)$-sections is surjective for all $i\in I$.
	
	Let $(U,T)$ be an object in $\XSinf$, then there is a covering $\{(U_i,T_i)\}_{i\in I}$ s.t. for each $i\in I$ the map $U_i\rightarrow T_i$ is nilpotent of finite order $n_i$ and we have a diagram
	\[
	\begin{tikzcd}
		U_i\ar[r]\ar[d]& X\ar[d, equal]\\
		T_i& X
	\end{tikzcd}.
	\]
	By log smoothness of $X$ over $S$ (\cite{diao_lan_liu_zhu}, Proposition 3.3.16), we get that there is a map
	\[
	g_i:T_i\longrightarrow X
	\]
	s.t. the diagram is still commutative, Lemma  \ref{lemmaD_X(Y)=Hom (-,Y)} implies that we have an element $g_i\in D_X(0)(U_i,T_i)$, therefore the unique morphism $\uno(U_i,T_i)=\{\ast\}$ is surjective for each $i\in I$. We get that $D_X(Y)\rightarrow\uno$ is an epimorphism of sheaves.
	
	Observe that by lemma \ref{lemmaD_X(Y)=Hom (-,Y)}
	\begin{multline*}
		D_X(1):=D_X(X\times_S X)\cong \text{Hom}_{\XSinf}\left( \ -\ ,X\times_S X\right)\\
		\cong \text{Hom}_{\XSinf}\left( \ -\ ,X\right)^2
		\cong  D_X(0)\times^{\Xinf}_{\uno} D_X(0),
	\end{multline*}
	where we used that the product in a category with a final object is the fibered product in this category along the final object. Since $D_X(0)\rightarrow \uno$ is an epimorphism, then
	\[
	\uno \cong \CoEq^{\Xinf} \left(D_X(1)\rightrightarrows D_X(0)\right)
	\]
	and we conclude the proof.
\end{proof}

As we anticipated at the beginning of this subsection we are now able to define global sections of a sheaf and to give a description of these global sections in terms of envelopes.

\begin{definition}
	Given a sheaf $F$ in the topos $\Xinf$, we define the \textbf{global sections of }$F$ as the set
	\[
	\Gamma\left( \XSinf, F\right):=\text{Hom}_{\Xinf}\left(\uno,F{}\right)
	\]
\end{definition}

\begin{proposition}\label{PropGlobalSections}
	Let $F$ be a sheaf in the topos $\Xinf$, there is a canonical isomorphism
	\[
	\Gamma\left( \XSinf, F\right)\cong \lim_{n\in \N} \Eq^{Set}\left( F(X,X)\rightrightarrows F(X, P^n_X)\right)
	\]
	where the morphisms in the equalizer are induced by the two projections maps.
	
\end{proposition}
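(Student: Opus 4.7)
The plan is to unwind the definitions and invoke Lemma \ref{lemma1isCoEq} to replace $\uno$ by its presentation as a coequalizer of representable-type sheaves, after which the identification becomes a formal consequence of Yoneda and the commutation of limits.

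First, by definition of global sections one has $\Gamma(\XSinf, F)=\Hom_{\Xinf}(\uno,F)$. Applying Lemma \ref{lemma1isCoEq} I can rewrite $\uno$ as $\CoEq^{\Xinf}\bigl(D_X(1)\rightrightarrows D_X(0)\bigr)$, where the two arrows are induced by $p_0^n(1)$ and $p_1^n(1)$. Since $\Hom_{\Xinf}(-,F)$ converts colimits to limits, this gives
\[
\Gamma(\XSinf,F)\;\cong\;\Eq^{\mathrm{Set}}\Bigl(\Hom_{\Xinf}(D_X(0),F)\rightrightarrows \Hom_{\Xinf}(D_X(1),F)\Bigr).
\]

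Next I would compute each of these Hom-sets. The sheaf $D_X(0)$ equals $h_{(X,X)}$, so Yoneda gives $\Hom_{\Xinf}(D_X(0),F)=F(X,X)$. For $D_X(1)=\colim_{n}^{\Xinf} h_{D^n_X(1)}=\colim_{n}^{\Xinf} h_{(X,P^n_X)}$, the same Yoneda together with the fact that $\Hom_{\Xinf}(-,F)$ sends colimits to limits yields
\[
\Hom_{\Xinf}(D_X(1),F)\;\cong\;\lim_{n\in\N} F(X,P^n_X),
\]
where the transition maps are those induced by the projections $P^{n_1}_X\to P^{n_2}_X$ for $n_1\le n_2$. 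Substituting, I obtain
\[
\Gamma(\XSinf,F)\;\cong\;\Eq^{\mathrm{Set}}\Bigl(F(X,X)\rightrightarrows \lim_{n\in\N}F(X,P^n_X)\Bigr),
\]
the two arrows being induced by the compatible systems $\{p_0^n(1)\}_n$ and $\{p_1^n(1)\}_n$.

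Finally, I would invoke the elementary fact that finite limits commute with arbitrary limits in $\mathrm{Set}$. Concretely, since the equalizer and the limit over $n$ are both limits, one has
\[
\Eq^{\mathrm{Set}}\Bigl(F(X,X)\rightrightarrows \lim_{n\in\N}F(X,P^n_X)\Bigr)\;\cong\;\lim_{n\in\N}\Eq^{\mathrm{Set}}\bigl(F(X,X)\rightrightarrows F(X,P^n_X)\bigr),
\]
which is exactly the right-hand side of the proposition. The only mild subtlety — the part deserving a careful sentence — is checking that the arrows $F(X,X)\rightrightarrows F(X,P^n_X)$ appearing on both sides are indeed induced by the very same pair of projections $p_0^n(1),p_1^n(1)$; this is straightforward from the description of the colimit $D_X(1)=\colim_n h_{(X,P^n_X)}$ and the compatibility of the projections under the transition maps. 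No further work is needed, since the argument proceeds entirely from the coequalizer presentation of $\uno$ and Yoneda.
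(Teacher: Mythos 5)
Your proposal is correct and follows essentially the same route as the paper: both arguments reduce to Lemma \ref{lemma1isCoEq}, the Yoneda lemma, and the fact that $\Hom_{\Xinf}(-,F)$ turns colimits into limits; the paper interchanges the coequalizer with the colimit over $n$ inside the Hom, while you perform the dual interchange of the equalizer with the limit over $n$ at the level of sets, which is the same formal content. Your remark checking that the two arrows $F(X,X)\rightrightarrows F(X,P^n_X)$ form compatible systems under the transition maps is exactly the point that makes the interchange legitimate.
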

\begin{proof}
	We have the following chain of natural isomorphisms
	\begin{multline*}
		\Gamma\left( \Xinf, F\right)=\text{Hom}_{\Xinf}\left(\uno,F\right)\\
		\cong \text{Hom}_{\Xinf}\left(\CoEq^{\Xinf}\left(D_X(X\times_S X)\rightrightarrows D_X(0) \right),F\right)\\
		\cong 
		\text{Hom}_{\Xinf}\left(\colim_{n\in \N}\CoEq^{\Xinf}\left(h_{(X, P^n_X)}\rightrightarrows h_{(X,X)}\right),F\right)\\
		\cong \lim_{n\in \N}\Eq^{Set} \left(
		\text{Hom}_{\Xinf}\left(h_{(X, X)}, F \right)\rightrightarrows
		\text{Hom}_{\Xinf}\left(h_{(X, P^n_X)}, F\right)\right)\\
		\cong 
		\lim_{n\in \N} \Eq^{Set}\left( F(X,X)\rightrightarrows F(X, P^n_X)\right)
	\end{multline*}
	where the two morphisms in the equalizers are the two restriction morphisms of $F$ induced by the two projection maps \[
	(\text{id}_X,p^n_0(1)), \ (\text{id}_X,p^n_1(1)):D_X^n(1)= \left(X,P^n_{X/Y}\right)\rightrightarrows \left(X,X\right)=D_X^n(0)
	\]
\end{proof}

\subsection{Quasi-coherent crystals.}\label{SectionCrystal}
In this section we introduce the notion of a quasi-coherent crystal in the log infinitesimal topos, but first we need to define quasi-coherent 
$\cO_X$-modules, for $X$ a log adic space over $S$.

Let $f:X\rightarrow S$ be a morphism of log adic spaces, which is log smooth and separated, with $\uS={\rm Spa}(B, B^+)$ and $S$ has trivial log structure. 
We will work with the category of quasi-coherent $\cO_X$-modules ${\rm QCoh}(X)$ (see \cite{conrad}, section 2.1) defined as follows. We point out that the article \cite{conrad} is written for rigid spaces $X$ over a $p$-adic field 
$K$, but the definitions and the results of section 2.1 of \cite{conrad} which we need, hold in our setting. 

\begin{definition}\label{def:quasicoh} An $\cO_X$-module $F$ is quasi-coherent if it is locally, a colimit of coherent modules, i.e. there is an open covering 
$(U_i)_{i\in I}$ of $\uX$ and for each $i\in I$ we have $F|_{U_i}\cong \colim_{\alpha\in A_i} G_{i,\alpha}$, where $G_{i,\alpha}$ are coherent $\cO_{U_i}$-modules for every $\alpha\in A_i$, with $A_i$ an inductive ordered set, for every $i\in I$.
\end{definition}

\medskip

\begin{definition}
	Let $\cO_{X_{\rm inf}}$ be the \textbf{structure sheaf} of the topos $\Xinf$ defined by $\cO_{X_{\rm inf}, (U,T)}:= \cO_T$
	together with the restriction morphisms 
	\[
	\phi^\cO_{(\alpha,\beta)}:\beta^{-1}\cO_{T_2}\rightarrow\cO_{T_1}
	\]
	for any $(\alpha,\beta):(U_1,T_1)\rightarrow(U_2,T_2)\in \XSinf$.
\end{definition}

\begin{definition}
	An $\cO_{X_{\rm inf}}$\textbf{-module} is an abelian sheaf $F\in\Xinf$ s.t. for each $(U,T)\in \XSinf$, the sheaf $F_{(U,T)}$ is an $\cO_T$-module and for each $(\alpha,\beta):(U_1,T_1)\rightarrow(U_2,T_2)\in \XSinf$, the morphism
	\[
	\phi_{(\alpha,\beta)}:\beta^{-1}F_{(U_2,T_2)}\rightarrow F_{(U_1,T_1)}
	\]
	is $\beta^{-1}\cO_{T_2}-$linear.
	
	
\end{definition}

We observe that for an $\cO_{X_{\rm inf}}$-module $F$, the restriction morphisms induce morphisms
\[
\begin{tikzcd}[row sep=small]
	\varphi^F_{(\alpha,\beta)}:\beta^*F_{(U_2,T_2)}= \beta^{-1}F_{(U_2,T_2)}\otimes_{\beta^{-1}\cO_{T_2}} \cO_{T_1} \ar[r]& F_{(U_1,T_1)}\\
	(x\otimes a)\ar[r,maps to]& a\phi_{(\alpha,\beta)}(x)
\end{tikzcd}
\]
of $\cO_{T_1}$-modules.

\begin{definition} We say that an $\cO_{X_{\rm inf}}$-module  $F$ is {\bf coherent} if $F_{(U,T)}$ is a coherent $\cO_T$-module for every object $(U,T)$  of $\XSinf$.

We say that an $\cO_{X_{\rm inf}}$-module $F$ is {\bf quasi-coherent}, if $F_{(U,T)}$ is a quasi-coherent $\cO_T$-module for every object $(U,T)$  of $\XSinf$ (see definition  \ref{def:quasicoh}).

A \textbf{quasi-coherent crystal} in $\Xinf$ is a quasi-coherent $\cO_{X_{\rm inf}}-$module $F$ such that for every morphism $(\alpha,\beta)$ in $\XSinf$ the morphisms $\varphi^F_{(\alpha,\beta)}$ above are isomorphisms.
\end{definition}

\bigskip 

Let $F\in\Xinf$ be a quasi-coherent crystal, then the morphisms (for $i=0,1$)
\[
(\text{id}_X, p_i^n(1)): D^n_X(1)\rightrightarrows  D^n_X(0)=(X,X)
\]
induce $\mathcal{P}^n_X-$linear isomorphisms (by definition of a crystal)
\[
\epsilon_n^F:=	\left(\varphi^F_{p_{1,n}}\right)^{-1} \circ \varphi^F_{p_{0,n}}: p_0^n(1)^* F_{(X,X)} \overset{\cong}{\longrightarrow} 
F_{D_X^n(1)}\overset{\cong}{\longrightarrow} p_{1}^n(1)^* F_{(X,X)} .
\]
We remark that, as $\mathcal{P}^n_X$-modules, we have
\[
p_0^n(1)^*F_{(X,X)}=F_{(X,X)}\otimes_{\cO_X} \mathcal{P}^n_X\quad\text{and}\quad p_1^n(1)^*F_{(X,X)}= \mathcal{P}^n_X \otimes_{\cO_X}F_{(X,X)}.
\]
\begin{lemma}\label{RemarksectionPrimaDellaProp}
	If $F\in \Xinf$ is a quasi-coherent crystal and $X$ is smooth over $S$, then
	\[
	\Gamma\left(\XSinf, F\right)=\{x\in F(X,X)\mid \epsilon_n^F(p_0^n(1)^*x)= p_1^n(1)^* x\text{ for all }n\in \N\}.
	\]
\end{lemma}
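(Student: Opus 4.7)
The plan is to reduce the statement directly to Proposition \ref{PropGlobalSections} and then translate the equalizer condition there into the crystal condition stated in the lemma. By Proposition \ref{PropGlobalSections},
\[
\Gamma\left( \XSinf, F\right)\cong \lim_{n\in \N} \Eq^{Set}\left( F(X,X)\rightrightarrows F(X, P^n_X)\right),
\]
where the two parallel arrows are the restriction maps of $F$ induced by the morphisms $(\text{id}_X, p_i^n(1)) : D^n_X(1) \to D^0_X = (X,X)$ for $i = 0,1$. So the whole question is: what are these two restriction maps, concretely, in terms of the data of the crystal?

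The first step is to identify, for each $i \in \{0,1\}$, the restriction map $F(X,X) \to F(X, P^n_X)$ with $x \mapsto \varphi^F_{p_i^n(1)}(p_i^n(1)^* x)$. By definition, the restriction morphism of the $\cO_{X_{\rm inf}}$-module $F$ associated with $(\text{id}_X, p_i^n(1))$ is $\phi^F_{(\text{id}_X, p_i^n(1))}: (p_i^n(1))^{-1} F_{(X,X)} \to F_{D^n_X(1)}$, and its $\cO_{P^n_X}$-linear extension is $\varphi^F_{p_i^n(1)}$; the quasi-coherent crystal hypothesis makes the latter an isomorphism. The element $p_i^n(1)^* x = 1 \otimes x \in p_i^n(1)^* F_{(X,X)}$ is precisely the element that corresponds, under $\varphi^F_{p_i^n(1)}$, to the sheaf-theoretic restriction of $x$.

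The second step is then purely formal. The equality in $F_{D^n_X(1)}$ between the two restrictions of $x$ reads $\varphi^F_{p_0^n(1)}(p_0^n(1)^* x) = \varphi^F_{p_1^n(1)}(p_1^n(1)^* x)$; applying $(\varphi^F_{p_1^n(1)})^{-1}$ to both sides and using the very definition $\epsilon_n^F = (\varphi^F_{p_1^n(1)})^{-1} \circ \varphi^F_{p_0^n(1)}$ gives exactly $\epsilon_n^F(p_0^n(1)^* x) = p_1^n(1)^* x$. Finally, the transition maps of the projective limit over $n$ are compatible with the natural morphisms of envelopes $P^{n+1}_X \to P^n_X$: the $\epsilon^F_{n+1}$-condition restricts to the $\epsilon^F_n$-condition, so the limit reduces to the subset of $F(X,X)$ on which the condition holds for every $n$.

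The main obstacle I expect is the bookkeeping in the first step, namely verifying carefully that the sheaf-theoretic restriction of $x$ along $(\text{id}_X, p_i^n(1))$ really equals $\varphi^F_{p_i^n(1)}(1 \otimes x)$ under the identifications in force, and that the two $\cO_{P^n_X}$-module structures used on the pullbacks on the two sides of $\epsilon_n^F$ match the conventions set when defining $\epsilon_n^F$. Once this identification is unambiguously in place, the rest is a one-line manipulation together with the description of global sections in Proposition \ref{PropGlobalSections}.
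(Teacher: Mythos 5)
Your proposal is correct and follows essentially the same route as the paper: both reduce to Proposition \ref{PropGlobalSections}, identify the equalizer condition at level $n$ with the equality $\epsilon_n^F(p_0^n(1)^*x)= p_1^n(1)^* x$ via the very definition of $\epsilon_n^F$ as $(\varphi^F_{p_{1,n}})^{-1}\circ\varphi^F_{p_{0,n}}$, and observe that these conditions are nested (compatible with $P^n_X\to P^m_X$), so the limit is the intersection over all $n$. The paper merely states the nesting as $\epsilon_{n-1}^F=\epsilon_n^F$ mod $I^n$ and leaves the translation of the equalizer implicit, which you spell out.
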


\begin{proof}The remark follows from Proposition \ref{PropGlobalSections}, indeed for any $n\in \N_{>0}$
\begin{multline*}
	\{x\in F(X,X)\mid \epsilon_n^F(p_0^n(1)^*x)= p_1^n(1)^* x\} \subset \{x\in F(X,X)\mid \epsilon_{n-1}^F(p_0^{n-1}(1)^*x)= p_1^{n-1}(1)^* x\},
\end{multline*}
because $\epsilon_{n-1}^F = \epsilon_n^F$ mod $I^n$. \end{proof}

Now we would like to prove that for a quasi-coherent crystal $F$ over $X$ the global sections are given by
\begin{equation}\label{GlobalSectionFormula}
	\Gamma\left(\XSinf, F\right)=\{x\in F(X,X)\mid \epsilon^1_F(p_0^1(1)^*x)= p_1^1(1)^* x\}.
\end{equation}
i.e. if $x\in F(X,X)$ satisfies $\epsilon_n^F(p_0^n(1)^*x)= p_1^n(1)^* x$ with $n=1$, then it satisfies the condition for all $n\in \N$. In order to prove the formula above we will define differential operators of a certain (finite) order and investigate some of their properties,
in the next section.

\subsubsection{Log differential operators.}\label{SectionDiffOp}
Recall that $f:X\rightarrow S$ denotes a log smooth and separated morphism of log adic spaces. 
We will work with the category of quasi-coherent $\cO_X$-modules ${\rm QCoh}(X)$, as in definition \ref{def:quasicoh}.

In this section we recall the definitions of stratifications $\{\epsilon_n^\cF\}_{n\in \N}$ of a quasi-coherent $\cO_X$-module $\cF$ and the module of differential operators of finite orders between quasi-coherent $\cO_X$-modules $\cF,\cG$, denoted ${\rm Diff}^\bullet(\cF, \cG)$. Moreover, given a stratification we will define a ring homomorphism
\[
{\rm Diff}^\bullet\left(\cO_X,\cO_X\right)\longrightarrow {\rm Diff}^\bullet\left(\cF,\cF\right).
\]
Our main source for these results is section \S 3.2 of \cite{shiho}.  In this article the author works both with schemes with log structures and coherent sheaves or formal schemes over a complete DVR of mixed characteristics and finite residue field, but all the results hold due to our description of the log infinitesimal neighbourhoods of the diagonal of a log smooth morphism and the definition of quasi-coherent sheaves.

\begin{lemma} [Lemma 3.2.3, \cite{shiho}]
Let $X\to S$ be a log smooth and separated morphism of fs log scheme with $S$ affinoid with trivial log structure.
For every $n$, let $P^n_{X/S}$ denote the adic space with topological space the topological space of $X$ and 
structure sheaves $(\cP^n_{X/S}, \cP^{n,+}_{X/S})$, with $\cP^{n,+}_{X/S}$ the integral closure of $\cO_X^+$ in 
$\cP^n_{X/S}$. Let $p_i$, $i=1,2$ denote the composition $\displaystyle P^n_{X/S}\to X\times_SX\stackrel{\pi_i}{\lra} X$, where $\pi_i$ is the $i$-th projection. Then:

1) The log structure of $P^n_{X/S}\times_XP^n_{X/S}$ is fine.

2) The morphism $X\to P^n_{X/S}\times_XP^n_{X/S}$ is an exact closed immersion.

\end{lemma}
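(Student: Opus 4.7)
The plan is to establish both statements by reducing to an explicit local chart, using the description of $\cP^n_{X/S}$ provided by Lemma \ref{lemma:logsmooth} and Corollary \ref{rmk:coordinates}. First I would fix an affinoid open $U\subset X$ from the covering of Lemma \ref{lemma:logsmooth}, equipped with a chart $M\to A$ for the log structure, with $M$ fine and saturated, $M^{\rm gp}/M^{\rm tor}$ free of rank $d$, the torsion subgroup of $M^{\rm gp}$ of order invertible in $A$, and $U\to S\langle M\rangle$ \'etale. By Corollary \ref{rmk:coordinates}, the structure sheaf $\cP^n_{U/S}$ of $P^n_{U/S}$ is isomorphic to $\cO_U[M^{\rm tor}][\xi_1,\ldots,\xi_d]^{\deg\le n}$ with $\xi_j=m_j-1$ for a basis $m_1,\ldots,m_d$ of $M^{\rm gp}/M^{\rm tor}$, and the log structure on $P^n_{U/S}$ is induced by the chart $M^{\rm gp}\to\cP^n_{U/S}$ sending $m_j\mapsto 1+\xi_j$ (and the torsion part to units in $\cO_U[M^{\rm tor}]$).

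For (1), I would observe that the underlying adic space of $P^n_{U/S}\times_U P^n_{U/S}$ has the topological space of $U$ and structure sheaf the algebraic tensor product $\cP^n_{U/S}\otimes_{\cO_U}\cP^n_{U/S}$, while its log structure is induced by the amalgamated-sum chart $M^{\rm gp}\oplus_{M^{\rm gp}} M^{\rm gp}\to \cP^n_{U/S}\otimes_{\cO_U}\cP^n_{U/S}$, where the amalgamation is along the structure map of $U$ and the two factors come from $p_1$ and $p_2$. Since $M^{\rm gp}\oplus_{M^{\rm gp}} M^{\rm gp}$ is a finitely generated and saturated monoid, the associated log structure is fine. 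These local descriptions glue to give the first assertion globally.

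For (2), I would analyze the diagonal $\Delta\colon U\to P^n_{U/S}\times_U P^n_{U/S}$. At the level of underlying adic spaces, this is the closed immersion cut out by the nilpotent ideal generated by $\xi_j\otimes 1$ and $1\otimes\xi_j$ for $j=1,\ldots,d$, which precisely kills $\cP^n_{U/S}\otimes_{\cO_U}\cP^n_{U/S}$ down to $\cO_U$. On charts, $\Delta$ is induced by the codiagonal $M^{\rm gp}\oplus_{M^{\rm gp}} M^{\rm gp}\to M^{\rm gp}$, which is an isomorphism of monoids; therefore the pullback log structure agrees with the given log structure on $U$, i.e.\ $\Delta$ is exact. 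Gluing over the affinoid covering from Lemma \ref{lemma:logsmooth} yields the global statement.

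The main obstacle is the careful bookkeeping around the exactification $(X\times_S X)^{\rm ex}$: one must verify that in the local chart from Lemma \ref{lemma:logsmooth}, the log structure on $P^n_{U/S}\times_U P^n_{U/S}$ is genuinely described by the amalgamated-sum chart above (rather than by some coarser or finer variant arising from re-exactifying), and that the resulting map of monoids $M^{\rm gp}\oplus_{M^{\rm gp}} M^{\rm gp}\to M^{\rm gp}$ yields an \emph{exact} rather than merely fine morphism. Once this compatibility is pinned down using Proposition \ref{prop:exactification}, both assertions reduce to the explicit coordinate description of Corollary \ref{rmk:coordinates}.
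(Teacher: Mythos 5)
Your overall strategy---reduce to the local charts of Lemma \ref{lemma:logsmooth} and Corollary \ref{rmk:coordinates}, identify the chart of the fibre product, and check fineness and exactness there---is exactly the route the paper takes (it simply defers to Shiho's proof of Lemma 3.2.3 once the local description of $\cP^n_{X/S}$ is available). However, the monoids you write down are wrong, and this breaks both halves of the argument. The log structure of $P^n_{U/S}$ is \emph{not} induced by a chart $M^{\rm gp}\to\cP^n_{U/S}$, $m_j\mapsto 1+\xi_j$: since $M^{\rm gp}$ is a group and the elements $1+\xi_j$ are units, such a chart induces the \emph{trivial} log structure, whereas the log structure of $P^n_{U/S}$ is the one induced from $(U\times_SU)^{\rm ex}$ and restricts to the (in general nontrivial) log structure of $U$, e.g.\ along the boundary divisor. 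The correct chart, read off from the proof of Lemma \ref{lemma:logsmooth}, is $M\oplus M^{\rm gp}$, with $M$ mapping through $\cO_U$ and the extra $M^{\rm gp}$ mapping to the units $1+\xi_j$. Consequently the chart of $P^n_{U/S}\times_U P^n_{U/S}$ is the amalgamated sum $(M\oplus M^{\rm gp})\oplus_M(M\oplus M^{\rm gp})\cong M\oplus M^{\rm gp}\oplus M^{\rm gp}$, amalgamated over $M$, not over $M^{\rm gp}$. Your $M^{\rm gp}\oplus_{M^{\rm gp}}M^{\rm gp}$ is literally just $M^{\rm gp}$, again a group, and ``the codiagonal is an isomorphism of monoids'' then only shows that the pullback of the trivial log structure is trivial---which is not the assertion when $U$ carries a nontrivial log structure.

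With the correct chart both claims do follow, but by a different mechanism than the one you invoke: (1) holds because $M\oplus M^{\rm gp}\oplus M^{\rm gp}$ is finitely generated and integral, hence fine; and (2) holds not because a codiagonal is an isomorphism, but because the two extra $M^{\rm gp}$ factors map to units of the form $1+(\mathrm{nilpotent})$ in $\cP^n_{U/S}\otimes_{\cO_U}\cP^n_{U/S}$, so they do not alter the associated log structure; hence the characteristic monoid of $P^n_{U/S}\times_U P^n_{U/S}$ restricted to $U$ agrees with that of $U$, and together with the nilpotence of the ideal generated by the two augmentation ideals this gives exactness of the closed immersion. Your ring-level computation (that the ideal generated by $\xi_j\otimes 1$ and $1\otimes\xi_j$ cuts $\cP^n_{U/S}\otimes_{\cO_U}\cP^n_{U/S}$ down to $\cO_U$) is fine; it is only the monoid bookkeeping---which you yourself flag as the main obstacle---that needs to be redone along the lines above.
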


\begin{proof} Given the description of $\cP^n_{X/S}$ in Lemma \ref{lemma:logsmooth}, the prooof follows the proof of Lemma 3.2.3 in \cite{shiho}.
\end{proof}

\begin{remark} In fact, Lemma 3.2.3 in \cite{shiho} holds also for formal schemes, so the result applies to log adic spaces over $p$-adic fields associated to log formal schemes.

We also remark that Corollary \ref{rmk:coordinates} gives a refinement of Proposition 3.2.5 in \cite{shiho} in the case $X\to S$ is log smooth, in the sense that the local coordinates are related to the generators of $M^{\rm gp}$, with $(U,M)$ a local chart of $X\to S$. 

 \end{remark}

\begin{definition}[Definition 3.2.6, \cite{shiho}]
Let $X\to S$ be a log smooth and separated morphism of adic spaces and let $E,F$ be quasi-coherent $\cO_X$-modules. We define $\cD{\rm iff}^n(E,F):=\cH{\rm om}_{\cO_X}(\cP^n_{X/S}\otimes_{\cO_X}E, F)$
and $\cD{\rm iff}^\bullet(E,F):=\colim_n \cD{\rm iff}^n(E,F)$.

We define the $\cP^n_{X/S}$-structure of $\cD{\rm iff}^n(E,F)$ by $(af)(x):=f(ax)$, for $a$ section of $\cP^n_{X/S}$,
$f$ a section of $\cD{\rm iff}^n(E,F)$ and $x$ section of $\cP^n_{X/S}$. This induces a  $\cP_{X/S}:=\lim_n\cP^n_{X/S}$-module structure of  $\cD{\rm iff}^\bullet(E,F)$.

\end{definition}

\begin{lemma}
	For each $n,m\in \N$ we have a map of $\cO_X$-algebras (with respect to both left or right $\cO_X$-module structures)  
	\[
	\delta^{n,m}:  \mathcal{P}_X^{n+m}\longrightarrow \mathcal{P}_X^n\otimes_{{\cO_X}} \mathcal{P}_X^m, \ 
	a\otimes b\lra (a\otimes 1)\otimes (1\otimes b)
	\]
	
\end{lemma}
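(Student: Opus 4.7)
The strategy is to realize $\delta^{n,m}$ as the pullback $p_{02}^\ast$ along the outer-projection $p_{02}^{\rm ex}\colon (X\times_S X\times_S X)^{\rm ex}\to (X\times_S X)^{\rm ex}$, $p_{02}(x_0,x_1,x_2)=(x_0,x_2)$, restricted to the appropriate log infinitesimal neighbourhoods of the partial diagonals. The formula $a\otimes b\mapsto(a\otimes 1)\otimes(1\otimes b)$ in the statement is just the effect of $p_{02}^\ast$ at the level of $\cO_X^{\otimes 2}\to \cO_X^{\otimes 3}$ (insert a $1$ in the middle factor), together with the identification $a\otimes 1\otimes b=(a\otimes 1\otimes 1)\cdot(1\otimes 1\otimes b)$.

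First I would fix notation: let $J_{ij}\subset \cO_{(X\times_SX\times_SX)^{\rm ex}}$ ($0\le i<j\le 2$) denote the coherent ideal of the closed subspace obtained by equating the $i$-th and $j$-th factors of the triple product. Unfolding the definition of the tensor product of $\cO_X$-bimodules (and using the local description of $(X\times_S X\times_S X)^{\rm ex}$ from the proof of Proposition \ref{prop:exactification}) yields a canonical $\cO_X$-bimodule isomorphism
\[
\cP_X^n\otimes_{\cO_X}\cP_X^m \;\cong\; \cO_{(X\times_S X\times_S X)^{\rm ex}}\big/(J_{01}^{n+1}+J_{12}^{m+1}),
\]
with left $\cO_X$-action coming from the first factor and right $\cO_X$-action from the third. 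Since $p_{02}^\ast$ sends the diagonal ideal $J\subset \cO_{(X\times_SX)^{\rm ex}}$ into $J_{02}$, the map $\delta^{n,m}$ is well-defined on the quotient $\cP_X^{n+m}=\cO_{(X\times_SX)^{\rm ex}}/J^{n+m+1}$ as soon as the ideal containment $J_{02}^{n+m+1}\subset J_{01}^{n+1}+J_{12}^{m+1}$ holds.

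The containment is checked \'etale-locally via Lemma \ref{lemma:logsmooth} and Corollary \ref{rmk:coordinates}: choose multiplicative generators $m_1,\dots,m_d$ of a torsion-free quotient of $M^{\rm gp}$ for a local chart of $f$, and set $\xi_i:=m_i^{(0)}/m_i^{(1)}-1$, $\eta_i:=m_i^{(1)}/m_i^{(2)}-1$, $\zeta_i:=m_i^{(0)}/m_i^{(2)}-1$, which are local generators of $J_{01}$, $J_{12}$, $J_{02}$ respectively. The multiplicative identity $m_i^{(0)}/m_i^{(2)}=(m_i^{(0)}/m_i^{(1)})\cdot(m_i^{(1)}/m_i^{(2)})$ gives the key local formula $\zeta_i=\xi_i+\eta_i+\xi_i\eta_i$. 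A short multinomial expansion then shows that every monomial of total $\zeta$-degree $\ge n+m+1$ becomes a sum of monomials $\prod_i\xi_i^{a_i+c_i}\eta_i^{b_i+c_i}$ with $\sum_i(a_i+c_i)+\sum_i(b_i+c_i)\ge n+m+1$, so at least one of $\sum_i(a_i+c_i)\ge n+1$ or $\sum_i(b_i+c_i)\ge m+1$ must hold, placing the monomial in $(\xi)^{n+1}+(\eta)^{m+1}$. The $\cO_X$-algebra and bimodule properties of $\delta^{n,m}$ are automatic from its geometric origin, since $p_{02}^{\rm ex}$ commutes with the projections to $X$ on the outer factors which govern the two module structures; and the canonical local descriptions glue to a global morphism. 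The only non-formal input is the local identity $\zeta_i=\xi_i+\eta_i+\xi_i\eta_i$, which is where the logarithmic (multiplicative) nature of the charts enters --- the rest being routine binomial bookkeeping.
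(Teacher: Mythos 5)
The paper states this lemma without any proof (it passes directly to the consequence about composition of differential operators), so there is no argument of the authors' to compare against. Your construction is correct and is the standard one: realizing $\delta^{n,m}$ as $p_{02}^\ast$ on the exactified triple product reduces everything to the ideal containment $J_{02}^{n+m+1}\subset J_{01}^{n+1}+J_{12}^{m+1}$, and your local verification via the multiplicative identity $1+\zeta_i=(1+\xi_i)(1+\eta_i)$, i.e.\ $\zeta_i=\xi_i+\eta_i+\xi_i\eta_i$, together with the degree count $\sum_i(a_i+c_i)+\sum_i(b_i+c_i)\ge n+m+1$, is exactly the log analogue of the classical argument of Berthelot--Ogus / EGA IV (where one has $\zeta_i=\xi_i+\eta_i$). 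The identification of $\cP_X^n\otimes_{\cO_X}\cP_X^m$ with $\cO_{(X\times_SX\times_SX)^{\rm ex}}/(J_{01}^{n+1}+J_{12}^{m+1})$ and the compatibility of exactification with the partial diagonals are the only points requiring the chart-level care you indicate; they follow from the local description in Lemma \ref{lemma:logsmooth} and Corollary \ref{rmk:coordinates} (and are of the same nature as the computation in Lemma \ref{lemma:cross}), since the nilpotent quotients $\cP_X^n$ are coherent, so the algebraic tensor product suffices. The bimodule and algebra compatibilities are then automatic from the geometric description, as you say. No gaps.
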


Thanks to this lemma we can say that a composition of two differential operators $D_1, D_2$ of order $n_1$, respectively $n_2$ is a differential operator of order $n_1+n_2$.

\begin{lemma}\label{lemma:comp}
	If $\cF,\cG,\cH$ are three $\cO_X-$modules, for each $n,m\in \N$ the composition map
	\[
	\cD{\rm iff}^n(\cF,\cG)\times \cD{\rm iff}^m(\cG,\mathcal{H}) \rightarrow \cD{\rm iff}^{n+m}(\cF,\mathcal{H})
	\]
	is defined as follows: let $D_1$ and $D_2$ be sections of $\cD{\rm iff}^n(\cF, \cG)$ respectively $\cD{\rm iff}^m(\cG,\cH)$, then $D_2\circ D_1$ is  the composition
	\[
	\begin{tikzcd}
		\mathcal{P}_X^{n+m}\otimes_{\cO_X} \cF\  \ar[r, "\delta^{n,m}\otimes \text{id}_\cF"]&\ \mathcal{P}_X^n\otimes_{\cO_X}\mathcal{P}_X^m\otimes_{\cO_X} \cF\ \ar[r, "\text{id}_{\mathcal{P}_X^n}\otimes D_1"]&\ \mathcal{P}_X^n \otimes_{\cO_X} \cG \ar[r, "D_2"] & \mathcal{H}.
	\end{tikzcd}
	\]
	in other words, if $D_1,D_2$ are defined on an open $U\subset X$, then \[
	D_2\circ D_1=
	D_2\circ \left(
	\text{id}_{{\mathcal{P}_X^n}_{|_U}}\otimes D_1\right)\circ
	\left(\delta_U^{n,m}\otimes \text{id}_{\cF_{|U}}\right).\] 
\end{lemma}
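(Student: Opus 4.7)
My plan is to show that the three-step composite displayed in the lemma lands in $\cD\mathrm{iff}^{n+m}(\cF,\cH)=\cH\mathrm{om}_{\cO_X}(\cP^{n+m}_X\otimes_{\cO_X}\cF,\cH)$, and then to verify that the assignment $(D_1,D_2)\mapsto D_2\circ D_1$ has the expected functorial properties. The essential input is the previous lemma, which provides the comultiplication $\delta^{n,m}\colon \cP^{n+m}_X\to \cP^n_X\otimes_{\cO_X}\cP^m_X$ and asserts that it is a morphism of $\cO_X$-algebras with respect to \emph{both} the left and right $\cO_X$-structures on the source and target. This double linearity is what allows me to compose maps that were originally $\cO_X$-linear for different structures.

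First I will unpack the definitions. Recall that $D_1\colon \cP^n_X\otimes_{\cO_X}\cF\to\cG$ and $D_2\colon \cP^m_X\otimes_{\cO_X}\cG\to\cH$ are $\cO_X$-linear, where on the tensor products we use the left $\cO_X$-action coming from the first tensor factor (this is the convention implicit in the notation, and agrees with the convention $D\mapsto D(1\otimes -)$ recovering a differential operator from its associated $\cP^\bullet_X$-module map). With these conventions, the three arrows in the displayed diagram are each $\cO_X$-linear for the left structure: $\delta^{n,m}\otimes\mathrm{id}_\cF$ by the algebra-map property of $\delta^{n,m}$, $\mathrm{id}_{\cP^n_X}\otimes D_1$ because $D_1$ is $\cO_X$-linear for the left action of $\cO_X$ on $\cP^n_X$ viewed through the inner tensor factor, and $D_2$ by hypothesis. (Here one must be mildly careful: after applying $\delta^{n,m}\otimes\mathrm{id}_\cF$, the middle factor $\cP^m_X$ inherits its right-$\cO_X$ action from $\cP^n_X$ on the left and its left-$\cO_X$ action is what pairs with $\cF$; thus $\mathrm{id}\otimes D_1$ makes sense exactly because the coalgebra structure intertwines the two $\cO_X$-actions. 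Indexing of $n$ and $m$ may require a harmless swap to $\delta^{m,n}$, depending on the chosen convention for the $\cO_X$-module structure on $\cD\mathrm{iff}$.) Composing, we obtain an $\cO_X$-linear map $\cP^{n+m}_X\otimes_{\cO_X}\cF\to\cH$, which is by definition an element of $\cD\mathrm{iff}^{n+m}(\cF,\cH)$.

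Next I would check the expected formal properties: (i) bi-$\cO_X$-linearity of the pairing in $D_1$ and $D_2$, so that the composition defines an $\cO_X$-bilinear map of sheaves $\cD\mathrm{iff}^n(\cF,\cG)\times\cD\mathrm{iff}^m(\cG,\cH)\to \cD\mathrm{iff}^{n+m}(\cF,\cH)$; (ii) associativity of the triple composite, which reduces to the coassociativity of the comultiplications, i.e.\ the commutativity of $(\delta^{n,m}\otimes\mathrm{id})\circ\delta^{n+m,k}=(\mathrm{id}\otimes\delta^{m,k})\circ\delta^{n,m+k}$ in $\cP^n_X\otimes\cP^m_X\otimes\cP^k_X$; and (iii) unitality for zeroth-order operators, i.e.\ compatibility with $\cP^0_X=\cO_X$. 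For (ii), the verification is a diagram chase reducing to an equality of two $\cO_X$-algebra maps out of $\cP^{n+m+k}_X$, and by the explicit local description from Lemma~\ref{lemma:logsmooth} and Corollary~\ref{rmk:coordinates} (where $\cP^N_X\cong \cO_X[M^{\mathrm{gp}}]/J^{N+1}$ with $\delta$ the standard comultiplication $m\mapsto m\otimes m$), this is immediate since two such algebra maps agree on the generators $m\in M^{\mathrm{gp}}$ because both send $m$ to $m\otimes m\otimes m$.

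The main obstacle is keeping track of the two $\cO_X$-module structures on $\cP^{n+m}_X$: the left and right structures differ in positive order, and exactly which one is used for the composition (and for the $\cO_X$-linearity of $D_1$, $D_2$) must be pinned down to avoid a sign-/symmetry-style confusion. Once one fixes the convention (say, tensor products taken with respect to the left $\cO_X$-structure, and $D_i$ linear for the right $\cO_X$-structure on the outermost $\cP^\bullet_X$-factor — or vice versa, consistently), everything follows from the previous lemma and the explicit local description of $\cP^n_X$ in Corollary~\ref{rmk:coordinates}. No further global argument is required, since both well-definedness and associativity can be checked on the affinoid cover provided there.
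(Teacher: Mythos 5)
Your proposal is correct and follows exactly the justification the paper intends: the paper gives no written proof of this lemma, treating it as an immediate consequence of the preceding lemma that $\delta^{n,m}$ is an $\cO_X$-algebra map for both the left and right structures, which is precisely the double linearity you invoke to make each arrow in the composite well defined and to conclude that the result lies in $\cD{\rm iff}^{n+m}(\cF,\cH)$. Your additional checks of bilinearity, coassociativity, and unitality go beyond what the lemma asserts but are harmless and standard.
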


We recall the following definition.

\begin{definition}
	A \textbf{stratification} on an $\cO_X$-module $\cF$ is a collection $\{\epsilon_n\}_{n\in\N}$ of $\cP_X$-linear isomorphisms
	\[
	\epsilon_n: p_1^{n}(1)^*\cF=\mathcal{P}_X^n\otimes_{\cO_X} \cF \longrightarrow \cF\otimes_{\cO_X} \mathcal{P}_X^n=p_{0}^{n}(1)^*\cF 
	\]
	with the following properties:
	\begin{itemize}
		\item (compatibility) for all $n\leqslant m$ the following diagram commutes
		\[
		\begin{tikzcd}
			\mathcal{P}_X^m\otimes_{\cO_X} \cF \ar[r, "\epsilon_m"] \ar[d]& \cF\otimes_{\cO_X} \mathcal{P}_X^m\ar[d]\\
			\mathcal{P}_X^n\otimes_{\cO_X} \cF \ar[r, "\epsilon_n"]& \cF\otimes_{\cO_X} \mathcal{P}_X^n\\
		\end{tikzcd},
		\] where the vertical maps are induced by the projections;
		\item (cocycle condition) for all $n\in \N$
		\begin{equation}\label{CocycleConditionDefStratification}
			\left(p_{0,1}^{n}(2)^*\epsilon_n\right) \circ \left((p_{1,2}^{n}(2)^* \epsilon_n\right)= p_{0,2}^{n}(2)^*\epsilon_n.
		\end{equation}
		\item (identity) $\epsilon_0$ is the identity.
	\end{itemize}
\end{definition}
The cocycle condition is well defined because
\[
p_{1,2}^{n}(2)^*p_{0}^{n}(1)^* \cF= \left(p_{0}^{n}(1)\circ p_{1,2}^{n}(2)\right)^* \cF=  \left(p_{1}^{n}(1)\circ p_{0,1}^{n}(2)\right)^* \cF= p_{0,1}^{n}(2)^*p_{1}^{n}(1)^* \cF,
\]
and similarly
\[
p^n_{1,2}(2)^* p_1^n(1)^*=p^n_{0,2}(2)^* p_1^n(1)^*\quad 
\text{and}\quad   p^n_{0,1}(2)^* p_0^n(1)^*=p^n_{0,2}(2)^* p_0^n(1)^*.
\]

Given a stratification $\{\epsilon_n\}_{n\in \N}$ on $\cF$, we define a morphism of sheaves of rings
\[
\cD{\rm iff}^\bullet(\cO_X,\cO_X)\longrightarrow \cD{\rm iff}^\bullet(\cF,\cF).
\]
Let $\partial$ be a section of $ \cD{\rm iff}^n(\cO_X,\cO_X)$, then we can consider the map
\[
\begin{tikzcd}
	\cF\ar[d, "t_\cF"]&\cF \otimes_{\cO_X} \cO_X=\cF\\
	\mathcal{P}_X^n\otimes_{\cO_X} \cF\ar[r, "\epsilon_n"]& \cF\otimes_{\cO_X} \mathcal{P}_X^n\ar[u, " \text{id}_\cF\otimes\overline{\partial}^n"]
\end{tikzcd}.
\]
By definition, $\epsilon_n$ is $\cP_X$-linear, then
\[
\nabla^n(\partial):=\left( \text{id}_\cF\otimes \partial^n\right)\circ \epsilon_n
\]
is $\cO_X$-linear with respect to the left structure, then
\[
\nabla^n(\partial):= \nabla^n(\partial)\circ t_\cF\in \cD{\rm iff}^n(\cF,\cF).
\]
We defined a morphism
\[
\nabla^n: \cD{\rm iff}^n(\cO_X,\cO_X)\longrightarrow \cD{\rm iff}^n(\cF,\cF).
\]
Via the compatibility of $\{\epsilon_n\}_{n\in \N}$, the $\nabla^n$'s agree and there is a morphism
\[
\nabla: \cD{\rm iff}^\bullet(\cO_X,\cO_X)\longrightarrow \cD{\rm iff}^\bullet(\cF,\cF).
\]

\begin{proposition}\label{PropositionStratificationThenDiffAlgebraMap}
	Given a stratification $\{\epsilon_n\}_{n\in \N}$ of an $\cO_X$-module $\cF$, the morphism
	\[
	\nabla: \cD{\rm iff}^\bullet(\cO_X,\cO_X)\longrightarrow \cD{\rm iff}^\bullet(\cF,\cF).
	\] defined above is a morphism of $\cO_X$-algebras. 
\end{proposition}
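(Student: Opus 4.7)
The plan is to verify the three properties of an $\cO_X$-algebra homomorphism separately: preservation of the identity, preservation of composition, and $\cO_X$-linearity. The first and third are essentially formal consequences of the definition, so the heart of the matter is the compatibility with composition, and this is where the cocycle condition on the stratification enters in an essential way.

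First I would handle the identity: since $\epsilon_0$ is the identity by definition and $\cD{\rm iff}^0(\cO_X,\cO_X)$ is generated by $\mathrm{id}_{\cO_X}$ as an $\cO_X$-module, the formula $\nabla^0(a) = a\cdot \mathrm{id}_\cF$ for $a\in \cO_X$ follows immediately, and in particular $\nabla(\mathrm{id}_{\cO_X}) = \mathrm{id}_\cF$. For $\cO_X$-linearity one observes that the construction of $\nabla^n(\partial)$ is functorial in $\partial$ through an $\cO_X$-linear map, where we use the left $\cO_X$-structure on $\cP_X^n$; this reduces to the corresponding $\cO_X$-linearity of the evaluation map $\cH{\rm om}_{\cO_X}(\cP_X^n\otimes_{\cO_X}\cO_X,\cO_X) \to \cO_X$ obtained from $\partial$.

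The main step is the multiplicativity $\nabla^{n+m}(\partial_2\circ \partial_1) = \nabla^n(\partial_2)\circ \nabla^m(\partial_1)$ for $\partial_1\in \cD{\rm iff}^m(\cO_X,\cO_X)$ and $\partial_2\in \cD{\rm iff}^n(\cO_X,\cO_X)$. Using Lemma~\ref{lemma:comp} I would first unravel the right-hand side: it equals the composition
\[
\cF \xrightarrow{t_\cF} \cP_X^n\otimes_{\cO_X}\cF \xrightarrow{\epsilon_n} \cF\otimes_{\cO_X}\cP_X^n \xrightarrow{t_\cF\otimes \mathrm{id}} \cP_X^m\otimes_{\cO_X}\cF\otimes_{\cO_X}\cP_X^n \xrightarrow{\epsilon_m\otimes \mathrm{id}} \cF\otimes_{\cO_X}\cP_X^m\otimes_{\cO_X}\cP_X^n
\]
followed by $\mathrm{id}_\cF\otimes (\partial_1\otimes \partial_2)$. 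The two tensor factors of $\cP_X^n$ on the right side come from two different projections out of $P_X^{n+m}(2)$, and the content of the cocycle condition \eqref{CocycleConditionDefStratification} is precisely that the composite map
\[
p_{1,2}^{n+m}(2)^*\cF \xrightarrow{\ p_{1,2}^{n+m}(2)^*\epsilon_{n+m}\ } p_{0,1}^{n+m}(2)^*\cF \xrightarrow{\ p_{0,1}^{n+m}(2)^*\epsilon_{n+m}\ } p_{0,2}^{n+m}(2)^*\cF
\]
agrees with $p_{0,2}^{n+m}(2)^*\epsilon_{n+m}$ after pullback along $\delta^{n,m}$. Combining this with Lemma~\ref{lemma:comp}, which rewrites $\partial_2\circ\partial_1$ using $\delta^{n,m}\otimes\mathrm{id}$, then yields the equality of $\nabla^n(\partial_2)\circ\nabla^m(\partial_1)$ with $\nabla^{n+m}(\partial_2\circ\partial_1)$.

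I expect the main technical obstacle to be the careful book-keeping of left versus right $\cO_X$-structures on the various factors of $\cP_X^n$ and $\cP_X^m$, and of which projection from $P_X^{n+m}(2)$ corresponds to which factor after applying $\delta^{n,m}$. A clean way to organize the argument is to draw the diagram of pullbacks along the three projections $p_{0,1}^{n+m}(2),\ p_{1,2}^{n+m}(2),\ p_{0,2}^{n+m}(2)$, evaluate the stratifications at each edge, and then apply $\mathrm{id}_\cF\otimes(\partial_1\otimes \partial_2)$ to both sides of the resulting commutative hexagon. Once this diagram is in place, the verification reduces to the cocycle identity and the compatibility of $\{\epsilon_n\}_n$ with the truncation maps $\cP_X^{n+m}\to \cP_X^n\otimes_{\cO_X}\cP_X^m$, both of which are built into the definition of a stratification.
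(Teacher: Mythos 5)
The paper does not actually supply a proof of this proposition: it is stated bare, with the surrounding text deferring to \cite{shiho}, \S 3.2 and \cite{berthelot_ogus} for this standard material, so there is no in-paper argument to compare against. Your outline is the classical Berthelot--Ogus argument and is correct: the identity and $\cO_X$-linearity checks are formal, and the multiplicativity reduces, after identifying $\cP_X^n\otimes_{\cO_X}\cP_X^m$ (right structure on the first factor, left on the second) with the appropriate quotient of $\cP_X^{n+m}(2)$ and invoking the compatibility of $\{\epsilon_n\}$ with truncation, exactly to the cocycle condition \eqref{CocycleConditionDefStratification} followed by $\mathrm{id}_\cF\otimes(\partial_1\otimes\partial_2)$. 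The only point to nail down in a full write-up is the left/right bookkeeping you already flag, i.e.\ which projection out of the triple product $\delta^{n,m}$ corresponds to and hence in which order $\partial_1$ and $\partial_2$ land on the two tensor factors.
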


\begin{lemma}\label{LemmaDiffGenInDegreeOne}
	The $\cO_X$-algebra $\cD{\rm iff}^\bullet(\cO_X,\cO_X)$ is locally generated by the elements of degree $1$.
\end{lemma}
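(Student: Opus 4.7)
The claim is local on $X$, so the plan is to work on an affinoid open $U \subset X$ equipped with the local coordinates $\xi_1, \ldots, \xi_d \in J_U$ produced by Corollary~\ref{rmk:coordinates}; with respect to these, $\cP^n_{U/S}$ is a free $\cO_U$-module on the monomials $\{\xi^\alpha : |\alpha| \le n\}$. Dualising, $\cD{\rm iff}^n(\cO_X, \cO_X)|_U = \Hom_{\cO_U}(\cP^n_{U/S}, \cO_U)$ is free on the dual basis $\{D_\alpha\}_{|\alpha| \le n}$, with $\cD{\rm iff}^{n-1}|_U$ identified with the span of those $D_\alpha$ with $|\alpha| \le n-1$. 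By Lemma~\ref{lemma:comp}, $\cD{\rm iff}^\bullet$ is a filtered $\cO_X$-algebra under composition, so the goal reduces to showing by induction on $n$ that every $D_\alpha$ with $|\alpha| = n$ lies in the $\cO_U$-subalgebra generated by $\cD{\rm iff}^1|_U$; the base case is clear since $D_0 = 1$.

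Setting $\partial_i := D_{e_i} \in \cD{\rm iff}^1|_U$, the inductive step will rest on computing the composition $\partial^\alpha := \partial_1^{\alpha_1} \circ \cdots \circ \partial_d^{\alpha_d}$. By Lemma~\ref{lemma:comp}, this equals the pullback of $\partial_{i_1} \otimes \cdots \otimes \partial_{i_n}$ along the iterated comultiplication $\cP^n_U \to (\cP^1_U)^{\otimes n}$. The Hopf-algebra description of Lemma~\ref{lemma:logsmooth}, namely $\cP_U \cong \cO_U[M^{\mathrm{gp}}]/J^{n+1}$ with $m_i = 1 + \xi_i$ group-like, yields $\delta(\xi_i) = \xi_i \otimes 1 + 1 \otimes \xi_i + \xi_i \otimes \xi_i$; after restricting each tensor factor to $\cP^1_U = \cP_U/J^2$, only the primitive part $\xi_i \otimes 1 + 1 \otimes \xi_i$ survives. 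A multinomial expansion of $\delta(\xi^\alpha)$ modulo these quadratic-and-higher corrections then produces the identity
$$\partial^\alpha \;=\; \alpha!\, D_\alpha \;+\; \sum_{|\beta| < n} c_{\alpha,\beta}\, D_\beta$$
for suitable $c_{\alpha,\beta} \in \cO_U$.

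Because we are in characteristic zero, $\alpha!$ is a unit in $\cO_U$, so the displayed identity solves for $D_\alpha$ in terms of $\partial^\alpha$ --- which is a product of elements of $\cD{\rm iff}^1|_U$ --- and the lower-order $D_\beta$'s, which already lie in the subalgebra generated by $\cD{\rm iff}^1|_U$ by the inductive hypothesis. This closes the induction and proves the lemma. The step I would check most carefully is the comultiplication computation isolating $\alpha!$ as the leading coefficient: one must verify that the non-primitive terms $\xi_i \otimes \xi_i$ in $\delta(\xi_i)$ only contribute to strictly lower-order $D_\beta$. This amounts to routine bookkeeping in the truncated group Hopf-algebra $\cO_U[M^{\mathrm{gp}}]/J^{n+1}$ and I do not anticipate a genuine obstacle; alternatively, one can bypass the explicit computation by passing to the associated graded $\mathrm{gr}^\bullet\cD{\rm iff}$, identifying it with $\Sym^\bullet_{\cO_X}(\mathcal{T}_{X/S})$ (using that derivations bracket to derivations, so composition is graded-commutative), and lifting generators from degree one.
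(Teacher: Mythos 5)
Your proposal is correct and follows the same route as the paper: localize via Corollary \ref{rmk:coordinates}, identify $\cD{\rm iff}^n(\cO_X,\cO_X)$ with the free module on the dual basis $\{D_q\}_{|q|\le n}$, compute compositions through the comultiplication $\delta^{n,m}$, and induct on the order. The one substantive difference is in your favour: the paper asserts the exact relation $D_q\circ D_{q'}=\binom{q+q'}{q}D_{q+q'}$ and hence $D_q=\frac{1}{q!}\prod_i\partial_{x_i}^{q_i}$ with no correction terms, whereas you correctly predict that the non-primitive part of $\delta(\zeta_i)=\zeta_i\otimes 1+1\otimes\zeta_i+\zeta_i\otimes\zeta_i$ produces lower-order contributions. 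Your version is the accurate one: already in one variable $D_q(x^m)=\binom{m}{q}x^m$ and $\partial(x^m)=mx^m$, so $D_q=\binom{\partial}{q}$ and, e.g., $\partial\circ\partial=2D_2+D_1$ rather than $2D_2$. Since $\binom{\partial}{q}$ is a degree-$|q|$ polynomial in the $\partial_{x_i}$ with invertible leading coefficient $1/q!$, the lemma still holds, and your induction on the order (using that $\alpha!$ is a unit in characteristic zero) is exactly the right way to absorb the corrections. The only point to make explicit, as you note, is that the terms $\zeta_i\otimes\zeta_i$ raise the degree in each tensor factor simultaneously, so after truncating each factor to $\cP^1_U$ in the iterated comultiplication they can only pair against monomials $\zeta^\beta$ with $|\beta|<|\alpha|$; this is the routine bookkeeping you anticipated and it goes through.
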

\begin{proof}
	Since the statement is local, let us assume, using Lemma \ref{lemma:logsmooth} that $\uX$ is affinoid with a log chart given by the finite saturated monoid $M$,
	such that  $\cP_X^n=\cO_X\langle M^{\rm gp}\rangle/J^{n+1},$ and the torsion degree of $M^{\rm gp}$ is invertible in $X$. Therefore we have 
	\[
	\mathcal{P}_X^n\cong \bigoplus_{k=0}^n {\rm Sym}^k(J/J^2).
	\]
	
	We recall that
	\[
	\cD{\rm iff}^n(\cO_X,\cO_X)\cong \cH{\rm om}_{\cO_X}(\mathcal{P}_X^n, \cO_X)
	\]
	as $\cO_X$-modules. Let us denote by $x_1,...,x_d$ a basis of $M^{\rm gp, tf}:=M^{\rm gp}/(M^{\rm gp})^{\rm tor}$, the torsion free quotient of $M^{\rm gp}$.
	Then $\zeta_i:=x_1-1$, $1\le i\le d$ give a system of generators of the augmentation ideal $J$.
	For a $q\in \N^{d}$ let's denote by $\overline{D}_q\in \cH{\rm om}_{\cO_X}(\mathcal{P}_X^n, \cO_X)$ the dual morphism to $\zeta^q:=
	(\zeta_1)^{q_1}\dots (\zeta_d)^{q_d}\in \mathcal{P}_X^n$ and by $\partial_{x_i}$ the dual morphism to ${\rm dlog} (x_i)$.
	
	Via the isomorphism above $\cD{\rm iff}^n(\cO_X,\cO_X)$ is generated as module (hence as algebra) by all the $D_q$ with
	\[
	|q|_1:=\sum_{i=1}^d q_i\leqslant n.
	\]
	Define $q!:= \prod_{i=1}^d q_i!$. One can check via the composition formula in lemma  \ref{lemma:comp}, as in 
	\cite{berthelot_ogus}, that
	\[
	D_q\circ D_{q'} ({\rm dlog}( x^{q''}))= \frac{(q')! (q'+q)!}{q!} \ D_{q+q'} ({\rm dlog} (x^{q''})).
	\]
	So
	\[
	D_q\circ D_{q'}= \binom{q+q'}{q} D_{q+q'}.
	\]
	Then the elements $\partial_{x_i}$ commute and by induction one gets that for any $q\in\N^d$
	\[
	D_q= \frac{1}{q!} \prod_{i=1}^n \partial_{x_i}^{q^i},
	\]
	and $\cD{\rm iff}^1\left(\cO_X,\cO_X\right)$ is generated by these sections, as stated.
\end{proof}

\subsubsection{The global sections of a quasi-coherent crystal.}

We continue to use the notations of section \ref{SectionDiffOp}.

\begin{proposition}\label{PropGlobalSectionsCrystal}
	Let $F\in \Xinf$ be a quasi-coherent crystal and suppose $f:X\rightarrow S$ is log smooth and separated morphism, then
	\[
	\Gamma\left((X/S)_{\rm inf}^{\rm log}, F\right)=\{x\in F(X,X)\mid \epsilon^F_1(p_0^1(1)^*x)= p_1^1(1)^* x\}.
	\]
\end{proposition}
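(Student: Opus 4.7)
The inclusion $\subseteq$ of the proposition is immediate from Lemma~\ref{RemarksectionPrimaDellaProp} by specialising the condition to $n=1$. The substance lies in the reverse inclusion: if $x\in F(X,X)$ satisfies $\epsilon_1^F(p_0^1(1)^*x)=p_1^1(1)^*x$, then the same identity holds for every $n\in\N$, and hence by Lemma~\ref{RemarksectionPrimaDellaProp} the element $x$ lies in $\Gamma\bigl((X/S)_{\rm inf}^{\rm log}, F\bigr)$. The plan is to translate the $n$-th horizontality condition into a statement about differential operators of order $\le n$ acting on $x$, and then to reduce everything to the case $n=1$ by invoking that such operators are generated, as an algebra under composition, by those of order $1$.

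The argument is local, so by Lemma~\ref{lemma:logsmooth} and Corollary~\ref{rmk:coordinates} I replace $X$ by an affinoid open $U$ carrying a log chart $M_U\to\cO_U$ with distinguished elements $m_1,\ldots,m_d$ whose images $\xi_i=m_i-1$ freely generate the augmentation ideal $J$ of $\cP_U$ (up to the harmless finite \'etale extension coming from $(M_U^{\rm gp})^{\rm tor}$). Thus $\cP_U^n$ is a finite free $\cO_U$-module with basis $\{\xi^q:|q|\le n\}$, and its $\cO_U$-linear dual is $\cD{\rm iff}^n(\cO_U,\cO_U)$ with the basis of logarithmic operators $\partial_{x_{i_1}}\cdots\partial_{x_{i_k}}$ dual to $\xi^q$. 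Because $\cP_U^n$ is finite free, pairing against $\cD{\rm iff}^n(\cO_U,\cO_U)$ separates points of $F(U)\otimes_{\cO_U}\cP_U^n$, and unpacking the definition given in Proposition~\ref{PropositionStratificationThenDiffAlgebraMap} one finds
\[
\nabla^n(\partial)(y)=(\mathrm{id}_F\otimes\partial)\bigl(\epsilon_n^F(1\otimes y)\bigr).
\]
Consequently the condition $\epsilon_n^F(p_0^n(1)^*x)=p_1^n(1)^*x$ is equivalent to
\[
\nabla^n(\partial)(x)=\partial(1)\cdot x\quad\text{for every }\partial\in\cD{\rm iff}^n(\cO_U,\cO_U).
\]

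Specialising to $n=1$, the hypothesis gives $\nabla(\partial_{x_i})(x)=0$ for each $i=1,\ldots,d$. Now I invoke Lemma~\ref{LemmaDiffGenInDegreeOne}: every element $\partial\in\cD{\rm iff}^n(\cO_U,\cO_U)$ with $\partial(1)=0$ is an $\cO_U$-linear combination of non-trivial iterated compositions $\partial_{x_{i_1}}\circ\cdots\circ\partial_{x_{i_k}}$ with $1\le k\le n$. Since $\nabla$ is an $\cO_U$-algebra homomorphism by Proposition~\ref{PropositionStratificationThenDiffAlgebraMap},
\[
\nabla\bigl(\partial_{x_{i_1}}\circ\cdots\circ\partial_{x_{i_k}}\bigr)(x)=\nabla(\partial_{x_{i_1}})\circ\cdots\circ\nabla(\partial_{x_{i_{k-1}}})\bigl(\nabla(\partial_{x_{i_k}})(x)\bigr)=0.
\]
Writing a general $\partial=\partial(1)\cdot\mathrm{id}+(\partial-\partial(1)\cdot\mathrm{id})$ and using $\cO_U$-linearity, I deduce $\nabla^n(\partial)(x)=\partial(1)\cdot x$ for every $n$ and every $\partial$, i.e.\ $\epsilon_n^F(p_0^n(1)^*x)=p_1^n(1)^*x$ for every $n$. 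Lemma~\ref{RemarksectionPrimaDellaProp} then yields $x\in\Gamma\bigl((X/S)_{\rm inf}^{\rm log},F\bigr)$. The main technical obstacle is the ``key equivalence'' in the preceding paragraph: establishing that horizontality in $F(U)\otimes_{\cO_U}\cP_U^n$ is detected by pairing against $\cD{\rm iff}^n(\cO_U,\cO_U)$ via precisely the formula defining $\nabla^n$. This requires the careful compatibility between the left and right $\cO_U$-structures on $\cP_U^n$ and verifying that the composition rule of Lemma~\ref{lemma:comp} intertwines the cocycle condition on $\{\epsilon_n\}$ with the algebra structure on $\cD{\rm iff}^\bullet$, both of which are precisely what Proposition~\ref{PropositionStratificationThenDiffAlgebraMap} encodes.
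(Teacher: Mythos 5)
Your proposal is correct and follows essentially the same route as the paper's proof: reduce to local coordinates, identify the coefficients of $\epsilon_n(p_0^n(1)^*x)$ in the basis $(\mathrm{dlog}(y))^q$ with $(\nabla D_q)(x)$, and propagate the $n=1$ vanishing to all $n$ via Lemma~\ref{LemmaDiffGenInDegreeOne} and the fact that $\nabla$ is a ring homomorphism. The only thing the paper does that you elide is the explicit verification (its Step I) that the crystal's restriction isomorphisms $\{\epsilon_n\}$ actually satisfy the stratification axioms, which is the hypothesis needed to invoke Proposition~\ref{PropositionStratificationThenDiffAlgebraMap}; this is routine but should be stated.
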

\begin{proof}
	We wish to show that for any $x\in F(X,X)$, if $\epsilon^F_1(p_0^1(1)^*x)= p_1^1(1)^* x$, then $\epsilon_n^F(p_0^n(1)^*x)= p_1^n(1)^*x$ for every $n\in \N_{>1}$.
	
	\textbf{I step.} Observe that $\{\epsilon_n\}_{n\in \N}$ is a stratification on $F_{(X,X)}$, i.e.
	\begin{itemize}
		\item (identity) $\epsilon_0$ is the restriction associated to $\text{id}_{(X,X)}$, then it must be the identity of $F_{(X,X)}$;
		\item (compatibility) holds via the cocycle condition of the restriction morphisms of $F$ applied to the following morphisms for $i=0,1$.
		\[
		\begin{tikzcd}[row sep=large]
			X\ar[d]\ar[r,equal]&X\ar[d]\ar[r,equal]& X\ar[d, equal]\\
			P^n_X\ar[r]\ar[rr, bend right, "p_i^n(1)"]&P^m_X\ar[r, "p_i^m(1)"]&X
		\end{tikzcd}
		\]
		\item (cocycle condition): via the cocycle condition of the restriction morphisms it follows that the diagram 
		\[
		\begin{tikzcd}
			p^n_2(2)^* F_{(X,X)}\ar[r, "\cong"]\ar[rd]&
			F_{D_X^n(2)}&\ar[l, "\cong"]
			p^n_0(2)^* F_{(X,X)}\\
			&p^n_1(2)^* F_{(X,X)}\ar[ru] \ar[u, "\cong"]&
		\end{tikzcd}
		\]
		is commutative and the cocycle condition on $\{\epsilon_n\}_{n\in \N}$ is a consequence of the following equalities
		\begin{multline*}
			p^n_2(2)^*=p^n_{1,2}(2)^* p_1^n(1)^*=p^n_{0,2}(2)^* p_1^n(1)^*; \quad p^n_1(2)^*=p^n_{1,2}(2)^* p_0^n(1)^*=p^n_{0,1}(2)^* p_1^n(1)^*\\
			\text{and}\quad  p^n_0(2)^*=p^n_{0,1}(2)^* p_0^n(1)^*=p^n_{0,2}(2)^* p_0^n(1)^*.
		\end{multline*}
	\end{itemize}
	
	Since $\{\epsilon_n\}_{n\in \N}$ is a stratification on $F_{(X,X)}$, via the Proposition \ref{PropositionStratificationThenDiffAlgebraMap}, there is a homomorphism of sheaves of $\cO_X$-algebras
	\[
	\nabla: {\rm Diff}^\bullet\left(\cO_X,\cO_X\right)\longrightarrow {\rm Diff}^\bullet\left(F_{(X,X)}, F_{(X,X)}\right).
	\]
	
	\textbf{II step.} For each open $U\subset X$, the following equality holds:
	\[
	\{x\in F_{(X,X)}(U)\mid \epsilon^F_1(p_0^1(1)^*x)=p_1^1(1)^* x\}= \bigcap_{\partial\in D^1(U)} \{x\in F_{(X,X)}(U)\mid \left(\nabla_U\partial\right)(x)=0\}
	\]
	where $D^1(U):={\rm Diff}^1(\cO_X,\cO_X)(U)\setminus {\rm Diff}^0(\cO_X,\cO_X)(U)$.
	
	Since the two sets (letting $U$ vary) are defined by local conditions, they define sub-sheaves of $F$ and we can check the equality locally.  By applying Lemma \ref{lemma:logsmooth}, we may assume that $X$ is affinoid,
	with log structure given by the finite saturated monoid $M$, and fix a basis $y_1,...,y_d$ of the torsion free quotient of $M^{\rm gp}$. We denoted $J$ the augmentaion ideal of $\cO_X\langle M^{\rm gp}\rangle$.
	Since ${\rm dlog}( y_i), 1\otimes 1$ is a basis of $\mathcal{P}_X^1$, we can write
	\[
	\epsilon_1(p_0^1(1)^* x)=x_0+\sum_{i=1}^d x_i \otimes {\rm dlog}(y_i) .
	\]
	Observe that $x=\epsilon_0(x)=x_0$ since $\epsilon_0$ is the identity on $F$ and $\epsilon_1\equiv \epsilon_0$ modulo $J$. Moreover
	\[
	(\nabla\partial_{y_i})  (x)= \overline{\partial_{y_j}}^1  \left(x+\sum_{i=1}^d x_i \otimes {\rm dlog}(y_i) \right)= x_j.
	\]
	Then $(\nabla\partial)(x)=0$ for each $\partial\in D^1(X)$ iff $x_i=0$ for each $i=1,\dots,d$ and the $2$nd step follows.,
	
	\textbf{III step.} For each $n\in \N_{>0}$ and any open $U\subset X$
	\begin{multline*}
		\{x\in F_{(X,X)}(U)\mid \epsilon_n(p^n_0(1)^*x)= p^n_1(1)^*x\}=\{x\in F_{(X,X)}(U)\mid \epsilon_1(p_0^1(1)^*x)= p_1^1(1)^* x\}.
	\end{multline*}
	One inclusion is true in general (by the compatibility condition).
	
	For the other inclusion, as in the previous step, we can work locally where we have coordinates $y_1,\dots, y_d$, i.e. a basis of the torsion free quotient of $M^{\rm gp}$ (see Lemma \ref{lemma:logsmooth}). We'll prove this by induction on $n\in \N_{>1}$. Fix an $x\in F_{(X,X)}(X)$ with $\epsilon_1(p^1_0(1)^*x)= p^1_1(1)^*x$. Denote
	\[
	\epsilon_n(p_0^1(1)^* x)=x+\sum_{ |q|_1 = n} x_q \otimes ({\rm dlog}(y))^q .
	\]
	where $q\in \N^d$ and $({\rm dlog}(y))^q:= ({\rm dlog}( y_1))^{q_1}\dots ({\rm dlog}(y_d))^{q_d}$ via the identification
	\[
	\mathcal{P}_X^n\cong \cO_X[{\rm dlog}(y_1),\dots,{\rm dlog}(y_d)]^{deg\leqslant n}.
	\]
	But now observe that for each $q_*\in \N^d$ with $|q_*|_1=n$
	\begin{multline*}
		x_{q_{*}}= D_{q_{*}} \left(x+\sum_{|q|_1 = n} x_q \otimes ({\rm dlog}(y))^q\right)=(\nabla D_{q_*}) (x)\\
		=(\nabla \frac{1}{q_*!}\prod_{i=1}^d \partial_{y_i}^{q_{*,i}}) (x)= \frac{1}{q_*!} \left(\prod_{i=1}^d (\nabla\partial_{y_i})^{q_{*,i}}\right)(x)=0
	\end{multline*}
	where we used that: a) $D_{q_*}({\rm dlog}(y))^q= \delta_{q}^{q_*}\cdot  ({\rm dlog}(y))^{q_*}$ for each $q\in \N^d$ with $|q|_1=|q_*|_1$; b) the fact that $\cD{\rm iff}^n\left(\cO_X,\cO_X\right)$ is (locally) generated as $\cO_X$-algebra in degree $1$ (we wrote the explicit formula); c) the fact that $\nabla$ is a ring homomorphism and the $2$nd step.
\end{proof}

\subsection{Linearization and De-linearization}
\label{sec:linedelin}

A. Grothendieck, in \cite{grothendieck}, defined the linearization of de Rham complexes for schemes over fields of characteristic zero; Berthelot-Ogus, in \cite{berthelot_ogus}, generalized linearization to schemes over arbitrary characteristic via divided power structures. We follow their constructions in order to associate to a quasi-coherent $\cO_X$-module $\cF$ on $X$ over $S$, with log integrable connection, a quasi-coherent crystal $\cF_{\rm inf}$, and the linearized de Rham complex. We will show that in our setting there is also a ``de-linearization" functor $u_*$ that preserves cohomology.

\subsubsection{Linearization}

We use the notations of section \ref{SectionDiffOp}, i.e. $X\to S$ is a log smooth morphism of fs log adic space, where $S={\rm Spa}(B,B^+)$. Let $\cF$ be a quasi-coherent $\cO_X$-module. As before $I\subset (\cO_X\ho_{f^{-1}\cO_S}\cO_X)^{\rm ex}$ is the ideal corresponding to the log diagonal immersion.

\begin{definition}
	The \textbf{linearization of} $\cF$ is the $\cP_X$-module
	\[
	L(\cF):=\lim_{n\in \N} L_n(\cF),
	\]
	where the projective system is $\{L_n(\cF):= \mathcal{P}^{n}_X\otimes_{\cO_X} \cF \}_{n\in \N}$ with morphisms
	\[
	L_{n+1}(\cF)= (\cO_X\ho_{f^{-1}\cO_S}\cO_X)^{\rm ex}/I^{n+2}\otimes_{\cO_X} \cF\longrightarrow (\cO_X\ho_{f^{-1}\cO_S}\cO_X)^{\rm ex}/I^{n+1}\otimes_{\cO_X} \cF= L_n(\cF).
	\]
\end{definition}

\begin{remark}\label{RemarkPolynomials}
	As we observed, locally, where $x_1,\dots, x_d$ are coordinates of $X$, i.e. generators of the group $M^{\rm gp}/(M^{\rm gp})^{\rm tor}$, for $(U,M)$ a local chart of $X$, we denote $G:=(M^{|rm gp})^{\rm tor}$, which is a finite abelian group and we can write 
	$$
	\mathcal{P}^n_X= \cO_X[G][\zeta_1,\dots,\zeta_d]^{deg\leqslant n},
	$$
	where $\zeta_i=x_i-1(\mbox{mod } I^{n+1})$ and the map 
	\[
	\mathcal{P}^{n+s}_X\rightarrow \mathcal{P}^{n}_X
	\]
	corresponds to the projection of polynomials of degree $\leqslant n+s$ into polynomials of degree $\leqslant n$. Hence
	\[
	\mathcal{P}_X:=\lim_{n\in \N} \mathcal{P}^n_X= \cO_X[[\zeta_1,\dots,\zeta_d]].
	\]
	
	We observe that $L(\cF)= \mathcal{P}_X\ho_{\cO_X}\cF$, where the completion of the tensor product is taken with respect to the $(\zeta_1,\dots , \zeta_d)$-topology. We remark that for every $n\ge 0$, $L_n(\cF)$ is a quasi-coherent $\cO_X$-module, but $L(\cF)$ is not quasi-coherent. We can think of it as a pro-quasi-coherent sheaf, i.e. a projective system/limit of quasi-coherent ones.
\end{remark}

\begin{lemma}
	If $D:\mathcal{F}\rightarrow \mathcal{G}$ is a log differential operator of degree $n\in \N$, then there is a $\cP_X$-linear map
	\[
	L(D): L(\mathcal{F})\rightarrow L(\mathcal{G})
	\] 
	limit of the morphisms $L_k(D)$ for $k\in \N$, defined by
	\[
	\left(\text{id}_{\mathcal{P}^k_X}\otimes D^n\right)\circ\left( \delta^{k,n}\otimes \text{id}_{F}\right):\mathcal{P}_X^{k+n} \otimes_{\cO_X} \mathcal{F} \rightarrow \mathcal{P}_X^{k}\otimes_{\cO_X} \mathcal{P}_X^{n}\otimes_{\cO_X} \mathcal{F} \rightarrow \mathcal{P}_X^{k} \otimes_{\cO_X}\mathcal{G}.
	\]
\end{lemma}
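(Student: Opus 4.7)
The plan is to verify three things in sequence: (i) each $L_k(D)$ is a well-defined $\cP_X^k$-linear morphism, (ii) the system $\{L_k(D)\}_{k\in\N}$ is compatible with the transition maps of the projective systems defining $L(\cF)=\lim_k L_k(\cF)$ and $L(\cG)=\lim_k L_k(\cG)$, and (iii) passing to the limit yields a $\cP_X$-linear morphism $L(D)$.

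First I would handle (i) by recalling that a differential operator $D$ of order $n$ is, by definition, an $\cO_X$-linear map $D^n\colon\cP_X^n\otimes_{\cO_X}\cF\to\cG$, the tensor product being formed using the right $\cO_X$-structure of $\cP_X^n$, while $\cG$ carries the $\cO_X$-structure coming from the target side. By the previous lemma, the co-multiplication $\delta^{k,n}\colon\cP_X^{k+n}\to\cP_X^k\otimes_{\cO_X}\cP_X^n$ is an $\cO_X$-algebra homomorphism for both the left and right $\cO_X$-structures. Combining $\delta^{k,n}\otimes\mathrm{id}_\cF$ with $\mathrm{id}_{\cP_X^k}\otimes D^n$ then furnishes a well-defined $\cP_X^k$-linear morphism $L_k(D)\colon L_{k+n}(\cF)\to L_k(\cG)$ with respect to the left $\cO_X$-structure on $\cP_X^k$.

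Next, for (ii), I would verify that the diagram
\[
\begin{tikzcd}[column sep=large]
\cP_X^{k+1+n}\otimes_{\cO_X}\cF \ar[r,"L_{k+1}(D)"]\ar[d]& \cP_X^{k+1}\otimes_{\cO_X}\cG\ar[d]\\
\cP_X^{k+n}\otimes_{\cO_X}\cF \ar[r,"L_{k}(D)"]& \cP_X^{k}\otimes_{\cO_X}\cG
\end{tikzcd}
\]
commutes, with vertical maps the canonical projections. This reduces to the identity
\[
(\pi^k_{k+1}\otimes\mathrm{id}_{\cP_X^n})\circ\delta^{k+1,n}=(\delta^{k,n}\otimes\mathrm{id})\circ\pi^{k+n}_{k+1+n},
\]
where $\pi^a_{a+1}\colon\cP_X^{a+1}\to\cP_X^a$ denotes the canonical truncation. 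One verifies this identity locally in the coordinates $\zeta_1,\ldots,\zeta_d$ provided by Corollary \ref{rmk:coordinates}, in which $\delta^{a,b}$ is the standard truncated co-multiplication sending $\zeta_i$ to $\zeta_i\otimes 1+1\otimes\zeta_i$.

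Finally, for (iii), I would take the projective limit in $k$ of the compatible system $\{L_k(D)\}$ to obtain $L(D)\colon L(\cF)\to L(\cG)$; its $\cP_X$-linearity is inherited from the $\cP_X^k$-linearity of each $L_k(D)$ by passing the relation to the limit. The only subtlety is that $L(\cF)$ is not quasi-coherent but only a pro-quasi-coherent sheaf, as recorded in Remark \ref{RemarkPolynomials}; however since the maps $L_k(D)$ are strict with respect to the $I$-adic truncations, no completeness issue arises. I do not expect any serious obstacle in proving this lemma: the whole argument is essentially careful bookkeeping of the left-versus-right $\cO_X$-structures on $\cP_X^k$ together with the co-associativity and truncation-compatibility of $\delta$.
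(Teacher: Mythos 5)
Your proof follows essentially the same route as the paper: both define $L(D)$ as the limit of the maps $\left(\mathrm{id}_{\cP_X^k}\otimes D^n\right)\circ\left(\delta^{k,n}\otimes\mathrm{id}_\cF\right)$ and reduce everything to the compatibility of $\delta^{k,n}$ and of $\mathrm{id}\otimes D^n$ with the canonical truncations, which the paper records as two commutative diagrams before passing to the limit via $\cH{\rm om}(L(\cF),\lim_k \cP_X^k\otimes\cG)=\lim_k\cH{\rm om}(L(\cF),\cP_X^k\otimes\cG)$. One minor caveat: in the logarithmic coordinates $\zeta_i=m_i-1$ of Corollary \ref{rmk:coordinates} the comultiplication is group-like, $\delta(\zeta_i)=\zeta_i\otimes 1+1\otimes\zeta_i+\zeta_i\otimes\zeta_i$, rather than the additive formula you quote, but this does not affect the truncation-compatibility identity you need, which already follows from the definition of $\delta^{k,n}$ as $a\otimes b\mapsto(a\otimes 1)\otimes(1\otimes b)$.
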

\begin{proof}
	Since
	\[
	\cH{\rm om}_{P}\left(L(\mathcal{F}), \lim_{k\in \N} \mathcal{P}_X^{k} \otimes_{\cO_X} \mathcal{G}\right)= \lim_{k\in \N} \cH{\rm om}_{P}\left(L(\mathcal{F}), \mathcal{P}_X^{k} \otimes_{\cO_X} \mathcal{G}\right)
	\]
	we have to show that there are compatible $\cP_X$-linear maps $L(\mathcal{F})\rightarrow \mathcal{P}_X^{k} \otimes_{\cO_X} \mathcal{G}.$ We consider the $\cP_X$-linear morphisms as in the statement:

	\[
	L(\mathcal{F})\rightarrow \mathcal{P}_X^{k+n} \otimes_{\cO_X} \mathcal{F}\rightarrow\mathcal{P}^{k}_X \otimes_{\cO_X} \mathcal{G}.
	\]
	Let $k,s\in \N$, then the diagram
	\[
	\begin{tikzcd}
		L(\mathcal{F})\ar[r]\ar[rd]&\mathcal{P}_X^{k+s+n} \otimes_{\cO_X} \mathcal{F}\ar[d]\\
		&\mathcal{P}_X^{k+n} \otimes_{\cO_X} \mathcal{F}
	\end{tikzcd}
	\]
	commutes by definition of limit. Let's consider the following diagrams
	\[
	\begin{tikzcd}
		\mathcal{P}_X^{k+s+n}\ar[d]\ar[r, "\delta^{k+s, n}"]& \mathcal{P}_X^{k+s}\otimes_{\cO_X} \mathcal{P}_X^n\ar[d]\\
		\mathcal{P}_X^{k+n}\ar[r, "\delta^{k, n}"]& \mathcal{P}_X^{k}\otimes_{\cO_X} \mathcal{P}_X^n
	\end{tikzcd}\ \ 
	\begin{tikzcd}
		\mathcal{P}_X^{k+s}\otimes_{\cO_X} \mathcal{P}_X^{n}\otimes_{\cO_X} \mathcal{F} \ar[d] \ar[r, "\text{id}_{\mathcal{P}^{k+s}}\otimes D^n"] \ \ &\ \ 
		\mathcal{P}_X^{k+s} \otimes_{\cO_X}\mathcal{G}\ar[d]\\
		\mathcal{P}_X^{k}\otimes_{\cO_X} \mathcal{P}_X^{n}\otimes_{\cO_X} \mathcal{F} \ar[r, "\text{id}_{\mathcal{P}^{k}}\otimes D^n"]&
		\mathcal{P}_X^{k} \otimes_{\cO_X}\mathcal{G}
	\end{tikzcd}
	\]
	where the vertical maps are induced by projections. Observe that the two diagrams are commutative and this system gives a $\cP_X$-linear map
	\[
	L(\mathcal{F})\longrightarrow L(\mathcal{G}).
	\]
\end{proof}
\begin{remark}
	We observe that if $D\in \cD{\rm iff}^n(\cF, \mathcal{G})$, then we can see $D$ as a section of $\cD{\rm iff}^{n+k}(\cF, \mathcal{G})$ for any $k\in \N$. One can check that $D$ and its images in $\cD{\rm iff}^{n+k}(\cF, \cG)$ give the same map $L(D):L(\cF)\rightarrow L(\mathcal{G})$. 
\end{remark}
\begin{lemma}
	If $\mathcal{F},\mathcal{G},\mathcal{H}$ are $\cO_X$-modules, $D_1\in \cD{\rm iff}^{n_1}(\mathcal{F},\mathcal{G})$ and $D_2\in \cD{\rm iff}^{n_2}(\mathcal{G},\mathcal{H})$, then 
	\[
	L(D_2\circ D_1)=L(D_2)\circ L(D_1): L(\mathcal{F})\longrightarrow L(\mathcal{H}).
	\]
\end{lemma}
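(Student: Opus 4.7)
The plan is to verify the claimed equality level by level: for each $k\in\N$, I would compare the two maps
\[
L_k(D_2\circ D_1),\ L_k(D_2)\circ L_{k+n_2}(D_1)\colon \mathcal{P}_X^{k+n_1+n_2}\otimes_{\cO_X}\mathcal{F}\longrightarrow \mathcal{P}_X^k\otimes_{\cO_X}\mathcal{H}.
\]
Once equality is established at each finite level, passage to the projective limit yields $L(D_2\circ D_1)=L(D_2)\circ L(D_1)$, since the definitions of the linearizations were explicitly compatible with the transition maps $\mathcal{P}_X^{k+1}\to\mathcal{P}_X^k$ (a compatibility already used in the preceding lemma).

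Expanding the definitions, one has on the one hand
\[
L_k(D_2)\circ L_{k+n_2}(D_1)=(\mathrm{id}_{\mathcal{P}^k}\otimes D_2)\circ(\delta^{k,n_2}\otimes\mathrm{id}_{\mathcal{G}})\circ(\mathrm{id}_{\mathcal{P}^{k+n_2}}\otimes D_1)\circ(\delta^{k+n_2,n_1}\otimes\mathrm{id}_{\mathcal{F}}),
\]
and on the other hand
\[
L_k(D_2\circ D_1)=(\mathrm{id}_{\mathcal{P}^k}\otimes(D_2\circ D_1))\circ(\delta^{k,n_1+n_2}\otimes\mathrm{id}_{\mathcal{F}}),
\]
with $D_2\circ D_1=D_2\circ(\mathrm{id}_{\mathcal{P}^{n_2}}\otimes D_1)\circ(\delta^{n_2,n_1}\otimes\mathrm{id}_{\mathcal{F}})$ by Lemma \ref{lemma:comp}. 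The first reduction is to interchange the middle maps in the upper line: since $\delta^{k,n_2}\otimes\mathrm{id}$ acts only on the first tensor factor while $\mathrm{id}\otimes D_1$ acts on the last two, they commute, so the expression becomes
\[
(\mathrm{id}_{\mathcal{P}^k}\otimes D_2)\circ(\mathrm{id}_{\mathcal{P}^k}\otimes\mathrm{id}_{\mathcal{P}^{n_2}}\otimes D_1)\circ\bigl((\delta^{k,n_2}\otimes\mathrm{id}_{\mathcal{P}^{n_1}})\circ\delta^{k+n_2,n_1}\otimes\mathrm{id}_{\mathcal{F}}\bigr).
\]

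The central structural point is the coassociativity identity
\[
(\delta^{k,n_2}\otimes\mathrm{id}_{\mathcal{P}^{n_1}})\circ\delta^{k+n_2,n_1}=(\mathrm{id}_{\mathcal{P}^k}\otimes\delta^{n_2,n_1})\circ\delta^{k,n_1+n_2}
\]
as maps $\mathcal{P}_X^{k+n_1+n_2}\to\mathcal{P}_X^k\otimes_{\cO_X}\mathcal{P}_X^{n_2}\otimes_{\cO_X}\mathcal{P}_X^{n_1}$. This is tautological from the explicit formula $\delta^{n,m}(a\otimes b)=(a\otimes 1)\otimes(1\otimes b)$: both sides send a class $a\otimes b$ to $(a\otimes 1)\otimes(1\otimes 1)\otimes(1\otimes b)$. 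Geometrically this is just the compatibility of the three projections out of the exact $n_1{+}n_2{+}k$-th infinitesimal neighbourhood of the diagonal $X\hookrightarrow (X\times_SX\times_SX\times_SX)^{\rm ex}$ under the natural maps $\delta^{\bullet,\bullet}$ arising from pairing two factors at a time.

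Substituting this identity into the rewritten form of $L_k(D_2)\circ L_{k+n_2}(D_1)$ and regrouping the evaluation maps yields
\[
\bigl(\mathrm{id}_{\mathcal{P}^k}\otimes\bigl(D_2\circ(\mathrm{id}_{\mathcal{P}^{n_2}}\otimes D_1)\circ(\delta^{n_2,n_1}\otimes\mathrm{id}_{\mathcal{F}})\bigr)\bigr)\circ(\delta^{k,n_1+n_2}\otimes\mathrm{id}_{\mathcal{F}}),
\]
which by Lemma \ref{lemma:comp} is exactly $L_k(D_2\circ D_1)$. I do not anticipate any substantive obstacle: once the coassociativity of $\delta$ is recorded, the remainder is bookkeeping of tensor factors and the commutation of operators acting on disjoint slots. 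The only point deserving a line of justification is the aforementioned compatibility of each $L_k(D)$ with the transition maps, which is a routine diagram chase from the same explicit formula for $\delta$.
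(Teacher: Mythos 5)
Your proof is correct. The paper in fact states this lemma without any proof, so there is no argument of the authors to compare against; your levelwise verification of $L_k(D_2\circ D_1)=L_k(D_2)\circ L_{k+n_2}(D_1)$, reduced to the coassociativity identity
\[
(\delta^{k,n_2}\otimes\mathrm{id}_{\mathcal{P}^{n_1}})\circ\delta^{k+n_2,n_1}=(\mathrm{id}_{\mathcal{P}^{k}}\otimes\delta^{n_2,n_1})\circ\delta^{k,n_1+n_2},
\]
which is immediate from $\delta^{n,m}(a\otimes b)=(a\otimes 1)\otimes(1\otimes b)$, is precisely the standard argument (as in Berthelot--Ogus) and fills the gap cleanly; the passage to the limit is justified exactly as you say, by the compatibility of the $L_k(D)$ with the transition maps already established when $L(D)$ was defined. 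One small remark: in the paper's composition lemma the indices $n$ and $m$ are swapped relative to the declared orders of $D_1$ and $D_2$ (as written, $\mathrm{id}_{\mathcal{P}^n_X}\otimes D_1$ would require $D_1$ to have order $m$); you have silently used the corrected form $D_2\circ D_1=D_2\circ(\mathrm{id}_{\mathcal{P}^{n_2}}\otimes D_1)\circ(\delta^{n_2,n_1}\otimes\mathrm{id}_{\mathcal{F}})$, which is the one for which the types match and which your computation requires.
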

Let $\nabla: \cF\rightarrow \cF\otimes_{\cO_X} \omega^1_{X/S}$ be an integrable log connection.
\begin{corollary}
	Let $\cF$ be a quasi-coherent $\cO_X$-module with an integrable log connection $\nabla$, then there is a complex
	\[
	\begin{tikzcd}
		L(\cF)\ar[r, "L(\nabla)"]& L(\cF\otimes_{\cO_X} \omega^1_{X/S})\ar[r, "L(\nabla^1)"] & L(\cF\otimes_{\cO_X} \omega^2_{X/S})\ar[r, "L(\nabla^2)"]&\dots 	
	\end{tikzcd}
	\]
\end{corollary}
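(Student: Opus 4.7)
My plan is to deduce the corollary as an almost immediate consequence of the two preceding lemmas, which show respectively that the linearization functor $L$ is defined on log differential operators of finite order and that it is compatible with composition. The only real work is to verify that the hypotheses apply, i.e.\ that each $\nabla^i$ is a log differential operator of order one and that the classical integrability identity $\nabla^{i+1}\circ\nabla^i=0$ actually holds in this log context.

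First I would recall the construction of the maps $\nabla^i\colon \cF\otimes_{\cO_X}\omega^i_{X/S}\to \cF\otimes_{\cO_X}\omega^{i+1}_{X/S}$: using the Leibniz rule one sets $\nabla^i(s\otimes\eta):=\nabla(s)\wedge\eta+(-1)^i s\otimes d\eta$ for local sections $s$ of $\cF$ and $\eta$ of $\omega^i_{X/S}$. Working in local coordinates furnished by Lemma~\ref{lemma:logsmooth} and Corollary~\ref{rmk:coordinates} (i.e.\ generators of the torsion-free quotient of $M^{\rm gp}$ for a local chart $(U,M)$), one checks that $\nabla^i$ is a log differential operator of order at most one in the sense of the definition of $\cD\mathrm{iff}^1$ of the previous section: the failure of $\cO_X$-linearity is encoded precisely in the map $J/J^2\cong \omega^1_{X/S}\to \cH{\rm om}_{\cO_X}(\cF\otimes\omega^i,\cF\otimes\omega^{i+1})$ coming from the Leibniz rule. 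In particular each $\nabla^i$ is a section of $\cD\mathrm{iff}^1\bigl(\cF\otimes\omega^i_{X/S},\cF\otimes\omega^{i+1}_{X/S}\bigr)$ and therefore, by the first lemma above, induces a $\cP_X$-linear map $L(\nabla^i)\colon L(\cF\otimes\omega^i_{X/S})\to L(\cF\otimes\omega^{i+1}_{X/S})$.

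Next, integrability of $\nabla$ means that $\nabla^1\circ\nabla=0$, and then a standard local calculation (identical to the classical one, since in coordinates the Leibniz rule is formally the same, and the log differential satisfies $d\circ d=0$) gives $\nabla^{i+1}\circ\nabla^i=0$ for every $i\ge 0$. The second lemma above then yields
\[
L(\nabla^{i+1})\circ L(\nabla^i)=L(\nabla^{i+1}\circ\nabla^i)=L(0)=0,
\]
which is exactly the complex condition required by the corollary.

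The only step that demands any care is checking that $\nabla^i$ really is a differential operator of order one in the precise sense used here; this is a local verification using the presentation $\cP^1_X\cong \cO_X\oplus\omega^1_{X/S}$ obtained from Corollary~\ref{rmk:coordinates}. Once that is in hand, the remainder is purely formal, combining the functoriality of $L$ with the integrability of $\nabla$; no further subtlety arises because we never have to complete or sheafify beyond what the construction of $L(-)$ as a projective limit of the $L_n(-)$ already provides.
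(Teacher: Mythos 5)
Your proposal is correct and is exactly the argument the paper intends: the corollary is stated without proof precisely because it follows immediately from the two preceding lemmas once one knows that each $\nabla^i$ is a log differential operator of order one and that integrability gives $\nabla^{i+1}\circ\nabla^i=0$, whence $L(\nabla^{i+1})\circ L(\nabla^i)=L(\nabla^{i+1}\circ\nabla^i)=L(0)=0$. Your local verification that $\nabla^i$ has order one via the splitting $\cP^1_X\cong\cO_X\oplus\omega^1_{X/S}$ is the right (and only nontrivial) check.
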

We claim that this complex is exact. Let us consider the canonical derivation

\[
\de: \cO_X\rightarrow \omega_{X/S}^1,
\]
which we see as an integrable log connection on $\cO_X$.

\begin{lemma}
	For any $k\in \N$, locally on $X$ where there are coordinates $x_1,\dots , x_d$, the map
	\[
	L(\de): \mathcal{P}_X \ho_{\cO_X}\omega^k_{X/S}\rightarrow \mathcal{P}_X\ho_{\cO_X}\omega^{k+1}_{X/S}
	\]
	is the completion of the map
	\[
	\begin{tikzcd}[row sep=small]
		\cO_X[[\zeta_1,\dots , \zeta_d]]\otimes_{\cO_X}\omega_{X/S}^k\ar[r]& \cO_X[[\zeta_1,\dots , \zeta_d]]\otimes_{\cO_X}\omega_{X/S}^{k+1}\\
		f \otimes \omega\ar[r, maps to]& f\otimes\de^k\omega+\sum_{i=1}^d  \left(\frac{\partial }{\partial \zeta_i}f\right) \otimes {\rm dlog}(x_i)\wedge\omega
	\end{tikzcd}
	\]
	where $\zeta_i=x_i-1$ for $i=1,..,d$.
\end{lemma}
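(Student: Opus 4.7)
My plan is to compute $L(\de)$ in explicit local coordinates and identify both sides term by term. By Corollary~\ref{rmk:coordinates}, we may work on an affinoid $U \subset X$ where a basis $m_1, \ldots, m_d$ of the torsion-free quotient of $M^{\rm gp}$ provides coordinates $\zeta_i := m_i - 1$ together with the identification $\cP^n_U \cong \cO_U[G][\zeta_1, \ldots, \zeta_d]^{\deg \le n}$ (where $G = (M^{\rm gp})^{\rm tor}$ and $\cO_U \subset \cO_U[G]$ is finite \'etale), and where ${\rm dlog}(x_i) = \zeta_i \bmod I^2$ is a basis of $\omega^1_{U/S}$. Since the \'etale factor $\cO_U[G]$ contributes no new relative differentials, the formula need only be checked after restricting to the free polynomial/power series part.

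The central computation is the explicit form of the comultiplication $\delta^{k,1} \colon \cP^{k+1}_X \to \cP^k_X \otimes_{\cO_X} \cP^1_X$. Since each $m_i$ is a group-like element of $\cO_X[M^{\rm gp}]$, one has $\delta(m_i) = m_i \otimes m_i$, whence
$$\delta(\zeta_i) \;=\; \zeta_i \otimes 1 + 1 \otimes \zeta_i + \zeta_i \otimes \zeta_i.$$
The cross term $\zeta_i \otimes \zeta_i$ vanishes modulo $I^2$ in the second factor, and Taylor-expanding the resulting substitution yields
$$\delta^{k,1}(f) \;\equiv\; f \otimes 1 + \sum_{i=1}^d \frac{\partial f}{\partial \zeta_i} \otimes \zeta_i \pmod{\cP^k_X \otimes I^2}$$
for every $f \in \cP^{k+1}_X$.

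Next I would identify the map $\widetilde{\de^k} \colon \cP^1_X \otimes_{\cO_X} \omega^k_{X/S} \to \omega^{k+1}_{X/S}$ associated to the first-order differential operator $\de^k$: using the splitting $\cP^1_X = \cO_X \oplus I/I^2 \cong \cO_X \oplus \omega^1_{X/S}$ and the Leibniz rule $\de^k(a\omega) = \de(a) \wedge \omega + a\,\de^k(\omega)$, one reads off $\widetilde{\de^k}(1 \otimes \omega) = \de^k(\omega)$ and $\widetilde{\de^k}(\zeta_i \otimes \omega) = {\rm dlog}(x_i) \wedge \omega$. Plugging these into the definition $L(\de^k) = (\mathrm{id}_{\cP^k_X} \otimes \widetilde{\de^k}) \circ (\delta^{k,1} \otimes \mathrm{id}_{\omega^k_{X/S}})$ and passing to the inverse limit in $k$ produces precisely the stated formula for the completion.

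The only real obstacle is bookkeeping: one must keep straight the two distinct $\cO_X$-module structures on $\cP^n_X$ and on the tensor products appearing at each stage, so that $L(\de)$ ends up $\cP_X$-linear with respect to the left structure that is implicit in the statement. Beyond this, the argument is a routine local computation facilitated by the explicit description of $\cP_X$ in Corollary~\ref{rmk:coordinates}.
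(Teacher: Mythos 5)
Your overall strategy---reduce to the torsion-free coordinates via Corollary \ref{rmk:coordinates}, compute the comultiplication $\delta^{k,1}$ explicitly on the group-like generators, and compose with the $\cO_X$-linear map $\overline{\de^k}\colon \cP^1_X\otimes_{\cO_X}\omega^k_{X/S}\to\omega^{k+1}_{X/S}$---is the right one (the paper omits the computation entirely), and your formulas $\delta(\zeta_i)=\zeta_i\otimes 1+1\otimes\zeta_i+\zeta_i\otimes\zeta_i$ and $\overline{\de^k}(\zeta_i\otimes\omega)={\rm dlog}(x_i)\wedge\omega$ are both correct. The gap is in the reduction step: the cross term $\zeta_i\otimes\zeta_i$ does \emph{not} vanish modulo $\cP^k_X\otimes I^2$, because its second factor $\zeta_i$ is a nonzero element of $I/I^2\subset\cP^1_X$ (only products of two or more such terms die). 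The correct reduction is $\delta(\zeta_i)\equiv\zeta_i\otimes 1+(1+\zeta_i)\otimes\zeta_i$, and the Taylor expansion then reads
\[
\delta^{k,1}(f)\equiv f\otimes 1+\sum_{i=1}^d (1+\zeta_i)\,\frac{\partial f}{\partial\zeta_i}\otimes\zeta_i \pmod{\cP^k_X\otimes I^2},
\]
so your computation, carried out correctly, yields $L(\de)(f\otimes\omega)=f\otimes\de^k\omega+\sum_i (1+\zeta_i)\frac{\partial f}{\partial\zeta_i}\otimes{\rm dlog}(x_i)\wedge\omega$, which differs from the stated formula by the unit factors $1+\zeta_i=x_i$. (Test it on $f=\zeta_j^2$, $k=0$: the middle term of $\bigl(\zeta_j\otimes 1+(1+\zeta_j)\otimes\zeta_j\bigr)^2$ is $2\zeta_j(1+\zeta_j)\otimes\zeta_j$, not $2\zeta_j\otimes\zeta_j$.)

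To land exactly on the displayed formula you must either replace the generators $\zeta_i=x_i-1$ by the primitive elements $\log(1+\zeta_i)$---legitimate in characteristic $0$, and this is the change of coordinates under which $\frac{\partial}{\partial\zeta_i}$ becomes the derivation dual to ${\rm dlog}(x_i)$---or record the formula with the extra unit factors and note that the discrepancy is an automorphism of $\cO_X[[\zeta_1,\dots,\zeta_d]]$ over $\cO_X$, hence harmless for the exactness statements of Propositions \ref{PropositionLinearizedDifferentialIsExact} and \ref{PropExactLinearizedComplex}. As written, the claim that the cross term vanishes silently performs this change of coordinates, so the argument does not establish the literal statement for $\zeta_i=x_i-1$; the fix is local and routine, but it must be made explicit.
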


\begin{proof}

The proof is a simple computation which we omit.

\end{proof}

\begin{proposition}\label{PropositionLinearizedDifferentialIsExact}
	The complex
	\[
	\begin{tikzcd}
		L(\cO_X)\ar[r, "L(\de)"]& L( \omega^1_{X/S})\ar[r, "L(\de^1)"] & L( \omega^2_{X/S})\ar[r, "L(\de^2)"]&\dots 	\end{tikzcd}
	\]
	is exact, moreover
	\[
	{\rm Ker}(L(\de))=\cO_X.
	\]
\end{proposition}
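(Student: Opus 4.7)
The plan is to first reduce to a local situation via Corollary~\ref{rmk:coordinates}, obtaining coordinates $x_1,\dots,x_d$ with $\zeta_i:=x_i-1$, so that $\cP_X\cong\cO_X[[\zeta_1,\dots,\zeta_d]]$ and $\omega^\bullet_{X/S}$ is the exterior algebra on the basis $\{d\log(x_i)\}$. The preceding lemma then identifies the differential $L(\de)$ with the $\zeta$-adic completion of the operator
\[
f\otimes\omega\;\longmapsto\; f\otimes d\omega+\sum_{i=1}^d\frac{\partial f}{\partial\zeta_i}\otimes d\log(x_i)\wedge\omega.
\]
I will set $V:=\bigoplus_{i=1}^d \cO_X\cdot\zeta_i$ and identify $\omega^q_{X/S}\cong\Lambda^q V$ via $d\log(x_i)\leftrightarrow\zeta_i$, so that the complex becomes the $\zeta$-adic completion of the bigraded module $C^{p,q}:=\Sym^p V\otimes_{\cO_X}\Lambda^q V$ equipped with two anticommuting differentials $\alpha:=1\otimes d$ of bidegree $(0,1)$ and $\beta:=\sum_i\partial_{\zeta_i}\otimes(\zeta_i\wedge-)$ of bidegree $(-1,1)$, so that $L(\de)=\alpha+\beta$ and $\alpha^2=\beta^2=\alpha\beta+\beta\alpha=0$.

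The key step will be to analyse this via the decreasing weight filtration $\widetilde F^N:=\prod_{p+q\geq N} C^{p,q}$. Since $\alpha$ strictly raises the weight $p+q$ by one while $\beta$ preserves it, the filtration is stable under $L(\de)$, and it is complete and separated in each $\omega$-degree by the very definition of the $\zeta$-adic completion. On the associated graded only the weight-preserving $\beta$ survives, so $(\mathrm{gr}^N,\beta)$ is the Koszul complex
\[
\Sym^N V\otimes\Lambda^0 V\xrightarrow{\beta}\Sym^{N-1} V\otimes\Lambda^1 V\xrightarrow{\beta}\cdots\xrightarrow{\beta}\Sym^0 V\otimes\Lambda^N V.
\]
The Cartan-style contraction $h:=\sum_i\zeta_i\otimes\iota_{\zeta_i^\vee}$ satisfies $\beta h+h\beta=(p+q)\cdot\mathrm{id}$ on $\Sym^p V\otimes\Lambda^q V$; since we are over $\Q_p$-algebras I may divide by $p+q$ whenever $N\geq 1$, which shows exactness of $\mathrm{gr}^N$ for $N\geq 1$ and isolates the sole surviving cohomology in weight $N=0$, namely $\cO_X\cdot(1\otimes 1)$.

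Finally I conclude via the spectral sequence of the filtered complex: its $E_1$-page is concentrated in degree zero with value $\cO_X$. Because $\widetilde F^\bullet$ is complete and separated and the quotients $L(\omega^q_{X/S})/\widetilde F^N L(\omega^q_{X/S})$ are finite direct sums of coherent $\cO_X$-modules (hence Mittag-Leffler as an inverse system), the spectral sequence converges to the cohomology of $L(\omega^\bullet_{X/S})$, yielding $\Ker(L(\de))=\cO_X$ and $H^q(L(\omega^\bullet_{X/S}))=0$ for $q\geq 1$. The one technical point requiring care is the convergence of this spectral sequence in the ``pro-quasi-coherent'' setting of Remark~\ref{RemarkPolynomials}; once this is in place the heart of the argument is the identification of the $E_0$ differential with the classical Koszul differential and its Poincar\'e-style vanishing in characteristic zero. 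An alternative, fully explicit, approach would be to construct the primitive $\sigma=\sum_N\sigma_N\in L(\omega^{q-1}_{X/S})$ of a closed section $\omega=\sum_N\omega_N$ recursively on weights, using at each stage the Koszul exactness and the compatibility $\beta\omega_{N+1}+\alpha\omega_N=0$ inherited from $L(\de)\omega=0$.
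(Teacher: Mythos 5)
Your proof is correct and follows essentially the same route as the paper, which reduces to local coordinates $\zeta_i=x_i-1$ and then invokes the explicit computation of \cite{berthelot_ogus}, \S 6.12 (the formal Poincar\'e lemma for the linearized complex). Your splitting $L(\de)=\alpha+\beta$ with the weight filtration, the Koszul--Cartan homotopy $\beta h+h\beta=(p+q)\cdot\mathrm{id}$ (invertible in characteristic zero), and the recursive construction of primitives is precisely a careful write-up of that argument, including the completeness/Mittag-Leffler point needed to pass from the associated graded back to the complete complex.
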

\begin{proof}
	The statement is local, hence we may suppose that $x_1,\dots, x_d $ are coordinates of $X$. 
	
	Now the proof follows the arguments in  section 6.12 in \cite{berthelot_ogus}, or one can use explicit computations with local coordinates.

\end{proof}
This Proposition implies that the complex with $\cO_X$ in degree $0$ and all other terms $0$, is quasi-isomorphic to the complex 
\[
\begin{tikzcd}
	L( \omega^1_{X/S})\ar[r, "L(\de^1)"] & L( \omega^2_{X/S})\ar[r, "L(\de^2)"]&\dots 	\end{tikzcd}.
\]
This is true for any flat quasi-coherent $\cO_X$-module with integrable connection.
\begin{proposition}\label{PropExactLinearizedComplex}
	Let $\cF$ be a quasi-coherent $\cO_X$-module with integrable connection $\nabla$, then the complex
	\[
	\begin{tikzcd}
		L(\cF)\ar[r, "L(\nabla)"]& L(\cF\otimes_{\cO_X} \omega^1_{X/S})\ar[r, "L(\nabla^1)"] & L(\cF\otimes_{\cO_X} \omega^2_{X/S})\ar[r, "L(\nabla^2)"]&\dots 	\end{tikzcd}
	\]
	is isomorphic to the complex
	\[
	\begin{tikzcd}
		\cF \ho_{\cO_X}L(\cO_X)\ar[r, "\text{id}_\cF\otimes L(\de)"]& \cF \ho_{\cO_X}L( \omega^1_{X/S})\ar[r, "\text{id}_\cF\otimes L(\de^1)"] & \cF \ho_{\cO_X}L( \omega^2_{X/S})\ar[r, " \text{id}_\cF\otimes L(\de^2)"]&\dots 	\end{tikzcd}.
	\]
	In particular if $\cF$ is flat the two complexes are exact with kernel
	\[
	{\rm Ker}(L(\nabla))\cong \cF.
	\]
\end{proposition}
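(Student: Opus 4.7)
The plan is to reduce to the case $\cF = \cO_X$ already treated in Proposition \ref{PropositionLinearizedDifferentialIsExact} by constructing a natural $\cP_X$-linear ``Taylor'' isomorphism of complexes
\[
\theta_\bullet \colon L(\cF \otimes_{\cO_X} \omega^\bullet_{X/S}) \xrightarrow{\ \cong\ } \cF \hotimes_{\cO_X} L(\omega^\bullet_{X/S})
\]
which intertwines the differentials $L(\nabla^k)$ on the left with $\mathrm{id}_\cF \otimes L(d^k)$ on the right. Once $\theta_\bullet$ is in hand, the exactness statement in the flat case and the identification ${\rm Ker}(L(\nabla)) \cong \cF$ follow by tensoring the (augmented) exact complex of Proposition \ref{PropositionLinearizedDifferentialIsExact} with the flat module $\cF$.

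To construct $\theta$, I would first work locally on an affinoid $U \subset X$ equipped with log coordinates $x_1,\ldots,x_d$ as in Corollary \ref{rmk:coordinates}, so that $\cP_X|_U \cong \cO_U[[\zeta_1,\ldots,\zeta_d]]$ with $\zeta_i = x_i - 1$. Let $\partial_1,\ldots,\partial_d$ be the dual derivations and, using the integrability of $\nabla$, write $\nabla^{(q)} := \nabla_{\partial_1}^{q_1}\cdots \nabla_{\partial_d}^{q_d}$ for $q\in \N^d$. Define locally
\[
\theta_k(1 \otimes m \otimes \omega) := \sum_{q\in \N^d} \frac{\nabla^{(q)}(m)}{q!}\otimes \zeta^q \otimes \omega
\]
for $m$ a local section of $\cF$ and $\omega$ a local section of $\omega^k_{X/S}$, and extend $\cP_X$-linearly. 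Independence of the chosen coordinates (hence glueing to a global $\theta_k$) follows from the standard cocycle/Leibniz computation for $\nabla$; invertibility is by the same formula with opposite signs on the second factor of $\cP_X$, the two being mutually inverse on Taylor series in characteristic zero.

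The compatibility $\theta_{k+1}\circ L(\nabla^k) = (\mathrm{id}_\cF \otimes L(d^k))\circ \theta_k$ is a direct computation in local coordinates: using the explicit formula for $L(d^k)$ given in the lemma preceding Proposition \ref{PropositionLinearizedDifferentialIsExact}, the $\zeta^q$-Taylor coefficients of both sides match, essentially because $\nabla^{(q)}$ exchanges the extra $\partial_{\zeta_i}$-term arising from $L(d^k)$ with the term $\nabla m \wedge \omega$ coming from $\nabla^k(m\otimes \omega) = \nabla m \wedge \omega + m\otimes d\omega$. Granted the isomorphism, if $\cF$ is flat then tensoring the complex of Proposition \ref{PropositionLinearizedDifferentialIsExact} over $\cO_X$ with $\cF$ preserves exactness: each $L(\omega^k_{X/S}) = \lim_n L_n(\omega^k_{X/S})$ is locally a countable inverse limit with surjective transition maps of coherent $\cO_X$-modules, so the Mittag-Leffler condition ensures that $\cF \hotimes_{\cO_X}(-)$ passes through the limit and preserves exactness, yielding ${\rm Ker}(\mathrm{id}_\cF\otimes L(d)) = \cF \otimes_{\cO_X} \cO_X = \cF$.

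The main obstacle I expect is ensuring that the Taylor formula for $\theta$ behaves well with respect to the projective limits: one must verify that $\theta$ truly extends across the transition maps $L_{n+1}\to L_n$, and that the completed tensor products on both sides are correctly identified. Once the bookkeeping with coordinates and the projective system is carefully set up --- particularly the interaction of $\nabla^{(q)}$ with the logarithmic poles appearing in $\omega^1_{X/S}$ --- the remainder of the argument is formal.
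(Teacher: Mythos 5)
Your proposal is correct and follows essentially the same route as the paper: the Taylor map $\theta_k$ you write down is exactly the stratification $\epsilon_n$ that the paper attaches to the integrable connection $\nabla$ (the paper's explicit local formula $\epsilon_{n}(1\otimes x)=\sum_{|q|_1\le n}\tfrac{1}{q!}\prod_j\nabla_{\partial_{x_j}}^{q_j}(x)\,\zeta^q$ is your $\theta$ truncated at level $n$), and the paper likewise verifies the intertwining with $L(\nabla^k)$ versus $\mathrm{id}_\cF\otimes L(\de^k)$ by the same local coordinate computation and concludes exactness in the flat case via level-wise exactness plus Mittag--Leffler.
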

\begin{proof}
	Let $n\in \N$, there is a stratification $\{\epsilon_n\}_{n\in \N}$ on $\cF$ attached to the connection $\nabla$, i.e. a compatible system of $P$-linear isomorphisms where
	\[
	\epsilon_n: \mathcal{P}^n_X\otimes_{\cO_X}\cF\longrightarrow \cF\otimes_{\cO_X} \mathcal{P}^n_X
	\]
	with $\epsilon_0=\text{id}_\cF$ and cocycle condiction. The map $\epsilon_1$ is defined as
	\[
	\begin{tikzcd}[row sep=small]
		\epsilon_1:\mathcal{P}^1_X\otimes_{\cO_X}\cF\ar[r]& \cF\otimes_{\cO_X}\mathcal{P}^1_X\\
		(a\otimes b)\otimes x\ar[r,maps to]& (a\otimes b)\cdot \left(\nabla(x)+ x\otimes 1\otimes 1\right)
	\end{tikzcd}.
	\]
	Now we want to show that the maps $\epsilon_n$ induce a map between the two complexes, i.e. that
	\[
	\begin{tikzcd}[row sep=large, column sep=large]
		\mathcal{P}^{n+1}_X\otimes_{\cO_X}\cF\otimes_{\cO_X}\Omega^k_{X/S}\ar[r, "L_n(\nabla^k)"]\ar[d, "\epsilon_{n+1}\otimes \text{id}_{\Omega_{X/S}^{k}}"]& \mathcal{P}^n_X\otimes_{\cO_X}\cF\otimes_{\cO_X}\Omega^{k+1}_{X/S}
		\ar[d, "\epsilon_{n} \otimes  \text{id}_{\Omega_{X/S}^{k+1}} "]\\
		\cF \otimes_{\cO_X} \mathcal{P}^{n+1}_X\otimes_{\cO_X}\Omega_{X/S}^k\ar[r, "\text{id}_\cF\otimes L_n(\de^k)"]& \cF \otimes_{\cO_X} \mathcal{P}^n_X\otimes_{\cO_X}\Omega_{X/S}^{k+1}
	\end{tikzcd}
	\]
	is commutative for each $n,k\in \N$. Once we show the commutativity, since $\epsilon_n$ is an isomorphism we conclude the first statement of the Lemma.
	
	The statement now is local, hence we may suppose that $X$ has coordinates $x_1,\dots, x_d$. Let $y:= \uno\otimes x\otimes \omega$ where  $x\in \cF$ and $\omega={\rm dlog}( x_{i_1})\wedge\dots {\rm dlog} (x_{i_k})$. Then 
	\begin{multline*}
		\left(\epsilon_{n} \otimes  \text{id}_{\Omega_{X/S}^{k+1}}\right)\circ L_n(\nabla^k)(y)=\left(\epsilon_{n} \otimes  \text{id}_{\omega_{X/S}^{k+1}}\right)\left( \sum_{i=1}^d\uno \otimes  (\nabla_{\partial_{x_i}}x) \ {\rm dlog}( x_i) \wedge \omega\right)\\
		= \sum_{i=1}^d\sum_{0\leqslant |q|_1\leqslant n} \frac{1}{q!} \left(\prod_{j=1}^d   \nabla_{\partial_{x_j}}^{q_j+ \delta_i^j}\right) (x)\ \zeta^q \otimes \ {\rm dlog}(x_i) \wedge \omega
	\end{multline*}
	and
	\begin{multline*}
		\left(
		\text{id}_\cF\otimes L_n(\de^k)\right)\circ\left(\epsilon_{n+1}\otimes \text{id}_{\Omega^k_{X/S}}
		\right)
		(y)
		= 	
		\left(
		\text{id}_\cF\otimes L_n(\de^k)
		\right)
		\left(
		\sum_{0\leqslant |q|_1\leqslant n+1}\left(\prod_{j=1}^d \nabla_{\partial_{x_j}}^{q_j}\right)(x)\ \zeta^{q} \otimes \omega	
		\right)
		\\
		=\sum_{1\leqslant |q|_1\leqslant n+1}\sum_{i=1}^d \frac{1}{q!}\left(\prod_{j=1}^d \nabla_{\partial_{x_j}}^{q_j}\right)(x)\  \frac{\partial \zeta^{q}}{\partial_{\zeta_{q_i}}} \otimes{\rm dlog}(x_i)\ \wedge\ \omega.
	\end{multline*}
	The two expressions are equal (via a change of the variable $q$ in the summation).
	
	For the last two assertions observe that if $\cF$ is flat, hence  locally the short exact sequence
	\begin{multline*}
		\cF \otimes_{\cO_X} \cO_X[G][\zeta_1,\dots , \zeta_d]^{\leqslant n+2}\otimes_{\cO_X} \omega_{X/S}^k
		\overset{id_\cF\otimes L_{n+1}(d^k) }{\longrightarrow }
		\cF \otimes_{\cO_X} \cO_X[G][\zeta_1,\dots , \zeta_d]^{\leqslant n+1}\otimes_{\cO_X} \omega_{X/S}^{k+1} 
		\\
		\overset{id_\cF\otimes L_n(d^{k+1})}{\longrightarrow }
		\cF \otimes_{\cO_X} \cO_X[G][\zeta_1,\dots , \zeta_d]^{\leqslant n}\otimes_{\cO_X} \omega_{X/S}^{k+2}
	\end{multline*}
	is exact for each $n,k\in \N$. Hence
	\[
	0\rightarrow {\rm Ker}(L_{n+1}(d^{k}) )\hookrightarrow \cF \otimes_{\cO_X} \cO_X[G][\zeta_1,\dots , \zeta_d]^{\leqslant n+2}\otimes_{\cO_X} \omega_{X/S}^k \rightarrow {\rm Ker}(L_n(d^{k+1}))\rightarrow 0
	\]
	is exact for each $n,k\in \N$, since ${\rm Ker}(L_{n+1}(d^{k}) )$ satisfies the Mittag-Lefler condition, the kernel and the limit commute, and we conclude. We recall that $G:=(M^{\rm gp})^{\rm tor}$ is a finite abelian group.
\end{proof}

\subsubsection{The quasi-coherent crystal associated to a quasi-coherent sheaf with stratification.}

In this section we describe how to associate a quasi-coherent crystal on $\XSinf$ to the data of a quasi-coherent $\cO_X$-module $F$ with a stratification. We will then apply this to associate to every quasi-coherent $\cO_X$-module $F$, with integrable connection $\nabla$, a family of quasi-coherent crystals $\bigl((L_n(F))_{\rm inf}\bigr)_n$, and in the end a pro-quasi-coherent crystal
$L(F)_{\rm inf}:=\lim_n \bigl(L_n(F)\bigr)_{\rm inf}.$

Let $F$ be a quasi-coherent $\cO_X$-module with a stratification $\{\epsilon_n\}_{n\in \N}$. We have seen in lemma \ref{lemma1isCoEq} that $(X,X)$ covers $\uno$, i.e. that for any object $(U,T)\in \XSinf$ there is a cover $\{(U_i, T_i)\}_{i\in I}$ with maps $g_i:T_i\rightarrow X$ s.t. the following diagram commutes
\[
\begin{tikzcd}
	U_i\ar[r]\ar[d, "g_i"]&X\\
	T_i\ar[ur, "\beta_i"]&
\end{tikzcd}
\]
for any $i\in I$. Hence locally we can define the sheaf $F_{(U_i,T_i)}:=\beta_i^* F=O_{T_i}\otimes_{\beta_i^{-1}(\cO_X)}\beta_i^{-1}(F)$, which is a quasi-coherent $\cO_{T_i}$-module; now we use the stratification in order to glue these sheaves. We may suppose that the map $g_i$ is nilpotent of order $n_i\in \N$. Let $i,j\in I$, $U_{i,j}:=U_i\cap U_j$, $T_{i,j}:= T_i\cap T_j$ and $n_i, n_j\leqslant n\in \N$, then there is a diagram
\[
\begin{tikzcd}
	U_{i,j}\ar[r, "\alpha_{i,j}  "]& T_{i,j}\ar[d, "g_{i,j}"]\ar[rd, "g_j"]\ar[dl, "g_i"]& \\
	X&X\times_S X\ar[r, "p_1"]\ar[l, "p_0"]& X
\end{tikzcd}
\]
the map $g_{i,j}$ factors through a map $h_{i,j}: T_{i,j}\rightarrow P^n_X$, the proof of this fact is similar to the proof of the lemma \ref{lemmaD_X(Y)=Hom (-,Y)}: using that $g_i\circ \alpha_{i,j}= g_j\circ \alpha_{i,j}$, the elements of $I$ are sent to elements in $\cO_{T_{i,j}}$ that differ by $n$-nilpotent elements, hence the two morphisms coincide modulo $I^n$. For any $n_i,n_j\leqslant n\in \N$ we get a diagram
\[
\begin{tikzcd}[column sep=large]
	U_{i,j}\ar[r, "\alpha_{i,j}  "]& T_{i,j}\ar[d, "g_{i,j}^n"]\ar[rd, "g_j"]\ar[dl, "g_i"]& \\
	X&P^n_X \ar[r, "p_1^n(1)"]\ar[l, "p_0^n(1)"]& X
\end{tikzcd}.
\]
We can glue the sheaf $F_{(U_i,T_i)}$ over $T_i$ and the sheaf $F_{(U_j,T_j)}$ over $T_j$ via the isomorphism given by the stratification:
\[
\begin{tikzcd}[column sep=large]
	\psi_{i,j}: g_i^* F=
	(g_{i,j}^{n})^*  p_0^n(1)^* F\ar[r,"(g_{i,j}^n)^* \epsilon_n"]&
	(g_{i,j}^{n})^* p_1^n(1)^* F=
	g_j^* F.
\end{tikzcd}
\]
The compatibility condition ensures that this procedure works with any $n\in \N$ bigger than $n_i,n_j$, the cocycle condition implies that the composition of two gluing is the right gluing, the identity condition (together with the cocycle condition) implies that $\psi_{i,j}= \psi_{j,i}^{-1}$. Hence we can glue these sheaves to a unique sheaf $F_{(U,T)}$.  For any map $(\alpha,\beta):(U_1,T_1)\rightarrow (U_2,T_2)$ there is a canonical isomorphism $\beta^*F_{U_1,T_1}\cong F_{(U_2,T_2)}$: if locally on $T_2$ we have the maps $g_i^{(2)}: T_{2,i}\rightarrow X$, then we get maps $g_i^{(1)}:= g_i^{(2)}\circ \beta_i$ and $F_{(U_{1,i}, T_{1,i})}\cong (g_i^{(1)})^* F\cong  \beta_i^* F_{(U_{2,i}, T_{2,i})} $ by definition; where $\beta_i$ is the right restriction of $\beta$, $\{(U_{2,i}, T_{2,i})\}$ is a cover as above of $(U_2,T_2)$ and $\{(U_{1,i}, T_{1,i}):= \left(\alpha^{-1}(U_{2,i}), \beta^{-1}(T_{2,i})\right) \}$.

We define the restriction morphism as the composition
\[
\beta^{-1} F_{(U_2,T_2)}\rightarrow  \cO_{T_2}\otimes_{\beta^{-1}\cO_{T_1}}  F_{(U_2,T_2)}\cong \beta^* F_{(U_2,T_2)}.
\]
One can check that the conditions of lemma \ref{lemmaPShinf} are satisfied (via the identity, cocycle condition of the stratification). 

Hence we have proved:
\begin{lemma}\label{LemmaCrystallization}
	If $F$ is a quasi-coherent $\cO_X$-module with a stratification $\{\epsilon_{n}\}_{n\in \N}$, then there is an associated quasi-coherent crystal $F_{\rm inf}$ on the infinitesimal site $(X/S)_{\rm inf}^{\rm log}$, associated to it.
\end{lemma}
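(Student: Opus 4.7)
The plan is to implement the construction already sketched in the paragraphs preceding the lemma, making precise the gluing of local definitions $\beta^\ast F$ via the stratification. The starting point is Lemma \ref{lemma1isCoEq}, which guarantees that for every object $(U,T)\in\XSinf$ there is a cover $\{(U_i,T_i)\}_{i\in I}$ of $(U,T)$ together with morphisms $\beta_i\colon T_i\to X$ making the obvious diagram commute. On each piece one simply sets $F_{(U_i,T_i)}:=\beta_i^\ast F$, which is quasi-coherent on $T_i$ because pullback of quasi-coherent modules under morphisms of (log) adic spaces preserves quasi-coherence (this is why Definition \ref{def:quasicoh} is adequate for our needs). The content of the lemma is to glue these pieces, show independence of the chosen $\beta_i$, and to verify the crystal condition.

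The gluing is carried out as follows. Given two indices $i,j$, the two morphisms $\beta_i,\beta_j\colon T_{i,j}:=T_i\cap T_j\to X$ agree on $U_{i,j}:=U_i\cap U_j$ and on the nilpotent thickening $T_{i,j}$ they factor through a morphism $g_{i,j}^n\colon T_{i,j}\to P_X^n$ for any $n$ larger than the nilpotency order of $U_{i,j}\hookrightarrow T_{i,j}$; this is shown exactly as in the proof of Lemma \ref{lemmaD_X(Y)=Hom (-,Y)}. Pulling back the isomorphism $\epsilon_n\colon p_0^n(1)^\ast F\to p_1^n(1)^\ast F$ along $g_{i,j}^n$ produces the transition isomorphism
\[
\psi_{i,j}\colon \beta_i^\ast F=(g_{i,j}^n)^\ast p_0^n(1)^\ast F\xrightarrow{(g_{i,j}^n)^\ast\epsilon_n}(g_{i,j}^n)^\ast p_1^n(1)^\ast F=\beta_j^\ast F.
\]
The compatibility of the $\epsilon_n$ as $n$ varies ensures $\psi_{i,j}$ is independent of the choice of $n$, so this assignment is canonical.

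The three axioms of a stratification translate directly into the three sheaf-gluing conditions: the identity $\epsilon_0=\mathrm{id}$ gives $\psi_{i,i}=\mathrm{id}$; the cocycle condition $p_{0,2}^n(2)^\ast\epsilon_n=p_{0,1}^n(2)^\ast\epsilon_n\circ p_{1,2}^n(2)^\ast\epsilon_n$, when pulled back along the map $T_{i,j,k}\to P_X^n(2)$ produced by the triple $(\beta_i,\beta_j,\beta_k)$, yields $\psi_{i,k}=\psi_{j,k}\circ\psi_{i,j}$; in particular taking $k=i$ gives $\psi_{j,i}=\psi_{i,j}^{-1}$. Hence the local pieces $(\beta_i^\ast F,\psi_{i,j})$ glue to a well-defined quasi-coherent $\cO_T$-module $F_{(U,T)}$, and a standard refinement argument shows it is independent of the covering and of the choices of the $\beta_i$.

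For the functorial restriction data, given a morphism $(\alpha,\beta)\colon (U_1,T_1)\to (U_2,T_2)$ in $\XSinf$, one covers $(U_2,T_2)$ by pieces with local lifts $\beta_i^{(2)}\colon T_{2,i}\to X$, pulls back to get a cover of $(U_1,T_1)$ with local lifts $\beta_i^{(1)}:=\beta_i^{(2)}\circ\beta|_{T_{1,i}}$, and uses the canonical identifications $(\beta_i^{(1)})^\ast F\cong \beta|_{T_{1,i}}^\ast(\beta_i^{(2)})^\ast F$ to produce an $\cO_{T_1}$-linear isomorphism $\varphi_{(\alpha,\beta)}^F\colon\beta^\ast F_{(U_2,T_2)}\xrightarrow{\sim} F_{(U_1,T_1)}$. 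These $\varphi^F$ satisfy the transitivity and normalization conditions of Lemma \ref{lemmaPShinf} by the same cocycle argument used above, so $F_{\rm inf}$ is an $\cO_{X_{\rm inf}}$-module, and the fact that each $\varphi^F_{(\alpha,\beta)}$ is an isomorphism is precisely the crystal condition.

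I expect the main obstacle to be bookkeeping rather than conceptual: one must verify systematically that every independence-of-choices statement follows from the compatibility of the $\epsilon_n$ across varying $n$ and from the cocycle condition pulled back along the appropriate map to $P_X^n(2)$. Once these identifications are made carefully, the sheaf property on each $T$ is automatic from the sheaf property of pullback of quasi-coherent modules, and the crystal property is built into the construction by design.
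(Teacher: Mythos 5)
Your proposal matches the paper's own argument essentially step for step: you cover $(U,T)$ using Lemma \ref{lemma1isCoEq}, set $F_{(U_i,T_i)}:=\beta_i^\ast F$, glue via $(g_{i,j}^n)^\ast\epsilon_n$ with the factorization through $P_X^n$ obtained as in Lemma \ref{lemmaD_X(Y)=Hom (-,Y)}, and derive the gluing axioms and the crystal condition from the identity, compatibility and cocycle conditions of the stratification exactly as the paper does. The proof is correct and takes the same route.
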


Let us consider the category of quasi-coherent $\cO_X$-modules $\cF$ with differential operators of finite order as morphisms, we denote this category as ${\rm Mod}^{\rm diff}$.
\begin{lemma}
	There is a linearization functor
	\[
	L(-)_{\rm inf}: {\rm Mod}^{\rm diff}\longrightarrow {\rm Mod}_{\cO_{\Xinf}}
	\]
\end{lemma}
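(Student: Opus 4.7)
The plan is to construct $L(\cF)_{\rm inf}$ as an inverse limit of quasi-coherent crystals. First, I would equip each $L_n(\cF)=\cP_X^n\otimes_{\cO_X}\cF$ with a (partial) stratification arising from the comultiplication on $\cP_X$. For each $k,n\in\N$, the map $\delta^{k,n}\colon \cP_X^{n+k}\to \cP_X^k\otimes_{\cO_X}\cP_X^n$ combined with the identity on $\cF$ and the canonical left/right $\cO_X$-module structures produces, in the limit over $n$, a $\cP_X^k$-linear isomorphism $\epsilon_k\colon \cP_X^k\otimes_{\cO_X} L(\cF)\to L(\cF)\otimes_{\cO_X}\cP_X^k$. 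The compatibility of the $\epsilon_k$ as $k$ varies, along with the identity condition $\epsilon_0=\mathrm{id}$, is immediate from the definition of $\delta^{k,n}$.

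Next, I would verify the cocycle condition, which reduces to the coassociativity of $\delta$: the two natural maps $\cP_X^{n+k+\ell}\to \cP_X^k\otimes_{\cO_X} \cP_X^\ell\otimes_{\cO_X} \cP_X^n$ agree, as this follows from the compatibility of the projections $p^n_{i,j}(2)$ on the triple envelope $P_X^n(2)$. Applying Lemma \ref{LemmaCrystallization} to each $L_n(\cF)$ with its induced stratification gives a quasi-coherent crystal $(L_n(\cF))_{\rm inf}$ on $\XSinf$, and I would then define $L(\cF)_{\rm inf}:=\lim_{n}(L_n(\cF))_{\rm inf}$ as the inverse limit in the category of $\cO_{\Xinf}$-modules. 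This limit need not itself be a quasi-coherent crystal (since $L(\cF)$ is only pro-quasi-coherent), but it is an $\cO_{\Xinf}$-module, which is what is required.

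For functoriality, given a differential operator $D\in \cD{\rm iff}^m(\cF,\cG)$, the previously constructed $\cP_X$-linear map $L(D)\colon L(\cF)\to L(\cG)$, defined as the limit of $(\mathrm{id}_{\cP_X^k}\otimes D^m)\circ(\delta^{k,m}\otimes \mathrm{id}_\cF)$, is automatically compatible with the stratifications since both $L(D)$ and $\epsilon_k$ are built from the same comultiplication maps $\delta^{k,\bullet}$. Thus $L(D)$ descends to a morphism of $\cO_{\Xinf}$-modules $L(D)_{\rm inf}\colon L(\cF)_{\rm inf}\to L(\cG)_{\rm inf}$, and the composition rule of Lemma \ref{lemma:comp} matches the coassociativity of $\delta$, ensuring $L(D_2\circ D_1)_{\rm inf}=L(D_2)_{\rm inf}\circ L(D_1)_{\rm inf}$.

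The hardest step will be the verification of the cocycle condition and of the compatibility of $L(D)$ with the stratifications. Both reduce to careful bookkeeping of the maps $\delta^{k,n}$ and their interaction with the projections on $P_X^n(2)$. In local coordinates (using Corollary \ref{rmk:coordinates}), this amounts to explicit identities between formal power series in the generators $\xi_{i,j}$, which hold by direct computation, generalizing the classical arguments of \cite{berthelot_ogus}.
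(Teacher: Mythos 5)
Your proposal follows essentially the same route as the paper: equip each finite level $L_n(\cF)=\cP_X^n\otimes_{\cO_X}\cF$ with the canonical stratification (which the paper writes out as an explicit tensor-factor shuffle and you describe via the comultiplication $\delta^{k,n}$ — these are the same map), crystallize each level via Lemma \ref{LemmaCrystallization}, set $L(\cF)_{\rm inf}:=\lim_n (L_n(\cF))_{\rm inf}$ as an $\cO_{\Xinf}$-module, and obtain functoriality from the compatibility of $L_n(D)$ with the canonical stratifications. The paper verifies that last compatibility by the same short diagram chase you defer to, so the argument is correct and matches.
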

\begin{proof}
	For each $n\in \N$ we define $L_n(\cF)_{\rm inf}\in \cO_{\Xinf}$, then we will define
	\[
	L(\cF)_{\rm inf}:= \lim_{n\in \N} L_n(\cF)_{\rm inf}.
	\] 
	In order to define the sheaves $L_n(\cF)_{\rm inf}$ we use the construction of lemma \ref{LemmaCrystallization}. Let $\cF$ be a quasi-coherent  $\cO_X$-module, consider its $n$-th linearization $L_n(\cF):=\cP^n_X \tensor_{\cO_X}\cF$, it has a canonical stratification given by
	\[
	\begin{tikzcd}[row sep=small]
		\epsilon_{m,n}^{\cF,can}:\cP^m_X\tensor_{\cO_X} L_n(\cF)\ar[r, "\cong"]&
		L_n(\cF)\tensor_{\cO_X} \cP^m_X\\
		(a\otimes b)\otimes (c\otimes d) x\ar[r, maps to]&(a\tensor  dx) \tensor (1\tensor bc)=(1\tensor  dx) \tensor (a\tensor bc)
	\end{tikzcd}
	\]
	Via this stratification we obtain the crystal $L_n(\cF)_{\rm inf}$. Moreover let $D:\cF\rightarrow \cG$ be a differential operator of order $n$ between quasi-coherent $\cO_X$-modules, we get a linearized map $L_n(D): L_n(\cF)\rightarrow L_n(\cG)$. For any thickening $(U,T)$ with a section $g: T\rightarrow X$ we define 
	\[
	L_n(D)_{\rm inf, (U,T)}:= g^* L_n(D): L_n(\cF)_{\rm inf, (U,T)}= g^*L_n(\cF)\longrightarrow L_n(\cG)_{\rm inf, (U,T)}= g^*L_n(\cG)
	\]moreover the following diagram commutes
	\[
	\begin{tikzcd}
		p_0^m(1)^* L_n(\cF) \ar[r, "\epsilon_{m,n}^{\cF,can}"]\ar[d, "p_0^m(1)^* L_n(D)"]& p_1^m(1)^* L_n(\cF)\ar[d, "p_1^m(1)^* L_n(D)"] \\
		p_0^m(1)^* L_n(\cG) \ar[r, "\epsilon_{m,n}^{\cG,can}"]& p_1^m(1)^* L_n(\cG)
	\end{tikzcd},
	\] indeed following the upper path we get
	\[
	(a\tensor bx) \tensor (c\tensor d)\mapsto (a \tensor D(bx))\tensor (c\tensor d)\mapsto (ac \tensor d) \tensor (1\tensor D(bx))
	\]
	and following the lower path we get
	\[
	(a\tensor bx) \tensor (c\tensor d)\mapsto (ac \tensor d)\tensor (1\tensor bx)\mapsto (ac \tensor d)\tensor (1 \tensor D(bx)).
	\]
	Hence the morphisms $L_n(D)_{\rm inf}$ glue on intersections and we can define $L(D)_{\rm inf}$ as the limit of the maps
	\[
	L(\cF)_{\rm inf} {\longrightarrow} L_n(\cF)_{\rm inf}\longrightarrow L_n(\cG)_{\rm inf}.
	\]
\end{proof}

\begin{definition}
	If $\cF$ is a quasi-coherent $\cO_X$-module with an integrable connection $\nabla$, the \textbf{linearization of } $\cF$ in the infinitesimal topos is the sheaf
	\[
	\cF_{\rm inf}\ho L(\cO_X):= \lim_{n\in \N} \left(\cF\otimes_{\cO_X} L_n(\cO_X)\right)_{\rm inf}
	\]
	where the stratification is given by the isomorphisms
	\[
	\cP_X^m \otimes_{\cO_X} \left(\cF \otimes_{\cO_X} L_n(\cO_X) \right) \overset{\epsilon_m\otimes \text{id}_{\cP^n_X}}{\longrightarrow} \cF \otimes (\cP^m_X\otimes L_n(\cO_X)) \overset{\text{id}_\cF \otimes \epsilon_{m,n}^{ can}}{\longrightarrow} (\cF \otimes L_n(\cO_X))\otimes \cP^m_X
	\]
	and the limit is computed in $\Xinf$.
\end{definition}
In analogue way one can define for any $k\in \N$ the sheaf 
\[
\cF_{\rm inf}\ho L(\omega_{X/S}^k)_{\rm inf}:= \lim_{n\in \N} \left( \cF \otimes_{\cO_X} L(\omega^k_{X/S})_n\right)_{\rm inf}
\] where the stratification isomorphisms (as before) are given by the composition of the stratification on $\cF$, given by the connection, and the canonical stratification on $L(\omega^k)_n$. 
\begin{proposition}
	For any quasi-coherent $\cO_X$-module $\cF$ with integrable connection $\nabla$, the two complexes of sheaves $\cF_{\rm inf}\ho L(\omega^\bullet_{X/S})_{\rm inf}$ and $L(\cF\otimes \omega^\bullet_{X/S})_{\rm inf}$ are canonically isomorphic.
\end{proposition}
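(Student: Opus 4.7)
The plan is to promote the $\cO_X$-module isomorphism already constructed in Proposition \ref{PropExactLinearizedComplex} to an isomorphism of complexes of crystals on $\XSinf$, by verifying that it intertwines the two natural stratifications, and then to pass to the limit in $n$. More precisely, for each $n,k \ge 0$ I will use the map $\epsilon_n \otimes \text{id}_{\omega^k_{X/S}}$, where $\epsilon_n\colon \cP^n_X \otimes_{\cO_X} \cF \to \cF \otimes_{\cO_X} \cP^n_X$ is the stratification on $\cF$ coming from $\nabla$; this produces an $\cO_X$-linear isomorphism
\[
L_n(\cF \otimes \omega^k_{X/S}) \xrightarrow{\ \cong\ } \cF \otimes_{\cO_X} L_n(\omega^k_{X/S}).
\]
Proposition \ref{PropExactLinearizedComplex} already guarantees compatibility with the differentials $L_n(\nabla^k)$ on the left and $\text{id}_\cF \otimes L_n(d^k)$ on the right, and the transition maps $L_{n+1} \to L_n$ obviously commute with $\epsilon_\bullet$, since they are all induced by the projections $\cP^{n+1}_X \twoheadrightarrow \cP^n_X$.

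The main step, which I expect to be the principal obstacle, is to verify that for every $m \ge 0$ the isomorphism above intertwines the canonical stratification $\epsilon_{m,n}^{\cF \otimes \omega^k,\,\mathrm{can}}$ on $L_n(\cF \otimes \omega^k_{X/S})$ with the composite stratification $(\text{id}_\cF \otimes \epsilon_{m,n}^{\omega^k,\,\mathrm{can}}) \circ (\epsilon_m \otimes \text{id}_{L_n(\omega^k_{X/S})})$ on $\cF \otimes L_n(\omega^k_{X/S})$. Concretely this amounts to showing the commutativity of
\[
\begin{tikzcd}[column sep=large]
\cP^m_X \otimes \cP^n_X \otimes \cF \otimes \omega^k_{X/S} \ar[r, "\epsilon_{m,n}^{\mathrm{can}}"] \ar[d, "\text{id}\otimes\epsilon_n\otimes\text{id}"'] & \cP^n_X \otimes \cF \otimes \omega^k_{X/S} \otimes \cP^m_X \ar[d, "\epsilon_n\otimes\text{id}"] \\
\cP^m_X \otimes \cF \otimes \cP^n_X \otimes \omega^k_{X/S} \ar[r, "\mathrm{composite}"] & \cF \otimes \cP^n_X \otimes \omega^k_{X/S} \otimes \cP^m_X
\end{tikzcd}
\]
Since the question is local, by Lemma \ref{lemma:logsmooth} and Corollary \ref{rmk:coordinates} I may assume that local coordinates $x_1,\ldots,x_d$ exist, $\cP^N_X$ is an explicit truncated polynomial algebra in $\zeta_i := x_i\otimes 1-1\otimes x_i$, and $\epsilon_N$ has the Taylor-series form $\epsilon_N(1\otimes f) = \sum_q \tfrac{1}{q!}\nabla^q(f)\otimes \zeta^q$. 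Both composites in the diagram then unwind into explicit finite sums, and the desired equality reduces to the cocycle identity $p^N_{0,1}(2)^*\epsilon_N \circ p^N_{1,2}(2)^*\epsilon_N = p^N_{0,2}(2)^*\epsilon_N$ satisfied by $\epsilon_\bullet^\cF$, combined with the associativity of the multiplication maps $\delta^{m,n}\colon \cP^{m+n}_X \to \cP^m_X \otimes_{\cO_X} \cP^n_X$ that governs the canonical stratification on any linearization.

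Granted this diagrammatic check, for every $n$ the map $\epsilon_n\otimes\text{id}_{\omega^k_{X/S}}$ is compatible with stratifications and therefore, by the construction of Lemma \ref{LemmaCrystallization}, defines an isomorphism of crystals $L_n(\cF \otimes \omega^k_{X/S})_{\rm inf} \cong (\cF \otimes L_n(\omega^k_{X/S}))_{\rm inf}$ on $\XSinf$. These isomorphisms are compatible with the complex differentials (by the first step) and with the transition maps in $n$, so taking the limit over $n$ in $\Xinf$ yields the desired canonical isomorphism of complexes of sheaves $L(\cF \otimes \omega^\bullet_{X/S})_{\rm inf} \cong \cF_{\rm inf} \ho L(\omega^\bullet_{X/S})_{\rm inf}$. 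The essential work is entirely concentrated in the stratification check at step two; the rest is bookkeeping with tensor products and limits in the infinitesimal topos.
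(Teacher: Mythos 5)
Your proposal is correct and follows essentially the same route as the paper: the isomorphism is $\epsilon_n\otimes\mathrm{id}_{\omega^k_{X/S}}$, compatibility with the differentials is delegated to Proposition \ref{PropExactLinearizedComplex}, and the substantive step is exactly the local verification that this map intertwines the canonical stratification on $L_n(\cF\otimes\omega^k_{X/S})$ with the composite stratification on $\cF\otimes L_n(\omega^k_{X/S})$, which the paper likewise reduces to the cocycle/compatibility conditions on $\{\epsilon_n\}$ and leaves to the reader. Your sketch of that verification (Taylor expansion in local coordinates plus the cocycle identity and associativity of $\delta^{m,n}$) is the intended computation.
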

\begin{proof}
	The first complex is given via the (limit) maps of the following morphisms
	\[
	\cF_{\rm inf}\ho L(\omega^k_{X/S})_{\rm inf} \rightarrow \left(\cF \otimes L_{n+1}(\omega^k_{X/S})\right)_{\rm inf} \overset{\left(id_\cF\otimes L_n(d^k)\right)_{\rm inf}}{\longrightarrow} \left(\cF \otimes L_{n+1}(\omega^k_{X/S})\right)_{\rm inf}.
	\]
	In order to show that this morphism is well defined we have to check that it glues, i.e. that the maps "commute with the stratification", i.e. that the following diagram commutes
	\[
	\begin{tikzcd}
		\cP^m_X\tensor \cF \tensor L_{n+1}(\omega^k_{X/S})\ar[r, "\epsilon_m\otimes \text{id}"]\ar[d, "\text{id}\tensor \delta^{n,1}\tensor \text{id}"]
		&
		\cF \tensor \cP^n_X\tensor L_{n+1}(\omega^k_{X/S})\ar[r, "\text{id}_\cF\tensor \epsilon_{m,n}^{can}"]
		&
		\left(\cF\tensor L_{n+1}(\omega^k_{X/S})\right) \otimes \cP^m_X
		\ar[d, "\text{id}\tensor \delta^{n,1}\tensor \text{id}"]
		\\
		\cP^m_X\tensor \cF\tensor \cP_X^n\tensor \cP^1_X\tensor \omega_{X/S}^k \ar[d, "\text{id}\tensor L(d^k)_1"]
		&
		&
		\left(\cF \tensor \cP^n_X\tensor \cP^1_X\tensor \omega^k_{X/S}\right)\tensor \cP^m_X
		\ar[d, "\text{id}\tensor L_1(d^k)\tensor\text{id}"]
		\\
		\cP^m_X\tensor \cF\tensor \cP^n_X\tensor \omega^{k+1}_{X/S}
		\ar[r, "\epsilon_m\tensor \text{id}"]
		&
		\cF\tensor \cP^m_X\tensor L_n(\omega^{k+1}_{X/S} )
		\ar[r, "\text{id}\tensor \epsilon_{m,n}^{can}"]
		&
		\left(\cF\tensor L_n(\omega^{k+1}_{X/S}) \right)\tensor \cP^m_X
	\end{tikzcd}
	\]
	where $\{\epsilon_m\}_{m\in \N}$ is the stratification attached to $\cF$ relative to $\nabla$. The commutativity can be checked locally, using coordinates.

	
	The complex $L(\cF\otimes \omega_{X/S}^{\bullet})_{\rm inf}$ is also well defined via the functoriality of $L(-)_{\rm inf}$ and the fact that $\nabla^k$ are differential morphisms (of degree $1$).
	
	Now we want to check that the two complexes are isomorphic. We recall that
	\begin{multline*}
		L(\cF\otimes \omega_{X/S}^{k})_{\rm inf}= \lim_{n\in \N} L_n(\cF\otimes \omega_{X/S}^{k})_{\rm inf},
		\cF_{\rm inf}\ho L(\omega^k_{X/S})_{\rm inf}=\lim_{n\in \N} \cF_{\rm inf}\otimes L_n(\omega^k_{X/S})_{\rm inf},
	\end{multline*}
	where the two pro-quasi-coherent crystals in the limits are built via their stratification as in lemma \ref{LemmaCrystallization}. Then it suffices to show that for each $n\in \N$ there is an isomorphism 
	\[
	L_n(\cF\otimes_{\cO_X} \omega_{X/S}^{k})\cong \cF\otimes_{\cO_X}L_n(\omega^k_{X/S})
	\]
	commuting with the two stratifications and then check that this isomorphism commutes with the complex maps. The isomorphism is the following:
	\[
	L_n(\cF\otimes_{\cO_X} \omega_{X/S}^{k})=\cP^n_{X}\tensor\cF \tensor \Omega^k_{X/S)}\overset{\epsilon_n\tensor \text{id}_{\omega^k}}{\longrightarrow} \cF \tensor\cP^n_{X}\tensor \omega^k_{X/S)}= \cF\tensor L_n(\omega^k_{X/S})
	\]
	In order to check the compatibility of the stratification with the isomorphism we have to show that the following diagram is commutative.
	\[
	\begin{tikzcd}
		\cP^n_X\tensor L_m\left( \cF\tensor\omega^k_{X/S}\right)\ar[r, "\epsilon_{m,n}^{can}"]\ar[d, "\text{id}_{\cP^m_X}\tensor \epsilon_m\tensor \text{id}_{\omega^k_{X/S}}"]&  L_m\left( \cF\tensor\omega^k_{X/S}\right)\tensor \cP^n_X\ar[rd, "\epsilon_m\tensor\text{id}_{\omega^k_{X/S}\tensor \cP^n_X}"]&\\
		\cP^n_X \tensor \cF \tensor \left(\cP^m_X\tensor \omega^k_{X/S}\right)\ar[r, "\epsilon_n\tensor \text{id}"]&\cF\tensor \cP^n_X \tensor \cP^m_X\tensor \omega^{k}_{X/S}\ar[r, "\text{id}_{\cF}\tensor \epsilon^{can}"]&\left(\cF\tensor \cP^m_X\tensor \omega^k_{X/S}\right)\tensor \cP^n_X
	\end{tikzcd}
	\]
	The commutativity can be checked locally and it is enough to check it in the case $n=m$ via the compatibility condition on the stratifications and the surjectivity of the projection morphisms. We leave this calculation to the reader.

The last thing that we have to show is that the isomorphisms just built commute with the two complexes. This could be checked locally and only on $(X,X)$, the diagram over a thickening is (locally) the pullback of the diagram over $(X,X)$. The diagram is the following:
	\[
	\begin{tikzcd}[column sep=large, row sep=large]
		\cP^{n+1}_X\tensor \cF \tensor \omega^k_{X/S} \ar[r, "L(\nabla^k)_{m+1}"]\ar[d, "\epsilon_m\tensor \text{id}_{\omega^k_{X/S} }"]&
		\cP^{m}\tensor \cF \tensor \omega^{k+1}_{X/S}\ar[d, "\epsilon_m\tensor \text{id}_{\omega^{k+1}_{X/S}}"]
		\\
		\cF\tensor \cP^{m+1}_X\tensor\omega^{k}_{X/S}\ar[r, "\text{id}_{\cF}\tensor L(d^i)_m"]&
		\cF\tensor \cP^m_X\tensor \omega^{k+1}_{X/S}
	\end{tikzcd}.
	\]
	The commutativity of this diagram follows by the computations done in proposition \ref{PropExactLinearizedComplex}.
\end{proof}

We have the complex $L(\omega_{X/S}^{\bullet})_{\rm inf}$; as in proposition \ref{PropExactLinearizedComplex} we get an exact sequence.

\begin{theorem}\label{TheoremFinfExactSequence}
	The sequence
	\[
	0\rightarrow\cO_{\Xinf} \rightarrow L(\omega_{X/S}^{\bullet})_{\rm inf}
	\]
	is exact, moreover if $\cF$ is a flat quasi-coherent $\cO_X$-module with integrable connection $\nabla$, the sequences
	\[
	0\rightarrow \cF_{\rm inf} \rightarrow 	\cF_{\rm inf}\ho L(\omega_{X/S}^\bullet)_{inf}\quad \text{and}\quad 0\rightarrow \cF_{\rm inf} \rightarrow  L(\cF\tensor_{\cO_X}\omega_{X/S}^\bullet)_{\rm inf}
	\]
	are exact.
\end{theorem}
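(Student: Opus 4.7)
The plan is to reduce all three exact sequences to a single pointwise claim on thickenings admitting a retraction to $X$, and then invoke the exactness already established on $X$ in Proposition \ref{PropExactLinearizedComplex}.

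First, by the canonical identification from the preceding Proposition, the two ``moreover'' sequences are isomorphic, and the first sequence in the statement is the case $\cF = \cO_X$ of either. It therefore suffices to prove, for any flat quasi-coherent $\cF$ with integrable log connection, that $0 \to \cF_{\rm inf} \to L(\cF \otimes_{\cO_X} \omega^\bullet_{X/S})_{\rm inf}$ is exact in $\Xinf$.

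Second, exactness on $\XSinf$ can be checked locally. By (the proof of) Lemma \ref{lemma1isCoEq} combined with log smoothness of $X \to S$, every thickening $(U,T)$ is locally covered by thickenings that admit a retraction $g \colon T \to X$. The crystal structure then ensures that the evaluation of $L(\cF \otimes_{\cO_X} \omega^k_{X/S})_{\rm inf}$ at such a $(U,T,g)$ is $\lim_n g^\ast L_n(\cF \otimes_{\cO_X} \omega^k_{X/S})$, with differentials obtained by pulling back the differentials on $X$, while the value of $\cF_{\rm inf}$ is $g^\ast \cF$. The problem thus reduces to verifying exactness of the pulled-back augmented complex $0 \to g^\ast \cF \to \lim_n g^\ast L_n(\cF \otimes_{\cO_X} \omega^\bullet_{X/S})$ at each such $(U,T,g)$.

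The main obstacle is that $g^\ast$ is only right exact and does not a priori commute with the projective limit defining $L$. Both difficulties are overcome by producing a $g$-compatible contracting homotopy. Working locally in the coordinates $x_1, \ldots, x_d$ supplied by Corollary \ref{rmk:coordinates}, the complex $0 \to \cO_X \to L(\omega^\bullet_{X/S})$ on $X$ is the augmented formal Poincar\'e--Koszul complex in the variables $\zeta_i = x_i - 1$, and the standard integration homotopy (meaningful in characteristic zero) is $\cO_X$-linear and respects the filtration by polynomial degree $n$. Tensoring with the flat module $\cF$ preserves this homotopy by flatness, and pulling back via $g$ yields an $\cO_T$-linear contracting homotopy for each $g^\ast L_n(\cF \otimes_{\cO_X} \omega^\bullet_{X/S})$ which is compatible with the projections in $n$; it therefore passes to the projective limit. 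This gives the desired exactness at $(U,T)$, and the kernel of the first differential is automatically $g^\ast \cF = \cF_{\rm inf, (U,T)}$, as the augmentation is split by the $\cO_X$-linear projection onto the constant term. This completes the argument.
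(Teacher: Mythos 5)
Your proposal is correct and follows essentially the same strategy as the paper's proof: reduce to thickenings $(U,T)$ that locally admit a retraction $g\colon T\to X$, observe that the crystal's value there is by construction $\lim_n g^{\ast}L_n(-)$, and then apply the formal Poincar\'e lemma in the coordinates $\zeta_i=x_i-1$ of Corollary \ref{rmk:coordinates}, using Proposition \ref{PropExactLinearizedComplex} to identify $L(\cF\otimes\omega^\bullet_{X/S})_{\rm inf}$ with $\cF_{\rm inf}\hat\otimes L(\omega^\bullet_{X/S})_{\rm inf}$. The only divergence is in how exactness is carried through $g^{\ast}$ and the inverse limit: you construct a total-degree-preserving, $\cO_X$-linear contracting homotopy compatible with the truncations (note that an additive functor preserves any homotopy, so flatness is not actually needed at that step), whereas the paper checks exactness of each finite-level sequence and invokes Mittag--Leffler; both are valid, and your variant has the merit of making the preservation of exactness under the merely right-exact $g^{\ast}$ completely explicit.
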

\begin{proof}
	A sequence of sheaves on the infinitesimal site is exact iff the sequence of sheaves associated to any thickening $(U,T)$ is exact in the topos $T_{\rm an}$. We can work locally, where $(U,T)$ has a section $g:T\rightarrow X$ and the sequence becomes
	\[
	0\rightarrow g^*\cO_{X} \rightarrow g^*L(\cO_{X/S}) \overset{g^* L(\de)}{\rightarrow } g^* L(\omega_{X/S}^1) \overset{g^* L(\de^1)}{\rightarrow }  g^* L(\omega_{X/S}^2) \rightarrow \ \cdots\ \ .
	\]
	Locally, where we have coordinates $x_1,\dots , x_d$ of $X$, the sequence is
	\[
	0\rightarrow \cO_{T} \rightarrow \cO_T[[\zeta_1,\dots , \zeta_d]] \rightarrow \cO_T[[\zeta_1,\dots , \zeta_d]]	\ho_{\cO_X} \omega^{1}_{X/S} \rightarrow  \cdots
	\]
	via the explicit computations in proposition \ref{PropositionLinearizedDifferentialIsExact}, it is exact.
	
	In order to see that 
	\[
	0\rightarrow \cF_{\rm inf} \rightarrow 	\cF_{\rm inf}\ho L(\omega_{X/S}^\bullet)_{\rm inf}
	\]
	is exact one works locally and as in proposition \ref{PropExactLinearizedComplex}, this sequence is limit of exact sequences and the Mittag-Lefler condition is (locally) satisfied.
\end{proof}

\subsubsection{De-linearization and the comparison theorem.}
\label{sec:delin}

The main goal of this section is to introduce a morphism of topoi
\[
u_{X/S}: \Xinf\longrightarrow X_{\rm an}
\]
where we denoted $X_{\rm an}$ the analytic topos of $X$, i.e. the category of sheaves of sets on $X$. We wish to show that for a flat, quasi-coherent $\cO_X$-module $\cF$ with an integrable connection $\nabla$, there is a canonical isomorphism 
\[
{\rm H}^i\left(\Xinf, \cF_{\rm inf}\right)\cong {\rm H}^i_{\rm dR}\left(X, (\cF, \nabla) \right).
\]

We denote  $\Xinfx= \Sh\left((X/S)_{\rm inf, {(X,X)}}\right)$, where $\left((X/S)_{\rm inf, {(X,X)}}\right)$ is the site with underlying category having as objects the morphisms $g:(U,T)\rightarrow (X,X)\in \XSinf$. A covering of $g$ is a collection of morphisms $\{g_i: (U_i,T_i)\rightarrow (X,X)\}_{i\in I}$ s.t. ${(U_i,T_i)}_{i\in I}$ is an admissible covering of $(U, T)$ in $\XSinf$. A sheaf $\cG\in \Xinfx$ is a (compatible) collection of sheaves $\cG_{g}$ on $T$, where $T$ and $g$ vary between all the morphisms of thickenings $g: (U,T)\rightarrow(X,X)$

\begin{definition}
	Let $\varphi:\Xinfx\rightarrow X_{\rm an}$ be the morphism of topoi given by
	\[
	\begin{tikzcd}[column sep=small, row sep=small]
		\varphi^{-1}: X_{an}  \ar[r] &\Xinfx&&	\varphi_*: \Xinfx \ar[r] & X_{an}\\
		\cE\ar[r, maps to]& \{\beta^{-1} \cE\}_{(\alpha, \beta): (U,T)\rightarrow (X,X)}&&\cG=\{\cG_{g}\}_{g:(U,T)\rightarrow (X,X)}\ar[r, maps to]& \cG_{\text{id}_{(X,X)}}
	\end{tikzcd}.
	\]
	Let $j: \Xinfx\rightarrow \Xinf$ be the morphism of topoi defined by
	\[
	\begin{tikzcd}[column sep=small, row sep=small]
		j^*: \Xinf  \ar[r] &\Xinfx\\
		F=\{F_{(U,T)}\}_{(U,T)}\ar[r, maps to]& \{F_{(U,T)}\}_{(U,T)\rightarrow (X,X)}
	\end{tikzcd}
	\]
	and the sheafification of
	\[
	\begin{tikzcd}[column sep=small, row sep=small]
		\tilde{j}_*': \Xinfx \ar[r] & \Psh\XSinf\\
		\cG=\{\cG_{g}\}_{g:(U,T)\rightarrow (X,X)}\ar[r, maps to]& \{\displaystyle\prod_{g:(U,T)\rightarrow (X,X)}\cG_{g}\}_{(U,T)}
	\end{tikzcd}.
	\]
	We denote the sheafification of $\tilde{j}_*'$ as $j_*'$.
	Let $u: \Xinf\rightarrow X_{\rm an}$ the morphism of topoi given by
	\[
	\begin{tikzcd}[column sep=small, row sep=small]
		u^{*}: X_{\rm an}  \ar[r] &\Xinf && 	u_*: \Xinf \ar[r] & X_{\rm an}\\
		\cE\ar[r, maps to]& \{ t_* \cE_{|_U}\}_{t: U\rightarrow T} && F=\{F_{(U,T)}\}_{(U,T)}\ar[r, maps to]& \{\Gamma(\Uinf, F_{|_{\Uinf} })\}_{U} 
	\end{tikzcd}.
	\]
\end{definition}
\begin{remark}
	The pairs in the above definitions define morphisms of topoi, i.e. one needs to check adjunctions between these pairs of functors.
	This is formal and follows as in section \S 5 of \cite{berthelot_ogus}.
\end{remark}

We get a diagram
\[
\begin{tikzcd}
	\Xinfx \ar[r, "\varphi"]\ar[d, "j"]& X_{ad}\\
	\Xinf\ar[ru, "u"]&
\end{tikzcd}.
\]
We have to check that $u^*, j^*, \varphi^{-1}$ commute with finite limits, 

For each nilpotent thickening $t: U\rightarrow T$ there is an arrow 
\[
		{\rm Hom}_{U_{\rm inf}}\left(\uno, F|_{U_{\rm inf}}\right)\rightarrow {\rm Hom}_{U_{\rm inf}}\left((h_{(U,T)})_{|_{U_{\rm inf}}}, F|_{U_{\rm inf}}\right)
		\]
		of sheaves in $U_{\rm an}$, where we recall that $h_{(U,T)}$ denotes the sheaf associated to $(U,T)$ via the Yoneda embedding. Via the push forward morphism $t_*$, we get a morphism
		\[
		s_{(U,T)}:(u^{-1}\cE)_{(U,T)}=t_* \cE_{|_U}\overset{t_*f_U}{\longrightarrow} t_* {\rm Hom}_{U_{\rm inf}}\left(\uno, F|_{U_{\rm inf}}\right)\rightarrow F_{(U,T)}
		\]
		This association is bijective since $t$ is a nilpotent thickening, hence $t_*:U_{\rm an}\rightarrow T_{\rm an}$ is an equivalence of categories. The inverse is given by taking a morphism $s=\{s_{(U,T)}\}$. For any section $x\in \cE(U)$ we get an element $x_{(U,T)}:= s_{(U,T)}(x)\in F(U,T)$ for each $(V,T)$ and $V\subset U$, this family $x_{(U,T)}$ gives a morphism $f_U(x)\in {\rm Hom}_{U_{inf}}\left(\uno, F_{|_{U_{inf}}}\right)$. Varying $U\subset X$, $x\in \cE(U)$ we get the morphism
		$
		f: \cE \rightarrow u_* F
		$ associated to $s$.

Then, as $t_*$ is an equivalence of categories and $\beta^{-1}$ commutes with finite limits it follows that $u^*, j^*, \varphi^{-1}$ commute with finite limits.

\begin{lemma}
	The diagram above is commutative.
\end{lemma}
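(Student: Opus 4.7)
The plan is to verify the commutativity $\varphi = u \circ j$ as morphisms of topoi by checking agreement on inverse images, i.e., by exhibiting a natural isomorphism $\varphi^{-1} \cong j^\ast \circ u^\ast$ of functors $X_{\rm an} \to \Xinfx$. (This is equivalent to the analogous statement $\varphi_\ast \cong u_\ast \circ j'_\ast$ by adjunction, but the inverse-image side is more explicit here.)

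First I would unwind both sides on a sheaf $\cE \in X_{\rm an}$. By definition, $\bigl(u^\ast \cE\bigr)_{(U,T)} = t_\ast \bigl(\cE|_U\bigr)$ for every object $(U,T) \in \XSinf$ with structural thickening $t\colon U \hookrightarrow T$. Applying $j^\ast$, which is simply restriction to the subcategory of objects equipped with a morphism to $(X,X)$, gives
\[
\bigl(j^\ast u^\ast \cE\bigr)_{(\alpha,\beta)\colon (U,T) \to (X,X)} \;=\; t_\ast \bigl(\cE|_U\bigr).
\]
On the other hand, by definition of $\varphi^{-1}$,
\[
\bigl(\varphi^{-1} \cE\bigr)_{(\alpha,\beta)\colon (U,T) \to (X,X)} \;=\; \beta^{-1} \cE .
\]

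The crucial identification between these uses that $t\colon U \to T$ is an exact closed immersion of log adic spaces whose underlying immersion is a homeomorphism with nilpotent ideal of definition. Consequently $t_\ast \colon \Sh(U_{\rm an}) \to \Sh(T_{\rm an})$ is an equivalence of categories with quasi-inverse $t^{-1}$. Using the commutativity $\beta \circ t = \alpha$ of the chosen morphism to $(X,X)$, we compute
\[
t_\ast\bigl(\cE|_U\bigr) \;=\; t_\ast \alpha^{-1} \cE \;=\; t_\ast t^{-1} \beta^{-1} \cE \;\stackrel{\sim}{\lra}\; \beta^{-1} \cE,
\]
the last isomorphism being the counit of the equivalence $t_\ast \dashv t^{-1}$. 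This produces, for each object of $\Xinfx$, a canonical isomorphism between the two presheaves, which I would denote $\theta_{(U,T)\to(X,X)}$.

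It then remains to check that $\theta$ is functorial, i.e.\ that it is compatible with the transition morphisms associated to a map $(\alpha_1,\beta_1)\colon (U_1,T_1)\to (U_2,T_2)$ in $\Xinfx$ over $(X,X)$. This reduces to the commutativity of an elementary square relating $\beta_1^{-1}, \beta_2^{-1}, t_{i,\ast}, t_i^{-1}$, which follows formally from $\beta_2\circ t_2 = \alpha_2$, $\beta_1 = \beta_2\circ\beta$ and the naturality of the counit of the adjunction $t_\ast\dashv t^{-1}$. No serious obstacle is expected: all needed facts — the description of $u^\ast$, $j^\ast$, $\varphi^{-1}$, and the equivalence of categories between $\Sh(U_{\rm an})$ and $\Sh(T_{\rm an})$ for a nilpotent thickening — have been established earlier. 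The only bookkeeping step is the sheafification (the formulas for $u^\ast$ and $\varphi^{-1}$ define presheaves which must be sheafified), but since $\theta$ is a natural isomorphism of presheaves it descends to a natural isomorphism of the associated sheaves.
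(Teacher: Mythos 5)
Your argument is correct and is exactly the formal verification that the paper leaves implicit (its proof is only the remark that the argument is formal and follows Berthelot--Ogus): you check $\varphi^{-1}\cong j^\ast\circ u^\ast$ componentwise using that the structural thickening $t\colon U\to T$ is a homeomorphism on underlying spaces, so that $t_\ast$ and $t^{-1}$ are mutually inverse equivalences, combined with the relation $\beta\circ t=\alpha$. The only cosmetic point is that no sheafification is actually needed: by Lemma \ref{lemmaPShinf} the explicit formulas for $u^\ast\cE$ and $\varphi^{-1}\cE$ already define sheaves, since each component is a sheaf on the relevant thickening and the transition maps along open immersions are isomorphisms.
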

\begin{proof}
	

The argument is formal and follows as in \cite{berthelot_ogus}.
	
\end{proof}

\begin{definition}
	Let $j_{!}^{\rm Ab}: \Xinfx^{\rm Ab}\rightarrow \Xinf^{\rm Ab}$ be the functor that associates to an abelian sheaf $\cG\in \Xinfx^{\rm Ab}$ the abelian sheaf defined by
	\[
	(j_{!}^{\rm Ab}\cG)_{(U,T)}:=\displaystyle\bigoplus_{g:(U,T)\rightarrow (X,X)} \cG_g\ .
	\]
	Let $j_{!}: \Xinfx\rightarrow \Xinf$ be the functor that associated to a sheaf $\cG\in \Xinfx$ the sheaf defined by
	\[
	(j_{!}\ \cG)_{(U,T)}:=\displaystyle\coprod_{g:(U,T)\rightarrow (X,X)} \cG_g\ .
	\]
\end{definition}
Observe that the functors $j_*$ and $j^*$ restrict to a pair of functors
\[
j^*: \Xinf^{\rm Ab}\leftrightarrows  \Xinfx^{\rm Ab}: j_*'
\]
that are adjoint.
Moreover $j_!^{\rm Ab}\leftrightarrows j^*$ are adjoint. Indeed a morphism $f\in{\rm  Hom}_{\Xinf^{\rm Ab}}\left(j_!^{\rm Ab} \cG, F\right)$ is a collection of morphisms $f_{(U,T)}: \bigoplus_g \cG_g\rightarrow F_{(U,T)}$ and it corresponds to a collection of morphisms $f_g: \cG_g\rightarrow F_{(U,T)}= (j^{*}F)_g$.

In analogue way $j_!\leftrightarrows j^{*}$.
\begin{remark}\label{Remarkj^*Exact}
	$j_!^{\rm Ab}\leftrightarrows j^{*}\leftrightarrows j_*'$, hence the functor $j^{*}$ commutes with limits and colimits, then $j^{*}$ is exact.
\end{remark}
\begin{lemma}
	If $\cE\in \Xinf^{\rm Ab}$ is an abelian sheaf, then for each $i\in \N$
	\[
	{\rm H}^i\left(\Xinfx, j^{*} \cE\right)\cong {\rm H}^i\left((X,X), \cE\right).
	\] 
\end{lemma}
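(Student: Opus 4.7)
The plan is to exploit the adjunctions $j_!^{\rm Ab}\dashv j^*\dashv j_*'$ recorded above and reduce the statement to the standard fact that the restriction of an injective resolution remains an injective resolution. Concretely, choose an injective resolution $\cE\to I^\bullet$ in $\Xinf^{\rm Ab}$. By Remark \ref{Remarkj^*Exact}, $j^*$ is exact, so $j^*\cE\to j^*I^\bullet$ is still a resolution; the key point is then to show that each $j^*I^n$ is injective in $\Xinfx^{\rm Ab}$.

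For this I would use that the left adjoint $j_!^{\rm Ab}$ of $j^*$, given by the termwise direct sum $(j_!^{\rm Ab}\cG)_{(U,T)}=\bigoplus_{g:(U,T)\to(X,X)}\cG_g$, is exact. Indeed direct sums are exact in the category of abelian sheaves, and since $j^*$ is right adjoint to an exact functor, it carries injectives to injectives. Thus $j^*I^\bullet$ is genuinely an injective resolution of $j^*\cE$ in $\Xinfx^{\rm Ab}$, and we obtain
\[
{\rm H}^i(\Xinfx, j^*\cE)={\rm H}^i\bigl(\Gamma(\Xinfx, j^*I^\bullet)\bigr).
\]

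It remains to identify the two complexes of abelian groups computing the respective cohomologies. The localized site $(X/S)_{\rm inf, (X,X)}^{\rm log}$ has a final object, namely the identity morphism ${\rm id}_{(X,X)}\colon(X,X)\to(X,X)$; consequently, for any abelian sheaf $\cG=\{\cG_g\}$ on $\Xinfx$ one has $\Gamma(\Xinfx,\cG)=\cG_{{\rm id}_{(X,X)}}$. Applying this with $\cG=j^*\cF$ and unwinding the definition $j^*\cF=\{\cF_{(U,T)}\}_{(U,T)\to(X,X)}$, we get a functorial identification
\[
\Gamma(\Xinfx, j^*\cF)=\cF_{(X,X)}=\Gamma\bigl((X,X),\cF\bigr).
\]
Applied levelwise to the resolution $I^\bullet$, this gives an equality of complexes $\Gamma(\Xinfx, j^*I^\bullet)=\Gamma((X,X), I^\bullet)$, whose $i$-th cohomology is by definition ${\rm H}^i((X,X),\cE)$. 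Chaining these equalities proves the lemma.

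The only step requiring care is the verification that $j^*$ preserves injectives, but as noted this follows at once from the exactness of the left adjoint $j_!^{\rm Ab}$; once this is in hand the remainder of the argument is the purely formal localization statement in topos theory.
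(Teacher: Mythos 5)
Your argument is correct and essentially identical to the paper's: both reduce to the degree-zero identification by taking an injective resolution, using that $j^{*}$ is exact and preserves injectives because its left adjoint $j_!^{\rm Ab}$ is exact. The only cosmetic difference is in degree zero, where you invoke the final object ${\rm id}_{(X,X)}$ of the localized site while the paper uses the equivalent adjunction identity ${\rm Hom}(h_{(X,X)},\cE)={\rm Hom}(j_!\uno,\cE)\cong{\rm Hom}(\uno,j^{*}\cE)$.
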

\begin{proof}
	For $i=0$ the proof is an easy computation:
	\begin{multline*}
		\Gamma\left( (X,X), \cE\right)={\rm Hom}_{\Xinf}\left(h_{(X,X)}, \cE\right)= {\rm Hom}_{\Xinf}\left(j_! \uno, \cE\right)\\
		\cong {\rm Hom}_{\Xinfx}\left(\uno, j^{*}\cE \right)=\Gamma\left( \Xinfx, j^{*}\cE\right).
	\end{multline*}

	For $i>0$ observe that $j_!^{\rm Ab}$ is exact, hence his right adjoint $j^{*}$ takes injectives to injectives (lemma 12.29.1 \cite{stack_proj})and it is exact by the remark \ref{Remarkj^*Exact}, hence we can take an injective resolution $\cI^\bullet$ of $\cE$ and
	\[
	{\rm H}^i\left((X,X),\cE\right)={\rm H}^i\left( \cI^\bullet(X,X)\right)
	\]
	moreover (by the previous observations) $j^{*} \cI^\bullet$ is an injective resolution of $j^*\cE$ and 
	\[
	{\rm H}^i\left( \Xinfx, j^{*}\cE\right)= {\rm H}^i\left((j^{*}\cI^\bullet)(id_{(X,X)})\right)= {\rm H}^i\left(\cI^\bullet(X,X)\right).
	\]
	In the last equality we used the computation with $i=0$.
\end{proof}

Observe that $\varphi_*, \varphi^{-1}$ restrict to a pair of functors between abelian sheaves (that we also denote with $\varphi_*$ and $\varphi^{-1}$). Moreover also the pair
\[
\varphi^{-1}: X_{\rm ad}^{\rm Ab}\leftrightarrows \Xinfx^{\rm Ab}: \varphi_*
\]
is a pair of adjunction.

\begin{proposition}
	For any $F\in \Xinf$ we have that $\varphi_* j^* F\cong F_{(X,X)}$, moreover $\varphi_*$ is exact and for any abelian sheaf $F\in \Xinf^{Ab}$
	\[
	{\rm H}^i\left((X,X), F\right)\cong {\rm H}^i\left(X_{ad}, F_{(X,X)}\right).
	\]
\end{proposition}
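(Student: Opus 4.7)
The plan is to prove the three parts of the Proposition in turn, with the last part being the main content.

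First, the identification $\varphi_* j^* F \cong F_{(X,X)}$ is essentially tautological from the definitions: by construction $(j^*F)$ is the sheaf in $\Xinfx$ whose value at an object $g\colon (U,T)\to (X,X)$ is $F_{(U,T)}$; applying $\varphi_*$ picks out the value at the terminal object $\mathrm{id}_{(X,X)}$, giving $F_{(X,X)}$. I would also verify here that $\Gamma_{\Xinfx} = \Gamma_{X_{\rm an}}\circ \varphi_*$, because $\mathrm{id}_{(X,X)}$ is a terminal object in the underlying category of $(X/S)^{\rm log}_{\rm inf,(X,X)}$, so the unit sheaf $\uno$ of $\Xinfx$ is representable by $\mathrm{id}_{(X,X)}$.

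Next, for exactness of $\varphi_*$. Since $\varphi^{-1}$ is exact (the morphisms $\beta^{-1}$ are exact), $\varphi$ is a morphism of topoi and $\varphi_*$ is automatically left exact and preserves injectives. The key additional observation is that $\varphi_*$ is also right exact. I would argue this by identifying the fibre-functor points of $\Xinfx$: points of $\Xinfx$ consist of pairs $(g,y)$ with $g\colon (U,T)\to (X,X)$ and $y$ a point of $T$, and the stalk of a sheaf $\cG$ at $(g,y)$ is $(\cG_g)_y$. Exactness of a sequence in $\Xinfx^{\rm Ab}$ is equivalent to exactness at every such stalk. In particular, exactness at points of the form $(\mathrm{id}_{(X,X)},y)$ with $y\in X$ gives exactness of the sequence $\varphi_*(\cG_\bullet)$ on $X_{\rm an}$. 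Alternatively, one can check directly that an epimorphism $\cG_1\to \cG_2$ in $\Xinfx$ restricted to the object $\mathrm{id}_{(X,X)}$ remains locally surjective, since the covers of $\mathrm{id}_{(X,X)}$ in the site correspond to open coverings of $X$.

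For the cohomology isomorphism, I would combine the preceding Lemma (giving ${\rm H}^i((X,X), F)\cong {\rm H}^i(\Xinfx, j^*F)$) with the Grothendieck composite functor spectral sequence for $\Gamma_{X_{\rm an}}\circ \varphi_*=\Gamma_{\Xinfx}$: this gives
\[
E_2^{p,q}={\rm H}^p\bigl(X_{\rm an}, R^q\varphi_*(j^*F)\bigr)\ \Rightarrow\ {\rm H}^{p+q}(\Xinfx, j^*F).
\]
Here the use of the spectral sequence is legal because $\varphi_*$ preserves injectives. Exactness of $\varphi_*$ (established in the previous step) forces $R^q\varphi_*=0$ for all $q>0$, so the spectral sequence degenerates to isomorphisms
\[
{\rm H}^p(\Xinfx, j^*F)\cong {\rm H}^p\bigl(X_{\rm an}, \varphi_* j^*F\bigr)\cong {\rm H}^p(X_{\rm an}, F_{(X,X)}),
\]
using the identification $\varphi_* j^*F\cong F_{(X,X)}$ from the first step. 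Chaining with the Lemma yields the claimed isomorphism ${\rm H}^i((X,X),F)\cong {\rm H}^i(X_{\rm an}, F_{(X,X)})$.

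The main obstacle is the proof of exactness of $\varphi_*$; everything else is a formal spectral sequence argument and bookkeeping with the definitions. The subtle point is that $\varphi_*$ is not the usual global sections functor (which would only be left exact) but rather evaluation at a terminal object of the site producing a genuine sheaf on $X$, and it is the topos-theoretic description via stalks that supplies the needed right exactness.
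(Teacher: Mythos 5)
Your proposal is correct and follows essentially the same route as the paper: identify $\varphi_*j^*F$ with $F_{(X,X)}$ directly from the definitions, establish exactness of $\varphi_*$ (the paper does this exactly as in your "alternative" argument, noting that an epimorphism in $\Xinfx^{\rm Ab}$ is one that is an epimorphism componentwise at each $g$, hence in particular at $\mathrm{id}_{(X,X)}$, while $\varphi_*$ preserves limits as a right adjoint), and then degenerate the Grothendieck spectral sequence for $\Gamma_{X_{\rm an}}\circ\varphi_*=\Gamma_{\Xinfx}$ using that $\varphi_*$ is exact and preserves injectives. The stalkwise formulation of exactness is an equivalent packaging of the same observation, so there is no substantive difference.
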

\begin{proof}
	For $F\in \Xinf$, then
	\[
	\varphi_* j^* F=(j^*F)_{id_{(X,X)}}=F_{(X,X)}
	\]
	$\varphi_*$ preserves limits since it is right adjoint to the functor $\varphi^{-1}$. Moreover $\varphi_*$ preserves epimorphisms: $f: F_1\rightarrow F_2$ is an epimorphism of abelian sheaves $F_1,F_2\in \Xinfx^{Ab}$ iff for each $g:(U,T)\rightarrow (X,X)$ the map $f_g$ is an epimorphism of sheaves in $T_{\rm an}$, hence if $f$ is an epimorphism, then $\varphi_* f=f_{id_{(X,X)}}$ is an epimorphism too. Hence $\varphi_*$ preserves limits and epimorphisms, then it preserves all exact sequences and it is exact.
	
	For the last part of the statement let $F\in \Xinf^{\rm Ab}$, observe that $\varphi^{-1}$ commutes with colimits (since it's left adjoint) and finite limits, hence it is exact. Hence $\varphi_*$ takes injectives to injectives and it is exact, then the spectral sequence
	\[
	R^p \Gamma\left( X_{\rm ad}, -\right) \circ R^q \varphi_* \Rightarrow R^{p+q} \left( \varphi_* \circ\Gamma\left( X_{\rm ad}, -\right)\right)= R^{p+q} \left( \Gamma\left( \Xinfx, -\right)\right).
	\]
	degenerates to
	\[
	\left(R^i \Gamma\left( X_{\rm ad}, -\right) \right)\circ \varphi_*\cong R^{i} \left( \Gamma\left( \Xinfx, -\right)\right).
	\]
	Hence for any $F\in \Xinf^{\rm Ab}$
	\[
	{\rm H}^i((X,X), F)\cong H^i(\Xinfx, j^{-1} F)\cong {\rm H}^i(X_{\rm an}, \varphi_*j^{-1}F).
	\]
\end{proof}
\begin{corollary}
	For any $\cG\in \Xinfx^{Ab}$ the abelian sheaf $j_*\cG$ is acyclic for $u_*$.
\end{corollary}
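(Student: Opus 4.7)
The plan is to apply the Grothendieck composition spectral sequence to the factorization $\varphi = u \circ j$ of morphisms of topoi, namely
\[
E_2^{p,q} = R^pu_*(R^qj_*\cG) \Longrightarrow R^{p+q}\varphi_*\cG,
\]
and to combine it with the exactness of $\varphi_*$ established in the preceding proposition. First I would verify the validity of this spectral sequence by showing that $j_*$ takes injectives of $\Xinfx^{\rm Ab}$ to $u_*$-acyclics of $\Xinf^{\rm Ab}$: since $j^*$ has the evidently exact left adjoint $j_!^{\rm Ab}$, namely the pointwise direct sum over $\text{Hom}_\Xinf(-,h_{(X,X)})$, the functor $j^*$ preserves injectives; by the standard adjunction its right adjoint $j_*$ therefore preserves injectives as well, and these are in particular $u_*$-acyclic. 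Next, the proposition tells us that the abutment $R^{p+q}\varphi_*\cG$ vanishes for every $p+q>0$.

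The argument thus reduces to proving the vanishing $R^qj_*\cG = 0$ for $q>0$: once this is in hand the spectral sequence collapses onto the bottom row, so that each $R^pu_*(j_*\cG)=R^p\varphi_*\cG=0$ for $p>0$, which is exactly the corollary. I would attack this vanishing by identifying $\Xinfx$ with the slice topos $\Xinf_{/h_{(X,X)}}$; under this identification $R^qj_*\cG$ is the sheafification of
\[
(U,T)\ \longmapsto\ H^q\bigl(\Xinf_{/h_{(U,T)}\times h_{(X,X)}},\ \cG|_{\bullet}\bigr).
\]
Working locally on $(U,T)\in \XSinf$, log smoothness of $X/S$ (exactly as exploited in Lemma \ref{lemma1isCoEq}) produces a lift $T\to X$ extending the inclusion $U\hookrightarrow X$, i.e.\ a morphism $(U,T)\to (X,X)$ in $\XSinf$. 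Such a lift is a section of the projection $h_{(U,T)}\times h_{(X,X)}\to h_{(U,T)}$, so the corresponding slice topos retracts onto $\Xinf_{/h_{(U,T)}}$; the latter admits $(U,T)$ as a terminal object, and hence its higher cohomology with values in the restricted $\cG$ collapses to degree zero.

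The hard part will be assembling this local vanishing into a global statement after sheafification. Different choices of lift $T\to X$ yield different retractions, and any two lifts $g,g':T\to X$ differ by an element of the $\mathcal{H}om(\omega^1_{X/S}|_U,J)$-torsor of extensions of $U\hookrightarrow X$ across the square-zero thickening $U\hookrightarrow T$ (with $J$ the defining ideal). One must check that the resulting automorphisms merely permute the factors of the presheaf and do not affect the vanishing of $H^q$ in positive degree; and then that the vanishing, a priori only established on a covering family of $\XSinf$, passes coherently to the sheafification to yield $R^qj_*\cG=0$ on all of $\Xinf$. This functoriality in the lift, together with the passage to the associated sheaf, is the principal technical hurdle of the argument.
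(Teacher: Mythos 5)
Your overall architecture coincides with the paper's: factor $\varphi=u\circ j$, run the Grothendieck spectral sequence $R^pu_*\bigl(R^qj_*\cG\bigr)\Rightarrow R^{p+q}\varphi_*\cG$, use the exactness of $\varphi_*$ from the preceding proposition to kill the abutment in positive degrees, and reduce everything to the vanishing $R^qj_*\cG=0$ for $q>0$. One logical slip in the part you do carry out: to have the spectral sequence you need $j_*$ to send injectives to $u_*$-acyclics, and the correct reason is that the \emph{left adjoint of $j_*$}, namely $j^*$, is exact (Remark \ref{Remarkj^*Exact}); the fact that $j^*$ itself preserves injectives (because $j_!^{\rm Ab}$ is exact) is a different statement and does not transfer to $j_*$ ``by the standard adjunction.''

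The genuine gap is the key step $R^qj_*\cG=0$ for $q>0$, which you explicitly leave open, and the sketch you give for it would not close it. A local section $s$ of the projection $\pi\colon h_{(U,T)}\times h_{(X,X)}\to h_{(U,T)}$ satisfies $\pi\circ s=\mathrm{id}$ and therefore exhibits the cohomology of the slice over $h_{(U,T)}$ as a \emph{direct summand} of the cohomology of the slice over the product --- the retraction points the wrong way for deducing vanishing of the latter. Moreover, ``the slice topos admits a terminal object, hence its higher cohomology collapses'' is false: the slice over $h_{(U,T)}$ always has a terminal object, yet its cohomology is $\mathrm{H}^q(T_{\rm an},-)$ by the preceding proposition, which is nonzero in general. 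What actually does the work in the paper is the concrete description of $j_*$ (established in the lemma immediately following the corollary): on a thickening $(U,T)$ admitting a section to $(X,X)$ one has $(j_*\cG)_{(U,T)}=\lim_{n}\bigl(p^n_T\bigr)_*\cG_{p^n_X}$, where $p^n_T$ is a homeomorphism on underlying spaces, so exactness of $j_*$ can be checked directly on such thickenings, which cover every object of $\XSinf$. Without an argument of this kind --- or some other proof that $j_*$ is exact --- your spectral sequence does not collapse onto the bottom row and the corollary is not proved.
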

\begin{proof}
	The functor $j^{*}$ commutes with colimits and finite limits, then it is exact, hence $j_*$ maps injective objects to injective objects and there is a spectral sequence
	\[
	R^p u_* R^q j_* \Rightarrow R^{p+q} (u\circ j)_*\cong R^{p+q} \varphi_*.
	\]
	But $j_*$ is also exact, via its explicit description and the fact that locally any $(U,T)$ has a section $(U,T)\rightarrow (X,X)$. Moreover $\varphi_*$ is exact, hence the spectral sequence degenerates to
	\[
	\left(R^i u_*\right)  j_* \cG= 0
	\]
	for any $i>0$ and $\cG\in \Xinfx$.
\end{proof}

Now we give another description of the functor $j_*'$.
\begin{definition}
	Let $j_*: \Xinfx\longrightarrow \Xinf$ be the functor that sends a sheaf $\cG\in \Xinfx$ to the sheaf
	\[
	j_*\cG:=\{\lim_{n\in \N} \left(p^n_T\right)_*\cG_{p^n_X}\}
	\]
	where $p^n_T: P^n_{U/U\times_S T}\rightarrow T$ and $p^n_X: D^n_{U}(U\times_S T)\rightarrow (X,X)$ are the two projections.
\end{definition}
\begin{lemma}
	There is a natural isomorphism $j_*\cong j_*'$; in particular for any $\cG\in \Xinfx^{Ab}$, the sheaf $j_*\cG$ is acyclic for $u_*$.
\end{lemma}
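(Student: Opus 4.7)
The plan is to construct a natural transformation between the two functors and then reduce to a local statement where the log smoothness of $f\colon X \to S$ provides enough lifts to make the comparison explicit; the acyclicity statement will then be a formal consequence combined with the previous Corollary.

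To build the natural transformation, observe that for each $(U,T) \in \XSinf$ and each morphism $g:(U,T)\to(X,X)$ in $\Xinfx$, the morphism $g$ factors canonically through $D_U^n(U\times_S T)$ for $n$ at least the order of nilpotence of $U\hookrightarrow T$. Indeed, since $U$ and $T$ share the same underlying topological space and $U\subset X$ is open, the map $T\to X$ lifting $U\to X$ factors through $U\hookrightarrow X$, and hence the pair $(g,\mathrm{id}_T)\colon T\to U\times_S T$ extends the graph embedding $U\hookrightarrow U\times_S T$; by the nilpotence of the latter this lifts uniquely to a morphism $T\to P^n_{U/U\times_S T}$, which is then a section of $p^n_T$. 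Thus every $g$ is the composition $p^n_X\circ s_g$ for a unique section $s_g$ of $p^n_T$ (for $n$ sufficiently large). This factorisation gives a morphism $g\to p^n_X$ in $\Xinfx$ and therefore a restriction map $\cG_{p^n_X}\to \cG_g$ on $\underline U$. Taking the product over all $g$ and then the inverse limit in $n$ produces the sought natural transformation $\eta\colon j_*\cG \longrightarrow j_*'\cG$.

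Next I would verify that $\eta$ is an isomorphism by working locally on $(U,T)$. By log smoothness of $f$, local lifts $g\colon T\to X$ of $U\hookrightarrow X$ exist, and by the universal property of $P^n_{U/U\times_S T}$ the set of all such lifts over an open $V\subset \underline U$ is in natural bijection with the set of $V$-sections of $p^n_{T,V}$; moreover, under this bijection, $\cG_g$ is the pullback $s_g^{-1}\cG_{p^n_X}$, and since $s_g$ is a homeomorphism onto its image it is simply a restriction to the section. The sheafification of the presheaf $(U,T)\mapsto\prod_g \cG_g$ therefore locally identifies with $(p^n_T)_*\cG_{p^n_X}$, because evaluation at each section defines a compatible family of restrictions from the universal object $\cG_{p^n_X}$, and conversely every compatible family on a sufficiently fine cover patches to a section of the universal object. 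Passing to the inverse limit in $n$ recovers $j_*\cG \cong j_*'\cG$ on $(U,T)$ and the local comparisons glue.

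For the acyclicity statement, the previous Corollary (whose proof uses the explicit $\prod$-description of what we now identify as $j_*'$, together with the fact that $j^*$ is exact and that local lifts exist) shows that $j_*'\cG$ is acyclic for $u_*$. By the natural isomorphism $j_*\cong j_*'$ just constructed, the same is true of $j_*\cG$. The hardest part of the plan is the second paragraph: carefully matching the sheafification of a product indexed by a set of lifts that varies with the open, with a single pushforward from the universal infinitesimal neighbourhood. The key technical input is the bijection between lifts and sections of $p^n_T$, which requires keeping track of the nilpotent order and of the pro-system structure as $n\to\infty$, and the identification must be verified to be compatible with the restriction morphisms in $\XSinf$ used in the sheafification.
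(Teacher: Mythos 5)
Your first paragraph is sound and is essentially the same key observation the paper uses: a morphism $g$ from a thickening into $(X,X)$ (equivalently, into $(U,T)$) factors through the envelope $D_U^n(U\times_S T)$ for $n$ at least the nilpotence order of $U\hookrightarrow T$, which is the content of Lemma \ref{lemmaD_X(Y)=Hom (-,Y)}. This does produce the natural transformation $\eta\colon j_*\cG\to j_*'\cG$, and the deduction of acyclicity from the preceding Corollary is correct.

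The gap is in your second paragraph, in two places. First, the assertion that ``under this bijection, $\cG_g$ is the pullback $s_g^{-1}\cG_{p^n_X}$'' is not justified and is false for a general abelian sheaf on $\XSinfx$: by the sheaf description in Lemma \ref{lemmaPShinf}, the restriction map $\phi_\gamma\colon\gamma_t^{-1}F_{g_2}\to F_{g_1}$ is only required to be an isomorphism when $\gamma_t$ is an open immersion, whereas $s_g$ is a closed immersion (already for $\cO_{X_{\rm inf}}$ the map $s_g^{-1}\cO_{P^n}\to\cO_T$ is a non-injective surjection). You appear to be implicitly assuming $\cG$ is a crystal, but the Lemma is stated for arbitrary $\cG\in\Xinfx^{\rm Ab}$. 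Second, and more seriously, the converse direction --- ``every compatible family on a sufficiently fine cover patches to a section of the universal object'' --- is exactly where the content lies, and it cannot be extracted from the naive product presheaf $(U,T)\mapsto\prod_{g}\cG_g$: the objects $D_U^n(U\times_S T)|_V$ are not among the indices $g\colon(V,T_V)\to(X,X)$ of that product (their targets differ from $T_V$), so a local compatible family does not directly hand you the elements $x_n\in\cG_{p^n_X}$. The paper resolves this by never computing the sheafification directly: it writes $(j_*'\cG)(U,T)=\Hom_{\Xinf}(h_{(U,T)},j_*'\cG)\cong\Hom_{\Xinfx}(j^{*}h_{(U,T)},\cG)$ via the adjunction, proves $j^{*}h_{(U,T)}\cong\colim_n h_{D_U^n(U\times_S T)}$ using the envelope lemma, and concludes $(j_*'\cG)(U,T)\cong\lim_n\cG\bigl(D_U^n(U\times_S T)\bigr)=(j_*\cG)(U,T)$ by Yoneda. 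A section of $j_*'\cG$ is thus a compatible family indexed by \emph{pairs} (object of $\XSinfx$, morphism of its source into $(U,T)$), which does include the universal elements over the envelopes; your identification would go through if you replaced the product presheaf by this Hom description, i.e.\ if you ran the adjunction argument.
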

\begin{proof}
	First we want to describe, for a thickening $(U,T)$, the sheaf $j^{*}h_{(U,T)}$. Let $g:(U',T')\rightarrow (X,X)\in \XSinfx$, then the set
	\[
	\left(j^{*}h_{(U,T)}\right)(U',T')=Hom_{\XSinf}\left((U',T'), (U,T)\right)
	\]
	corresponds to the set of commutative diagrams between $(U',T')\rightarrow (X,X)$ and $(U, U\times_S T)\rightarrow (X,X)$, thanks to the Lemma \ref{lemmaD_X(Y)=Hom (-,Y)} we get
	\[
	\left(j^{*}h_{(U,T)}\right)(U',T')\cong\displaystyle\colim_{n\in \N}  {\rm Hom}_{\XSinfx}\left( (U',T')\rightarrow (X,X), D_U^n(U\times_S T)\rightarrow (X,X) \right).
	\]
	Then $j^{*}h_{(U,T)}= \colim_{n\in \N}  {\rm Hom}_{\XSinfx}\left( -\ , D_U^n(U\times_S T)\rightarrow (X,X) \right)$ is colimit of representable objects.
	
	Let $\cG\in \Xinfx$, then for any thickening $(U,T)$ we get
	\begin{multline*}
		(j_{*}'\cG){(U,T)}\cong {\rm Hom}_{\Xinf}\left(h_{(U,T)}, j_{*} \cG \right)\cong {\rm Hom}_{\Xinfx}( j^{-1} h_{(U,T)} , \cG)\\
		= \lim_{n\in \N} \cG\left( D^n_{U}(U\times_S T)\rightarrow (X,X)\right)= (j_*\cG)(U,T).
	\end{multline*}
	
\end{proof}

Observe that  $\varphi_*$ restricts to a functor
\[
\varphi_*: {\rm QCoh}\bigl(\cO_{\Xinfx}\bigr) \longrightarrow {\rm QCoh}\bigl(\cO_{X_{\rm ad}}\bigr)\ ,
\]
we can define $\varphi^*$, an analogue of $\varphi^{-1}$.
\begin{definition}
	Let $\varphi^*: {\rm QCoh}\bigl(\cO_{X_{\rm an}}\bigr)\longrightarrow {\rm QCoh}\bigl(\cO_{\Xinfx}\bigr)$ be the functor that associates to a quasi-coherent $\cO_{X}-$module $\cE$ the $\cO_{\Xinfx}-$module defined by
	\[
	(\varphi^*\cE)_{g}:= \beta^*\cE
	\]
	where $g=(\alpha,\beta):(U,T)\rightarrow (X,X)$.
\end{definition}
One can prove that $\varphi^*\leftrightarrows \varphi_*$ following the proof of the adjunction $\varphi^{-1}\leftrightarrows \varphi_{*}$.

\begin{lemma}
\label{lemma:cross}
Let $(U,{\cal M}_U,\alpha_U)$ and $(V,{\cal M}_V,\alpha_V)$ be affinoid adic spaces with fs log structures and with $U={\rm Spa}(A,A^+)$ and $V={\rm Spa}(B,B^+)$. Let $f\colon (U,{\cal M}_U,\alpha_U)\to(V,{\cal M}_V,\alpha_V)$ be a morphism of log adic spaces.

Let $J\subset A$ be an ideal such that $J^s=0$ and $p\colon A \rightarrow C:=A/J$. Let $W:={\rm Spa}(C,C^+)$ be the quotient adic space  with induced log structure $({\cal M}_W,\alpha_W)$ so that $\iota\colon (W,{\cal M}_W,\alpha_W)\to (U,{\cal M}_U,\alpha_U)$ is an exact closed immersion. Suppose that there is a section  $j\colon (U,{\cal M}_U,\alpha_U)\to (W,{\cal M}_W,\alpha_W) $ of $p$, defined over $(V,{\cal M}_V,\alpha_V)$.
	
	Let
	$I_1:=Ker\bigl((C\hat{\otimes}_BC)^{\rm ex} \rightarrow C\bigr)$ and $I_2:=Ker\bigl((A\hat{\otimes}_BC)^{\rm ex} \rightarrow C\bigr)$.
	Then, for every $n\ge 0$ we have
	$$
		A\hat{\otimes}_{i,C} \frac{(C\hat{\otimes}_BC)^{\rm ex}}{I_1^n}\cong 	 \frac{(A\hat{\otimes}_BC)^{\rm ex}}{I_2^n}.
	$$
  
  \end{lemma}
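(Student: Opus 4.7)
My plan is to reduce the problem to an explicit local chart computation, carry out the ring-theoretic identification before taking quotients, and then match the ideal filtrations using the nilpotency of $J$.

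First, since $\iota\colon W \hookrightarrow U$ is an exact closed immersion of log adic spaces over $V$, I would invoke Lemma \ref{lemma:logsmooth} to pass to local charts in which both $U$ and $W$ admit the same fine saturated monoid $M$ (with $M \to A$ on $U$ inducing $M \to C$ on $W$ via $p$). Under such charts, the exactifications of both closed immersions $W \hookrightarrow W \times_V W$ and $W \hookrightarrow U \times_V W$ are obtained by base change along the common monoid-algebra map $B\langle M \oplus_N M\rangle \to B\langle M \oplus_N M^{\rm gp}\rangle$. Combining this with the elementary identification $A \otimes_C (C \otimes_B C) \cong A \otimes_B C$ sending $a \otimes c_1 \otimes c_2 \mapsto a\, j^*(c_1) \otimes c_2$, where the $C$-algebra structure on $A$ comes from the retraction $j^*\colon C \to A$ supplied by the hypotheses, one obtains the un-quotiented ring isomorphism
\[
A \hat{\otimes}_{j^*, C, i} (C \hat{\otimes}_B C)^{\rm ex} \;\cong\; (A \hat{\otimes}_B C)^{\rm ex}.
\]

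Next I would match the ideals. A direct computation shows that under this isomorphism the image of $A \hat{\otimes}_C I_1$ is the ideal $I_2' \subset (A \hat{\otimes}_B C)^{\rm ex}$ defined as the kernel of the alternative augmentation $a \otimes c \mapsto a\, j^*(c)$, which corresponds to the graph of the retraction $j$ rather than that of $\iota$. This yields an isomorphism $A \hat{\otimes}_{j^*, C, i} (C \hat{\otimes}_B C)^{\rm ex}/I_1^n \cong (A \hat{\otimes}_B C)^{\rm ex}/(I_2')^n$ at each level $n$.

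The main obstacle is then to compare $(I_2')^n$ with $I_2^n$, since these ideals differ by the ideal generated by $J$, with $I_2 = I_2' + J \cdot (A \hat{\otimes}_B C)^{\rm ex}$. This is precisely where the nilpotency hypothesis $J^s = 0$ becomes essential: a binomial expansion of $(I_2' + J)^n$ has only finitely many surviving terms, and using the relation $J \cdot I_2' \subseteq I_2'$ together with a careful induction exploiting $J^s = 0$, one argues that the filtrations by powers of $I_2$ and $I_2'$ produce the same quotients. This matching of filtrations is the key technical step: the individual ideals are genuinely distinct, but the nilpotent discrepancy coming from $J$ is absorbed in high enough powers, yielding the stated isomorphism.
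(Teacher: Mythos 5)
Your chart reduction and the un-quotiented isomorphism $A\hat{\otimes}_{i,C}(C\hat{\otimes}_BC)^{\rm ex}\cong (A\hat{\otimes}_BC)^{\rm ex}$ follow essentially the same route as the paper: both rest on arranging a common chart monoid for $U$ and $W$ (the paper obtains this from [DLLZ] Prop.\ 2.3.21--2.3.22 together with the exactness of $\iota$ and the monoid splitting induced by $j$, a point you assert rather than derive --- Lemma \ref{lemma:logsmooth} by itself does not give it), and on the observation that both exactifications are base-changed from the same map of monoid algebras. You then go further than the paper, whose proof ends with ``the claim follows'': your identification of the image of $A\hat{\otimes}_C I_1$ with $I_2':=\mathrm{Ker}\bigl((A\hat{\otimes}_BC)^{\rm ex}\to A\bigr)$ (the graph ideal of $j$) and the relation $I_2=I_2'+J\cdot(A\hat{\otimes}_BC)^{\rm ex}$ are both correct, and they expose exactly the subtlety the paper glosses over.

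The final step, however, fails as stated: the quotients by $(I_2')^n$ and by $I_2^n$ are in general not isomorphic for any finite $n$. Take trivial log structures, $B=\Q_p$, $C=\Q_p\langle T\rangle$, $A=C[\epsilon]/(\epsilon^2)$, $J=(\epsilon)$, $i\colon C\hookrightarrow A$ the inclusion. Then $R:=A\hat{\otimes}_BC=\Q_p\langle T_1,T_2\rangle[\epsilon]/(\epsilon^2)$, $I_2'=(\zeta)$ with $\zeta=T_1-T_2$, and $I_2=(\zeta,\epsilon)$; since $\epsilon^2=0$ one gets $I_2^n=(\zeta^n,\zeta^{n-1}\epsilon)$, so $R/(I_2')^n$ is free of rank $2n$ over $\Q_p\langle T_2\rangle$ while $R/I_2^n$ is free of rank $2n-1$. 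What your binomial expansion actually yields is only the intertwining of the two filtrations,
$$(I_2')^n\ \subseteq\ I_2^n\ \subseteq\ (I_2')^{\,n-s+1}\qquad (n\ge s-1),$$
hence an isomorphism after passing to inverse limits over $n$:
$\lim_n R/I_2^n\cong\lim_n R/(I_2')^n\cong\lim_n A\hat{\otimes}_{i,C}(C\hat{\otimes}_BC)^{\rm ex}/I_1^n$. That pro-isomorphism is what is actually used in Theorem \ref{TheoremLinearizationIsAyclicForU}, where both sides occur under $\lim_n$, and it is the correct content here; the term-by-term isomorphism you claim (and which the lemma, as literally written, also asserts) is contradicted by the example above. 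So the gap is not in your structural analysis but in the assertion that the nilpotent discrepancy is ``absorbed'' at each fixed level $n$ --- it is only absorbed in the limit.
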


  \begin{proof} By \cite{diao_lan_liu_zhu} Prop. 2.3.21 and 2.3.22 we can take charts $M \to A$, $N \to B$, $Q\to C$ with maps of fs monoids $N\to M\to Q$ and a splitting $Q\to M$, as monoids over $N$, providing charts for $\iota$ and $j$. Since $\iota$ is exact we may take $M$ such that $M\cong Q\times_{Q^{\rm gp}} M^{\rm gp}$. The splitting $Q\to M$ provides a direct summand decomposition  $M^{\rm gp}=Q^{\rm gp}\oplus R$ for some abelian group $R$. Hence, $M\cong Q\oplus R $ as $Q\to Q^{\rm gp}$ is injective by assumption.  In conclusion, we can take $M=Q$ as chart. We compute $(M\oplus_N Q)^{\rm ex}=(Q\oplus_N Q)^{\rm ex}=Q\oplus_N Q^{\rm gp}$.  Since $$(W\times_V W)^{\rm ex} \cong  (W\times_V W)\times_{V\langle Q\oplus_N Q\rangle}V\langle(Q\oplus_N Q)^{\rm ex}\rangle \cong (W\times_V W)\times_{V\langle Q\oplus_N Q\rangle}V\langle Q\oplus_N Q^{\rm gp}\rangle$$ and  $$(U\times_V W)^{\rm ex} \cong  (U\times_V W)\times_{V\langle M\oplus_N Q\rangle}V\langle (M\oplus_N Q)^{\rm ex}\rangle\cong    (U\times_V W)\times_{V\langle Q\oplus_N Q\rangle}V\langle Q\oplus_N Q^{\rm gp}\rangle,$$ By \cite{diao_lan_liu_zhu}  Ex. 2.3.26, we conclude that $(U\times_V W)^{\rm ex} \cong U\times_V (W\times_V W)^{\rm ex}$. The claim follows. 
  
\end{proof}

\

\begin{theorem}\label{TheoremLinearizationIsAyclicForU}
	For any flat $\cO_X-$module $\cE\in X_{\rm ad}$ there is an isomorphism
	\[
	L(\cE)_{inf}\cong j_*\varphi^* \cE,
	\]
	in particular $L(\cE)_{inf}$ is acyclic for $u_*$.
\end{theorem}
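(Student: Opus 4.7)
The plan is to construct a canonical $\cO_{X_{\rm inf}}$-linear isomorphism $\Phi\colon L(\cE)_{\rm inf}\stackrel{\sim}{\longrightarrow} j_*\varphi^*\cE$; the acyclicity claim for $u_*$ then follows at once from the preceding corollary, which says that $j_*\cG$ is $u_*$-acyclic for every $\cG\in \Xinfx^{\rm Ab}$, applied to $\cG=\varphi^*\cE$.

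I will first describe both sides explicitly on an arbitrary pair $(U,T)\in\XSinf$ and build $\Phi$ as the inverse limit of maps at each level $n$. By the formula for $j_*$, we have $(j_*\varphi^*\cE)_{(U,T)}=\lim_n(p^n_T)_*(p^n_X)^*\cE$; since $P^n_{U/U\times_S T}$ and $T$ share the same underlying topological space (namely that of $U$), the projection $p^n_T$ is a nilpotent closed immersion, $(p^n_T)_*$ is exact, and the $n$-th term is simply the $\cO_T$-module $\cO_{P^n_{U/U\times_S T}}\otimes_{\cO_U}(\cE|_U)$ with $\cO_T$-action coming via $p^n_T$. On the other hand, $L(\cE)_{\rm inf,(U,T)}=\lim_n L_n(\cE)_{\rm inf,(U,T)}$, where the crystal $L_n(\cE)_{\rm inf}$ is attached by Lemma \ref{LemmaCrystallization} to $L_n(\cE)=\cP^n_X\otimes_{\cO_X}\cE$ equipped with its canonical stratification $\epsilon^{\cE,\rm can}_{m,n}$. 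Evaluating this crystal at the thickening $D^n_U(U\times_S T)$ via the morphism $p^n_X\colon D^n_U(U\times_S T)\to(X,X)$ yields, by the crystal axiom, an identification $L_n(\cE)_{\rm inf, D^n_U(U\times_S T)}\cong (p^n_X)^*L_n(\cE)$; combining this with the $\cO_X$-augmentation $\cP^n_X\otimes_{\cO_X}\cE\twoheadrightarrow\cE$ and pushing forward along $p^n_T$ produces the map $\Phi_n\colon L_n(\cE)_{\rm inf,(U,T)}\to (p^n_T)_*(p^n_X)^*\cE$.

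To check that each $\Phi_n$ is an isomorphism I will argue locally, where by log smoothness of $X/S$ one finds a retraction $g\colon T\to U\subset X$ of the thickening $\iota\colon U\to T$. The explicit form of Lemma \ref{LemmaCrystallization} then gives $L_n(\cE)_{\rm inf,(U,T)}\cong g^*L_n(\cE)=\cO_T\otimes_{g^{-1}\cO_X}(\cP^n_X\otimes_{\cO_X}\cE)$, while the cross Lemma \ref{lemma:cross} provides a canonical isomorphism $\cO_{P^n_{U/U\times_S T}}\cong \cO_T\hat{\otimes}_{g,\cO_U}(\cP^n_X|_U)$ compatible with the two natural projections. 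Tensoring the latter with $\cE|_U$ over the remaining copy of $\cO_U$ identifies $(p^n_T)_*(p^n_X)^*\cE$ with $\cO_T\otimes_{g,\cO_U}(\cP^n_X|_U\otimes_{\cO_U}\cE|_U)=g^*L_n(\cE)$, so $\Phi_n$ is locally an isomorphism; independence of these local isomorphisms from the choice of $g$ is automatic from the crystal axiom satisfied by $L_n(\cE)_{\rm inf}$, so they glue to a global isomorphism. Taking $\lim_n$ and using that $L(\cE)_{\rm inf}=\lim_n L_n(\cE)_{\rm inf}$ and $j_*\varphi^*\cE=\lim_n(p^n_T)_*(p^n_X)^*\cE$ commute with the pro-system maps produces the desired $\Phi$.

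The principal obstacle is bookkeeping: one must track which of the two natural $\cO_U$-structures on $\cP^n_X|_U$ appears in each tensor factor, and verify that the matching provided by the cross lemma is precisely the one dictated by the canonical stratification $\epsilon^{\cE,\rm can}_{m,n}$. This is best handled after choosing local coordinates on $X/S$ via Lemma \ref{lemma:logsmooth}, where both sides unfold into explicit formulas on monomials $\zeta^q$ in $\cO_U[[\zeta_1,\ldots,\zeta_d]]$, and the required compatibility becomes a direct algebraic check.
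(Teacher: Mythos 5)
Your proposal is correct and follows essentially the same route as the paper: construct the canonical map from $L(\cE)_{\rm inf}$ to $j_*\varphi^*\cE$ levelwise via the augmentation $L_n(\cE)\to\cE$ (the paper packages this through the adjunction $j^*\dashv j_*$), then verify it is an isomorphism locally by choosing a retraction $g\colon T\to X$ of the thickening and invoking Lemma \ref{lemma:cross} to identify $\cO_{P^n_{U/U\times_S T}}$ with $\cO_T\hat{\otimes}_{g,\cO_U}\cP^n_X|_U$. Your explicit attention to which $\cO_U$-structure appears in each tensor factor and its compatibility with the canonical stratification is exactly the bookkeeping the paper leaves implicit.
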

\begin{proof}
	For any object $g=(\alpha,\beta):(U,T)\rightarrow (X,X)\in \XSinfx$ one can define the morphism
	\[
	\left(j^* L(\cE)_{inf}\right)_{g}=\lim_{n\in \N} \beta^* L(\cE)_n \rightarrow \beta^* L(\cE)_0= \beta^*\cE = \left(\varphi^* \cE\right)_g.
	\]
	Since
	\[
	{\rm Hom}_{\Xinf}\left( L(\cE)_{inf},  j_*\varphi^* \cE  \right)\cong 	{\rm Hom}_{\Xinfx}\left( j^* L(\cE)_{inf},  \varphi^* \cE  \right),
	\]
	the previous morphism corresponds to a morphism $\mu: L(\cE)_{inf}\rightarrow j_*\varphi^* \cE$, we check (locally) that this morphism is an isomorphism. Let $(U,T)\in \XSinf$ with a section $\beta: T\rightarrow X$, then 
	\[
	(j_*\varphi^* \cE)_{(U,T)}= \lim_{n\in \N} (p^n_T)_* \left(\varphi^* \cE\right)_{p^n_X}= \lim_{n\in \N}(p^n_T)_* (p^n_X)^* \cE=\lim_{n\in \N}\left(\frac{(\cO_T\hat{\tensor}_{\cO_S} \cO_U)^{\rm ex}}{I^{n+1}_{U/U\times T}} \otimes_{\cO_X} \cE_{|_U}\right)
	\]
	where $I^{n+1}_{U/U\times T}= Ker\left(\cO_T\hat{\tensor}_{\cO_S} \cO_U\rightarrow \cO_U\right)$. On the other side
	\[
	L(\cE)_{inf,(U,T)}=\lim_{n\in \N} \left(\frac{ \left(\cO_T\hat{\tensor}_{\cO_S} \cO_U\right)}{I^{n+1}_{U/U\times U}} \otimes_{\cO_X} \cE_{|_U}\right),
	\]
	where $I^{n+1}_{U/U\times U}= Ker\left((\cO_U\hat{\tensor}_{\cO_S} \cO_U)^{\rm ex}\rightarrow \cO_U\right)$.
	
	The morphism $\mu$
	\[
	\lim_{n\in \N}\left(\frac{ \left(\cO_T\hat{\tensor}_{\cO_S} \cO_U\right)^{\rm ex}}{I^{n+1}_{U/U\times T}} \otimes_{\cO_X} \cE_{|_U}\right) \longrightarrow  \lim_{n\in \N} \left( \cO_T\ho_{\beta,\cO_U}\frac{ \left(\cO_U\hat{\tensor}_{\cO_S} \cO_U\right)^{\rm ex}}{I^{n+1}_{U/U\times U}} \otimes_{\cO_X} \cE_{|_U}\right)
	\]
	corresponds to the natural projection map, that via the Lemma \ref{lemma:cross} is an isomorphism.
\end{proof}


\begin{proposition}
	Given a flat $\cO_X$-module $\cF$ on $X$ with an integrable connection $\nabla$
	\[
	u_*(L(\cF)_{\rm inf})\cong \cF.
	\]
\end{proposition}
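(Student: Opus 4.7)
The plan is to deduce the proposition directly from the preceding Theorem \ref{TheoremLinearizationIsAyclicForU}, which establishes a canonical isomorphism $L(\cF)_{\rm inf} \cong j_\ast \varphi^\ast \cF$ for any flat $\cO_X$-module $\cF$. Note that the integrable connection $\nabla$ does not actually enter into the statement: the functor $L(-)_{\rm inf}\colon {\rm Mod}^{\rm diff}\to {\rm Mod}_{\cO_{\Xinf}}$ equips each $L_n(\cF) = \cP^n_X \otimes_{\cO_X}\cF$ with its \emph{canonical} stratification, independent of any chosen $\nabla$; the connection is mentioned in the hypothesis only to place the result in the context where it will be used, namely the computation of the cohomology of the linearized de Rham complex.

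Applying the functor $u_\ast$ to both sides of the isomorphism of Theorem \ref{TheoremLinearizationIsAyclicForU} yields
\[
u_\ast\bigl(L(\cF)_{\rm inf}\bigr) \cong u_\ast\bigl(j_\ast \varphi^\ast \cF\bigr).
\]
Next, I would invoke the commutativity of the triangle of topoi $u \circ j = \varphi$ established earlier in this section (and proved just before Theorem \ref{TheoremLinearizationIsAyclicForU}). At the level of direct images this reads $u_\ast \circ j_\ast = \varphi_\ast$, and therefore
\[
u_\ast\bigl(j_\ast \varphi^\ast \cF\bigr) \cong \varphi_\ast\bigl(\varphi^\ast \cF\bigr).
\]

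Finally, I would compute $\varphi_\ast(\varphi^\ast \cF)$ by unwinding the definitions. By construction $\varphi_\ast \cG = \cG_{{\rm id}_{(X,X)}}$ for any $\cG \in \Xinfx$, while for a morphism $g = (\alpha,\beta)\colon (U,T)\to (X,X)$ one has $(\varphi^\ast \cF)_g = \beta^\ast \cF$. Specializing to $g = {\rm id}_{(X,X)}$, so that $\beta = {\rm id}_X$, yields $(\varphi^\ast \cF)_{{\rm id}_{(X,X)}} = \cF$. Composing the chain of canonical isomorphisms produces the desired identification $u_\ast(L(\cF)_{\rm inf})\cong \cF$. The argument is purely formal once Theorem \ref{TheoremLinearizationIsAyclicForU} is granted, so there is no serious obstacle; the only genuine work has already been done in proving that theorem, which relied on the explicit local description of $\cP^n_X$ via Lemma \ref{lemma:logsmooth} together with the exactification lemma \ref{lemma:cross}.
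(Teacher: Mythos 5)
Your argument is correct, but it follows a genuinely different route from the paper's. The paper deduces the proposition from Proposition \ref{PropGlobalSectionsCrystal} (the description of $\Gamma(U_{\rm inf},-)$ of a quasi-coherent crystal as the sections fixed by the first-order stratification) together with Proposition \ref{PropExactLinearizedComplex}: unwinding the canonical stratification of $L(\cF)$ identifies $u_\ast(L(\cF)_{\rm inf})$ with ${\rm Ker}(L(\nabla))\subset L(\cF)$, which is then identified with $\cF$ by the exactness statement for the linearized de Rham complex. You instead read the proposition off from Theorem \ref{TheoremLinearizationIsAyclicForU} and the commutativity $u\circ j=\varphi$ of the triangle of topoi:
\[
u_\ast\bigl(L(\cF)_{\rm inf}\bigr)\cong u_\ast j_\ast\varphi^\ast\cF\cong\varphi_\ast\varphi^\ast\cF=({\rm id}_X)^\ast\cF=\cF.
\]
This is shorter and entirely formal once the theorem is granted; the only point worth making explicit is that $u_\ast\circ j_\ast\cong\varphi_\ast$ uses both the commutativity lemma and the lemma identifying $j_\ast$ (defined via envelopes, which is the version appearing in Theorem \ref{TheoremLinearizationIsAyclicForU}) with the adjoint $j_\ast'$ of $j^\ast$. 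Your observation that the connection is irrelevant to the statement is also accurate: $L(\cF)_{\rm inf}$ carries the canonical stratification, and both your proof and Theorem \ref{TheoremLinearizationIsAyclicForU} need only flatness, whereas the paper's proof invokes $\nabla$ only as an auxiliary device. What the paper's longer route buys is the explicit form of the isomorphism --- the composite $u_\ast(L(\cF)_{\rm inf})\cong{\rm Ker}(L(\nabla))\cong\cF$ --- which is the shape in which the result is used in the comparison theorem immediately afterwards, where $u_\ast$ is applied to the augmented complex $0\to\cF_{\rm inf}\to L(\cF\otimes\omega^\bullet_{X/S})_{\rm inf}$.
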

\begin{proof}
	The proof is a consequence of propositions \ref{PropGlobalSectionsCrystal} and \ref{PropExactLinearizedComplex}. More in detail, denote $F:= L(\cF)$ with the canonical stratification by proposition \ref{PropGlobalSectionsCrystal} and by definition of $F_{inf}$, for any open $U\subset X$, there are canonical identifications
	\begin{multline*}
		\Gamma\left(U_{\rm inf}, {(F_{\rm inf})}_{|_{U_{\rm inf}}}\right)\cong \{x\in {(F_{\rm inf})}_{|_{U_{\rm inf}}}(U,U)\mid \epsilon^1_{F_{\rm inf}}(p_0^1(1)^*x)= p_1^1(1)^* x\}\\
		=\{x\in F_{\rm inf}(U,U)\mid \epsilon^1_{F_{\rm inf}}(p_0^1(1)^*x)= p_1^1(1)^* x\}\\
		=\{x\in F(U,U)\mid \epsilon^1_F(p_0^1(1)^*x)= p_1^1(1)^* x\}.
	\end{multline*}
	But by definition of $\epsilon^1_F$ we have that
	\[
	\Gamma\left(U_{\rm inf}, {(F_{\rm inf})}_{|_{U_{\rm inf}}}\right)\cong {\rm Ker}(L(\nabla))(U).
	\]
	This isomorphism is functorial on $U$ and, via proposition \ref{PropExactLinearizedComplex}, we get
	\[
	u_*(F_{\rm inf})\cong {\rm Ker}(L(\nabla))\cong \cF.
	\]
\end{proof}
In the proof we used that the stratification $\{\epsilon^n_{F_{\rm inf}}\}_{n\in \N}$ attached to the sheaf $(F_{\rm inf})_{(X,X)}=F$ defined in \S\ref{SectionCrystal} is equal to the stratification $\{\epsilon^n_{F}\}_{n\in \N}$ that we used to define $F_{\rm inf}$. 

\begin{theorem}
	If $\cF$ is a flat $\cO_X$-module with integrable connection $\nabla$, then there is a canonical isomorphism
	\[
	{\rm H}^i\left(\Xinf, \cF_{\rm inf} \right)\cong {\rm H}^i_{dR}\left(X, (\cF, \nabla)\right)
	\]
\end{theorem}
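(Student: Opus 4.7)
The plan is to use the linearized de Rham complex as a resolution of $\cF_{\rm inf}$ in the log infinitesimal topos, apply $u_\ast$ term-by-term to obtain the ordinary de Rham complex of $(\cF,\nabla)$ on $X_{\rm an}$, and then conclude by the Leray spectral sequence associated with the morphism of topoi $u\colon \Xinf \to X_{\rm an}$.

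First I would invoke Theorem \ref{TheoremFinfExactSequence} to obtain an exact sequence
\[
0 \lra \cF_{\rm inf} \lra L\bigl(\cF\otimes_{\cO_X}\omega^\bullet_{X/S}\bigr)_{\rm inf}
\]
in the abelian category of $\cO_{\Xinf}$-modules. This exhibits $L(\cF\otimes \omega^\bullet_{X/S})_{\rm inf}$ as a resolution of $\cF_{\rm inf}$. By Theorem \ref{TheoremLinearizationIsAyclicForU}, each term $L(\cF\otimes\omega^k_{X/S})_{\rm inf}$ is $u_\ast$-acyclic (since $\cF\otimes \omega^k_{X/S}$ is flat, as $\cF$ is flat and $\omega^k_{X/S}$ is locally free), so this resolution computes $Ru_\ast(\cF_{\rm inf})$.

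Next I would compute the complex $u_\ast L(\cF\otimes\omega^\bullet_{X/S})_{\rm inf}$. By the proposition immediately preceding the theorem, $u_\ast L(\cF\otimes\omega^k_{X/S})_{\rm inf} \cong \cF\otimes\omega^k_{X/S}$ for each $k\ge 0$: indeed each $\cF\otimes\omega^k_{X/S}$ acquires a canonical integrable connection (induced from $\nabla$ on $\cF$ and the de~Rham differential on $\omega^\bullet_{X/S}$), and the argument of that proposition shows that $u_\ast$ of its linearization recovers it. The task that requires a small verification is that the differentials of the complex $u_\ast L(\cF\otimes\omega^\bullet_{X/S})_{\rm inf}$, which are induced by $L(\nabla^k)$ via functoriality of $u_\ast$, coincide under these identifications with the usual de Rham differentials $\nabla^k$ of the complex $(\cF\otimes \omega^\bullet_{X/S}, \nabla^\bullet)$. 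This follows, locally in coordinates $x_1,\dots,x_d$, from the explicit local formula for $L(\nabla^k)$ established in the lemma preceding Proposition \ref{PropExactLinearizedComplex}, combined with the characterization of $u_\ast$ as taking horizontal (``flat'') sections in the analogue de Rham direction; horizontal sections of $L(\nabla^k)$ on $(X,X)$ restrict, via the splitting of the stratification, to elements of $\cF\otimes \omega^k_{X/S}$, and a direct check shows the induced differential is $\nabla^k$.

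Having identified the complex $u_\ast L(\cF\otimes\omega^\bullet_{X/S})_{\rm inf}$ with the de Rham complex $\cF\otimes\omega^\bullet_{X/S}$, we get an isomorphism $Ru_\ast(\cF_{\rm inf}) \cong \cF\otimes\omega^\bullet_{X/S}$ in the derived category $D^+(X_{\rm an})$. Finally, I would apply the Leray spectral sequence
\[
E_2^{p,q} = \mathrm{H}^p\bigl(X_{\rm an}, R^q u_\ast(\cF_{\rm inf})\bigr) \Longrightarrow \mathrm{H}^{p+q}\bigl(\Xinf, \cF_{\rm inf}\bigr),
\]
or equivalently the identity $R\Gamma(\Xinf,-) = R\Gamma(X_{\rm an},-)\circ Ru_\ast$, to conclude
\[
\mathrm{H}^i\bigl(\Xinf, \cF_{\rm inf}\bigr) \cong \mathbb{H}^i\bigl(X_{\rm an}, \cF\otimes\omega^\bullet_{X/S}\bigr) = \mathrm{H}^i_{\rm dR}\bigl(X,(\cF,\nabla)\bigr),
\]
as desired. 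The main obstacle is the compatibility of differentials step: while each individual $u_\ast$ term is easy to identify with $\cF\otimes \omega^k_{X/S}$, one must carefully unwind the construction of $L(\nabla^k)_{\rm inf}$ via stratifications, and check that after applying $u_\ast$ the resulting boundary maps are genuinely the de Rham differentials rather than differentials twisted by some auxiliary data coming from the infinitesimal thickenings; fortunately, this is a local calculation in coordinates that reduces to the formula in the lemma before Proposition~\ref{PropExactLinearizedComplex}.
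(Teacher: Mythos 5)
Your proposal is correct and follows essentially the same route as the paper: resolve $\cF_{\rm inf}$ by the linearized de Rham complex (Theorem \ref{TheoremFinfExactSequence}), use $u_\ast$-acyclicity of the linearized terms (Theorem \ref{TheoremLinearizationIsAyclicForU}), identify $u_\ast$ of the resolution with the de Rham complex, and conclude via the Leray spectral sequence. Your explicit verification that the induced differentials agree with $\nabla^k$ is a point the paper leaves implicit, but it does not change the argument.
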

\begin{proof}
	The sequence
	\[
	0\rightarrow \cF_{inf}\rightarrow L\left(\cF\tensor \Omega_{X/S}^\bullet\right)
	\]
	is exact via the Theorem \ref{TheoremFinfExactSequence}, then $\cF_{inf}$ is isomorphic to $L\left(\cF\tensor \Omega_{X/S}^\bullet\right)$ in the derived category. Moreover via the Theorem \ref{TheoremLinearizationIsAyclicForU} $L(\cF\tensor_{\cO_X} \Omega_{X/S}^\bullet)_{inf}$ is done by acyclic objects for $u_*$ and the Lerray spectral sequence
	\[
	R^p\Gamma\left(X,-\right) R^q u_*\ \left( L\left(\cF\tensor \Omega_{X/S}^\bullet\right) \right)\Rightarrow R^{p+q} \Gamma\left(\Xinf,-\right)  \left(L\left(\cF\tensor \Omega_{X/S}^\bullet\right)\right)
	\]
	degenerates, then
	\begin{multline*}
		R^i\Gamma\left(\Xinf,-\right)\left(\cF_{inf}\right)\cong  R^i \Gamma\left(\Xinf,-\right) \left(L\left(\cF\tensor \Omega_{X/S}^\bullet\right)\right)
		\\
		\cong  R^i \Gamma\left(X,-\right)\circ u_* \left(L\left(\cF\tensor \Omega_{X/S}^\bullet\right) \right)
		\cong R^i\Gamma\left(X,-\right)  \left(\cF\tensor \Omega_{X/S}^\bullet\right).
	\end{multline*}
\end{proof}

The functor $u_\ast$ has a left adjoint and it is not exact, therefore it does not commute with general colimits. Nevertheless, we have the following:
 
\begin{lemma}\label{lemma:uastcolim}
	Let $\{F_N\}_{N\in \N}$ be a direct system of coherent and locally free $\cO_X$-modules such that $F_N\rightarrow F_{N+1}$ is a monomorphism for each $N\in \N$. Suppose that there is an integrable connection $\nabla$ on $F:= \colim_{N}F_N$. Then
	\[
		u_\ast \left(\colim_N L( F_N )_{\inf }\right)\cong  u_\ast  L( \colim_N F_N)_{\inf}	\cong \colim_N F_N.
	\]
\end{lemma}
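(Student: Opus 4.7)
The plan is to reduce both isomorphisms to applications of Theorem \ref{TheoremLinearizationIsAyclicForU}, combined with the left exactness of $u_\ast$ and naturality in the module variable.

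First I will establish the easier second isomorphism $u_\ast L(\colim_N F_N)_{\inf} \cong \colim_N F_N$. Since each $F_N$ is locally free, the filtered colimit $F := \colim_N F_N$ is flat as an $\cO_X$-module, so Theorem \ref{TheoremLinearizationIsAyclicForU} applies and gives $L(F)_{\inf} \cong j_\ast \varphi^\ast F$. Applying $u_\ast$ and using $u \circ j = \varphi$ as morphisms of topoi, together with the direct computation $\varphi_\ast \varphi^\ast \cE \cong \cE$ (which follows from the explicit descriptions $\varphi_\ast \cG = \cG_{\mathrm{id}_{(X,X)}}$ and $(\varphi^\ast \cE)_g = \beta^\ast \cE$), I obtain $u_\ast L(F)_{\inf} \cong F$. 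The same argument applied to each flat sheaf $F_N$ yields natural-in-$N$ isomorphisms $u_\ast L(F_N)_{\inf} \cong F_N$.

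For the first isomorphism I will study the natural morphism $\iota \colon \colim_N L(F_N)_{\inf} \to L(F)_{\inf}$ in $\Xinf$ induced by the inclusions $F_N \hookrightarrow F$, and show that $u_\ast \iota$ is an isomorphism. First, $\iota$ is a monomorphism: each map $L(F_N)_{\inf} \to L(F_{N+1})_{\inf}$ is a monomorphism because $L(-) \cong j_\ast \varphi^\ast(-)$ preserves injections of flat $\cO_X$-modules (with $\varphi^\ast$ preserving monomorphisms locally via pullback of flat modules, and $j_\ast$ being left exact as a right adjoint), and the filtered colimit of monomorphisms with a common target is a monomorphism in any topos. Since $u_\ast$ is left exact, it follows that $u_\ast \iota$ is a monomorphism into $u_\ast L(F)_{\inf} \cong F$. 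To prove $u_\ast \iota$ is surjective as a map of sheaves on $X$, I will use that the structure morphisms $L(F_N)_{\inf} \to \colim_N L(F_N)_{\inf}$ produce, after applying $u_\ast$ and the identifications $u_\ast L(F_N)_{\inf} \cong F_N$, canonical maps $F_N \to u_\ast(\colim_N L(F_N)_{\inf})$ whose composition with $u_\ast \iota$ recovers the inclusion $F_N \hookrightarrow F$. Since $F = \colim_N F_N$ and filtered colimits commute with stalks, every germ of $F$ lifts through some $F_N$, hence through $u_\ast(\colim_N L(F_N)_{\inf})$, giving surjectivity on stalks and therefore the isomorphism.

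The main obstacle to avoid is that $u_\ast$ does not commute with filtered colimits in general; by reducing the problem to a mono/epi check at the stalk level on $X$, I sidestep this issue entirely. The one technical point that needs careful verification is the naturality of $u_\ast L(F_N)_{\inf} \cong F_N$ in $N$, ensuring commutativity of the square relating these isomorphisms to the inclusions $F_N \hookrightarrow F$; this is encoded in the functoriality of Theorem \ref{TheoremLinearizationIsAyclicForU} and of the unit of the adjunction $\varphi^\ast \dashv \varphi_\ast$.
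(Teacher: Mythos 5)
Your overall strategy is the same as the paper's: identify $u_\ast L(F)_{\inf}\cong F$ and $u_\ast L(F_N)_{\inf}\cong F_N$, show the natural map $\colim_N L(F_N)_{\inf}\to L(F)_{\inf}$ becomes injective after applying $u_\ast$, and then sandwich $F\subseteq u_\ast(\colim_N L(F_N)_{\inf})\subseteq F$. Your derivation of $u_\ast L(\cE)_{\inf}\cong\varphi_\ast\varphi^\ast\cE\cong\cE$ from Theorem \ref{TheoremLinearizationIsAyclicForU} and $u\circ j=\varphi$ is in fact a nice shortcut, since it applies to each $F_N$ without requiring a connection on $F_N$ (the paper's proposition on $u_\ast(L(\cF)_{\inf})\cong\cF$ assumes one).

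There is, however, a genuine gap in your injectivity step. You claim that $\iota\colon\colim_N L(F_N)_{\inf}\to L(F)_{\inf}$ is a monomorphism in $\Xinf$ because ``$\varphi^\ast$ preserves monomorphisms locally via pullback of flat modules,'' and then invoke left exactness of $u_\ast$. But $\varphi^\ast$ on an object $g=(\alpha,\beta)\colon(U,T)\to(X,X)$ is $\beta^\ast=\cO_T\otimes_{\beta^{-1}\cO_X}\beta^{-1}(-)$, and $\cO_T$ is a nilpotent thickening of $\cO_U$, hence not flat; pullback along a non-flat map does not preserve injections between flat modules unless the cokernel is flat (e.g.\ $\cO\stackrel{x}{\to}\cO$ restricted to the vanishing locus of $x$), and the hypotheses of the lemma give no control on the cokernels $F_{N+1}/F_N$ or $F/F_N$. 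So ``$\iota$ is a monomorphism of sheaves on the whole site'' is not established and may fail on general thickenings $(U,T)$. (There is also a small mismatch: you prove monomorphy of the transition maps $L(F_N)_{\inf}\to L(F_{N+1})_{\inf}$ but then invoke the statement about a filtered system of monos \emph{with a common target}; what you need is that each $L(F_N)_{\inf}\to L(F)_{\inf}$ is a mono.) The repair is exactly what the paper does: since $u_\ast G(U)=\lim_n{\rm Eq}\bigl(G(U,U)\rightrightarrows G(U,P^n_U)\bigr)$, injectivity of $u_\ast\iota$ only requires injectivity of $\iota$ on sections over the diagonal thickenings, where the relevant rings $\cP^n_U\cong\cO_U[G][\zeta_1,\dots,\zeta_d]^{\deg\le n}$ are free, hence flat, over $\cO_U$; there $\cP^n_U\otimes F_N(U)\to\cP^n_U\otimes F(U)$ is injective and one concludes. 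With that substitution your argument closes, and the remaining surjectivity/stalk argument coincides with the paper's sandwich $F(U)\subseteq u_\ast(\colim_N L(F_N)_{\inf})(U)\subseteq F(U)$.
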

\begin{proof}
	Firstly we prove that $\phi: \left(\colim _NL( F_N )_{\inf}\right)(U,U)\rightarrow L( \colim_N F_N)_{\inf} (U,U)$ is injective for each $(U,U)\in \XSinf$.
	
	We can cover each $(U,U)\in\XSinf$ with objects of the same shape, hence we can work locally. Let $F:=\colim_N F_N$, then
	\[
	\left(\colim_N L( F_N )_{\inf}\right) (U,U)=\colim_N \lim_n L_n(F_N)(U,U)=\colim_N \lim_n \cP^n_U\tensor F_N(U)
	\] 
	and
	\[
	L( \colim_N F_N)_{\inf} (U,U)=\lim_n L_n\left(\colim_N F_N\right)_{\inf}(U,U)
	=\lim_n \left(\cP^n_U\tensor F(U)\right).
	\]
	Let $x\in \colim_N \lim_n \cP^n_U\tensor F(U)$, then $x=(x_n)_{n\in \N}\in \lim_n \cP^n_U\tensor F_M(U)$ for an $M\in \N$. Since $F_{M}\hookrightarrow F$ via the hypothesis and each $\cP^n_U$ is flat we get that 
	\[
		\phi_n:\cP^n_U \otimes  F_N(U)\hookrightarrow \cP^n_U \otimes  F(U)
	\]
	is injective for each $n\in \N$.
	If $\phi(x)=0,$ then $\phi_n(x_n)=0\in \cP^n_U \otimes  F(U)$ for each $n\in \N$ and we get that $x=0$; hence $\phi$ is injective.
	
	Let's consider the following diagram induced via the two projections of $X\times X$ into $X$
	\[
		\begin{tikzcd}
			\left(\colim_N L(F_N)_{\inf}\right)(U,U) \ar[r,shift right=1.25]\ar[r,shift left=1.25]\ar[d, hook, "\phi"]&\left(\colim_N L(F_N)\right)(U, \cP^n_U)\ar[d]\\
			L(F)_{\inf}(U,U)\ar[r,shift right=1.25]\ar[r,shift left=1.25]&L(F)_{\inf}(U, \cP^n_U)
		\end{tikzcd}.
	\]
	Since each $F_N$ is locally free and coherent we get that $F$ is a flat $\cO_{\Xinf}$-module with an integrable connection; then by the previous Theorem and the explicit computation of the global sections for a sheaf in $\Xinf$
	\[
		{\rm Eq}\left(	L(F)_{\inf}(U,U)\rightrightarrows L(F)_{\inf}(U, \cP^n_U)\right)\cong \left(u_\ast L(F)_{\inf} \right)(U)\cong F(U).
	\] 
	Since $\phi$ is injective we get that
	\begin{align*}
		F(U)\subseteq u_\ast \left(\colim_N L( F_N )_{\inf }\right)(U)\cong {\rm Eq}\left(	\left(\colim_N L(F_N)_{\inf}\right)(U,U)\rightrightarrows\left(\colim_N L(F_N)\right)(U, \cP^n_U)\right)\\
		\subseteq {\rm Eq}\left(	L(F)_{\inf}(U,U)\rightrightarrows L(F)_{\inf}(U, \cP^n_U)\right)\cong F(U).
	\end{align*}

\end{proof}

\end{document}